\numberwithin{equation}{section}
\numberwithin{figure}{section}
\theoremstyle{plain}
\newtheorem*{question*}{\protect\questionname}
\theoremstyle{plain}
\newtheorem{thm}{\protect\theoremname}
\theoremstyle{remark}
\newtheorem*{rem*}{\protect\remarkname}
\theoremstyle{plain}
\newtheorem{lem}{\protect\lemmaname}
\theoremstyle{definition}
\newtheorem{defn}{\protect\definitionname}
\theoremstyle{plain}
\newtheorem{prop}{\protect\propositionname}
\theoremstyle{plain}
\newtheorem{cor}{\protect\corollaryname}
\theoremstyle{definition}
 \newtheorem{example}{\protect\examplename}
\theoremstyle{definition}
\newtheorem*{example*}{\protect\examplename}
\theoremstyle{remark}
\newtheorem{rem}{\protect\remarkname}
\theoremstyle{definition}
\newtheorem*{defn*}{\protect\definitionname}
\theoremstyle{plain}
\newtheorem*{thm*}{\protect\theoremname}
\DeclareMathOperator{\lcm}{lcm}
\DeclareMathOperator{\Tor}{Tor}
\DeclareMathOperator{\Aut}{Aut}
\DeclareMathOperator{\Dih}{Dih}
\DeclareMathOperator{\rank}{rank}
\pgfplotsset{compat=1.11}
\providecommand{\corollaryname}{Corollary}
\providecommand{\definitionname}{Definition}
\providecommand{\examplename}{Example}
\providecommand{\lemmaname}{Lemma}
\providecommand{\propositionname}{Proposition}
\providecommand{\questionname}{Question}
\providecommand{\remarkname}{Remark}
\providecommand{\theoremname}{Theorem}
\begin{document}
\title{Polynomial Maps and Polynomial Sequences in Groups}
\author{Ya-Qing Hu}
\address{Morningside Center of Mathematics\\
Chinese Academy of Sciences\\
No. 55, Zhongguancun East Road\\
Haidian District, Beijing 100190}
\email{\href{mailto:yaqinghu@amss.ac.cn}{yaqinghu@amss.ac.cn}}
\keywords{polynomial maps, polynomial sequences, commutative semigroups, locally
nilpotent groups}
\subjclass[2000]{11C08, 20M14, 20F18}
\thanks{This work is partially supported by NSF Grant DMS-1401419. }
\begin{abstract}
 This paper develops a theory of polynomial maps from commutative
semigroups to arbitrary groups and proves that it has desirable formal
properties when the target group is locally nilpotent. We will apply
this theory to solve Waring's problem for general discrete Heisenberg
groups in a sequel to this paper. 
\end{abstract}

\date{\today}

\maketitle
\tableofcontents{}

\section{Introduction}

\subsection{Motivation}

 The motivation of this work is the following question of Michael
Larsen:  
\begin{question*}
Find good notions of ``polynomial sequence'' and ``generalized cone''
so that if $G$ is a finitely generated nilpotent group and $g_{0},g_{1},g_{2},\ldots$
is a polynomial sequence in $G$ such that no coset of any infinite
index subgroup of $G$ contains the whole sequence, then there exists
a positive integer $M$, a generalized cone $C\subset G$, and a subgroup
$H$ of finite index in $G$ such that every element of $C\cap H$
is a product of $M$ elements of the sequence. 
\end{question*}
A polynomial sequence in $\mathbb{Z}$ should be given by a polynomial
$\mathbb{N}_{0}\to\mathbb{Z}$ and a generalized cone should be the
set of all integers in an unbounded open interval. Thus, the classical
Waring's problem with integer-valued polynomials of degree $\ge2$
solved by Kamke \cite{Kamke1921} should be a trivial consequence
of the above question for $G=\mathbb{Z}$. The goal is to find a uniform
notion of polynomial maps from nonempty semigroups to groups that
would allow us to define both polynomial sequences $\mathbb{N}_{0}\to\mathcal{U}_{n}(\mathbb{Z})$
and generalized cones as the image of continuous polynomial maps $\mathbb{R}_{\ge0}^{N}\to\mathcal{U}_{n}(\mathbb{R})$
with nonempty interiors, where $\mathcal{U}_{n}(\mathbb{R})$ is the
group of $n\times n$ unipotent matrices over $\mathbb{R}$. 

\subsection{Work of Leibman}

Leibman developed a theory of polynomial sequences $\mathbb{Z}\to G$
in any group $G$ \cite{Leibman1998} and polynomial mappings $G\to F$
between two groups \cite{Leibman2002}. Let $G=G_{1}\supseteq G_{2}\supseteq G_{2}\supseteq\cdots$
be the lower central series of $G$. In \cite{Leibman1998}, he defines
a difference operator $Dg(n)=g(n)^{-1}g(n+1)$ on the sequence $g:\mathbb{Z}\to G$
and calls $g$ a polynomial if for any $k$ there exists $d$ such
that the sequence obtained from $g$ by applying this difference operator
$d$ times takes its values in $G_{k}$, i.e., $D^{d}g(n)\in G_{k}$
for all $n\in\mathbb{Z}$. Then, he introduces the notion of the degree
of a polynomial sequence and proves that polynomial sequences of degrees
not exceeding a fixed superadditive sequence form a group with group
law defined elementwise. 

In \cite{Leibman2002}, given any $h\in G$, he defines the left (resp.
right) $h$-derivative of a mapping $\varphi:G\to F$ between two
groups by $D_{h}^{L}\varphi(g)=\varphi(hg)\varphi(g)^{-1}$ (resp.
$D_{h}^{R}\varphi(g)=\varphi(g)^{-1}\varphi(gh)$), and calls $\varphi$
a left-polynomial (resp. right-polynomial) mapping of degree $\le d$
if for all $h_{1},\ldots,h_{d+1}\in G$, $D_{h_{1}}^{L}\cdots D_{h_{d+1}}^{L}\varphi\equiv1_{F}$
(resp. $D_{h_{1}}^{R}\cdots D_{h_{d+1}}^{R}\varphi\equiv1_{F})$).
Then, he proves that if $F$ is nilpotent, right-polynomial mappings
$G\to F$ form a group with group law defined elementwise,  and $\varphi:G\to F$
is a right-polynomial if and only if it is a left-polynomial.  However,
the degree of polynomial map $f$ as a right-polynomial is not necessarily
the same as the degree of $f$ as a left-polynomial. 

\subsection{Our generalization}

 To meet our own needs, we modify and generalize Leibman's theory.
In particular, a polynomial of degree $d$ should be killed by any
sequence of $d+1$ difference operators, left, right, or a combination
of the two. The difficulty which this definition is intended to meet
is the unavailability of inverses in general semigroups.  The most
important quantity of a polynomial map is its lc-degree, which is
a vector formed by the degree of the induced polynomial map modulo
lower central series of the target group and conveys more information
than the degree. A polynomial sequence in $G$ is given by a polynomial
map from $\mathbb{N}_{0}$ to $G$. A generalized cone in a path-connected
nilpotent Lie group $N$ is given as the image of a continuous polynomial
map $\mathbb{R}_{\ge0}^{n}\to N$, which is assumed to have non-empty
interior. It turns out that a generalized cone in $\mathbb{R}$ is
just an unbounded interval.  Given a homomorphism from a nilpotent
group $G$ to a path-connected nilpotent Lie group $N$, we can pull
back a generalized cone in $N$ to obtain a generalized cone in $G$.

\subsection{Main results}

Just as a polynomial $\mathbb{R}\to\mathbb{R}$ of degree $d$ is
in general determined by $d+1$ polynomial values, a polynomial map
is uniquely determined by certain special values and $\langle f(S)\rangle$
is finitely generated if $S$ is finitely generated. The set of polynomial
maps from $S$ to $G$ is invariant under conjugations in $G$, translations
in both $S$ and $G$, and taking elementwise inverse. In particular,
the fact that the elementwise inverse $f^{-1}:S\to G$ is also a polynomial
map of the same degree resembles the fact that the additive inverse
$-f$ of a polynomial $f\in R[x]$ is a polynomial of the same degree
with $f$. (Leibman's left or right polynomial does not have such
a nice property.) 

Elementwise product of two polynomial maps $S\to G$ may not be a
polynomial map. The simplest example might be provided by the multiplicative
functions $f_{1}(n)=x^{n}$ and $f_{2}(n)=y^{n}$ from $\mathbb{N}$
to the free group $F_{2}$ generated by two generators $x,y$. Even
in metabelian (let alone solvable) groups, the situation is still
unpleasant; cf. Example \ref{exa:Fibonacci} and the remark following
it.  The first main result is about elementwise product of polynomial
maps: 
\begin{thm}
\label{thm:product of two polynomial maps} Let $S$ be any nonempty
commutative semigroup, $G$ be any group and $f,f':S\rightarrow G$
be polynomial maps of degree $\le d$ and respectively $\le d'$.
If the subgroup $\langle f,f'\rangle$ generated by $f(S)$ and $f'(S)$
is nilpotent,  then the (elementwise) product 
\[
ff':S\to G;\quad t\mapsto f(t)f'(t)
\]
is a polynomial map.
\end{thm}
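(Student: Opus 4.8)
The plan is to induct on the nilpotency class $c$ of $N:=\langle f,f'\rangle$. Throughout I use several facts that are immediate from the definition of a polynomial map: group homomorphisms commute with every difference operator (so homomorphic images of polynomial maps are polynomial, and one may project before or after applying difference operators); each difference operator lowers the degree by at least $1$; and a map with values in a subgroup $H\le G$ is polynomial into $G$ iff it is polynomial into $H$. The engine is a Leibniz rule: writing $\psi^{\chi}$ for the pointwise conjugate $t\mapsto\chi(t)^{-1}\psi(t)\chi(t)$ and $[\psi,\chi]$ for the pointwise commutator, a direct computation gives $D_h^R(ff')=\bigl((D_h^Rf)^{f'}\bigr)(D_h^Rf')=(D_h^Rf)\,[D_h^Rf,f']\,(D_h^Rf')$, and symmetrically for $D_h^L$ (roles of $f,f'$ interchanged, conjugation by a translate of $f$); here $[D_h^Rf,f']$ takes values in $N_2=[N,N]$. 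When $c=1$ the group $N$ is abelian, the commutator term vanishes, $D_h^L=D_h^R$ is additive, and $D_h(ff')=(D_hf)(D_hf')$ is a product of two polynomial maps of strictly smaller degree; a secondary induction on $\deg f+\deg f'$ (base case: one factor constant, where translation-invariance in $G$ applies) then settles $c=1$.

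For $c\ge2$, assume the theorem for all nilpotent targets of class $<c$. Since $Z:=N_c$ is central, $\bar f,\bar f'\colon S\to N/N_c$ are polynomial with $\langle\bar f,\bar f'\rangle$ of class $<c$, so by induction $\bar f\bar f'$ is polynomial of some degree $\le e$; hence every composition $D$ of $e+1$ difference operators sends $ff'$ into $\ker(N\to N/N_c)=Z$. It thus suffices to prove that $D(ff')$ is polynomial with degree bounded uniformly over all length-$(e+1)$ compositions $D$: then some fixed $k$ makes every composition of $e+1+k$ difference operators kill $ff'$, so $ff'$ is polynomial. Iterating the Leibniz rule expands $D(ff')$ into a product of boundedly many terms, each a product of iterated derivatives of $f$ and $f'$ — polynomial of strictly smaller degree — times pointwise conjugates $\gamma^{\chi}$ of pointwise commutators $\gamma$ of such derivatives; each $\gamma$ has values in $N_2$ (nilpotent of class $\le c-1$) and $\gamma^{\chi}=\gamma\,[\gamma,\chi]$ with $[\gamma,\chi]$ deeper still. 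So, through a secondary induction on total degree, everything reduces to a Commutator Lemma: the pointwise commutator of two polynomial maps into a nilpotent group is polynomial.

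I would prove the Commutator Lemma by the same descent along the lower central series, using a commutator Leibniz rule derived from $[xy,z]=[x,z]^{y}[y,z]$ and $[x,yz]=[x,z][x,y]^{z}$. Modulo $N_{i+2}$ the commutation map restricts to the bi-additive pairing $N_i/N_{i+1}\times N/N_2\to N_{i+1}/N_{i+2}$, so each layer reduces to the elementary sub-lemma: if $\alpha\colon S\to A$ and $\beta\colon S\to B$ are polynomial maps into abelian groups and $b\colon A\times B\to C$ is bi-additive, then $t\mapsto b(\alpha(t),\beta(t))$ is polynomial into $C$ — which has its own Leibniz rule $D_h\,b(\alpha,\beta)=b(D_h\alpha,\beta(\cdot+h))+b(\alpha,D_h\beta)$ and follows by induction on $\deg\alpha+\deg\beta$, the base case being a homomorphic image.

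The main obstacle is the circular entanglement of ``product of polynomial maps is polynomial'', ``pointwise conjugate of a polynomial map by a polynomial map is polynomial'', and ``pointwise commutator of polynomial maps is polynomial'': a conjugate is a product with a commutator, a commutator is built from products, and the coarse degree of a product is not controlled by the degrees of its factors once non-abelian correction terms intrude. The way through is the double layering — an outer induction on nilpotency class and, inside it, inductions descending the lower central series one central abelian quotient at a time — so that at the bottom one only ever adds polynomial maps into an abelian group, while every commutator produced along the way lands strictly deeper in the lower central series and is therefore controlled by the bi-additive sub-lemma and the inductive hypothesis. Getting the degree bookkeeping to close these nested inductions is the delicate part, and it is precisely here that the finer lc-degree, rather than the bare degree, is needed.
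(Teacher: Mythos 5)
Your local computations are right (the Leibniz expansions of $D_h^L(ff')$ and $D_h^R(ff')$, the reduction modulo the central term $N_c$, the bi-additive pairing on the graded quotients), and you correctly isolate the real difficulty; but the induction scheme you propose does not break the circularity you name. The problematic objects are exactly the cross terms your expansion produces: a factor such as $(D_h^Rf)(D_h^Rf')$, whose values may still generate a subgroup of class $c$, multiplied by a commutator factor such as $[D_h^Rf,f']$ or its conjugate, which takes values in $N_2$ but contains an underived copy of $f'$. Neither of your measures decreases on this pair: the subgroup generated by the two factors need not have class $<c$, so the outer induction on class does not apply; and the total degree is not controlled below $\deg f+\deg f'$ (the commutator factor carries the full degree of $f'$, and you have no a priori degree bounds for these composite factors), so the secondary induction on total degree does not apply either. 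The same issue recurs inside your Commutator Lemma: the bi-additive pairing identifies $[\alpha,\beta]$ only modulo the next term of the lower central series, i.e.\ only the first graded layer; to go deeper you must multiply that main term by error terms lying further down, and knowing a map is polynomial modulo a normal subgroup says essentially nothing about the map itself (the paper makes this point explicitly in a remark), so every subsequent layer reintroduces the very product statement being proved. Your reduction also needs the degree of $D(ff')$ to be bounded uniformly over the infinitely many length-$(e+1)$ compositions $D$, which the sketch never addresses.

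What is missing is an induction quantity that does decrease on those cross terms, and this is precisely what the paper's proof supplies: it proves the product and commutator statements simultaneously, by a descending induction on $k+k'$, the sum of the lc-heights of the two arguments, with base case $k+k'\ge n+1$ (where multiplication restricted to $C^{k}G\times C^{k'}G$ is a group homomorphism), and an inner induction on the sum of the degrees of $f\bmod C^{k+1}G$ and $f'\bmod C^{k'+1}G$. With that bookkeeping, every commutator produced by the Leibniz expansions has lc-height $\ge k+k'$, so multiplying it by any factor of lc-height $\ge 1$ pushes the height sum strictly above $k+k'$ and is covered by the outer hypothesis, while the pure derivative products are covered by the inner one. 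Your observation that each new commutator lies strictly deeper is the right germ, but to exploit it the induction must be organized by the lc-heights of the factors rather than by the class of the subgroup they generate (which does not drop) or by total degree (which is not controlled); as written, the nested inductions do not close, and this is a gap in the main step rather than delicate bookkeeping.
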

It follows that if $G$ is nilpotent of class $n$, then all polynomial
maps from $S$ to $G$ form a nilpotent group of class $n$, cf. Corollary
\ref{cor:group of polynomial maps}. Leibman proves something similar:
if $F$ is nilpotent, right-polynomial mappings $G\to F$ form a group,
cf. \cite[Theorem 3.2]{Leibman2002}. Both of these proofs are elementary
 and essentially done by induction on the (lc-)degree of the polynomial
map and the nilpotency class of the target group. Moreover, Leibman
proves that if $F$ is nilpotent of class $c$, then $\varphi:G\to F$
is a right-polynomial if and only if $\varphi$ is a left-polynomial,
cf. \cite[Proposition 3.16]{Leibman2002}. In fact, he proves if $\varphi$
is a right-polynomial  of degree $\le d$, then $\varphi$ is a left-polynomial
 of degree $\le dc^{2}$. But if the target group is locally nilpotent
but not nilpotent, this statement does not necessarily hold; see Example
\ref{exa:polynomials maps in locally nilpotent groups} and Remark
\ref{rem:left but not right polynomial} for more detail. It is not
the purpose of this paper to distinguish the slight difference between
polynomial maps and left or right polynomial mappings. 

 Polynomial maps $\mathbb{R}_{\ge0}\to\mathbb{R}$ in our sense are
not necessarily given by polynomials in the usual sense. Discontinuous
additive functions $\mathbb{R}_{\ge0}\to\mathbb{R}$ provide such
pathological examples of polynomial maps of degree $1$; cf. Remark
\ref{rem:Hamel basis}. This can be avoided if the continuity is required.
Indeed, Theorem \ref{thm:polynomial in N var} shows that every continuous
polynomial map $f:\mathbb{R}_{\ge0}^{N}\rightarrow\mathbb{R}$ is
the usual polynomial. Theorem \ref{thm:unitriangular matrix of polynomials in N var}
shows that for $1\le i<j\le n$ each entry $f_{i,j}$ of the matrix
form of any continuous polynomial map $f:\mathbb{R}_{\ge0}^{N}\to\mathcal{U}_{n}(\mathbb{R})$
is a polynomial $f_{i,j}:\mathbb{R}_{\ge0}^{N}\to\mathbb{R}$. 

 The most important quantity of a polynomial map is its (lc-)degree.
So we are very interested in the lower and upper bounds of the (lc-)degree,
in particular, of polynomial maps of the form $\mathbb{R}_{\ge0}^{N}\to\mathcal{U}_{n}(\mathbb{R})$.
Theorems \ref{thm:bound degree N} and \ref{thm:bound lc-degree N}
gives lower and upper bounds of the (lc-)degree of $f:\mathbb{R}_{\ge0}^{N}\to\mathcal{U}_{n}(\mathbb{R})$
via the degree of $f_{i,j}$. 

 Theorem \ref{thm:polynomial sequence repeats constant} states that
a nonconstant polynomial sequence $g:\mathbb{N}_{0}\to G$ in a finitely
generated torsion-free nilpotent group $G$ cannot repeat the same
value infinitely many times. Theorem \ref{thm:infinite subsequence generates finite index subgroup}
states that every infinite subsequence (not necessarily corresponding
to any arithmetic progression) in any nilpotent group generates a
finite index subgroup of the group generated by the whole sequence. 

 Denote the direct sum of $N$ copies of a commutative semigroup
$S$ by $S^{N}$ and the set of all polynomial maps from $S^{N}$
to a group $G$ by $G_{p}^{S^{N}}$, on which the symmetric group
$S_{N}$ naturally acts. Then, we call a polynomial map $f:S^{N}\to G$
symmetric with respect to this $S_{N}$-action, if it is invariant
under this action. The strategy that we call the \emph{iterated symmetrization}
enables us to prove Theorem \ref{thm:symmetrization in G}, which
guarantees that any polynomial map $f:S^{N}\to G$, where $G$ is
nilpotent of class $n$, can be turned into a symmetric polynomial
map $\tilde{f}=\sigma_{1}(f)\sigma_{2}(f)\cdots\sigma_{M}(f)$, where
$\sigma_{1},\sigma_{2},\ldots,\sigma_{M}\in S_{N}$. Results in this
section will lay the foundation for our work on Waring's problem in
discrete Heisenberg groups. 

\subsection*{Organization}

In Section \ref{sec:Polynomial Maps}, we generalize the usual polynomials
to polynomial maps from a nonempty commutative semigroup $S$ to a
group $G$. Proposition \ref{prop:polynomial map of degree 1} and
Corollary \ref{cor:f-translation} characterize polynomial maps of
degree $1$.  Two possible ways to construct induced polynomial maps
via homomorphisms of semigroups or groups are given in Propositions
\ref{prop:homomorphism of commutative semigroups} and \ref{prop:homomorphism of groups}
respectively. Then, we give an elementary proof of Theorem \ref{thm:product of two polynomial maps}.
We also prove that polynomial maps from a commutative semigroup to
a nilpotent group with lc-degree bounded by a fixed superadditive
vector form a nilpotent subgroup; cf. Corollary \ref{cor:lc-degree <=00003D a superadditive vector}.
Apart from elementwise product, we also talk about ordered product
$f\odot f':S\times S'\to G$ of two polynomial maps $f:S\to G$ and
$f':S'\to G$ given by $f\odot f'(s,s')=f(s)f'(s')$ and prove that
the ordered product $\bigodot_{i=1}^{k}f$ of polynomial map $f:S\to G$
with lc-degree bounded by a fixed superadditive vector is a polynomial
map with lc-degree bounded by the same superadditive vector. 

Section \ref{sec:Continuous Polynomial Maps} is devoted to a characterization
of (continuous) polynomial maps in several variables, such as $f:\mathbb{R}_{\ge0}^{N}\rightarrow\mathbb{R}$
and $f:\mathbb{R}_{\ge0}^{N}\rightarrow\mathcal{U}_{n}(\mathbb{R})$. 

Section \ref{sec:Degree} provides estimations of lower and upper
bounds for the (lc-)degree of polynomial maps $f:\mathbb{R}_{\ge0}^{N}\rightarrow\mathcal{U}_{n}(\mathbb{R})$. 

Section \ref{sec:Polynomial Sequences} consists of basic results
about polynomial sequences and subsequences in a group $G$, which
are polynomial maps $\mathbb{N}_{0}\to G$. 

Section \ref{sec:Symmetric Polynomial Maps} proves some technical
results about symmetric polynomial maps with the help of some $1$-cocycles
of non-abelian group cohomology. 

Section \ref{sec:Polynomial Sets} introduces the concept of polynomial
sets in nilpotent groups and finds a proper polynomial set inside
any Kamke domain, an open subset in $\mathbb{R}^{n}$ played an important
role in Kamke's work and our future work. 

\subsection*{Acknowledgment}

I thank my advisor Michael Larsen for the guidance and many helpful
discussions through this work, in particular, for bringing this problem
to my attention. 

\section{Polynomial Maps \label{sec:Polynomial Maps}}

 A semigroup $S$ is a set $S$ together with a binary operation
$\cdot:S\times S\rightarrow S$ that satisfies the associative property:
$(a\cdot b)\cdot c=a\cdot(b\cdot c)$, $\forall a,b,c\in S$. A monoid
$S$ is a semigroup with the identity $e\in S$ such that $e\cdot a=a\cdot e=a$,
$\forall a\in S.$ If the binary operation is commutative, i.e., $a\cdot b=b\cdot a$,
$\forall a,b\in S$, then the semigroup (or monoid) $S$ is called
commutative, and the binary operation is denoted by $+$ and the identity
is denoted by $0$. 
\begin{rem*}
Vacuously, the empty set with the empty function as the binary operation
forms an empty semigroup. All semigroups mentioned in this paper are
assumed to be nonempty. 
\end{rem*}
 The rank of a semigroup $S$ is the smallest cardinality of a generating
set for the semigroup, i.e., 
\[
\rank(S)=\min\{|X|:X\subseteq S,\ \langle X\rangle=S\text{ or }\langle X\rangle=S\setminus\{\text{the unity element}\}\}.
\]
Thus, the rank of a finitely generated semigroup is the minimal number
of elements generating this semigroup either by themselves or after
the addition of the unity element.  A commutative semigroup $(S,+)$
is called divisible, if for any $s\in S$ and any $n\in\mathbb{N}$,
there exists $t\in S$ such that $nt=s$, and is called uniquely divisible,
if $t$ is unique. 

We adopt the following conventions on commutators and conjugations
in any group $G$: The commutator of the elements $x,y$ in a group
$G$ is defined by $[x,y]:=xyx^{-1}y^{-1}$, the $y$-conjugate of
$x$ in $G$ by $x^{y}:=yxy^{-1}$, the $n$-fold left-commutator
of $x_{1},x_{2},\ldots,x_{n}$ in $G$ by 
\[
[x_{1},x_{2},\ldots,x_{n}]:=[[\ldots[x_{1},x_{2}],\ldots,x_{n-1}],x_{n}],
\]
and the $1$-fold left-commutator of $x$ simply by $[x]:=x$. 

The following commutator identities will greatly facilitate calculations
related to commutators. 
\begin{lem}
\label{lem:commutator identities} Let $G$ be a group and $x,y,z,x_{1},\cdots,x_{n}\in G$.
Then, the following identities hold:
\begin{enumerate}
\item $x^{y}=[y,x]x$;
\item \label{enu:=00005Bx,y=00005D^-1} $[x,y]^{-1}=[y,x]$;
\item $[x^{-1},y]=[x^{-1},[y,x]][y,x]=[y,x]^{x^{-1}}$;
\item \label{enu:=00005Bx,yz=00005D} $[x,yz]=[x,y][y,[x,z]][x,z]=[x,y][x,z]^{y}$;
\item \label{enu:=00005Bxy,z=00005D} $[xy,z]=[x,[y,z]][y,z][x,z]=[y,z]^{x}[x,z]$;
\item $[x_{1},\ldots,x_{n}]^{z}=[x_{1}^{z},\ldots,x_{n}^{z}]$; 
\item \label{enu:Hall-Witt} $[x^{-1},y,z]^{x}[z^{-1},x,y]^{z}[y^{-1},z,x]^{y}=1$
and $[y,x,z^{x}][x,z,y^{z}][z,y,x^{y}]=1$. 
\end{enumerate}
The Identity (\ref{enu:Hall-Witt}) is also known as the Hall-Witt
identity. 
\end{lem}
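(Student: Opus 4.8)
The first six identities are universal equalities, and I would dispatch them by unwinding the two definitions $[a,b]=aba^{-1}b^{-1}$ and $a^{b}=bab^{-1}$, letting the later ones lean on the earlier ones. For (1), $[y,x]x=yxy^{-1}x^{-1}x=yxy^{-1}=x^{y}$. Identity (2) is $(xyx^{-1}y^{-1})^{-1}=yxy^{-1}x^{-1}=[y,x]$. For the second equality in (3) I would compute $[y,x]^{x^{-1}}=x^{-1}(yxy^{-1}x^{-1})x=x^{-1}yxy^{-1}=[x^{-1},y]$, and then the first equality in (3) is exactly the instance of (1) with the pair $([y,x],x^{-1})$ substituted for $(x,y)$. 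The same two-step pattern handles (4) and (5): one checks by cancellation that $[x,y][x,z]^{y}=xyzx^{-1}z^{-1}y^{-1}=[x,yz]$ and that $[y,z]^{x}[x,z]=xyzy^{-1}x^{-1}z^{-1}=[xy,z]$, and then rewrites $[x,z]^{y}$ and $[y,z]^{x}$ via (1); alternatively (5) follows from (4) by taking inverses and using (2) termwise. Identity (6) is an induction on $n$: the case $n=1$ is the tautology $x_{1}^{z}=[x_{1}^{z}]$, and the inductive step uses that $g\mapsto g^{z}$ is an automorphism, so $[[x_{1},\ldots,x_{n-1}],x_{n}]^{z}=[[x_{1},\ldots,x_{n-1}]^{z},x_{n}^{z}]$, to which the inductive hypothesis applies.

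The real content is the Hall--Witt identity (7). My plan is to reduce the first identity to the second and then prove the second by a direct word computation. For the reduction, I would apply (3) and (6) to each of the three factors of the first identity. From (3) we have $[x^{-1},y]=[y,x]^{x^{-1}}$, hence $[x^{-1},y]^{x}=[y,x]$, and then by (6), $[x^{-1},y,z]^{x}=[[x^{-1},y]^{x},z^{x}]=[y,x,z^{x}]$; the cyclic analogues give $[z^{-1},x,y]^{z}=[x,z,y^{z}]$ and $[y^{-1},z,x]^{y}=[z,y,x^{y}]$. So the first identity is literally the statement $[y,x,z^{x}][x,z,y^{z}][z,y,x^{y}]=1$, which is the second identity.

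To prove $[y,x,z^{x}][x,z,y^{z}][z,y,x^{y}]=1$, I would expand each triple commutator as an unreduced word in $x,y,z$. Using (2), $[y,x,z^{x}]=[y,x]\,z^{x}\,[x,y]\,(z^{-1})^{x}=(yxy^{-1})\,z\,(yx^{-1}y^{-1}x)\,z^{-1}x^{-1}$, and the other two factors are the images of this under the two cyclic substitutions of $x,y,z$. Writing the three words side by side, each consecutive pair shares a long cancelling block, and the product telescopes to the empty word. This telescoping is the only genuinely combinatorial step, and it is the place where I expect to have to keep careful track of the bookkeeping; but it is purely mechanical, and everything else in the lemma is formal manipulation with the definitions together with the already-established parts (1), (3), and (6).
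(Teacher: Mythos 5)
Your proposal is correct: all the computations check out under the paper's conventions ($[x,y]=xyx^{-1}y^{-1}$, $x^{y}=yxy^{-1}$), including the reduction of the first Hall--Witt identity to the second via (3) and (6) and the telescoping cancellation in the final word computation. The paper itself gives no details (its proof is just ``the verification is routine''), and your direct verification from the definitions is exactly the intended argument.
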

\begin{proof}
The verification of these statements is routine. 
\end{proof}
Let $X$ and $Y$ be subsets of a group $G$. The commutator subgroup
of $X$ and $Y$ is defined to be $[X,Y]:=\langle[x,y]\mid x\in X,y\in Y\rangle$.
In particular, the derived or commutator subgroup of $G$ is defined
to be $G^{(1)}=G'=[G,G]$. Identity (\ref{enu:=00005Bx,y=00005D^-1})
implies that the commutator subgroup is symmetric: $[X,Y]=[Y,X]$.

\begin{defn}
\label{def:polynomial map} We say that a map $f:S\to G$ is a polynomial
map of degree $-\infty$ \footnote{To be compatible with the definition of the lc-degree and superadditive
vectors later, $-\infty$ turns out to be a better choice for the
degree of the zero map than $-1$. } if $f$ maps $S$ to the identity $1_{G}$ of $G$, and $f$ is a
polynomial map of degree $0$ if it is a constant but not the identity.
Inductively, we say that $f$ is a polynomial map of degree $\le d+1$,
if for all $s\in S$ the following left and right forward finite differences
\[
L_{s}(f):S\rightarrow G;\ t\mapsto f(s+t)f(t)^{-1}\qquad R_{s}(f):S\rightarrow G;\ t\mapsto f(t)^{-1}f(s+t)
\]
are polynomial maps of degree $\le d$.

We call the minimal $d$ with this property the degree of the polynomial
map.  We call $L$ (resp. $R$) the left (resp. right) difference
operator. If $G$ is abelian, then there is no need to distinguish
$L$ from $R$, so $D$ is used to denote either one of them. 
\end{defn}
If $f:S\to G$ has degree $\le0$, then we may abuse notations and
simply denote its image by $f$ and thus any element $g\in G$ is
also viewed as a constant polynomial map $g:S\rightarrow G$. Let
$\mathbb{Z}_{*}=\mathbb{N}_{0}\cup\{-\infty\}$ and adopt the following
convention
\[
-\infty<n\text{ and }-\infty+n=-\infty=(-\infty)+(-\infty),\quad\forall n\in\mathbb{Z}
\]
to extend the addition in $\mathbb{N}_{0}$ to $\mathbb{Z}_{*}$,
and the following convention
\[
a-b=\begin{cases}
a-b, & \text{if }a\ge b,\\
-\infty, & \text{if }a<b,
\end{cases}\quad\forall a\in\mathbb{Z}_{*},\forall b\in\mathbb{N}_{0}
\]
to partially extend the subtraction in $\mathbb{N}_{0}$ to $\mathbb{Z}_{*}$
and leave $a-(-\infty)$ undefined.
\begin{rem*}
With this convention, the definition of polynomial maps can be summarized
as follows: $f:S\to G$ is a polynomial map of degree $\le d$, if
for any $s_{1},s_{2},\ldots,s_{d+1}\in S$, 
\[
D_{s_{1}}D_{s_{2}}\cdots D_{s_{d+1}}f\equiv1_{G},
\]
where each $D$ is arbitrarily taken to be $L$ or $R$. 

Any nonconstant multiplicative function from $S$ to $G$ is a polynomial
map of degree $1$. Moreover, 
\end{rem*}
\begin{prop}
\label{prop:polynomial map of degree 1} If $S$ is a commutative
monoid, then any polynomial map $f:S\to G$ of degree $1$ is a nonconstant
affine multiplicative function, i.e., a multiplicative function multiplied
by a constant in $G$ either on the left or on the right.
\end{prop}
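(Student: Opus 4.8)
The plan is to unwind the definition of degree and then exploit the identity element $0$ of the commutative monoid $S$. Since $f$ has degree exactly $1$, on the one hand $f$ is not constant, and on the other hand $L_{s}(f)$ and $R_{s}(f)$ are polynomial maps of degree $\le 0$ for every $s\in S$, so by Definition \ref{def:polynomial map} each of them is a constant map $S\to G$. The first step is to identify these constants: evaluating $L_{s}(f)(t)=f(s+t)f(t)^{-1}$ at $t=0$ shows that $L_{s}(f)$ is the constant $f(s)f(0)^{-1}$, which yields the functional equation
\[
f(s+t)=f(s)\,f(0)^{-1}\,f(t),\qquad s,t\in S.
\]

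For the second step I would introduce $\phi:=f(0)^{-1}f\colon S\to G$, that is, $\phi(t)=f(0)^{-1}f(t)$, and verify that $\phi$ is a multiplicative function: $\phi(0)=1_{G}$ is immediate, and the displayed functional equation gives $\phi(s+t)=f(0)^{-1}f(s)f(0)^{-1}f(t)=\phi(s)\phi(t)$. Then $f=f(0)\cdot\phi$ realizes $f$ as the multiplicative function $\phi$ left-multiplied by the constant $f(0)\in G$, and since $f$ is nonconstant this is a nonconstant affine multiplicative function, as claimed. If one prefers the right-hand version, the same computation with $\psi:=f\cdot f(0)^{-1}$ shows $\psi$ is multiplicative and $f=\psi\cdot f(0)$.

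I do not expect a genuine obstacle here; the proof is a short computation. The one place worth a remark is consistency: the hypothesis also forces $R_{s}(f)$ to be constant, and evaluating $R_{s}(f)(t)=f(t)^{-1}f(s+t)$ at $t=0$ gives $f(s+t)=f(t)f(0)^{-1}f(s)$, which is exactly the functional equation above with $s$ and $t$ interchanged, hence automatic because $S$ is commutative. It is also worth flagging that the monoid hypothesis is used essentially: the identity $0\in S$ is what lets us pin down the constant value of $L_{s}(f)$ and thereby put $f$ into affine form, and without it only a weaker quasi-multiplicativity would be available.
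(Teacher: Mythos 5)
Your proof is correct and follows essentially the same route as the paper: both arguments use that $L_{s}(f)$ and $R_{s}(f)$ are constant, pin the constant down via the monoid identity $0$ (you evaluate at $t=0$, the paper telescopes through $f(0)$ and $f(s)$, which amounts to the same thing), and conclude that $f(0)^{-1}f$ and $ff(0)^{-1}$ are multiplicative. No issues.
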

\begin{proof}
A nonconstant map $f:S\rightarrow G$ is a polynomial map of degree
$\le1$, if for each $s\in S$, 
\[
L_{s}(f):S\rightarrow G;\ t\mapsto f(s+t)f(t)^{-1}\qquad R_{s}(f):S\rightarrow G;\ t\mapsto f(t)^{-1}f(s+t)
\]
are polynomial maps of degree $\le0$. Thus, we have $f(s+t)=l_{s}f(t)=f(t)r_{s}$,
where $l_{s}:=L_{s}(f)(t)$ and $r_{s}:=R_{s}(f)(t)$ are constants
($\ne1_{G}$) for each $s\in S$. In particular, we have 
\[
f(0)^{-1}f(s+t)=f(0)^{-1}f(s)f(s)^{-1}f(t+s)=f(0)^{-1}f(s)f(0)^{-1}f(t),
\]
\[
f(s+t)f(0)^{-1}=f(s+t)f(t)^{-1}f(t)f(0)^{-1}=f(s)f(0)^{-1}f(t)f(0)^{-1}.
\]
Thus, $f(0)^{-1}f$ and $ff(0)^{-1}$ are both multiplicative functions.
\end{proof}
Just as a polynomial of degree $d$ is in general determined by $d+1$
polynomial values, certain special values of a polynomial map will
suffice to determine it. 
\begin{prop}
\label{prop:uniquely determined} Let $S_{0}$ be a set of generators
of a commutative monoid $S$ and $f:S\to G$ be a polynomial map of
degree $d$. Then, $f$ is uniquely determined by its values on $\{0\}\cup S_{0}^{\le d}$,
where \footnote{It is understood that the empty word (i.e., $i=0$) denotes the identity
element $0$. } 
\[
S_{0}^{\le d}=\{s_{1}+s_{2}+\ldots+s_{i}\mid1\le i\le d,s_{1},s_{2},\ldots,s_{d}\in S\}.
\]
If $S$ is a commutative semigroup without $0$ and is generated by
$S_{0}$, then $f$ is uniquely determined by its values on $S_{0}^{\le d+1}$. 

Furthermore, if $S$ is finitely generated commutative monoid or semigroup,
then the subgroup generated by the image of a polynomial map $f:S\to G$
is also finitely generated.
\end{prop}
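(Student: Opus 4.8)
The plan is to prove all three assertions together by induction on the degree $d$ of $f$, exploiting that the left difference operator $L_s$ strictly lowers the degree and that $f$ is recovered from the maps $L_s(f)$, $s\in S_0$, via the identity $f(s+t)=L_s(f)(t)\,f(t)$ together with a single initial value. The cases $d\le 0$ are immediate: a polynomial map of degree $\le 0$ is constant (or $\equiv 1_G$), hence pinned down by its value at any one point and with cyclic — in particular finitely generated — image.

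For the inductive step of the first statement, I would suppose $f,\tilde f$ are polynomial maps of degree $\le d$ with $d\ge 1$ agreeing on $\{0\}\cup S_0^{\le d}$. For each $s\in S_0$ the maps $L_s(f),L_s(\tilde f)$ have degree $\le d-1$, and whenever $t\in\{0\}\cup S_0^{\le d-1}$ both $t$ and $s+t$ lie in $\{0\}\cup S_0^{\le d}$, so $L_s(f)$ and $L_s(\tilde f)$ agree on $\{0\}\cup S_0^{\le d-1}$; by the induction hypothesis they agree on all of $S$. Then for arbitrary $u\in S$ I would induct on the length $k$ of a representation $u=s_1+\cdots+s_k$ with $s_i\in S_0$ (with $k=0$ meaning $u=0$): writing $v=s_2+\cdots+s_k$, the identity $f(u)=L_{s_1}(f)(v)\,f(v)$ reduces $f(u)=\tilde f(u)$ to the already-established $L_{s_1}(f)=L_{s_1}(\tilde f)$ and the shorter-word instance $f(v)=\tilde f(v)$. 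The second statement is the same argument with the identity absent: every element of $S$ is a nonempty sum of generators, so one works with $S_0^{\le d+1}$ in place of $\{0\}\cup S_0^{\le d}$, the word-length induction starts at $k=1$ with the values $f(s)$, $s\in S_0$, among those prescribed, and all exponents shift up by one.

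For the finite-generation statement I would run the same induction on $d$ but follow the image rather than individual values. If $d=-\infty$ then $f\equiv 1_G$ and there is nothing to prove; if $d\le 0$ then $\langle f(S)\rangle$ is cyclic. For $d\ge 1$, each $L_s(f)$ with $s\in S_0$ has degree $\le d-1$, so by the induction hypothesis $\langle L_s(f)(S)\rangle$ is finitely generated, say by a finite set $B_s\subseteq G$; let $H$ be the subgroup generated by $f(0)$ together with $\bigcup_{s\in S_0}B_s$ (in the identity-free case, by $\{f(s):s\in S_0\}$ together with $\bigcup_{s\in S_0}B_s$), a finitely generated group. The word-length induction, using $f(u)=L_{s_1}(f)(v)\,f(v)$, gives $f(S)\subseteq H$, hence $\langle f(S)\rangle\subseteq H$. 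Conversely, $f(0)\in\langle f(S)\rangle$ and $L_s(f)(t)=f(s+t)f(t)^{-1}\in\langle f(S)\rangle$ for every $t\in S$, so $B_s\subseteq\langle L_s(f)(S)\rangle\subseteq\langle f(S)\rangle$; therefore $H\subseteq\langle f(S)\rangle$, and $\langle f(S)\rangle=H$ is finitely generated.

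The one point demanding care — and the step I expect to be the main obstacle — is precisely this last pair of inclusions: it is what upgrades ``subgroup of a finitely generated group'' (insufficient on its own, since such subgroups need not be finitely generated) to ``equal to a finitely generated group'', and it is the reason for choosing the generators of $H$ among values that visibly lie in $\langle f(S)\rangle$. Everything else is routine bookkeeping: verifying the membership $s+t\in S_0^{\le d}$ and its identity-free analogue, and keeping the monoid versus semigroup exponent offset straight throughout.
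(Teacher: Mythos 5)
Your proof is correct, and it follows the paper's overall strategy: induction on the degree, passing to the difference maps $L_{s}(f)$ for generators $s\in S_{0}$, whose values on the next-smaller special set are visible from the prescribed values of $f$. The differences are in how the argument is closed. The paper extends the determination of $L_{s}(f)$ from $s\in S_{0}$ to all $s\in S$ via the lemma stated immediately after the proposition (the composition identity $L_{s_{1}+s_{2}}(f)(t)=L_{s_{1}}(f)(s_{2}+t)L_{s_{2}}(f)(t)$) and then recovers $f(s)=L_{s}(f)(0)f(0)$, whereas you bypass that lemma entirely with a secondary induction on word length in the generators, using only $L_{s_{1}}(f)$ with $s_{1}\in S_{0}$ and the identity $f(s_{1}+v)=L_{s_{1}}(f)(v)f(v)$; this is exactly the device the paper itself uses in the identity-free semigroup case, so your treatment is slightly more uniform and avoids one auxiliary lemma. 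For the finite-generation clause, the paper simply asserts the explicit generating set $f(S_{0}^{\le d})\cup\{f(0)\}$ (resp. $f(S_{0}^{\le d+1})$), which is cleaner and implicitly rests on the observation that every value of $f$ is a group word in these special values; your recursive construction of generators $B_{s}$ together with the two-sided inclusion $\langle f(S)\rangle=H$ is a more laborious but fully spelled-out substitute, and you correctly identify why the reverse inclusion is needed (a subgroup of a finitely generated group need not be finitely generated). Either way the content is the same; if you wanted to match the paper's economy, note that your word-length induction already shows $f(S)\subseteq\langle f(\{0\}\cup S_{0}^{\le d})\rangle$ directly, which yields the paper's explicit generating set without introducing the sets $B_{s}$.
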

\begin{proof}
The proof is by induction on the degree. If $d=-\infty,0$, then $f(s)$
is a constant for all $s\in S$. Suppose we have shown this for polynomial
maps of degree $<d$. Then, for any $s\in S_{0}$ and $t\in S_{0}^{\le d-1}$,
we have $s+t\in S_{0}^{\le d}$ and
\[
L_{s}(f)(t)=f(s+t)f(t)^{-1},\qquad R_{s}(f)(t)=f(t)^{-1}f(s+t)
\]
are polynomial maps of degree $\le d-1$. Since the values $f\restriction_{S_{0}^{\le d}}$
are given, the values $L_{s}(f)\restriction_{S_{0}^{\le d-1}}$ and
$R_{s}(f)\restriction_{S_{0}^{\le d-1}}$ are known. The induction
hypothesis implies that $L_{s}(f)$ and $R_{s}(f)$ are uniquely determined
for any $s\in S_{0}$. By the lemma below, they are also uniquely
determined for any $s\in S$. Hence, $f(s)=L_{s}(f)(0)f(0)=f(0)R_{t}(f)(0)$
is uniquely determined. 

A similar argument applies if $S$ is a commutative semigroup without
$0$, except in the last step. We write $S=\bigcup_{m=1}^{\infty}S_{0}^{\le m}$
and proceeds by induction on $m$. In this case, the values of $f$
on $S_{0}^{\le d+1}$ are given. If $s\in S_{0}^{\le m}$ with $m>d+1$,
then we write $s=u+v$, where $u\in S_{0}^{\le m-1}$ and $v\in S_{0}$.
Then, by induction, $f(s)=f(u+v)=L_{u}(f)(v)f(v)=f(v)R_{u}(f)(v)$
is uniquely determined. 

At last, the subgroup generated by the image of a polynomial map $f:S\to G$
is either $\langle f(S_{0}^{\le d})\cup\{f(0)\}\rangle$ or $\langle f(S_{0}^{\le d+1})\rangle$.
Since $S_{0}$ is finite, it is finitely generated.
\end{proof}
\begin{lem}
For any map $f:S\to G$ and any $s_{1},s_{2},t\in S$, we have 
\begin{align*}
L_{s_{1}+s_{2}}(f)(t) & =L_{s_{1}}(f)(s_{2}+t)L_{s_{2}}(f)(t)=L_{s_{2}}L_{s_{1}}(f)(t)L_{s_{1}}(f)(t)L_{s_{2}}(f)(t),\\
R_{s_{1}+s_{2}}(f)(t) & =R_{s_{1}}(f)(t)R_{s_{2}}(f)(s_{1}+t)=R_{s_{1}}(f)(t)R_{s_{2}}f(t)R_{s_{1}}R_{s_{2}}(f)(t).
\end{align*}
\end{lem}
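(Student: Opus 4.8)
The plan is a direct computation from Definition \ref{def:polynomial map}; no induction or structural hypothesis on $G$ is needed — indeed $G$ is never assumed abelian — and the only property of $S$ used is associativity (together with commutativity for the two $R$-identities). The single reusable observation I would isolate first is the elementary ``telescoping'' identity: for an \emph{arbitrary} map $h\colon S\to G$ and any $s,t\in S$,
\[
h(s+t)=L_{s}(h)(t)\,h(t)=h(t)\,R_{s}(h)(t),
\]
which is nothing but the definitions $L_{s}(h)(t)=h(s+t)h(t)^{-1}$ and $R_{s}(h)(t)=h(t)^{-1}h(s+t)$ rearranged.

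For the first chain I would start from $L_{s_{1}+s_{2}}(f)(t)=f((s_{1}+s_{2})+t)f(t)^{-1}$, insert the factor $f(s_{2}+t)^{-1}f(s_{2}+t)$ in the middle, and use associativity to write $(s_{1}+s_{2})+t=s_{1}+(s_{2}+t)$; this regroups the product as $\bigl(f(s_{1}+(s_{2}+t))f(s_{2}+t)^{-1}\bigr)\bigl(f(s_{2}+t)f(t)^{-1}\bigr)=L_{s_{1}}(f)(s_{2}+t)\,L_{s_{2}}(f)(t)$, giving the first equality. For the second equality I would apply the telescoping identity with $h=L_{s_{1}}(f)$ to replace $L_{s_{1}}(f)(s_{2}+t)$ by $L_{s_{2}}\bigl(L_{s_{1}}(f)\bigr)(t)\,L_{s_{1}}(f)(t)$ and substitute. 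The $R$-chain is entirely parallel: from $R_{s_{1}+s_{2}}(f)(t)=f(t)^{-1}f((s_{1}+s_{2})+t)$, insert $f(s_{1}+t)f(s_{1}+t)^{-1}$ and use commutativity to rewrite $(s_{1}+s_{2})+t=s_{2}+(s_{1}+t)$, obtaining $R_{s_{1}}(f)(t)\,R_{s_{2}}(f)(s_{1}+t)$; then the telescoping identity applied to $h=R_{s_{2}}(f)$ turns $R_{s_{2}}(f)(s_{1}+t)$ into $R_{s_{2}}(f)(t)\,R_{s_{1}}\bigl(R_{s_{2}}(f)\bigr)(t)$.

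There is no genuine obstacle here — this lemma is purely bookkeeping. The only two points that require a moment's care, and which I would state explicitly to the reader, are (i) keeping the left-to-right order of the factors correct, since they live in the possibly non-abelian group $G$, and (ii) being clear that the reassociations $(s_{1}+s_{2})+t=s_{1}+(s_{2}+t)$ and $(s_{1}+s_{2})+t=s_{2}+(s_{1}+t)$ take place inside the \emph{arguments} of $f$ and rely on associativity, respectively commutativity, of $S$ rather than any property of $G$.
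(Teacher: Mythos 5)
Your proposal is correct and matches the paper's approach: the paper's proof consists of the single line ``By direct calculations,'' and your insert-and-telescope computation (using $L_{s}(h)(t)=h(s+t)h(t)^{-1}$, $R_{s}(h)(t)=h(t)^{-1}h(s+t)$, associativity for the $L$-identities and commutativity of $S$ for the $R$-identities) is exactly that calculation written out. Nothing is missing.
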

\begin{proof}
By direct calculations. 
\end{proof}
 We can construct induced polynomial maps via homomorphism of either
semigroups or groups. 
\begin{prop}
\label{prop:homomorphism of commutative semigroups} Let $\phi:S_{0}\rightarrow S_{1}$
be a homomorphism of commutative semigroups and $f:S_{1}\rightarrow G$
be a polynomial map of  degree $d$. Then, the induced function $f^{*}=f\circ\phi:S_{0}\xrightarrow{\phi}S_{1}\xrightarrow{f}G$
is a polynomial map of  degree $\le d$. 

In particular, if $\phi:S_{1}\to S_{1}$ is an automorphism of commutative
semigroups, then the induced function $f^{*}=f\circ\phi$ has the
same degree as $f$. 
\end{prop}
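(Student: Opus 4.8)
The plan is to prove the statement by induction on the degree $d$ of $f$, mirroring the inductive structure of Definition \ref{def:polynomial map}. The base cases are immediate: if $f$ has degree $-\infty$ then $f^*$ is the constant map $1_G$ and has degree $-\infty$; if $f$ has degree $0$ then $f^*$ is a constant map (either the same nonidentity constant, so degree $0$, or possibly the identity if we are being careless — but since $f$ itself is a genuine constant $g \neq 1_G$, so is $f^* = g\circ\phi$, hence degree exactly $0$). For the inductive step, I would assume the claim holds for all polynomial maps of degree $\le d$ (into any group, from any commutative semigroup) and let $f$ have degree $\le d+1$.

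The key computation is to relate the difference operators on $f^*$ to those on $f$. For $s \in S_0$ and $t \in S_0$ one checks directly that
\[
L_s(f^*)(t) = f^*(s+t)f^*(t)^{-1} = f(\phi(s)+\phi(t))f(\phi(t))^{-1} = L_{\phi(s)}(f)(\phi(t)) = \bigl(L_{\phi(s)}(f)\bigr)^*(t),
\]
using that $\phi$ is a semigroup homomorphism, and similarly $R_s(f^*) = \bigl(R_{\phi(s)}(f)\bigr)^*$. Now for each fixed $s \in S_0$, the map $L_{\phi(s)}(f): S_1 \to G$ is a polynomial map of degree $\le d$ by the definition of $f$ having degree $\le d+1$, and likewise for $R_{\phi(s)}(f)$. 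By the induction hypothesis applied to the homomorphism $\phi: S_0 \to S_1$ and these degree-$\le d$ maps, both $L_s(f^*) = \bigl(L_{\phi(s)}(f)\bigr)^*$ and $R_s(f^*) = \bigl(R_{\phi(s)}(f)\bigr)^*$ are polynomial maps of degree $\le d$. Since this holds for every $s \in S_0$, the map $f^*$ has degree $\le d+1$, completing the induction.

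One subtlety I would address: Definition \ref{def:polynomial map} requires $L_s(f^*)$ and $R_s(f^*)$ to be polynomial of degree $\le d$ for \emph{all} $s \in S_0$, not merely for $s$ in a generating set of $S_0$. The computation above literally produces $L_s(f^*) = \bigl(L_{\phi(s)}(f)\bigr)^*$ for every $s \in S_0$ without any generation hypothesis, since it only uses $\phi(s+t) = \phi(s)+\phi(t)$, which holds for all $s,t$. So there is actually no gap here — but I would remark on it, since the reader might worry about the interaction with the lemma used in Proposition \ref{prop:uniquely determined}. The second assertion is then an instant corollary: if $\phi$ is an automorphism, then $f^* = f\circ\phi$ has degree $\le d = \deg f$, and applying the same bound to $\phi^{-1}$ and the polynomial map $f^*$ gives $\deg f = \deg(f^* \circ \phi^{-1}) \le \deg f^*$, so the two degrees coincide.

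The main obstacle is essentially bookkeeping rather than mathematical depth: one must be careful that the induction hypothesis is stated with enough uniformity (over all commutative semigroups and all groups simultaneously) so that it can be reapplied to the maps $L_{\phi(s)}(f), R_{\phi(s)}(f): S_1 \to G$, which are polynomial maps of strictly smaller degree but live over the \emph{same} target semigroup $S_1$ and the \emph{same} group $G$ — so in fact no change of ambient data is even needed in the recursion, which makes the argument cleaner than one might first expect. I expect the whole proof to be short, with the displayed identity $L_s(f^*) = \bigl(L_{\phi(s)}(f)\bigr)^*$ being the only content.
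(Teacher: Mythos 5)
Your proposal is correct and follows essentially the same route as the paper: the paper's proof is just ``by induction on the degree $d$,'' with the automorphism case handled exactly as you do, by applying the degree bound to $f=f^{*}\circ\phi^{-1}$. Your displayed identity $L_{s}(f^{*})=\bigl(L_{\phi(s)}(f)\bigr)^{*}$ (and its $R$-analogue) is precisely the computation the paper leaves implicit, so you have simply filled in the omitted details.
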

\begin{proof}
By induction on the degree $d$.  In particular, if $\phi:S_{1}\to S_{1}$
is an automorphism, then the degree of $f=f^{*}\circ\phi^{-1}$ is
not larger than the degree of $f^{*}=f\circ\phi$. 
\end{proof}
\begin{prop}
\label{prop:homomorphism of groups}  Let $\phi:G\rightarrow H$
be a homomorphism (or antihomomorphism) of groups and $f:S\rightarrow G$
be a polynomial map of  degree $d$. Then, the induced function $f_{*}=\phi\circ f:S\xrightarrow{f}G\xrightarrow{\phi}H$
is a polynomial map of  degree $\le d$. 

In particular, if $\phi:G\rightarrow H$ is an isomorphism (or antiisomorphism)
of groups, then $f:S\rightarrow G$ is a polynomial map of  degree
$d$, if and only if $f_{*}$ is. 
\end{prop}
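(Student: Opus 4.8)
The plan is to prove the first assertion by induction on the degree $d$, treating homomorphisms and antihomomorphisms simultaneously. For the base cases: if $d=-\infty$ then $f\equiv 1_G$, so $f_*\equiv 1_H$ has degree $-\infty$; if $d=0$ then $f$ is a constant $g\in G$, so $f_*$ is the constant $\phi(g)$, which has degree $0$ when $\phi(g)\ne 1_H$ and degree $-\infty$ otherwise, so in either case its degree is $\le 0$. (This is precisely the phenomenon that motivates assigning degree $-\infty$, rather than $-1$, to the zero map.)

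For the inductive step, assume the claim for all polynomial maps of degree $<d$ and let $f:S\to G$ have degree $\le d$ with $d\ge 1$. Fix $s\in S$. If $\phi$ is a homomorphism, a one-line computation gives
\[
L_{s}(f_{*})(t)=\phi(f(s+t))\phi(f(t))^{-1}=\phi\bigl(f(s+t)f(t)^{-1}\bigr)=\phi\bigl(L_{s}(f)(t)\bigr),
\]
so $L_{s}(f_{*})=\phi\circ L_{s}(f)$, and likewise $R_{s}(f_{*})=\phi\circ R_{s}(f)$. Since $L_{s}(f)$ and $R_{s}(f)$ have degree $\le d-1$ by the definition of $\deg f\le d$, the induction hypothesis shows $L_{s}(f_{*})$ and $R_{s}(f_{*})$ have degree $\le d-1$; hence $f_{*}$ has degree $\le d$. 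If instead $\phi$ is an antihomomorphism, the same computation, now using $\phi(ab)=\phi(b)\phi(a)$, yields
\[
L_{s}(f_{*})(t)=\phi(f(s+t))\phi(f(t)^{-1})=\phi\bigl(f(t)^{-1}f(s+t)\bigr)=\phi\bigl(R_{s}(f)(t)\bigr),
\]
so $L_{s}(f_{*})=\phi\circ R_{s}(f)$ and symmetrically $R_{s}(f_{*})=\phi\circ L_{s}(f)$: the left and right difference operators get interchanged. This causes no difficulty, because by the Remark following Definition \ref{def:polynomial map} the condition ``$f$ is a polynomial map of degree $\le d$'' is symmetric in $L$ and $R$ — it asks that every word of length $d+1$ in the operators $L_{\bullet}$ and $R_{\bullet}$ annihilate $f$. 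Concretely, $R_{s}(f)$ and $L_{s}(f)$ still have degree $\le d-1$, so by the induction hypothesis (applied to the antihomomorphism $\phi$) both $L_{s}(f_{*})$ and $R_{s}(f_{*})$ have degree $\le d-1$, whence $f_{*}$ has degree $\le d$.

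Finally, if $\phi$ is an isomorphism or antiisomorphism, then $\phi^{-1}:H\to G$ is again an isomorphism or antiisomorphism, and $f=\phi^{-1}\circ f_{*}$. Applying the first part to $\phi$ gives $\deg f_{*}\le\deg f$, and applying it to $\phi^{-1}$ gives $\deg f=\deg(\phi^{-1}\circ f_{*})\le\deg f_{*}$; hence $\deg f=\deg f_{*}$. I do not expect a genuine obstacle here; the only point needing care is the antihomomorphism case, where one must notice that the defining property of a polynomial map is invariant under swapping the roles of $L$ and $R$, so that after the interchange the induction proceeds verbatim.
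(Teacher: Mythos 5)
Your proof is correct and follows essentially the same route as the paper, which simply says ``by induction on the degree $d$'' and, for the (anti)isomorphism case, observes that $\deg f=\deg(\phi^{-1}\circ f_{*})\le\deg f_{*}$. You have merely filled in the details the paper leaves implicit, in particular the computation showing that an antihomomorphism interchanges $L_{s}$ and $R_{s}$, which is harmless because the definition of a polynomial map is symmetric in the two difference operators.
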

\begin{proof}
By induction on the degree $d$.  In particular, if $\phi:G\to H$
is an (anti)isomorphism, then the degree of $f=\phi^{-1}\circ f_{*}$
is not larger than the degree of $f_{*}=\phi\circ f$. 
\end{proof}
Let $G$ be any group.  Set $C^{1}G=G$ and inductively define $C^{i+1}G$
by $[C^{i}G,G]$ for all $i\ge1$. Then, the descending series 
\[
G=C^{1}G\ge C^{2}G\ge\cdots\ge C^{n}G\ge\cdots
\]
is the lower central series of $G$. Each $C^{n}G$ is normal  in
$G$ and $C^{n}G/C^{n+1}G$ is contained in the center of $G/C^{n+1}G$.
Moreover, the lower central series of a group is graded with respect
to commutators, i.e., for every $i,j\ge1$, we have $[C^{i}G,C^{j}G]\le C^{i+j}G$. 

Set $Z_{0}G=\{1_{G}\}$ and inductively define $Z_{i+1}G$ to be the
subgroup of $G$ such that $Z_{i+1}G/Z_{i}G=Z(G/Z_{i}G)$ for all
$i\ge1$. Then, the ascending series 
\[
\cdots\ge Z_{n}G\ge Z_{n-1}G\ge\cdots\ge Z_{1}G\ge Z_{0}G=\{1_{G}\},
\]
is the upper central series of $G$. Each $Z_{n}G$ is normal  in
$G$ and $Z_{1}G$ is the center of $G$. 

A group $G$ is said to be nilpotent if $G$ has a lower/upper central
series of finite length.  The smallest $n$ such that $G$ has a
lower/upper central series of length $n$ is called the nilpotency
class of $G$. 
\begin{defn}
A function $f:S\rightarrow G$ from a semigroup $S$ to a group $G$
is said to have lc-height $\ge k$ (relative to $G$), if the image
of $f$ lies in $C^{k}G$, and is said to have uc-height $\le k$
(relative to $G$), if the image of $f$ lies in $Z_{k}G$. \footnote{(Here, lc is short for lower central, and uc is short for upper central). }
\end{defn}
\begin{rem*}
 Since $C^{k}G\cdot C^{k'}G\subseteq C^{\min\{k,k'\}}G$, the product
$ff':S\rightarrow G$ of a function $f:S\to G$ of lc-height $\ge k$
and a function $f':S\to G$ of lc-height $\ge k'$ has lc-height $\ge\min\{k,k'\}$.
Since the lower central series of $G$ is graded with respect to commutators,
the commutator $[f,f']:S\to G$ is a function of lc-height $\ge k+k'$. 
\end{rem*}
Next, we state and prove a few corollaries of Proposition \ref{prop:homomorphism of groups}.
Since conjugation by a group element is an inner automorphism of the
group, we have 
\begin{cor}
\label{cor:f-conjugate} For any $f\in G$, the $f$-conjugate of
a polynomial map $f'$ of lc-height $k'$ and degree $d'$ 
\[
ff'f^{-1}:S\rightarrow G;\quad t\mapsto ff'(t)f^{-1}
\]
is a polynomial map of lc-height $k'$ and degree $d'$. 
\end{cor}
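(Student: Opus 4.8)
The plan is to recognize that conjugation by $f$ is an inner automorphism of $G$ and then quote Proposition \ref{prop:homomorphism of groups} directly. Precisely, write $\phi=c_{f}\colon G\to G$ for the map $g\mapsto fgf^{-1}$; since the paper has recorded that each term of the lower central series is normal in $G$, and more basically since $c_{f}$ is visibly a bijective group homomorphism with inverse $c_{f^{-1}}$, the map $\phi$ is an automorphism of $G$, hence in particular an isomorphism of groups. The $f$-conjugate of $f'$ is by definition the composite $\phi\circ f'\colon S\xrightarrow{f'}G\xrightarrow{\phi}G$, so the isomorphism clause of Proposition \ref{prop:homomorphism of groups} applies and shows that $ff'f^{-1}=\phi\circ f'$ is a polynomial map; because that clause is an ``if and only if'' statement, it yields equality of degrees, so $ff'f^{-1}$ has degree exactly $d'$, not merely $\le d'$.

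It then remains only to check that the lc-height is unchanged. For this I would use that $C^{k}G$ is normal in $G$ (recalled just above in the text) and that $\phi=c_{f}$ is an \emph{inner} automorphism, so $\phi(C^{k}G)=C^{k}G$; alternatively, one may note that $C^{k}G$ is the subgroup generated by all $k$-fold commutators and is therefore fully invariant, so the equality $\phi(C^{k}G)=C^{k}G$ holds for every automorphism. Consequently $f'(S)\subseteq C^{k'}G$ if and only if $(ff'f^{-1})(S)=\phi(f'(S))\subseteq C^{k'}G$, and the same equivalence holds with $k'$ replaced by $k'+1$, $k'+2,\ldots$; hence $f'$ and $ff'f^{-1}$ have exactly the same lc-height $k'$.

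There is essentially no obstacle here: the statement is an immediate corollary once one observes that an inner automorphism preserves both ``being a polynomial map of a given degree'' (via the isomorphism case of Proposition \ref{prop:homomorphism of groups}) and ``having image inside $C^{k'}G$'' (via normality, equivalently full invariance, of $C^{k'}G$). The only point that warrants a moment's care is to invoke the isomorphism version of Proposition \ref{prop:homomorphism of groups} rather than the plain homomorphism version, so that the degree is preserved on the nose and not merely bounded above.
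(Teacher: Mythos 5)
Your proposal matches the paper's own reasoning exactly: the paper derives this corollary directly from Proposition \ref{prop:homomorphism of groups} by noting that conjugation by $f$ is an inner automorphism, which is precisely your argument, including the use of the isomorphism clause to preserve the degree on the nose. Your additional remark that $C^{k'}G$ is preserved under inner automorphisms (by normality, indeed full invariance) is the same implicit observation the paper relies on for the lc-height claim, just spelled out more explicitly.
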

Notice that any translation $T_{s}(f)(t):=f(t+s)$ of a polynomial
map $f$ of lc-height $\ge k$ and degree $\le d$ by $s\in S$ is
a polynomial map of lc-height $\ge k$ and degree $\le d$. Similarly, 
\begin{cor}
\label{cor:f-translation} For any $f\in G\setminus\{1_{G}\}$ of
lc-height $\ge k$, the left $f$-translation $ff'$ (resp. the right
$f$-translation $f'f$) of a polynomial map $f'$ of lc-height $\ge k'$
and degree $\le d'$ is a polynomial map of lc-height $\ge\min\{k,k'\}$
and degree $\le\max\{0,d'\}$. 
\end{cor}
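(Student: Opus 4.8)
The plan is to dispose of the lc-height claim immediately and then bound the degree by computing the two difference operators of the translated map, reducing everything to Corollary \ref{cor:f-conjugate} and the defining recursion of Definition \ref{def:polynomial map}.

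For the lc-height: since $f\in C^{k}G$ and $f'(t)\in C^{k'}G$ for every $t\in S$, the product $ff'(t)$ lies in $C^{k}G\cdot C^{k'}G\subseteq C^{\min\{k,k'\}}G$, which is precisely the content of the remark following the definition of lc-height; the same applies to $f'f$. So both translations have lc-height $\ge\min\{k,k'\}$, and it remains to control the degree.

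For the degree of the left translation $g:=ff'$ I would split on $d'$. If $d'=-\infty$, then $f'$ sends $S$ to $1_{G}$, so $g=f$ is the constant $f\neq 1_{G}$, a polynomial map of degree $0=\max\{0,-\infty\}$; this is the only place the hypothesis $f\neq 1_{G}$ is used. If $d'\ge 0$, then for each $s\in S$ one computes
\[
L_{s}(g)(t)=ff'(s+t)\bigl(ff'(t)\bigr)^{-1}=f\bigl(f'(s+t)f'(t)^{-1}\bigr)f^{-1}=f\cdot L_{s}(f')(t)\cdot f^{-1},
\]
so $L_{s}(g)$ is the $f$-conjugate of $L_{s}(f')$ and hence, by Corollary \ref{cor:f-conjugate}, a polynomial map of the same degree as $L_{s}(f')$, namely $\le d'-1$; and
\[
R_{s}(g)(t)=\bigl(ff'(t)\bigr)^{-1}ff'(s+t)=f'(t)^{-1}f'(s+t)=R_{s}(f')(t),
\]
so $R_{s}(g)=R_{s}(f')$ is a polynomial map of degree $\le d'-1$ as well. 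By Definition \ref{def:polynomial map}, $g$ is then a polynomial map of degree $\le d'$. Combining the two cases yields $\deg(ff')\le\max\{0,d'\}$.

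The right translation $f'f$ is handled by the mirror-image computation: one checks $L_{s}(f'f)=L_{s}(f')$ and that $R_{s}(f'f)(t)=f^{-1}R_{s}(f')(t)f$ is the $f^{-1}$-conjugate of $R_{s}(f')$, so again both difference operators have degree $\le d'-1$ and the same bound follows; alternatively one passes to the opposite group and invokes the left case through Proposition \ref{prop:homomorphism of groups}. There is no real obstacle in this argument — the only points needing care are the degenerate values $d'=-\infty$ and $d'=0$ (which is exactly why the bound reads $\max\{0,d'\}$ rather than $d'$) and the bookkeeping that the hypothesis $f\neq 1_{G}$ prevents the constant map $ff'=f$ from collapsing to degree $-\infty$ when $f'$ is the zero map.
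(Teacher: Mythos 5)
Your proof is correct and follows essentially the same route as the paper: the lc-height claim via $C^{k}G\cdot C^{k'}G\subseteq C^{\min\{k,k'\}}G$, and the degree bound via the computations $L_{s}(ff')=fL_{s}(f')f^{-1}$ (handled by Corollary \ref{cor:f-conjugate}) and $R_{s}(ff')=R_{s}(f')$, with the mirror identities for $f'f$. The only cosmetic difference is that the paper phrases the degree argument as an induction on $d'$, whereas you argue directly from the definition after splitting off the degenerate case $d'=-\infty$; both are fine.
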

\begin{proof}
By induction on the degree $d'$, we prove this for $ff'$ and the
proof for $f'f$ is similar. The assertion clearly holds for $d'=-\infty,0$.
If $d'>0$, then 
\[
L_{s}(ff')(t)=(ff'(s+t))(ff'(t))^{-1}=ff'(s+t)f'(t)^{-1}f^{-1}=fL_{s}(f')(t)f^{-1}
\]
is the $f$-conjugate of a polynomial map $L_{s}(f')$ of degree $\le d'-1$,
and 
\[
R_{s}(ff')(t)=(ff'(t))^{-1}(ff'(s+t))=f'(t)^{-1}f^{-1}ff'(s+t)=R_{s}(f')(t)
\]
is a polynomial map of degree $\le d'-1$. By Corollary \ref{cor:f-conjugate},
$fL_{s}(f')f^{-1}$ is a polynomial map of degree $\le d'-1$. Hence,
$ff'$ is a polynomial map of degree $\le d'$. The assertion about
the lc-height is an easy consequence of the fact $C^{k}G\cdot C^{k'}G\subseteq C^{\min\{k,k'\}}G$. 
\end{proof}
\begin{rem*}
 The above corollary shows that the converse of Proposition \ref{prop:polynomial map of degree 1}
also holds. 
\end{rem*}
\begin{cor}
\label{cor:inverse} The composition $\iota\circ f:S\xrightarrow{f}G\xrightarrow{\iota}G$
of a polynomial map $f$ of lc-height $k$ and degree $d$ with the
inverse function $\iota:G\to G$ is a polynomial map of lc-height
$k$ and degree $d$. 
\end{cor}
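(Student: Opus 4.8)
The plan is to recognize the inverse function $\iota\colon G\to G$, $g\mapsto g^{-1}$, as an \emph{antiautomorphism} of $G$ and then invoke Proposition \ref{prop:homomorphism of groups} directly. Indeed, for all $g,h\in G$ we have $\iota(gh)=(gh)^{-1}=h^{-1}g^{-1}=\iota(h)\iota(g)$, so $\iota$ is an antihomomorphism; since $\iota$ is a bijection with $\iota\circ\iota=\mathrm{id}_{G}$, it is in fact an antiisomorphism of $G$ onto itself.

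First I would apply the ``in particular'' clause of Proposition \ref{prop:homomorphism of groups} with $\phi=\iota$ and $H=G$: because $\iota$ is an antiisomorphism, the induced function $f_{*}=\iota\circ f$ is a polynomial map, and $f$ is a polynomial map of degree $d$ if and only if $f_{*}$ is. Hence $\iota\circ f$ has degree exactly $d$.

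Next I would handle the lc-height. Each term $C^{k}G$ of the lower central series is a subgroup of $G$, hence closed under taking inverses, so $\iota(C^{k}G)=C^{k}G$. Thus if $f$ has lc-height $\ge k$, i.e.\ $f(S)\subseteq C^{k}G$, then $(\iota\circ f)(S)=\iota(f(S))\subseteq C^{k}G$, so $\iota\circ f$ has lc-height $\ge k$ as well. Applying this observation to $\iota\circ f$ in place of $f$ and using $\iota\circ(\iota\circ f)=f$ shows that the lc-heights of $f$ and $\iota\circ f$ coincide, which combined with the previous paragraph completes the argument.

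There is essentially no obstacle here: the whole statement is a one-line consequence of Proposition \ref{prop:homomorphism of groups}, once one observes that the proposition was deliberately phrased to admit antihomomorphisms precisely so that the elementwise inverse can be treated on the same footing as conjugations and translations. The only point requiring a moment's care is that $\iota$ is an antihomomorphism rather than a homomorphism, which is exactly the case that was included in Proposition \ref{prop:homomorphism of groups} for this purpose.
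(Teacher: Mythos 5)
Your proposal is correct and matches the paper's intended argument exactly: Corollary \ref{cor:inverse} is stated there as an immediate consequence of Proposition \ref{prop:homomorphism of groups}, whose antihomomorphism/antiisomorphism case was included precisely so that the elementwise inverse $\iota$ could be handled this way, and your observation that $C^{k}G$ is closed under inverses settles the lc-height claim just as the paper intends.
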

\begin{cor}
\label{cor:quotient} Let $H$ be a normal subgroup of $G$ and $f:S\rightarrow G$
be a polynomial map of lc-height $\ge k$ and degree $\le d$. Then,
the induced function $f\mod H:S\xrightarrow{f}G\xrightarrow{\pi}G/H$
is also a polynomial map of lc-height $\ge k$ and degree $\le d$. 

If the induced polynomial map $f\mod H$ is of degree $\le0$, then
$f$ is a $g$-translation of a polynomial map $h:S\to H$ of degree
$\le d$, for some $g\in G$. (In particular, if $f\mod H$ has degree
$-\infty$, then we can take $f=h$ and $g=1_{G}$.) 
\end{cor}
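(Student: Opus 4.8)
The plan is to derive both assertions from results already established: the functoriality of polynomial maps under group homomorphisms (Proposition~\ref{prop:homomorphism of groups}) and their invariance under conjugation and translation (Corollaries~\ref{cor:f-conjugate} and~\ref{cor:f-translation}). For the first assertion I would apply Proposition~\ref{prop:homomorphism of groups} to the canonical projection $\pi:G\to G/H$, which is a group homomorphism; this gives at once that $f\bmod H=\pi\circ f$ is a polynomial map of degree $\le d$. For the lc-height, a one-line induction from $\pi([x,y])=[\pi(x),\pi(y)]$ shows $\pi(C^{k}G)\subseteq C^{k}(G/H)$, whence $(f\bmod H)(S)=\pi(f(S))\subseteq\pi(C^{k}G)\subseteq C^{k}(G/H)$, i.e.\ $f\bmod H$ has lc-height $\ge k$.

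For the second assertion, suppose $f\bmod H$ has degree $\le0$, i.e.\ it is constant, and write this constant value as $gH$ for some $g\in G$. Then $f(t)\in gH$ for every $t\in S$, so the map $h:S\to G$, $h(t):=g^{-1}f(t)$, actually takes values in $H$, and the identity $f=gh$ exhibits $f$ as the left $g$-translation of $h$ (by normality of $H$ one could use a right translation instead). It remains to check that $h$ is a polynomial map of degree $\le d$. If $g\in H$ then $f\bmod H$ is the identity of $G/H$, hence has degree $-\infty$ (this includes the case $d=-\infty$, where $f\equiv1_G$); then $f(S)\subseteq H$ and we may take $h=f$ and $g=1_G$. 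If $g\notin H$, then $f\bmod H$ has degree $0$, so $d\ge0$, and $h=g^{-1}f$ is the left $g^{-1}$-translation (with $g^{-1}\ne1_G$) of the polynomial map $f$ of degree $\le d$; Corollary~\ref{cor:f-translation} then shows $h$ is a polynomial map of degree $\le\max\{0,d\}=d$. Finally, since $h$ takes values in the subgroup $H$, and the difference operators $L_s,R_s$ acting on an $H$-valued map yield the same result whether products and inverses are computed in $H$ or in $G$, the map $h$ is also a polynomial map $S\to H$ of degree $\le d$.

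I do not expect a genuine obstacle here; the statement is essentially a repackaging of the functoriality and translation-invariance of polynomial maps. The only points requiring a little care are the case split that keeps the translation bound $\max\{0,d\}$ from being weaker than the claimed bound $d$ --- which is fine because as soon as $f\bmod H$ has degree $\ge0$ (equivalently $g\notin H$) one automatically has $d\ge0$ --- and the harmless but worth-stating remark that the degree of $h$ is the same whether it is regarded as a map into $H$ or into $G$.
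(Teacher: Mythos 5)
Your proof is correct and follows essentially the same route as the paper: the first assertion via Proposition~\ref{prop:homomorphism of groups} (with the easy observation $\pi(C^kG)\subseteq C^k(G/H)$ for the lc-height), and the second by writing $f=gh$ with $h$ taking values in the coset representative's translate and invoking Corollary~\ref{cor:f-translation}, with the same case split ensuring $\max\{0,d\}=d$. The extra remarks you add (that $d\ge0$ whenever $f\bmod H$ has degree $0$, and that the degree of $h$ is the same whether viewed into $H$ or into $G$) are points the paper leaves implicit, and they are handled correctly.
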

\begin{proof}
The first assertion is a trivial consequence of Proposition \ref{prop:homomorphism of groups}.
If the induced polynomial map $f\mod H$ is of degree $0$, then the
image of $f$ lies in a left coset of $H$, say, $gH$ for some $g\in G\setminus H$.
Then, we can write $f=gh$, where $h:S\rightarrow H$ is some function;
If $f\mod H$ has degree $-\infty$, then we take $f=h$ and $g=1_{G}$.
If $d=-\infty$, then $f\mod H$ has degree $-\infty$, and if $d\ge0$,
then Corollary \ref{cor:f-translation} implies that $h=g^{-1}f:S\rightarrow H$
is a polynomial map of degree $\le\max\{0,d\}=d$ . Hence, $f$ is
a left $g$-translation of a polynomial map $h$ of degree $\le d$.
Similarly, $f$ can be written as a right $g$-translation of a polynomial
map of degree $\le d$, for some other $g\in G$. 
\end{proof}
\begin{rem*}
Suppose that $f:S\rightarrow G$ is a function and the induced function
$f_{*}=\phi\circ f:S\xrightarrow{f}G\xrightarrow{\phi}H$ is a polynomial
map of degree $\le d$. Then, what can we say about the function $f:S\rightarrow G$?
The answer is not much. Even when $d=-\infty,0$, i.e., $f_{*}$ is
a constant, we only know that $f:S\rightarrow G$ is a function such
that its image lies in some fiber $\phi^{-1}(h)$ for some $h\in H$.
\end{rem*}
 A natural question that one may ask is whether the (elementwise)
product 
\[
f_{1}f_{2}:S\rightarrow G;t\mapsto f_{1}(t)f_{2}(t)
\]
of two polynomial maps $f_{1}:S\to G$ and $f_{2}:S\to G$ a polynomial
map? The answer, in general, is no. The simplest example might be
provided by the multiplicative functions $f_{1}(n)=x^{n}$ and $f_{2}(n)=y^{n}$
from $\mathbb{N}$ to the free group $F_{2}$ generated by two generators
$x,y$. 
\begin{example}
\label{exa:Fibonacci}  Inspired by \cite[Example in Section 3.1]{Leibman2002},
we provide another beautiful example, which is  related to the Fibonacci
sequence $F_{n}$ with $F_{1}=F_{2}=1$. Consider the group 
\[
G=\langle x,y,z\mid[x,y]=1_{G},zxz^{-1}=yx,zyz^{-1}=x\rangle.
\]
Then, $f_{1}(n)=z^{n}x$ and $f_{2}(n)=z^{-n}$ are two polynomial
maps from $\mathbb{N}$ to $G$ of degree $1$. Let $f$ be the elementwise
product of $f_{1}$ and $f_{2}$, i.e., $f(n)=f_{1}(n)f_{2}(n)=z^{n}xz^{-n}$.
One can show by induction that $f(n)=x^{F_{n+1}}y^{F_{n}}$ for all
$n$. Therefore, the following identity 
\[
D_{1}(f(n))=f(n+1)f(n)^{-1}=x^{F_{n+2}}y^{F_{n+1}}x^{-F_{n+1}}y^{-F_{n}}=x^{F_{n}}y^{F_{n-1}}=f(n-1),
\]
implies that $f$ is not a polynomial map from $\mathbb{N}$ to either
$G$ or the normal abelian subgroup of $G$ generated by $x$ and
$y$. Roughly speaking, $f$ is more like an ``exponential map''.
\end{example}
\begin{rem*}
The group $G$ given in the above example has a normal abelian subgroup
$H$ generated by $x$ and $y$, such that $G/H$ is a cyclic group
generated by the image of $z$, and thus is metabelian, i.e., solvable
of derived length $2$. Example \ref{exa:Fibonacci} implies that
the product of two polynomial maps in metabelian (let alone solvable)
groups may not be a polynomial map. 
\end{rem*}
\begin{prop}
\label{prop:direct sum} Let $\Lambda$ be a finite index set. For
$\lambda\in\Lambda$, let $S_{\lambda}$ be a commutative semigroup
and $G_{\lambda}$ a nilpotent group of class $n_{\lambda}$. Then,
the direct sum $\bigoplus_{\lambda\in\Lambda}S_{\lambda}$ is a commutative
semigroup, the direct sum $\bigoplus_{\lambda\in\Lambda}G_{\lambda}$
is a nilpotent group of class $\max_{\lambda\in\Lambda}\{n_{\lambda}\}$,
and the direct sum 
\[
\bigoplus_{\lambda\in\Lambda}f_{\lambda}:\bigoplus_{\lambda\in\Lambda}S_{\lambda}\rightarrow\bigoplus_{\lambda\in\Lambda}G_{\lambda}
\]
of a finite family of polynomial maps $f_{\lambda}:S_{\lambda}\rightarrow G_{\lambda}$
of lc-height $\ge k_{\lambda}$ and degree $d_{\lambda}$ is a polynomial
map of lc-height $\ge\min_{\lambda\in\Lambda}\{k_{\lambda}\}$ and
degree $\max_{\lambda\in\Lambda}\{d_{\lambda}\}$. 
\end{prop}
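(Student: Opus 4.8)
\emph{Proof proposal.} The plan is to reduce everything to a single coordinatewise identity for the forward difference operators. Since $\Lambda$ is finite, $\bigoplus_{\lambda}S_{\lambda}$ is the Cartesian product with componentwise addition, and associativity and commutativity are inherited coordinate by coordinate, so it is a commutative semigroup; likewise $\bigoplus_{\lambda}G_{\lambda}$ is a group under componentwise multiplication. An easy induction on $i$, using that a commutator in a direct product is computed coordinatewise, shows $C^{i}(\bigoplus_{\lambda}G_{\lambda})=\bigoplus_{\lambda}C^{i}G_{\lambda}$; hence $C^{i+1}(\bigoplus_{\lambda}G_{\lambda})$ is trivial if and only if $C^{i+1}G_{\lambda}$ is trivial for every $\lambda$, i.e. if and only if $i\ge\max_{\lambda}n_{\lambda}$, so $\bigoplus_{\lambda}G_{\lambda}$ is nilpotent of class $\max_{\lambda}n_{\lambda}$. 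The same identity gives $C^{\min_{\mu}k_{\mu}}(\bigoplus_{\lambda}G_{\lambda})=\bigoplus_{\lambda}C^{\min_{\mu}k_{\mu}}G_{\lambda}\supseteq\bigoplus_{\lambda}C^{k_{\lambda}}G_{\lambda}$, and the latter contains the image of $\bigoplus_{\lambda}f_{\lambda}$; so this map has lc-height $\ge\min_{\lambda}k_{\lambda}$.

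Write $f=\bigoplus_{\lambda}f_{\lambda}$. For $s=(s_{\lambda})_{\lambda}$ and $t=(t_{\lambda})_{\lambda}$ in $\bigoplus_{\lambda}S_{\lambda}$, a direct computation (using that inversion and multiplication in $\bigoplus_{\lambda}G_{\lambda}$ are componentwise) gives $L_{s}(f)(t)=(L_{s_{\lambda}}(f_{\lambda})(t_{\lambda}))_{\lambda}$ and $R_{s}(f)(t)=(R_{s_{\lambda}}(f_{\lambda})(t_{\lambda}))_{\lambda}$; in other words $D_{s}(f)=\bigoplus_{\lambda}D_{s_{\lambda}}(f_{\lambda})$ with the same choice of $D\in\{L,R\}$ on each side. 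Iterating this (an induction on the number of operators), for any $m$ and any sequence of choices
\[
D_{s^{(1)}}\cdots D_{s^{(m)}}(f)=\bigoplus_{\lambda}D_{s^{(1)}_{\lambda}}\cdots D_{s^{(m)}_{\lambda}}(f_{\lambda}).
\]
If $\max_{\lambda}d_{\lambda}=-\infty$ then every $f_{\lambda}$, hence $f$, is the constant $1$ and the claim is trivial; otherwise put $d=\max_{\lambda}d_{\lambda}\ge0$. Taking $m=d+1$, each coordinate on the right is $\equiv1_{G_{\lambda}}$ because $\deg f_{\lambda}=d_{\lambda}<d+1$, so $f$ is a polynomial map of degree $\le d$. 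For the opposite inequality pick $\lambda_{0}$ with $d_{\lambda_{0}}=d$; since $\deg f_{\lambda_{0}}=d$ there is a length-$d$ difference sequence $D_{u^{(1)}}\cdots D_{u^{(d)}}$ and a point $v\in S_{\lambda_{0}}$ with $D_{u^{(1)}}\cdots D_{u^{(d)}}(f_{\lambda_{0}})(v)\ne1_{G_{\lambda_{0}}}$; choosing the remaining coordinates of the $s^{(i)}$ and of the argument arbitrarily (possible because every $S_{\lambda}$ is nonempty), the $\lambda_{0}$-coordinate of $D_{s^{(1)}}\cdots D_{s^{(d)}}(f)$ at that argument is nontrivial, so $\deg f=d$.

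I do not foresee a genuine obstacle; the only point requiring care is the coordinatewise difference identity and keeping track of which of $L$ or $R$ occurs at each step (the identity holds for each choice independently, which is exactly why the definition of polynomial map via arbitrary mixtures of $L$ and $R$ behaves well here). As an alternative one could write $f$ as the elementwise product over $\lambda$ of the maps $\iota_{\lambda}\circ f_{\lambda}\circ\pi_{\lambda}$, where $\pi_{\lambda}$ is the coordinate projection of semigroups and $\iota_{\lambda}$ the coordinate inclusion of groups, note each factor is a polynomial map of degree $\le d_{\lambda}$ and lc-height $\ge k_{\lambda}$ by Propositions \ref{prop:homomorphism of commutative semigroups} and \ref{prop:homomorphism of groups}, and invoke Theorem \ref{thm:product of two polynomial maps} (the factors take values in the nilpotent group $\bigoplus_{\lambda}G_{\lambda}$); but that route delivers the sharp degree less transparently, so I would use the direct computation above.
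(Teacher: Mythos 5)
Your proof is correct, and since the paper simply declares this proposition trivial and omits the argument, your coordinatewise verification (differences in a finite direct sum are computed componentwise with the same choice of $L$ or $R$, plus $C^{i}\bigl(\bigoplus_{\lambda}G_{\lambda}\bigr)=\bigoplus_{\lambda}C^{i}G_{\lambda}$) is exactly the intended reasoning, including the extra care you take to get the exact degree $\max_{\lambda}d_{\lambda}$ rather than only an upper bound. Nothing further is needed.
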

\begin{proof}
The proof is trivial and thus omitted. 
\end{proof}
Given two polynomial maps $f:S\rightarrow G$ of degree $\le d$ and
$f':S\rightarrow G$ of degree $\le d'$, their product $ff':S\rightarrow G$
is given by the following composition 
\[
ff':S\xrightarrow{\Delta}S\times S\xrightarrow{f\times f'}G\times G\xrightarrow{m}G,
\]
where $\Delta:S\rightarrow S\times S$ is the diagonal map and $m:G\times G\rightarrow G$
is the multiplication map of $G$. Proposition \ref{prop:direct sum}
implies that $f\times f'$ is a polynomial of degree $\le\max\{d,d'\}$.
From Proposition \ref{prop:homomorphism of groups}, we see that if
$m:G\times G\rightarrow G$ or even $m:\langle f\times f'\rangle\to G$
is a homomorphism of groups, then the product $ff'$ will be a polynomial
map of degree $\le\max\{d,d'\}$. This simple idea allows us to prove
the following corollary. 
\begin{cor}
\label{cor:product lc-height k+k'>=00003Dn+1} Let $G$ be a nilpotent
group of class $n$. Given two polynomial maps $f:S\rightarrow G$
of degree $\le d$ and $f':S\rightarrow G$ of degree $\le d'$, if
$f$ has lc-height $\ge k$ and $f'$ has lc-height $\ge k'$ with
$k+k'\ge n+1$, then the product $ff'$ has lc-height $\ge\min\{k,k'\}$
and degree $\le\max\{d,d'\}$. 
\end{cor}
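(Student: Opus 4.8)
The plan is to use precisely the mechanism indicated in the paragraph preceding the statement: factor the product as
\[
ff':S\xrightarrow{\Delta}S\times S\xrightarrow{f\times f'}G\times G\xrightarrow{m}G,
\]
where $\Delta$ is the diagonal map and $m$ is the multiplication of $G$, and show that the hypothesis $k+k'\ge n+1$ makes the last arrow a homomorphism on the subgroup through which the composite actually factors. First, since $G$ is nilpotent, Proposition \ref{prop:direct sum} gives that $f\times f':S\times S\to G\times G$ is a polynomial map of degree $\le\max\{d,d'\}$; since $\Delta$ is a homomorphism of commutative semigroups, Proposition \ref{prop:homomorphism of commutative semigroups} then gives that $g:=(f\times f')\circ\Delta:S\to G\times G$, $t\mapsto(f(t),f'(t))$, is a polynomial map of degree $\le\max\{d,d'\}$. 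Both the image of $g$ and the images of all of its iterated forward differences lie in the subgroup $K:=\langle f(S)\rangle\times\langle f'(S)\rangle\le G\times G$ (each such difference is a coordinatewise product of values of $f$ and of $f'$), so $g$ is also a polynomial map $S\to K$ of degree $\le\max\{d,d'\}$.

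The key point, and the only place the hypothesis is used, is that $m$ restricts to a group homomorphism $m|_{K}:K\to G$. The failure of $m$ to be multiplicative on a direct product is caused solely by noncommutation of the two factors, so it is enough to check that every element of $\langle f(S)\rangle$ commutes with every element of $\langle f'(S)\rangle$. Since $f$ has lc-height $\ge k$ and $f'$ has lc-height $\ge k'$, we have $\langle f(S)\rangle\le C^{k}G$ and $\langle f'(S)\rangle\le C^{k'}G$, whence
\[
[\langle f(S)\rangle,\langle f'(S)\rangle]\le[C^{k}G,C^{k'}G]\le C^{k+k'}G\le C^{n+1}G=\{1_{G}\},
\]
using that the lower central series is graded with respect to commutators and that $G$ has nilpotency class $n$. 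This yields the required elementwise commutation and hence that $m|_{K}$ is a homomorphism.

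With this in hand, Proposition \ref{prop:homomorphism of groups} applied to the polynomial map $g:S\to K$ and the homomorphism $m|_{K}:K\to G$ shows that $ff'=m\circ g$ is a polynomial map of degree $\le\max\{d,d'\}$. For the lc-height, observe that $ff'(t)=f(t)f'(t)\in C^{k}G\cdot C^{k'}G\subseteq C^{\min\{k,k'\}}G$ for every $t\in S$, so $ff'$ has lc-height $\ge\min\{k,k'\}$, as claimed. I do not anticipate a genuine obstacle here: the whole argument rests on the gradedness estimate displayed above, and the only point requiring care is remembering to shrink the codomain from $G\times G$ to $K$ before invoking Proposition \ref{prop:homomorphism of groups}, since $m$ is not a homomorphism on the full product.
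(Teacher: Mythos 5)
Your proof is correct and follows essentially the same route as the paper: the paper likewise uses the factorization $ff'=m\circ(f\times f')\circ\Delta$ together with Propositions \ref{prop:direct sum}, \ref{prop:homomorphism of commutative semigroups} and \ref{prop:homomorphism of groups}, and its proof of the corollary is exactly the observation that the commutator terms obstructing multiplicativity of $m$ die because $[C^{k}G,C^{k'}G]\le C^{k+k'}G\le C^{n+1}G=\{1_{G}\}$ (the paper restricts $m$ to $C^{k}G\times C^{k'}G$ rather than to $\langle f(S)\rangle\times\langle f'(S)\rangle$, an immaterial difference). The lc-height statement is handled just as you do, via $C^{k}G\cdot C^{k'}G\subseteq C^{\min\{k,k'\}}G$.
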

\begin{proof}
Indeed, for any $a,b,c,d\in G$, we have $m((a,b))m((c,d))=abcd$
and 
\begin{align*}
m((a,b)(c,d)) & =m((ac,bd))=acbd=abc[c^{-1},b^{-1}]d\\
 & =abcd[c^{-1},b^{-1}][[c^{-1},b^{-1}]^{-1},d^{-1}].
\end{align*}
 Since $[C^{k}G,C^{k'}G]=C^{n+1}G=\{1_{G}\}$, $m:C^{k}G\times C^{k'}G\to G$
is a homomorphism of groups. 
\end{proof}
 Now, we are ready to prove Theorem \ref{thm:product of two polynomial maps}. 
\begin{proof}[Proof of Theorem \ref{thm:product of two polynomial maps}]
 Replacing $G$ by the subgroup $\langle f,f'\rangle$, we may assume
that $G$ is nilpotent of class $n$. The proof is given by double
induction with the outer descending induction on $k+k'$, where $k,k'$
are lc-heights of the polynomial maps $f$ and $f'$ relative to $G=\langle f,f'\rangle$
respectively, and inner ordinary induction on $d_{f}+d_{f'}$, where
$d_{f}$ and $d_{f'}$ are respectively the induced degrees of the
polynomial maps $f\mod C^{k+1}G$ and $f'\mod C^{k'+1}G$. 

The idea is to apply the definition of polynomial maps and commutator
identities to create polynomial maps of either lower degrees or larger
lc-heights and move them to the correct place. Doing so creates extra
commutators of larger lc-heights, which is easy to deal with by the
induction hypothesis.

To facilitate the proof, the following statements will be proved simultaneously: 
\begin{enumerate}
\item \label{enu:product} The product $ff':S\rightarrow G$ is a polynomial
map; 
\item \label{enu:commutator} The commutator $[f,f']:S\rightarrow G$ is
a polynomial map. 
\end{enumerate}
Although it suffices to show (\ref{enu:product}), the proof is easier
if the induction hypothesis contains several claims, each of which
depends on the other for larger lc-heights or smaller induced degrees. 

Clearly, the theorem holds when $n\le1$, i.e., when $G$ is abelian.
Indeed, the commutator of two polynomial maps is always a polynomial
map of degree $-\infty$; the product $ff'$ is a polynomial map of
degree $\le\max\{d,d'\}$ by Corollary \ref{cor:product lc-height k+k'>=00003Dn+1}.
So we may assume that $n\ge2$. 

By Corollary \ref{cor:product lc-height k+k'>=00003Dn+1}, if $k+k'\ge n+1$,
then the product $ff'$ is a polynomial map of lc-height $\ge\min\{k,k'\}$
and degree $\le\max\{d,d'\}$, and the commutator $[f,f']$ is a polynomial
map of lc-height $n+1$ and degree $-\infty$. This gives the outer
induction base on the sum $k+k'$ of lc-heights $k$ and $k'$. 

Suppose that we have shown this for $k+k'>m$ with $2\le m\le n$.
The goal of the outer induction step is to prove the the claim holds
for $k+k'=m$. We proceed by the ordinary induction on  $(d_{f},d_{f'})$.
If either $d_{f}$ or $d_{f'}$ is $-\infty$, then we are in the
case when $k+k'=m+1$, which has been proved by induction hypothesis.
Therefore, we assume that $d_{f},d_{f'}\ge0$.

Then, for the product case, we have 
\begin{align}
(f(s+t)f'(s+t))(f(t)f'(t))^{-1} & =f(s+t)f'(s+t)f'(t)^{-1}f(t)^{-1}\label{eq:L_s(ff')(t)}\\
 & =f(s+t)L_{s}(f')(t)f(t)^{-1}\nonumber \\
 & =f(s+t)f(t)^{-1}L_{s}(f')(t)[L_{s}(f')(t)^{-1},f(t)]\nonumber \\
 & =L_{s}(f)(t)L_{s}(f')(t)[L_{s}(f')(t)^{-1},f(t)],\nonumber 
\end{align}
\begin{align}
(f(t)f'(t))^{-1}(f(s+t)f'(s+t)) & =f'(t)^{-1}f(t)^{-1}f(s+t)f'(s+t)\label{eq:R_s(ff')(t)}\\
 & =f'(t)^{-1}R_{s}(f)(t)f'(s+t)\nonumber \\
 & =[f'(t)^{-1},R_{s}(f)(t)]R_{s}(f)(t)f'(t)^{-1}f'(s+t)\nonumber \\
 & =[f'(t)^{-1},R_{s}(f)(t)]R_{s}(f)(t)R_{s}(f')(t).\nonumber 
\end{align}
By the inner induction hypothesis, $[L_{s}(f')^{-1},f]$ and $[f'^{-1},R_{s}(f)]$
are polynomial maps of lc-height $\ge k+k'=m$, and $L_{s}(f)L_{s}(f')$
and $R_{s}(f)R_{s}(f')$ are polynomial maps of lc-heights $\ge\min\{k,k'\}\ge1$,
 and thus $L_{s}(f)L_{s}(f')[L_{s}(f')^{-1},f]$ and $[f'^{-1},R_{s}(f)]R_{s}(f)R_{s}(f')$)
are polynomial maps of lc-height $\ge\min\{k,k'\}$. It follows that
$ff'$ is a polynomial map of lc-height $\ge\min\{k,k'\}$.

Next, we deal with the commutator case. We have 
\begin{align}
 & [f(s+t),f'(s+t)][f(t),f'(t)]^{-1}\label{eq:L_s(=00005Bf,f'=00005D)(t)}\\
= & f(s+t)f'(s+t)f(s+t)^{-1}f'(s+t)^{-1}f'(t)f(t)f'(t)^{-1}f(t)^{-1}\nonumber \\
= & f(s+t)f'(s+t)f(s+t)^{-1}R_{s}(f')(t)^{-1}f(t)f'(t)^{-1}f(t)^{-1}\nonumber \\
= & f(s+t)f'(s+t)f(s+t)^{-1}f(t)R_{s}(f')(t)^{-1}C_{1}(t)f'(t)^{-1}f(t)^{-1}\nonumber \\
= & f(s+t)f'(s+t)R_{s}(f)(t)^{-1}R_{s}(f')(t)^{-1}C_{1}(t)f'(t)^{-1}f(t)^{-1}\nonumber \\
= & f(s+t)f'(s+t)R_{s}(f)(t)^{-1}R_{s}(f')(t)^{-1}f'(t)^{-1}C_{2}(t)f(t)^{-1}\nonumber \\
= & f(s+t)f'(s+t)R_{s}(f)(t)^{-1}f'(s+t)^{-1}C_{2}(t)f(t)^{-1}\nonumber \\
= & f(s+t)f'(s+t)f'(s+t)^{-1}R_{s}(f)(t)^{-1}C_{3}(t)f(t)^{-1}\nonumber \\
= & f(t)C_{3}(t)f(t)^{-1}=C_{4}(t),\nonumber 
\end{align}
where by induction hypothesis
\begin{align*}
C_{1}(t) & =[R_{s}(f')(t),f(t)^{-1}], &  &  & C_{2}(t) & =C_{1}(t)[C_{1}(t)^{-1},f'(t)],\\
C_{3}(t) & =[R_{s}(f)(t),f'(s+t)]C_{2}(t), &  &  & C_{4}(t) & =[f(t),C_{3}(t)]C_{3}(t)
\end{align*}
are polynomial maps of lc-height $\ge k+k'$, and 
\begin{align}
 & [f(t),f'(t)]^{-1}[f(s+t),f'(s+t)]\label{eq:R_s(=00005Bf,f'=00005D)(t)}\\
= & f'(t)f(t)f'(t)^{-1}f(t)^{-1}f(s+t)f'(s+t)f(s+t)^{-1}f'(s+t)^{-1}\nonumber \\
= & f'(t)f(t)f'(t)^{-1}R_{s}(f)(t)f'(s+t)f(s+t)^{-1}f'(s+t)^{-1}\nonumber \\
= & f'(t)f(t)f'(t)^{-1}f'(s+t)R_{s}(f)(t)D_{1}(t)f(s+t)^{-1}f'(s+t)^{-1}\nonumber \\
= & f'(t)f(t)R_{s}(f')(t)R_{s}(f)(t)D_{1}(t)f(s+t)^{-1}f'(s+t)^{-1}\nonumber \\
= & f'(t)f(t)R_{s}(f')(t)R_{s}(f)(t)f(s+t)^{-1}D_{2}(t)f'(s+t)^{-1}\nonumber \\
= & f'(t)f(t)R_{s}(f')(t)f(t)^{-1}D_{2}(t)f'(s+t)^{-1}\nonumber \\
= & f'(t)f(t)f(t)^{-1}R_{s}(f')(t)D_{3}(t)f'(s+t)^{-1}\nonumber \\
= & f'(s+t)D_{3}(t)f'(s+t)^{-1}=D_{4}(t),\nonumber 
\end{align}
where by induction hypothesis
\begin{align*}
D_{1}(t) & =[R_{s}(f)(t)^{-1},f'(s+t)^{-1}], &  &  & D_{2}(t) & =D_{1}(t)[D_{1}(t)^{-1},f(s+t)],\\
D_{3}(t) & =[R_{s}(f')(t)^{-1},f(t)]D_{2}(t), &  &  & D_{4}(t) & =[f'(s+t),D_{3}(t)]D_{3}(t),
\end{align*}
are polynomial maps of lc-height $\ge k+k'$. So $[f,f']$ is a polynomial
map of lc-height $\ge k+k'$. 
\end{proof}
\begin{rem*}
Since each subgroup of a nilpotent group is nilpotent, Theorem \ref{thm:product of two polynomial maps}
holds when $G$ is nilpotent. 
\end{rem*}
\begin{cor}
\label{cor:group of polynomial maps} Let $S$ be a commutative semigroup
and $G$ a nilpotent group of class $n$. The set $G_{p}^{S}$ of
all polynomial maps $f:S\to G$ forms a nilpotent group of class $n$,
with group law given by elementwise multiplication.  In particular,
$G$ can be viewed as a subgroup of $G_{p}^{S}$. 
\end{cor}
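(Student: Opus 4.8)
The plan is to verify the group axioms for $(G_p^S, \cdot)$ where $\cdot$ denotes elementwise multiplication, and then to identify its nilpotency class. First I would check closure: given $f, f' \in G_p^S$, the subgroup $\langle f, f'\rangle \le G$ is a subgroup of a nilpotent group of class $n$, hence itself nilpotent (of class $\le n$), so Theorem \ref{thm:product of two polynomial maps} applies and $ff' \in G_p^S$. Associativity is inherited pointwise from $G$. The identity element is the constant map $1_G$ (of degree $-\infty$), which lies in $G_p^S$ by definition; and the inverse of $f \in G_p^S$ is the elementwise inverse $t \mapsto f(t)^{-1}$, which is a polynomial map by Corollary \ref{cor:inverse}. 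This establishes that $G_p^S$ is a group, and the constant maps give an embedding $G \hookrightarrow G_p^S$ as a subgroup.

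For the nilpotency class, I would argue that $G_p^S$ is nilpotent of class \emph{exactly} $n$. For the upper bound, I would show $C^{n+1}(G_p^S) = \{1_G\}$: an element of $C^{k}(G_p^S)$ is a product of iterated commutators (in the elementwise group law) of polynomial maps, and since commutators in $G_p^S$ are computed pointwise, the value at each $t \in S$ of any such element lies in $C^{k}(\langle \text{images}\rangle) \le C^k G$. Since $C^{n+1}G = \{1_G\}$, every $k$-fold elementwise commutator with $k \ge n+1$ is the constant map $1_G$, so $G_p^S$ has class $\le n$. Here one must be slightly careful that each such commutator is genuinely a \emph{polynomial} map — but that is exactly what is guaranteed by the commutator clause (\ref{enu:commutator}) proved inside Theorem \ref{thm:product of two polynomial maps}, iterated finitely many times (each commutator of polynomial maps lands in a nilpotent subgroup of $G$, so the theorem reapplies). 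For the lower bound, since $G$ embeds in $G_p^S$ as the constant maps and $G$ has class exactly $n$, we get $C^n(G_p^S) \ne \{1_G\}$, so the class is exactly $n$.

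The one genuine subtlety — and the step I expect to require the most care — is the closure argument: Theorem \ref{thm:product of two polynomial maps} requires $\langle f, f'\rangle$ to be nilpotent, which here is automatic because $G$ itself is nilpotent of class $n$ and subgroups of nilpotent groups are nilpotent (this is precisely the content of the remark immediately following the proof of Theorem \ref{thm:product of two polynomial maps}). With that observation in hand, every invocation of the theorem is legitimate, and the remaining verifications are routine pointwise manipulations. I would write the proof as: (i) closure and inverses via Theorems \ref{thm:product of two polynomial maps} and Corollary \ref{cor:inverse}; (ii) the identity and the embedding $G \hookrightarrow G_p^S$; (iii) the class computation via the pointwise description of the lower central series of $G_p^S$.
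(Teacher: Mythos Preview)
Your proposal is correct and follows essentially the same approach as the paper: closure via Theorem~\ref{thm:product of two polynomial maps} (using that subgroups of nilpotent groups are nilpotent), inverses via Corollary~\ref{cor:inverse}, the upper bound on the class from the pointwise nature of commutators, and the lower bound from the embedding of $G$ as constant maps. The paper's proof is much terser (it calls the group structure and the class $\le n$ bound an ``immediate consequence'' of Theorem~\ref{thm:product of two polynomial maps}), while you spell out the details; one small simplification is that once $G_p^S$ is known to be a group, iterated commutators are automatically polynomial maps, so you need not separately invoke clause~(\ref{enu:commutator}).
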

\begin{proof}
It is an immediate consequence of Theorem \ref{thm:product of two polynomial maps}
that $G_{p}^{S}$ is a nilpotent group of class $\le n$. The subset
of all constant polynomial maps is easily seen to be a subgroup, which
is isomorphic to $G$. Thus, the nilpotency class of $G_{p}^{S}$
is at least $n$ and hence must be exactly $n$. 
\end{proof}
\begin{example*}
Up to isomorphism, there is only one semigroup $S$ with one element,
i.e., the singleton $\{s\}$ with operation $s\cdot s=s$. Then, any
polynomial $f:S\to G$ has degree $\le0$. If one identifies $f$
with its image $f(s)$, then the nilpotent group $G_{p}^{S}$ is seen
to be isomorphic to $G$.
\end{example*}
Recall that a group is $G$ is said to be locally nilpotent, if every
finitely generated subgroup of $G$ is nilpotent. Then, subgroups
and quotient groups of a locally nilpotent group are locally nilpotent
and the external product of two locally nilpotent groups is locally
nilpotent.  Since finitely generated subgroups of a nilpotent group
are nilpotent, nilpotent groups are locally nilpotent. Here is an
example of a locally nilpotent group which is not nilpotent. 
\begin{example*}
Let $p$ be a prime number. The Prüfer $p$-group $\mathbb{Q}_{p}/\mathbb{Z}_{p}$
can be viewed as the direct limit 
\[
\mathbb{Q}_{p}/\mathbb{Z}_{p}=\lim_{k\to\infty}\mathbb{Z}/p^{k}=\lim_{k\to\infty}\left(\mathbb{Z}/p\hookrightarrow\mathbb{Z}/p^{2}\hookrightarrow\mathbb{Z}/p^{3}\hookrightarrow\cdots\hookrightarrow\mathbb{Z}/p^{k}\hookrightarrow\cdots\right),
\]
where the embedding $\mathbb{Z}/p^{k}\hookrightarrow\mathbb{Z}/p^{k+1}$
is induced by multiplication by $p$. A presentation of $\mathbb{Q}_{p}/\mathbb{Z}_{p}$
is given by $\langle g_{1},g_{2},g_{3},\ldots\mid g_{1}^{p}=1,g_{2}^{p}=g_{1},g_{3}^{p}=g_{2},\dots\rangle$,
where the group operation is written as multiplication. Then, each
element of $\mathbb{Q}_{p}/\mathbb{Z}_{p}$ has $p$ different $p$th
roots. 

For any abelian group $H$, the generalized dihedral group corresponding
to $H$
\[
\Dih(H)=\langle H,s\mid s^{2}=(sh)^{2}=1,\forall h\in H\rangle,
\]
can be viewed as the semidirect product $H\rtimes_{\varphi}\mathbb{Z}/2$,
where $\varphi:\mathbb{Z}/2=\langle s\mid s^{2}=1\rangle\to\Aut(H)$
is a homomorphism\footnote{In the sequel, $\varphi$ will be understood and omitted.}
given by $\varphi(s)(h)=shs^{-1}=shs=h^{-1},\forall h\in H$.  The
dihedral groups $D_{2n}=\Dih(\mathbb{Z}/n)=\langle r,s\mid r^{n}=s^{2}=(sr)^{2}=1\rangle$
are special cases of generalized dihedral groups and $D_{2n}$ is
nilpotent if and only if it has order $2n=2^{k}$ for some $k\in\mathbb{N}$,
and $D_{2\cdot1}=\mathbb{Z}/2$ and $D_{2\cdot2}=\mathbb{Z}/2\times\mathbb{Z}/2$
are the only abelian ones.  Moreover, $D_{2\cdot2^{k}}$ is of nilpotency
class $k$ for $k\ge1$. 

Then, the generalized dihedral group $\Dih(\mathbb{Q}_{2}/\mathbb{Z}_{2})=\mathbb{Q}_{2}/\mathbb{Z}_{2}\rtimes\mathbb{Z}/2$
corresponding to the Prüfer $2$-group is an example of a locally
nilpotent group which is not nilpotent. Then, one sees that 
\[
\lim_{k\to\infty}D_{2\cdot2^{k}}=\lim_{k\to\infty}(\mathbb{Z}/2^{k}\rtimes\mathbb{Z}/2)\le(\lim_{k\to\infty}\mathbb{Z}/2^{k})\rtimes\mathbb{Z}/2=\Dih(\mathbb{Q}_{2}/\mathbb{Z}_{2}).
\]
It is locally nilpotent as every finitely generated subgroup must
be contained in some finite nilpotent subgroup $D_{2\cdot2^{k}}=\mathbb{Z}/2^{k}\rtimes\mathbb{Z}/2$
and thus is nilpotent. Since one can find nilpotent subgroups $D_{2\cdot2^{k}}$
of nilpotency class $k$ for arbitrarily large $k$, $\Dih(\mathbb{Q}_{2}/\mathbb{Z}_{2})$
cannot be nilpotent. 
\end{example*}
A trivial consequence of Theorem \ref{thm:product of two polynomial maps}
is that the product of two polynomial maps $f_{1},f_{2}:S\to G$ in
a locally nilpotent group $G$ is a polynomial map, if the subgroup
$\langle f_{1},f_{2}\rangle$ generated by $f_{1}(S)$ and $f_{2}(S)$
is finitely generated. Moreover, we have 
\begin{prop}
\label{prop:ploynomial map in locally nilpotent subgroup} If $g:S\to G$
is a polynomial map from a finitely generated commutative semigroup
$S$ to a locally nilpotent group $G$, then the subgroup $\langle g\rangle$
is finitely generated and nilpotent.
\end{prop}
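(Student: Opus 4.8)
The plan is to read off the statement as an essentially immediate corollary of Proposition \ref{prop:uniquely determined} together with the hypothesis that $G$ is locally nilpotent; no new inductive argument is needed.

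First I would unwind the notation: by the convention used throughout (e.g. in Theorem \ref{thm:product of two polynomial maps}), $\langle g\rangle$ denotes the subgroup of $G$ generated by the image $g(S)$. Let $d$ be the degree of $g$ and choose a finite generating set $S_0$ of $S$ (finite because $S$ is finitely generated). By the last part of Proposition \ref{prop:uniquely determined}, $\langle g\rangle$ equals $\langle g(\{0\}\cup S_0^{\le d})\rangle$ when $S$ is a commutative monoid, and $\langle g(S_0^{\le d+1})\rangle$ when $S$ is a commutative semigroup without a unity element. Since $S_0$ is finite, the sets $S_0^{\le d}$ and $S_0^{\le d+1}$ are finite, hence so are their images under $g$; therefore $\langle g\rangle$ is generated by finitely many elements of $G$, i.e.\ $\langle g\rangle$ is finitely generated.

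Second, $\langle g\rangle$ is now a finitely generated subgroup of $G$, and $G$ is locally nilpotent, so $\langle g\rangle$ is nilpotent by the very definition of local nilpotency. This yields both conclusions of the proposition.

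I do not expect a genuine obstacle here; the substance is already packaged in Proposition \ref{prop:uniquely determined}. The only point meriting a line of care is that "finitely generated commutative semigroup" in the hypothesis covers both conventions in the definition of $\rank(S)$ (generation with or without adjoining a unity element), so one should cite the appropriate clause of Proposition \ref{prop:uniquely determined} in each case; in both cases the resulting generating set of $\langle g\rangle$ is finite, which is all that is used.
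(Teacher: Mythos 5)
Your proposal is correct and follows exactly the route the paper takes: the paper's own proof is the one-line remark that the statement "follows easily from Proposition \ref{prop:uniquely determined} and the definition of a locally nilpotent group," which is precisely what you have spelled out (finite generation of $\langle g\rangle$ from the last part of Proposition \ref{prop:uniquely determined}, then nilpotency from local nilpotency). No differences worth noting.
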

\begin{proof}
This follows easily from the Proposition \ref{prop:uniquely determined}
and the definition of a locally nilpotent group.
\end{proof}
But if the commutative semigroup is not finitely generated, we can
construct an example such that the product of two polynomial maps
in locally nilpotent groups may not be a polynomial map. The motivation
is that the alternating sequence $f:\mathbb{N}\to\mathbb{Z}$ sending
$n$ to $(-1)^{n}$ cannot be a polynomial map. Indeed, $D_{1}f(n)=2\cdot(-1)^{n-1}$,
$D_{1}D_{1}f(n)=2^{2}\cdot(-1)^{n}$, etc. 
\begin{example}
\label{exa:polynomials maps in locally nilpotent groups}  Let $F_{\mathbb{N}}$
be the free abelian group on the generators $x_{1},x_{2},\ldots$.
Then, each element $x=\sum_{i=1}^{\infty}n_{i}x_{i}\in F_{\mathbb{N}}$
can be uniquely written as an infinite dimensional vector $(n_{1},n_{2},\ldots)$
with only finitely many nonzero $n_{i}$. Let $\varepsilon:F_{\mathbb{N}}\to\mathbb{Z}$
be the augmentation given by $\varepsilon(\sum_{i=1}^{\infty}n_{i}x_{i})=\sum_{i=1}^{\infty}n_{i}$. 

Let $\langle g_{1},g_{2},g_{3},\ldots\mid g_{1}^{2}=1,g_{2}^{2}=g_{1},g_{3}^{2}=g_{2},\dots\rangle$
be a presentation of the Prüfer $2$-group $\mathbb{Q}_{2}/\mathbb{Z}_{2}$.
Consider the homomorphism $\varphi_{0}:F_{\mathbb{N}}\to\Dih(\mathbb{Q}_{2}/\mathbb{Z}_{2})=\mathbb{Q}_{2}/\mathbb{Z}_{2}\rtimes\mathbb{Z}/2$
such that $\varphi_{0}(x_{i})=(g_{i},0)$ and the following composition
of homomorphisms 
\[
\varphi_{1}:F_{\mathbb{N}}\xrightarrow{\varepsilon}\mathbb{Z}\twoheadrightarrow\mathbb{Z}/2\hookrightarrow\mathbb{Q}_{2}/\mathbb{Z}_{2}\rtimes\mathbb{Z}/2,
\]
where the last arrow embeds $\mathbb{Z}/2$ as the second factor of
$\mathbb{Q}_{2}/\mathbb{Z}_{2}\rtimes\mathbb{Z}/2$, i.e., $\varphi_{1}(x)=(0,\varepsilon(x)\mod2)$.
Then, $\varphi_{0}$ and $\varphi_{1}$ are polynomial maps of degree
$1$. The subgroup $\langle\varphi_{0}\rangle$ is isomorphic to $\mathbb{Q}_{2}/\mathbb{Z}_{2}$,
which is not finitely generated, and the subgroup $\langle\varphi_{1}\rangle$
is $\{0\}\rtimes\mathbb{Z}/2$. 

Then, the product $\varphi=\varphi_{0}\varphi_{1}:F_{\mathbb{N}}\to\Dih(\mathbb{Q}_{2}/\mathbb{Z}_{2})$
is not a polynomial map. Indeed, for any $s=\sum_{i=1}^{\infty}s_{i}x_{i}\in F_{\mathbb{N}}$,
we have $\varphi(s)=\varphi_{0}(s)\varphi_{1}(s)=\left(\prod g_{i}^{s_{i}},\varepsilon(s)\mod2\right)$.
Then, for any $t=\sum_{i=1}^{\infty}t_{i}x_{i}$, we have the following:
\[
L_{s}(\varphi)(t)=\varphi(s+t)\varphi(t)^{-1}=\begin{cases}
\left(\prod g_{i}^{s_{i}},\varepsilon(s)\mod2\right), & \text{if }\varepsilon(s)\equiv0\mod2,\\
\left(\prod g_{i}^{s_{i}+2t_{i}},\varepsilon(s)\mod2\right), & \text{if }\varepsilon(s)\equiv1\mod2,
\end{cases}
\]
\[
R_{s}(\varphi)(t)=\varphi(t)^{-1}\varphi(s+t)=\begin{cases}
\left(\prod g_{i}^{s_{i}},\varepsilon(s)\mod2\right), & \text{if }\varepsilon(t)\equiv0\mod2,\\
\left(\prod g_{i}^{-s_{i}},\varepsilon(s)\mod2\right), & \text{if }\varepsilon(t)\equiv1\mod2.
\end{cases}
\]
In particular, fixing $i\in\mathbb{N}$ and $s\in F_{\mathbb{N}}$
such that $\varepsilon(s)=0\mod2$, choosing $t$ among the sequence
$x_{i},2x_{i},3x_{i},\ldots,nx_{i},\ldots$, we have 
\begin{align*}
R_{s}(\varphi)(nx_{i}) & =\begin{cases}
\left(\prod g_{i}^{s_{i}},0\right), & \text{if }n\equiv0\mod2,\\
\left(\prod g_{i}^{-s_{i}},0\right), & \text{if }n\equiv1\mod2,
\end{cases}\\
D_{x_{i}}R_{s}(\varphi)(nx_{i}) & =\begin{cases}
\left(\prod g_{i}^{-2s_{i}},0\right), & \text{if }n\equiv0\mod2,\\
\left(\prod g_{i}^{2s_{i}},0\right), & \text{if }n\equiv1\mod2,
\end{cases}
\end{align*}
where $D$ is either $L$ or $R$. Then, repeating this for $2m$
times, we see that 
\begin{align*}
\underbrace{D_{x_{i}}\cdots D_{x_{i}}}_{2m}R_{s}(\varphi)(nx_{i}) & =\begin{cases}
\left(\prod g_{i}^{2^{2m}s_{i}},0\right), & \text{if }n\equiv0\mod2,\\
\left(\prod g_{i}^{-2^{2m}s_{i}},0\right), & \text{if }n\equiv1\mod2,
\end{cases}\\
 & =(1,0),\quad\text{for all sufficiently large }m,
\end{align*}
because $\varphi_{0}(s)=\prod g_{i}^{s_{i}}$ and its inverse have
finite order. However, $\varphi_{0}(s)=(\prod g_{i}^{s_{i}},0)$ may
have arbitrarily large finite order. This implies that $\varphi(t)$
is not a polynomial map. 
\end{example}
\begin{rem}
\label{rem:left but not right polynomial} Notice that $L_{s}(\varphi)$
behaves in a completely different way when we apply difference operators
to it. In fact, $L_{s}(\varphi)$ is a polynomial of degree $\le1$
for any $s\in F_{\mathbb{N}}$, and thus $\varphi$ is a left polynomial
of degree $\le2$. If $\varepsilon(s)\equiv0\mod2$, we have $L_{s}(\varphi)(t)=\left(\prod g_{i}^{s_{i}},0\right)$
and thus $D_{u}L_{s}(\varphi(t))=(1,0)$ for any $u\in F_{\mathbb{N}}$.
If $\varepsilon(s)\equiv1\mod2$, we have $L_{s}(\varphi)(t)=\left(\prod g_{i}^{s_{i}+2t_{i}},1\right)$
and  
\[
L_{u}L_{s}(\varphi(t))=\left(\prod g_{i}^{2u_{i}},0\right),\qquad R_{u}L_{s}(\varphi(t))=\left(\prod g_{i}^{-2u_{i}},0\right).
\]
Thus, for any $v\in F_{\mathbb{N}}$, we have $D_{v}L_{u}L_{s}(\varphi(t))=(1,0)$
and $D_{v}R_{u}L_{s}(\varphi(t))=(1,0)$. 

Following Leibman's definition, we may also define the left-polynomial
and right-polynomial from a commutative semigroup $S$ to any group
$G$ in the same manner. These definitions coincide if $S$ is an
abelian group. Then, the above example shows that $\varphi$ is a
left-polynomial of degree $\le2$, but not a right-polynomial. 
\end{rem}
Notice that in Theorem \ref{thm:product of two polynomial maps},
we have no estimate of the degree of the product of two general polynomial
maps. But with the following concept of lc-degree, we may strengthen
Corollary \ref{cor:group of polynomial maps}.
\begin{defn}
Let $G$ be a nilpotent group of class $n$ and $f:S\to G$ be a polynomial
map of degree $d$. By Corollary \ref{cor:quotient}, $f\mod C^{i+1}G$
is a polynomial map of degree $\le d$ for each $i=1,2,\ldots,n$.
Let $d_{i}$ be the degree of $f\mod C^{i+1}G$, i.e., $d_{i}$ is
least number in $\mathbb{Z}_{*}$ such that for any $s_{1},s_{2},\ldots,s_{d_{i}+1}\in S$,
\[
D_{s_{1}}D_{s_{2}}\cdots D_{s_{d_{i}+1}}f(S)\subseteq C^{i+1}G.
\]
Then, $\hat{d}=(d_{1},\ldots,d_{n})\in\mathbb{Z}_{*}^{n}$ will be
called the lc-degree of $f$.
\end{defn}
Leibman \cite{Leibman2002} has a slightly different definition for
lc-degree using the superadditive vectors: 
\begin{defn*}
A vector $\bar{d}=(d_{1},\ldots,d_{n})\in\mathbb{Z}_{*}^{n}$ is said
to be superadditive, if $d_{i}\le d_{j}$ for all $i\le j$, and $d_{i}+d_{j}\le d_{i+j}$,
for all $i,j$ with $i+j\le n$. 
\end{defn*}
With the obvious lexicographical order on $\mathbb{Z}_{*}^{n}$, there
exists a unique smallest superadditive vector $\bar{d}\ge\hat{d}=(d_{1},\ldots,d_{n})$
and Leibman calls $\bar{d}$ the lc-degree of $f$. To distinguish
them, our lc-degree $\hat{d}$ does not have to be superadditive.

Then, for any $c\in\mathbb{N}_{0}$, we see that $(a-b)-c=a-(b+c)$.
For any superaddtive vector $\bar{d}=(d_{1},\ldots,d_{n})\in\mathbb{Z}_{*}^{n}$
and any $b\in\mathbb{N}_{0}$, we define $\bar{d}-b=(d_{1}-b,\ldots,d_{n}-b)$.
Then, we see that if $\bar{d}$ is superaddtive, then $\bar{d}-b$
is also superaddtive, and $(\bar{d}-b)-c=\bar{d}-(b+c)$. However,
if $\bar{d}$ is superadditive, $\bar{d}+a=(d_{1}+a,\ldots,d_{n}+a)$
may not be superadditive. 

Now let us focus on our definition of lc-degree. Notice that $d_{i}\le d_{j}$
for all $i\le j$ and that $d_{i}=-\infty$, if and only if $f(S)\subseteq C^{i+1}G$,
if and only if the lc-height of $f$ is $\ge i+1$. If the lc-degree
of $f$ is $\hat{d}=(d_{1},\ldots,d_{n})$, then the degree of $f$
is $d_{n}$; conversely, if the degree of $f$ is $d$, then $\hat{d}\le(d,d,\ldots,d)$.
Also, if $f:S\to G$ is a polynomial map of lc-degree $\le\hat{d}$,
then $L_{s}(f)$ and $R_{s}(f)$ are polynomial maps of lc-degree
$\le\hat{d}-1$ for any $s\in S$. By Corollary \ref{cor:inverse},
if $f\mod C^{i+1}G$ has degree $\le d_{i}$, then $f^{-1}\mod C^{i+1}G$
has degree $\le d_{i}$ and vice versa. So $f$ and $f^{-1}$ have
the same lc-degree. 

Moreover, for all $s\in S$ if both $L_{s}(f)\mod C^{i+1}G$ and $R_{s}(f)\mod C^{i+1}G$
have degree $\le d_{i}$, then what can we say about the degree of
$f\mod C^{i+1}G$? Well, if $d_{i}\ge0$, then the degree of $f\mod C^{i+1}G$
must be $\le d_{i}+1$ and equality holds if at least one of $L_{s}(f)\mod C^{i+1}G$
and $R_{s}(f)\mod C^{i+1}G$ has degree $d_{i}$ for some $s\in S$;
if $d_{i}=-\infty$, which means that $L_{s}(f)(t)$ and $R_{s}(f)(t)$
all lie in $C^{i+1}G$ and thus $f(s+t)\equiv f(t)\mod C^{i+1}G$
for all $s,t\in S$, then the degree of $f\mod C^{i+1}G$ must be
$\le0$. So if $L_{s}(f)$ and $R_{s}(f)$ are polynomial maps of
lc-degree $\le\hat{d'}=(d_{1}',\ldots,d_{n}')$ for all $s\in S$,
then $f$ is a polynomial map of lc-degree $\le\hat{d}=(d_{1},\ldots,d_{n})$,
where 
\[
d_{i}=\begin{cases}
d_{i}'+1, & \text{if }d_{i}'\ge0,\\
0 & \text{if }d_{i}'=-\infty.
\end{cases}
\]
So even if one starts with a superadditive vector $\hat{d'}$, there
is no reason to expect $\hat{d}$ to be superadditive, because it
may happen that $d_{i}+d_{j}=d_{i}'+1+d_{j}'+1\not\leq d_{i+j}'+1=d_{i+j}$.

Furthermore, if $f$ has lc-degree $\hat{d}=(d_{1},\ldots,d_{n})$
and $f'$ has lc-degree $\hat{d'}=(d_{1}',\ldots,d_{n}')$, then what
can we say about $[f,f']$? Let $i$ (resp. $j$) be the largest index
such that $d_{i}\ge0$ (resp. $d_{j}'\ge0$). Then, we see that $f$
has lc-height $i$ and $f'$ has lc-height $j$. Hence, $[f,f']$
has lc-height $i+j$.

Now we are ready to strengthen Corollary \ref{cor:group of polynomial maps},
whose proof is similar to the one of Theorem \ref{thm:product of two polynomial maps}
and to the one of \cite[Proposition 3.4.]{Leibman2002}.  
\begin{cor}
\label{cor:lc-degree <=00003D a superadditive vector} Let $S$ be
a commutative semigroup and $G$ a nilpotent group of class $n$.
The set of all polynomial maps $f:S\to G$ with lc-degree $\hat{d}$
bounded above by some fixed superadditive vector $\bar{d}=(d_{1},\ldots,d_{n})$,
i.e., $\hat{d}\le\bar{d}$, forms a nilpotent subgroup of $G_{p}^{S}$.
\end{cor}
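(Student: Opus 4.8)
The plan is to reduce everything to Theorem~\ref{thm:product of two polynomial maps} together with the bookkeeping observations assembled just before the statement. Write $P_{\bar d}$ for the set of polynomial maps $f\colon S\to G$ with $\hat d^{f}\le\bar d$. Since $G_p^{S}$ is already a nilpotent group of class $n$ by Corollary~\ref{cor:group of polynomial maps}, and every subgroup of a nilpotent group is nilpotent, it suffices to check that $P_{\bar d}$ is a subgroup. It contains the identity (the constant map $1_G$ has lc-degree $(-\infty,\dots,-\infty)\le\bar d$), and it is closed under elementwise inverse because $f$ and $\iota\circ f$ have the same lc-degree (Corollary~\ref{cor:inverse} and the discussion preceding it). So the entire content is: if $\hat d^{f}\le\bar d$ and $\hat d^{f'}\le\bar d$, then $\hat d^{ff'}\le\bar d$.

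To prove this I would re-run the double induction from the proof of Theorem~\ref{thm:product of two polynomial maps} — outer descending induction on the sum $k+k'$ of the lc-heights of $f,f'$ relative to $G=\langle f,f'\rangle$, inner induction on the sum of the degrees of $f\bmod C^{k+1}G$ and $f'\bmod C^{k'+1}G$ — now carrying lc-degree bounds through each line rather than only ``is a polynomial map''. The two statements proved simultaneously are: (A) $\hat d^{ff'}\le\bar d$; and (B) if $f$ has lc-height $\ge k$ and $f'$ has lc-height $\ge k'$, then $[f,f']$ has lc-height $\ge k+k'$ and its lc-degree is bounded, at index $i$, by $\max\{\bar d_{a}+\bar d_{b}\colon a\ge k,\ b\ge k',\ a+b\le i\}$. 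Superadditivity, $\bar d_{a}+\bar d_{b}\le\bar d_{a+b}\le\bar d_{i}$, makes the bound in (B) in particular $\le\bar d$; the sharper form is needed because each commutator factor appearing inside the difference identities for $ff'$ (such as the factor $[L_{s}(f')(t)^{-1},f(t)]$ in the identity for $L_{s}(ff')(t)$ used in the proof of Theorem~\ref{thm:product of two polynomial maps}) must come out with lc-degree $\le\bar d-1$, not merely $\le\bar d$. The base case $k+k'\ge n+1$ refines Corollary~\ref{cor:product lc-height k+k'>=00003Dn+1}: there $m\colon C^{k}G\times C^{k'}G\to G$ is a homomorphism, it descends modulo $C^{i+1}G$, so $(ff')\bmod C^{i+1}G$ is the composite of the polynomial map $(f\times f')\bmod C^{i+1}(G\times G)$, of degree $\le\bar d_{i}$, with a homomorphism, hence of degree $\le\bar d_{i}$; the inner base case, $d_{f}$ or $d_{f'}$ equal to $-\infty$, already lies in the range $k+k'\ge m+1$ handled by the outer hypothesis.

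For the inductive step one copies the displayed commutator identities from the proof of Theorem~\ref{thm:product of two polynomial maps}. Because $L_{s}(f),R_{s}(f)$ have lc-degree $\le\bar d-1$ and $\bar d-1$ is again superadditive, the ``plain'' factors (products of two difference maps) are controlled by the induction hypothesis applied with the superadditive vector $\bar d-1$; each ``extra'' commutator factor has one difference-map argument, of lc-degree $\le\bar d-1$ and unchanged lc-height, and one undifferenced argument, of lc-degree $\le\bar d$, so the bound in (B) fed the two vectors $\bar d-1$ and $\bar d$ collapses — again by superadditivity, $(\bar d-1)_{a}+\bar d_{b}\le\bar d_{a+b}-1\le\bar d_{i}-1$ — to lc-degree $\le\bar d-1$. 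Reassembling via the elementary reconstruction recorded before the statement (if $L_{s}(g)$ and $R_{s}(g)$ all have lc-degree $\le\bar d-1$ then $g$ has lc-degree $\le\bar d$, the indices where $\bar d_{i}=-\infty$ being handled directly from $f(S),f'(S)\subseteq C^{i+1}G$) gives $\hat d^{ff'}\le\bar d$ and likewise the commutator bound. The main obstacle is exactly this bookkeeping: making sure each commutator thrown off by the identities is assigned the right pair (lc-height, lc-degree), that the relevant instance of (B) is genuinely smaller for the induction — strictly larger lc-height, or equal lc-height but strictly smaller induced degree because one factor is a difference map — and that every resulting piece of arithmetic on the bounding vectors collapses. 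Superadditivity of $\bar d$, together with its stability under $\bar d\mapsto\bar d-1$ (and its failure under $\bar d\mapsto\bar d+1$), is precisely what makes all these collapses work, which is why the statement must be phrased for superadditive vectors.
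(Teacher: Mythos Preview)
Your approach is correct in spirit and close to the paper's, but the paper organizes the induction differently and more cleanly. Rather than re-running the double induction of Theorem~\ref{thm:product of two polynomial maps} indexed by lc-heights $k+k'$, the paper does a single descending induction on a shift parameter $b\in\mathbb{N}_0$, proving simultaneously: (i) if $f,f'$ both have lc-degree $\le\bar d-b$ then so does $ff'$; and (ii) if $f,f'$ have lc-degree $\le\bar d-c$ and $\le\bar d-c'$ respectively, then $[f,f']$ has lc-degree $\le\bar d-(c+c')$. The base case is $b>d_n$ (everything collapses to the identity), and the inductive step reads the same displayed identities (\ref{eq:L_s(ff')(t)})--(\ref{eq:R_s(=00005Bf,f'=00005D)(t)}) but now each factor carries a shift: $L_s(f)$ has lc-degree $\le\bar d-(b+1)$, and the commutator $[L_s(f')^{-1},f]$ has lc-degree $\le\bar d-(2b+1)\le\bar d-(b+1)$ directly from (ii). Superadditivity enters exactly once, to check that when $d_i-m=-\infty$ the commutator really lands in $C^{i+1}G$.

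This parametrization by shifts is what your version is groping toward when you write ``(B) fed the two vectors $\bar d-1$ and $\bar d$'': as stated, your (B) only accepts a single bounding vector $\bar d$ and tracks lc-heights $k,k'$ on the side, so it does not literally apply to a pair with different bounds, and you end up having to improvise. The paper's formulation (ii) builds the asymmetry in from the start via the two shifts $c,c'$, which makes the bookkeeping you flag as ``the main obstacle'' essentially automatic. Your route would go through once (B) is restated in that asymmetric form, but at that point you have effectively rediscovered the paper's induction.
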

\begin{proof}
Let $b,c,c'\in\mathbb{N}_{0}$. We will prove the following statements
by descending induction on $b$: 
\begin{enumerate}
\item \label{enu:product-lc-degree} If $f,f'$ are polynomial maps of lc-degree
$\le\bar{d}-b$, then $ff'$ is a polynomial map of lc-degree $\le\bar{d}-b$; 
\item \label{enu:commutator-lc-degree} If $f,f'$ are polynomial maps of
lc-degree $\hat{d}\le\bar{d}-c$ and $\hat{d'}\le\bar{d}-c'$, then
$[f,f']$ is a polynomial map of lc-degree $\le\bar{d}-(c+c')$. 
\item \label{enu:inverse-lc-degree} If $f$ is a polynomial map of lc-degree
$\hat{d}\le\bar{d}-b$, then $f^{-1}$ is a polynomial of lc-degree
$\hat{d}\le\bar{d}-b$. 
\end{enumerate}
Notice that the last statement has been proved, since $f$ and $f^{-1}$
have the same lc-degree. Also, if $b$ is large enough ($b>d_{n})$,
then a polynomial map has lc-degree $\le\bar{d}-b=(-\infty,\ldots,-\infty)$
implies that it has degree $-\infty$. Then, assertion (\ref{enu:product-lc-degree})
is trivially satisfied, while by the same reason assertion (\ref{enu:commutator-lc-degree})
is trivially satisfied if $b+b'>2d_{n}$, as in this case either $b>d_{n}$
or $b'>d_{n}$. So this gives the induction base. 

Suppose that we have shown that assertions (\ref{enu:product-lc-degree})
and (\ref{enu:commutator-lc-degree}) hold for $b>m$ and $c+c'>m$
for some $m\in\mathbb{N}_{0}$. The goal of the induction step is
to show that they also hold for $b=m$ and $c+c'=m$. 

Then, for the product case, we take a closer look at equations (\ref{eq:L_s(ff')(t)})
and (\ref{eq:R_s(ff')(t)}). By the induction hypothesis, $L_{s}(f)$,
$L_{s}(f')$, $R_{s}(f)$ and $R_{s}(f')$ are polynomial maps of
lc-degree $\le\bar{d}-(m+1)$, and $[L_{s}(f')^{-1},f]$ and $[f'^{-1},R_{s}(f)]$
are polynomial maps of lc-degree $\le\bar{d}-(2m+1)\le\bar{d}-(m+1)$.
So $ff'$ is a polynomial map of lc-degree $\le(a_{1},\ldots,a_{n})$,
where $a_{i}=d_{i}-m$ if $d_{i}\ge m$. If there exists $i$ such
that $d_{i}<m$, then the assumption that $f\mod C^{i+1}G$ and $f'\mod C^{i+1}G$
have degree $\le d_{i}-m=-\infty$ implies that $f(t)\in C^{i+1}G$
and $f'(t)\in C^{i+1}G$, thus $f(t)f'(t)\in C^{i+1}G$, i.e., $ff'\mod C^{i+1}G$
has degree $-\infty$, so $a_{i}=-\infty=d_{i}-m$. Hence, it follows
that $ff'$ is a polynomial map of lc-degree $\le\bar{d}-m$. 

For the commutator case, we check equations (\ref{eq:L_s(=00005Bf,f'=00005D)(t)})
and (\ref{eq:R_s(=00005Bf,f'=00005D)(t)}). With the induction hypothesis,
it is easy to see that $C_{1},C_{2},C_{3},D_{1},D_{2},D_{3}$, $L_{s}([f,f'])=C_{4}$
and $R_{s}([f,f'])=D_{4}$ are polynomial maps of lc-degree $\le\bar{d}-(c+c'+1)=\bar{d}-(m+1)$.
Therefore, $[f,f']$ is a polynomial map of lc-degree $\le(a_{1},\ldots,a_{n})$,
where $a_{i}=d_{i}-m$ if $d_{i}\ge m$.  If there exists $i$ such
that $d_{i}<m$, we let $j$ (resp. $j'$) be the smallest index such
that $f\mod C^{j+1}G$ (resp. $f'\mod C^{j'+1}G$) has nonnegative
degrees. Thus, this implies that $f(t)\in C^{j}G$ and $f'(t)\in C^{j'}G$
for all $t\in S$, and that $d_{j}-c\ge0$ and $d_{j'}-c'\ge0$. If
$j+j'\ge n+1$, then $[f(t),f'(t)]\in C^{j+j'}G=\{1_{G}\}$, so there
is nothing to prove. If $j+j'\le n$, then we have \footnote{This is where we need the superadditivity.}
\[
d_{j+j'}\ge d_{j}+d_{j'}\ge c+c'=m>d_{i},
\]
and thus $j+j'>i$. So it follows that $[f(t),f'(t)]\in C^{j+j'}G\subseteq C^{i+1}G$,
i.e., $a_{i}=-\infty=d_{i}-m$. Hence, it follows that $[f,f']$ is
a polynomial map of lc-degree $\le\bar{d}-m$. 
\end{proof}
Let $S$ and $S'$ be commutative semigroups. Then, any polynomial
map $f:S\to G$ can be viewed as a polynomial map $\tilde{f}:S\times S'\to G$
by sending $(s,s')$ to $f(s)$. One checks that $\tilde{f}$ is a
polynomial map and shares many of the same quantities with $f$, such
as degree, lc-height, lc-degree, etc. 
\begin{defn}
We say that $\tilde{f}:S\times S'\to G$ defined above is a polynomial
map lifted from $f:S\to G$ to the commutative semigroup $S\times S'$.
If $\tilde{f}':S\times S'\to G$ is lifted from another polynomial
map $f':S'\to G$, then we can define an ordered product of $f$ and
$f'$ by the following formula:
\[
f\odot f':S\times S'\to G;\quad(s,s')\mapsto\tilde{f}(s,s')\tilde{f}'(s,s')=f(s)f'(s').
\]
\end{defn}
The ordered product of two polynomial maps will be a polynomial map
in some good cases, for example when $G$ is nilpotent. But the most
interesting result will be the following
\begin{cor}
\label{cor:ordered product of a polynomial map} Let $S$ be a commutative
semigroup and $G$ a nilpotent group of class $n$. Let $f:S\to G$
be any polynomial map, whose lc-degree is bounded by a fixed superadditive
vector $\bar{d}$. Then, for any $k\in\mathbb{N}$, the ordered product
$\bigodot_{i=1}^{k}f:\bigoplus_{i=1}^{k}S\to G$ is a polynomial map
whose lc-degree is bounded by the same superadditive vector $\bar{d}$. 

Moreover, if the group $\langle f\rangle$ generated by $f(S)$ is
finitely generated, then the subgroup $\langle\bigodot_{i=1}^{k}f\rangle$
generated by $\bigodot_{i=1}^{k}f(\bigoplus_{i=1}^{k}S)$ is of finite
index in $\langle f\rangle$.
\end{cor}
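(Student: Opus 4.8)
The plan is to prove the two assertions in turn: the first is essentially a formal consequence of Corollary~\ref{cor:lc-degree <=00003D a superadditive vector}, and the second reduces to a short fact about finitely generated nilpotent groups.

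For the first assertion, let $\pi_{i}\colon\bigoplus_{i=1}^{k}S\to S$ be the $i$-th projection, which is a homomorphism of commutative semigroups. Unwinding the definition of the ordered product, $\bigodot_{i=1}^{k}f$ is exactly the elementwise product $\tilde f_{1}\tilde f_{2}\cdots\tilde f_{k}$ in $G_{p}^{\bigoplus_{i=1}^{k}S}$, where $\tilde f_{i}=f\circ\pi_{i}$. First I would note that for every $l$ one has $\tilde f_{i}\bmod C^{l+1}G=(f\bmod C^{l+1}G)\circ\pi_{i}$, so Proposition~\ref{prop:homomorphism of commutative semigroups} shows that each $\tilde f_{i}$ is a polynomial map whose lc-degree is bounded by that of $f$, hence by $\bar d$. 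Since $\bar d$ is superadditive and $G$ is nilpotent of class $n$, Corollary~\ref{cor:lc-degree <=00003D a superadditive vector} tells us that the polynomial maps $\bigoplus_{i=1}^{k}S\to G$ of lc-degree $\le\bar d$ form a subgroup of $G_{p}^{\bigoplus_{i=1}^{k}S}$; as each $\tilde f_{i}$ lies in this subgroup, so does $\bigodot_{i=1}^{k}f$. This proves the first assertion.

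For the index statement, set $N=\langle f\rangle$ and $X=f(S)$; then $N$ is finitely generated by hypothesis, nilpotent (being a subgroup of $G$), say of class $c$, and $X$ generates $N$. The key observation is that evaluating $\bigodot_{i=1}^{k}f$ on the diagonal gives $(\bigodot_{i=1}^{k}f)(s,s,\ldots,s)=f(s)^{k}$, so $T:=\langle x^{k}\mid x\in X\rangle$ is contained in $K:=\langle\bigodot_{i=1}^{k}f\rangle$, and it suffices to show that $T$ has finite index in $N$. I would prove this by induction on $c$. If $c\le1$ then $N$ is abelian and $T=\langle x^{k}\mid x\in X\rangle=\{\,n^{k}:n\in N\,\}$, which has finite index because $N$ is finitely generated abelian. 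If $c\ge2$, put $Z=C^{c}N$, a finitely generated central subgroup; in $N/Z$, which has class $\le c-1$ and is generated by the images of $X$, the induction hypothesis shows that $TZ/Z$ has finite index, so $TZ$ has finite index in $N$. Since $TZ/T\cong Z/(Z\cap T)$, it remains to bound $[Z:Z\cap T]$. Here I would use the standard fact that $Z=C^{c}N$ is generated by the left-normed commutators $[x_{1},\ldots,x_{c}]$ with $x_{j}\in X$, together with the identity $[x_{1}^{k},\ldots,x_{c}^{k}]=[x_{1},\ldots,x_{c}]^{k^{c}}$, which is exact because $C^{c+1}N=1$; as the left-hand side lies in $T\cap Z$, this gives $Z\cap T\supseteq k^{c}Z$, of finite index in the finitely generated abelian group $Z$. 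Hence $[N:K]\le[N:T]=[N:TZ]\,[Z:Z\cap T]<\infty$.

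The first assertion is routine once Corollary~\ref{cor:lc-degree <=00003D a superadditive vector} is available, and the reduction in the second assertion is straightforward; the part requiring genuine care is the identity $[x_{1}^{k},\ldots,x_{c}^{k}]=[x_{1},\ldots,x_{c}]^{k^{c}}$. To verify it one expands each $x_{j}^{k}$ one factor at a time using $[xy,z]=[y,z]^{x}[x,z]$, $[x,yz]=[x,y][x,z]^{y}$ and $x^{y}=[y,x]x$ from Lemma~\ref{lem:commutator identities}, checking that every auxiliary term so produced is a conjugate of an element of $C^{c+1}N=1$, so that nothing survives beyond the $k^{c}$ copies of $[x_{1},\ldots,x_{c}]$. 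Alternatively, the finite-index claim for $T$ (and hence for $K$) may simply be quoted as a standard property of finitely generated nilpotent groups; everything else is bookkeeping.
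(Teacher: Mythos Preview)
Your argument is correct and follows the same route as the paper: for the first assertion both you and the paper apply Corollary~\ref{cor:lc-degree <=00003D a superadditive vector} to the lifts $\tilde f_i$, and for the second both restrict $\bigodot_{i=1}^{k}f$ to the diagonal to land in $\langle f^{k}\rangle$ and then invoke the finite-index fact for finitely generated nilpotent groups. The only difference is that the paper simply cites Mal'tsev's theorem \cite[Thm~2.23]{CMZ2017} for the last step, whereas you reprove that special case by induction on the nilpotency class via the multilinearity identity $[x_{1}^{k},\ldots,x_{c}^{k}]=[x_{1},\ldots,x_{c}]^{k^{c}}$; this makes your write-up self-contained but is not a different strategy.
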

\begin{proof}
By induction on $k$, the first assertion follows from Corollary \ref{cor:lc-degree <=00003D a superadditive vector}.
 The subgroup $\langle\bigodot_{i=1}^{k}f\rangle$ contains the subgroup
generated by the image of $\bigodot_{i=1}^{k}f$ on the diagonal $S$
of $\bigoplus_{i=1}^{k}S$, which is nothing but $\langle f^{k}\rangle$.
Since $\langle f\rangle$ is finitely generated and nilpotent, by
a result due to Mal'tsev (cf. \cite[Thm 2.23]{CMZ2017}), $\langle f^{k}\rangle$
has finite index in $\langle f\rangle$ and so is $\langle\bigodot_{i=1}^{k}f\rangle$.
\end{proof}

\section{Continuous Polynomial Maps \label{sec:Continuous Polynomial Maps}}

Any usual polynomial $f:\mathbb{R}_{\ge0}\rightarrow\mathbb{R}$ of
degree $\le d$ \footnote{The degree of the zero polynomial is either left undefined, or defined
to be negative (usually $-1$ or $-\infty$). But to be compatible
with our definition of the degree of polynomial maps, it will be defined
to be $-\infty$ as well. } is a polynomial map of degree $\le d$ in our sense. The converse
also holds, provided that the polynomial map $f:\mathbb{R}_{\ge0}\rightarrow\mathbb{R}$
is assumed to be continuous. But before proving this, we need a few
definitions and lemmas. 
\begin{defn}
Let $S$ be a commutative semigroup and $G$ be an abelian group.
A function $f:S\rightarrow G$ is called additive, if it satisfies
Cauchy's functional equation: 
\begin{equation}
f(s+t)=f(s)+f(t),\qquad\forall s,t\in S.\label{eq:Cauchy}
\end{equation}
\end{defn}
\begin{rem*}
If $\{f_{i}:S\to G\mid1\le i\le n\}$ is a finite family of additive
functions, then so is their linear combination $\sum_{i=1}^{n}r_{i}f_{i}$,
where $r_{i}\in\mathbb{Z}$. 
\end{rem*}
The following property is fundamental for additive functions $f:S\to G$: 
\begin{lem}
\label{lem:additive =00003D> linear} Let $f:S\to G$ be an additive
function. 
\begin{enumerate}
\item The function $f$ is always $\mathbb{N}$-linear, i.e., for all $n\in\mathbb{N}$
and all $s_{1},\ldots,s_{n}\in S$, we have $f\left(\sum_{i=1}^{n}s_{i}\right)=\sum_{i=1}^{n}f(s_{i})$
and \textup{$f(ns)=nf(s)$, $\forall n\in\mathbb{N}$ and all $s\in S$. }
\item If $S$ is a commutative monoid, then $f$ is $\mathbb{N}_{0}$-linear
and in particular $f(0)=0$. 
\item If $S$ is an abelian group, then $f$ is $\mathbb{Z}$-linear. 
\end{enumerate}
For the following three assertions, we need to assume that $G$ is
torsion-free. 
\begin{enumerate}
\item[$(1')$]  If $S$ is a uniquely divisible commutative semigroup, i.e., for
each $s\in S$ and each $n\in\mathbb{N}$, there exists a unique $t\in S$,
such that $s=nt$, then $f$ is $\mathbb{Q}_{>0}$-linear. 
\item[$(2')$]  If $S$ is a uniquely divisible commutative monoid, then $f$ is
$\mathbb{Q}_{\ge0}$-linear. 
\item[$(3')$]  If $S$ is a divisible abelian group, then $f$ is $\mathbb{Q}$-linear. 
\end{enumerate}
\end{lem}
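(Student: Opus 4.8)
The plan is to establish each assertion by a short induction or direct manipulation, using only Cauchy's functional equation \eqref{eq:Cauchy} and, for the primed statements, the torsion-freeness of $G$ together with unique divisibility of $S$. For (1), I would argue by induction on $n$: the case $n=1$ is trivial and the inductive step $f(s_1+\cdots+s_{n+1}) = f((s_1+\cdots+s_n)+s_{n+1}) = f(s_1+\cdots+s_n)+f(s_{n+1})$ is immediate from \eqref{eq:Cauchy}; specializing all $s_i$ to a common value $s$ gives $f(ns)=nf(s)$. For (2), when $S$ has an identity $0$, apply \eqref{eq:Cauchy} with $s=t=0$ to get $f(0)=f(0)+f(0)$, hence $f(0)=0$ (valid since $G$ is a group), and then $\mathbb{N}_0$-linearity follows by including the trivial coefficient. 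For (3), when $S$ is an abelian group, for any $s\in S$ we have $0=f(0)=f(s+(-s))=f(s)+f(-s)$, so $f(-s)=-f(s)$, and combined with (1) this yields $\mathbb{Z}$-linearity.

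For the primed assertions I would use torsion-freeness of $G$ in an essential way. For $(1')$: given $s\in S$ and $n\in\mathbb{N}$, by unique divisibility there is a unique $t$ with $nt=s$; then by (1), $nf(t)=f(nt)=f(s)$. I claim $f(t)$ is the unique element $x\in G$ with $nx=f(s)$: if $nx=nx'$ then $n(x-x')=0$, and since $G$ is torsion-free $x=x'$. Hence it is legitimate to write $f(t)=\tfrac1n f(s)$, i.e.\ $f\!\left(\tfrac1n s\right)=\tfrac1n f(s)$. Combining this with $\mathbb{N}$-linearity from (1) gives, for $\tfrac{p}{q}\in\mathbb{Q}_{>0}$, that $f\!\left(\tfrac{p}{q}s\right)=p\cdot f\!\left(\tfrac1q s\right)=\tfrac{p}{q}f(s)$, where $\tfrac{p}{q}s$ denotes the unique element $u$ with $qu=ps$; one should check this is well-defined independently of the representation $p/q$, which again follows from torsion-freeness. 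Assertions $(2')$ and $(3')$ then follow by the same bootstrapping: $(2')$ adds the coefficient $0$ using $f(0)=0$ from (2), and $(3')$ extends to negative rationals using $f(-s)=-f(s)$ from (3).

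The main obstacle, and the only place where more than routine bookkeeping is needed, is making sense of the scalar action $\tfrac{p}{q}s$ on $S$ and verifying that $f\!\left(\tfrac{p}{q}s\right)$ is genuinely independent of how the rational is written as a fraction — both on the $S$-side (using uniqueness of division in $S$) and on the $G$-side (using torsion-freeness of $G$ to cancel integers). Concretely, if $p/q = p'/q'$ then $pq' = p'q$, and one must check $q'u = p's$ where $qu = ps$; this reduces to $q q' u = q' p s = q p' s$ and then cancelling $q$, which is where the hypotheses bite. Once this compatibility is in place, $\mathbb{Q}_{>0}$-, $\mathbb{Q}_{\ge0}$-, and $\mathbb{Q}$-linearity are formal consequences of the unprimed cases.
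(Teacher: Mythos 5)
Your proposal is correct and follows essentially the same route as the paper, which proves only $(3')$ in detail (induction for $\mathbb{N}$-linearity, $f(0)=0$, $f(-s)=-f(s)$, then cancelling the integer $m$ via torsion-freeness of $G$) and refers to Kuczma for the remaining cases. Your extra attention to the well-definedness of $\tfrac{p}{q}s$ is exactly the point the paper acknowledges in its remark that uniqueness of division is needed to make sense of $\lambda s$.
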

\begin{proof}
The proofs are all very similar, cf. \cite[Thm 5.2.1]{Kuczma2009Functional}.
Here we only show the last one. By induction, for all $n\in\mathbb{N}$
and all $s_{1},\ldots,s_{n}\in S$, we have $f\left(\sum_{i=1}^{n}s_{i}\right)=\sum_{i=1}^{n}f(s_{i})$.
Letting $s_{1}=s_{2}=\cdots=s_{n}=s$, we see that $f(ns)=nf(s)$
for all $n\in\mathbb{N}$, and thus $f$ is $\mathbb{N}$-linear.
Since $f(0)=f(0+0)=f(0)+f(0)$, we see that $f(0)=0$, and thus $f$
is $\mathbb{N}_{0}$-linear. Since 
\[
0=f(0)=f(s-s)=f(s)+f(-s),
\]
we see that $f(s)=-f(s)$, i.e., $f$ is an odd function, and thus
is $\mathbb{Z}$-linear. Since any $\lambda\in\mathbb{Q}$ can be
written as $\lambda=n/m$, where $n\in\mathbb{Z}$ and $m\in\mathbb{N}$,
and $S$ is divisible, for all $s\in S$, there exists a unique $t\in S$
such that $mt=ns$, we write $t=ns/m=\lambda s$ and thus have 
\[
mf(\lambda s)=mf(t)=f(mt)=f(ns)=nf(s),
\]
for all $s\in S$. Since $G$ is torsion-free, we have $f(\lambda s)=\lambda f(s)$
for all $s\in S$, and thus $f$ is $\mathbb{Q}$-linear. 
\end{proof}
\begin{rem*}
Uniqueness is needed here to make sense of $\lambda s$ for $\lambda\in\mathbb{Q}$. 
\end{rem*}
\begin{thm}
\label{thm:polynomial in 1 var} Every continuous polynomial map $f:\mathbb{R}_{\ge0}\rightarrow\mathbb{R}$
of is the usual polynomial. 
\end{thm}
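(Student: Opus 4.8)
The plan is to induct on the degree $d$ of the polynomial map $f$. The cases $d=-\infty$ (the zero map) and $d=0$ (a nonzero constant) are immediate. For $d=1$, the definition of a polynomial map says each $D_s(f)\colon t\mapsto f(s+t)-f(t)$ is constant in $t$; evaluating at $t=0$ identifies that constant as $f(s)-f(0)$, so $f-f(0)$ satisfies Cauchy's equation \eqref{eq:Cauchy}. Since $\mathbb{R}_{\ge 0}$ is a uniquely divisible commutative monoid and $\mathbb{R}$ is torsion-free, Lemma \ref{lem:additive =00003D> linear}$(2')$ shows $f-f(0)$ is $\mathbb{Q}_{\ge 0}$-linear, and continuity upgrades this to $\mathbb{R}_{\ge 0}$-linearity, so $f(t)=f(0)+\bigl(f(1)-f(0)\bigr)t$ is an honest polynomial.

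For the inductive step, suppose $d\ge 2$ and the statement is known in all degrees $<d$. First I would consider $Q(t):=D_1(f)(t)=f(t+1)-f(t)$: it is a continuous polynomial map of degree $\le d-1$, hence by the induction hypothesis an honest polynomial of degree $\le d-1$. Since the forward difference operator $P\mapsto P(\cdot+1)-P(\cdot)$ maps the real polynomials of degree $\le d$ onto those of degree $\le d-1$ (elementary, by matching leading coefficients), I can choose an honest polynomial $R$ of degree $\le d$ with $R(t+1)-R(t)=Q(t)$ for all $t\ge 0$, and set $h:=f-R$. Then $h$ is continuous; it is a polynomial map of degree $\le d$, because an honest polynomial of degree $\le d$ is one (as recalled just before the theorem) and a difference of two polynomial maps of degree $\le d$ into the abelian group $\mathbb{R}$ is again one (directly, since the operators $D_s$ are additive on $\mathbb{R}$-valued maps, so $D_{s_1}\cdots D_{s_{d+1}}$ kills $h$; cf.\ Theorem \ref{thm:product of two polynomial maps} and Corollary \ref{cor:inverse}); and $h(t+1)-h(t)=Q(t)-Q(t)=0$, so $h$ has period $1$.

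It then remains to show that every continuous, period-$1$ polynomial map $h\colon\mathbb{R}_{\ge 0}\to\mathbb{R}$ of degree $\le d$ is constant, for then $f=R+h(0)$ is an honest polynomial of degree $\le d$ and we are done. For this I would argue: for each $s\ge 0$ the map $D_s(h)$ is a continuous polynomial map of degree $\le d-1$, hence an honest polynomial by the induction hypothesis; it is again period-$1$, since difference operators commute and $D_s(D_1(h))=D_s(0)\equiv 0$; and an honest polynomial of period $1$ takes a single value on $\{0,1,2,\dots\}$, hence is constant, say $D_s(h)\equiv c(s)$. Then $h(s+t)-h(t)=c(s)$ for all $t\ge 0$, which at $t=0$ reads $c(s)=h(s)-h(0)$, so $h-h(0)$ satisfies Cauchy's equation; as in the case $d=1$ this gives $h(t)=h(0)+\bigl(h(1)-h(0)\bigr)t$, and periodicity forces $h(1)=h(0)$, so $h$ is constant.

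I expect the only genuinely substantive move to be this reduction: antidifferencing $Q$ to peel off a period-$1$ remainder $h$, and then combining the polynomial-map condition with continuity and Lemma \ref{lem:additive =00003D> linear} to rule out a nonconstant periodic remainder. Continuity is indispensable exactly here: without it, the discontinuous additive functions of Remark \ref{rem:Hamel basis} are degree-$1$ polynomial maps $\mathbb{R}_{\ge 0}\to\mathbb{R}$ that are not honest polynomials, and the theorem is false.
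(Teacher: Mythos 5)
Your argument is correct, and its base case (degree $1$ via Cauchy's equation, Lemma \ref{lem:additive =00003D> linear}, and continuity) matches the paper's; but your inductive step takes a genuinely different route. The paper works first over $\mathbb{Q}_{\ge0}$ and peels off a single monomial: it expands $P_{s}(t)=f(s+t)-f(t)=\alpha_{0}(s)+\sum_{i=1}^{n-1}\alpha_{i}(s)t^{i}$ using the induction hypothesis and asserts (as a ``routine check'') that subtracting $A_{n}t^{n}$ with $A_{n}=\frac{1}{n}\alpha_{n-1}(1)$ lowers the polynomial-map degree, after which the induction hypothesis and continuity finish the job. You instead antidifference: you solve $R(t+1)-R(t)=D_{1}(f)(t)$ with an honest polynomial $R$ (using the elementary surjectivity of the forward difference operator on real polynomials), so that $h=f-R$ is a continuous polynomial map with period $1$, and then you kill $h$ by showing each $D_{s}(h)$ is an honest polynomial of period $1$, hence constant, so that $h-h(0)$ is additive and, by continuity, affine, hence constant by periodicity. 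The trade-off: the paper's route is shorter on the page but hides the crux in the unproved claim about $A_{n}$ (which requires understanding how $\alpha_{n-1}(s)$ depends on $s$), whereas your route replaces that claim by two fully elementary facts (antidifferencing of polynomials, and ``periodic polynomial $\Rightarrow$ constant'') plus a second use of the induction hypothesis, and it works directly on $\mathbb{R}_{\ge0}$ with continuity confined to the Cauchy-equation step. One small point worth making explicit if you write this up: to know $Q=D_{1}(f)$ has honest degree $\le d-1$ (so that $R$ can be taken of degree $\le d$), note that on honest polynomials the polynomial-map degree coincides with the usual degree, since repeated differencing of a degree-$e$ polynomial leaves a nonzero constant after $e$ steps; this is the same implicit strengthening of the induction hypothesis that the paper also uses.
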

\begin{proof}
By the continuity of $f$, it suffices to prove that the restriction
$f:\mathbb{Q}_{\ge0}\to\mathbb{R}$ is a polynomial. The proof is
by induction on the degree $d$. It is clear for $d\le0$. If $f$
is a polynomial map of degree $\le1$, then for all $s\in\mathbb{Q}_{\ge0}$,
$f(s+t)-f(t)$ is a polynomial map of degree $\le0$, i.e., a constant,
say $C_{s}$. For any $s_{1},s_{2}\in\mathbb{Q}_{\ge0}$, we have
\[
C_{s_{1}+s_{2}}=f(s_{1}+s_{2}+t)-f(t)=f(s_{1}+s_{2}+t)-f(s_{2}+t)+f(s_{2}+t)-f(t)=C_{s_{1}}+C_{s_{2}}.
\]
This implies that $C_{s}$ is additive with respect to $s$. By Lemma
\ref{lem:additive =00003D> linear}, $C_{\lambda s}=\lambda C_{s}$
for all $\lambda\in\mathbb{Q}_{\ge0}$. Setting $k=C_{1}$, we obtain
$C_{s}=sC_{1}=ks$ for all $s\in\mathbb{Q}_{\ge0}$.  Then, $f(s)=C_{s}+f(0)=ks+f(0)$
for all $s\in\mathbb{Q}_{\ge0}$. Hence, $f(t)=kt+f(0)$ is a polynomial
of degree $\le1$. 

Suppose that the assertion holds when $d\le n-1$ with $n\ge2$. Let
$f$ be a polynomial map of degree $\le n$.  By Corollary \ref{cor:product lc-height k+k'>=00003Dn+1},
for any $A_{n}\in\mathbb{R}$, $f(t)-A_{n}t^{n}$ is a polynomial
map of degree $\le n$. We claim that there exists some $A_{n}\in\mathbb{R}$,
such that $f(t)-A_{n}t^{n}$ is a polynomial map of degree $\le n-1$.
By the induction hypothesis, $f(t)-A_{n}t^{n}$ is a polynomial of
degree $\le n-1$, and therefore $f(t)$ is a polynomial of degree
$\le n$. So it suffices to find such an $A_{n}$. By definition,
for all $s\in\mathbb{Q}_{\ge0}$, $P_{s}(t):=f(s+t)-f(t)$ is a polynomial
map of degree $\le n-1$. Then, by the induction hypothesis, 
\[
P_{s}(t)=\alpha_{0}(s)+\sum_{i=1}^{n-1}\alpha_{i}(s)t^{i}
\]
is a polynomial in variable $t$ of degree $\le n-1$, where $\alpha_{i}(s)$
are functions from $\mathbb{Q}_{\ge0}$ to $\mathbb{R}$. Then, it
is a routine check that $A_{n}:=\dfrac{1}{n}\alpha_{n-1}(1)$ is the
desired number. 
\end{proof}
\begin{rem}
\label{rem:Hamel basis}  It was G. Hamel  who first succeeded in
proving the existence of discontinuous additive functions.  In fact,
he proved the following theorem in the case when $N=1$:
\begin{thm*}[{\cite[Thm 5.2.2]{Kuczma2009Functional}}]
 Any function $g:H\rightarrow\mathbb{R}$ from an arbitrary Hamel
basis $H$  of the $\mathbb{Q}$-vector space $\mathbb{R}^{N}$ to
$\mathbb{R}$ extends to a unique additive function $f:\mathbb{R}^{N}\to\mathbb{R}$
such that $f\restriction_{H}=g$.
\end{thm*}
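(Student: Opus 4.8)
The plan is to recognize this as nothing more than the universal mapping property of a basis of a vector space, once we identify additive functions with $\mathbb{Q}$-linear maps. First I would observe that $\mathbb{R}^{N}$ is a divisible abelian group and $\mathbb{R}$ is torsion-free, so by Lemma \ref{lem:additive =00003D> linear}$(3')$ every additive function $f:\mathbb{R}^{N}\to\mathbb{R}$ is automatically $\mathbb{Q}$-linear; conversely every $\mathbb{Q}$-linear map is visibly additive. Hence the additive functions $\mathbb{R}^{N}\to\mathbb{R}$ are exactly the morphisms of $\mathbb{Q}$-vector spaces, and a Hamel basis $H$ is by definition a $\mathbb{Q}$-basis of $\mathbb{R}^{N}$.

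For existence, given $g:H\to\mathbb{R}$, I would use that every $x\in\mathbb{R}^{N}$ admits a unique expression $x=\sum_{i=1}^{k}q_{i}h_{i}$ with distinct $h_{i}\in H$ and nonzero rationals $q_{i}$ (the empty sum when $x=0$), and define $f(x)=\sum_{i=1}^{k}q_{i}g(h_{i})$. One then checks that $f$ is well-defined by uniqueness of the representation, that $f\restriction_{H}=g$, and that $f(x+y)=f(x)+f(y)$ by writing out the representations of $x$ and $y$, merging them, and regrouping coefficients over the common set of basis vectors — a routine computation using only commutativity and distributivity of scalar multiplication.

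For uniqueness, suppose $f_{1},f_{2}:\mathbb{R}^{N}\to\mathbb{R}$ are both additive with $f_{1}\restriction_{H}=f_{2}\restriction_{H}=g$. Then $f_{1}-f_{2}$ is again additive (by the remark following the definition of additive functions), hence $\mathbb{Q}$-linear, and it vanishes on the spanning set $H$; therefore it vanishes identically, so $f_{1}=f_{2}$.

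I do not expect any serious obstacle here. The only point demanding care is the passage from \emph{additive} to \emph{$\mathbb{Q}$-linear}, which is precisely Lemma \ref{lem:additive =00003D> linear}$(3')$ and relies on the target $\mathbb{R}$ being torsion-free; everything else is the standard fact that a linear map is freely and uniquely determined by prescribing arbitrary values on a basis. In particular, the existence of a Hamel basis $H$ (which requires the axiom of choice) is not at issue, since $H$ is handed to us in the statement.
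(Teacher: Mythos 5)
Your proposal is correct. Note that the paper does not prove this statement at all: it is quoted in Remark \ref{rem:Hamel basis} as a known theorem from Kuczma's book, so there is no internal proof to compare against. Your argument is the standard one for that cited result: the only nontrivial ingredient is the equivalence ``additive $\Leftrightarrow$ $\mathbb{Q}$-linear'' for maps $\mathbb{R}^{N}\to\mathbb{R}$, which you correctly obtain from item $(3')$ of the paper's lemma on additive functions (using that $\mathbb{R}^{N}$ is a (uniquely) divisible abelian group and $\mathbb{R}$ is torsion-free), and which is genuinely needed in the uniqueness step, since elements of $\mathbb{R}^{N}$ are only $\mathbb{Q}$-linear, not $\mathbb{Z}$-linear, combinations of the Hamel basis; after that, existence and uniqueness are exactly the universal property of a $\mathbb{Q}$-basis, as you say.
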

In fact, every (discontinuous) additive function $f:\mathbb{R}^{N}\to\mathbb{R}$
may be obtained in such a way.  Hence, there exist pathological polynomial
maps $\mathbb{R}_{\ge0}^{N}\to\mathbb{R}$ if one does not insist
on the continuity. 
\end{rem}
\begin{cor}
\label{cor:polynomial set in 1 var} The image of any nonconstant
continuous polynomial map $f:\mathbb{R}_{\ge0}\rightarrow\mathbb{R}$
is an unbounded interval. 
\end{cor}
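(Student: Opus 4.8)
The plan is to invoke Theorem \ref{thm:polynomial in 1 var}, which tells us that any continuous polynomial map $f:\mathbb{R}_{\ge0}\to\mathbb{R}$ is a usual polynomial, say $f(t)=\sum_{i=0}^{d}a_{i}t^{i}$ with $a_{d}\neq 0$; since $f$ is nonconstant we have $d\ge 1$. From here the statement is essentially a combination of the intermediate value theorem and the growth behaviour of polynomials.

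First I would argue that the image is an interval: $\mathbb{R}_{\ge0}$ is connected, $f$ is continuous, so $f(\mathbb{R}_{\ge0})$ is a connected subset of $\mathbb{R}$, hence an interval. (Concretely, if $y_{1}=f(t_{1})<y<y_{2}=f(t_{2})$ then the intermediate value theorem applied to $f$ on the closed interval with endpoints $t_{1},t_{2}$ produces a $t$ with $f(t)=y$.)

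Next I would establish unboundedness. Since $d\ge 1$ and $a_{d}\ne 0$, we have $f(t)=a_{d}t^{d}\bigl(1+o(1)\bigr)$ as $t\to+\infty$, so $f(t)\to+\infty$ if $a_{d}>0$ and $f(t)\to-\infty$ if $a_{d}<0$; in either case $f(\mathbb{R}_{\ge0})$ contains values of arbitrarily large absolute value, so the interval $f(\mathbb{R}_{\ge0})$ is unbounded. Combining the two points gives that $f(\mathbb{R}_{\ge0})$ is an unbounded interval.

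There is no real obstacle here; the only thing to be careful about is that the conclusion genuinely rests on Theorem \ref{thm:polynomial in 1 var}, without which a discontinuous additive function would provide a counterexample (its image is a proper $\mathbb{Q}$-subspace of $\mathbb{R}$, not an interval), cf. Remark \ref{rem:Hamel basis}. Once continuity is in force, the argument is the classical one.
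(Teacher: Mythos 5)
Your proposal is correct and follows the same route as the paper: invoke Theorem \ref{thm:polynomial in 1 var} to see $f$ is a genuine polynomial of degree $\ge 1$, whose image on $\mathbb{R}_{\ge0}$ is then an unbounded interval. The paper simply asserts this last step as obvious, while you spell out the connectedness and leading-term growth details.
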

\begin{proof}
By Theorem \ref{thm:polynomial in 1 var}, $f$ must be a polynomial
of degree $\ge1$, whose image $f(\mathbb{R}_{\ge0})$ is certainly
an unbounded interval in $\mathbb{R}$.
\end{proof}
\begin{cor}
\label{cor:vector of polynomials in 1 var} Every continuous polynomial
map $f:\mathbb{R}_{\ge0}\rightarrow\mathbb{R}^{M}$ of degree $\le d$
is a vector of polynomials of degree $\le d$.
\end{cor}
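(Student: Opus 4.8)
The plan is to reduce to the scalar one-variable case already settled in Theorem \ref{thm:polynomial in 1 var}, by composing with coordinate projections. Write $f = (f_1,\dots,f_M)$, where $f_j = \pi_j \circ f \colon \mathbb{R}_{\ge 0} \to \mathbb{R}$ and $\pi_j \colon \mathbb{R}^M \to \mathbb{R}$ denotes projection onto the $j$-th coordinate.

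First, each $\pi_j$ is a homomorphism of abelian groups, so Proposition \ref{prop:homomorphism of groups} shows that each $f_j$ is a polynomial map of degree $\le d$. Second, each $\pi_j$ is continuous, so $f_j$ is continuous as a composition of continuous maps. Hence Theorem \ref{thm:polynomial in 1 var} applies to each $f_j$ and yields that $f_j \colon \mathbb{R}_{\ge 0} \to \mathbb{R}$ is a usual polynomial of degree $\le d$. Therefore $f = (f_1,\dots,f_M)$ is a vector whose entries are polynomials of degree $\le d$, which is precisely the assertion.

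There is essentially no obstacle here: the only two points that merit a word are that continuity passes to the components (immediate, since the projections are continuous) and that the degree does not increase under post-composition with a group homomorphism (this is Proposition \ref{prop:homomorphism of groups}). One could also note the converse direction to record that the two descriptions coincide: given scalar polynomials $f_j$ of degree $\le d$, the map $f$ factors as $\mathbb{R}_{\ge 0}\xrightarrow{\Delta}\mathbb{R}_{\ge 0}^{\,M}\xrightarrow{\bigoplus_j f_j}\mathbb{R}^M$, so Propositions \ref{prop:homomorphism of commutative semigroups} and \ref{prop:direct sum} give that $f$ is a polynomial map of degree $\le d$, and it is clearly continuous.
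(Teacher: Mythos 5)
Your proposal is correct and is essentially the paper's own argument: compose $f$ with the coordinate projections $\pi_j$, invoke Proposition \ref{prop:homomorphism of groups} and continuity to see each $f_j=\pi_j\circ f$ is a continuous polynomial map of degree $\le d$, and apply Theorem \ref{thm:polynomial in 1 var}. The added converse remark is fine but not needed for the statement.
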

\begin{proof}
 By Theorem \ref{thm:polynomial in 1 var}, the induced map $f_{i}:=\pi_{i}\circ f$,
where $\pi_{i}:\mathbb{R}^{M}\to\mathbb{R}$ be the projection map
of the $i$th coordinates, is a continuous polynomial map of degree
$\le d$. 
\end{proof}
Before stating another theorem, we record some properties about the
group $\mathcal{U}_{n}(\mathbb{R})$ of upper unitriangular $n\times n$
matrices over $\mathbb{R}$ in the lemma below. Let $E_{i,j}$ be
the $n\times n$ matrix with all entries $0$ except the $(i,j)$-entry
$1$, and $I$ be the identity $n\times n$ matrix, and $T_{i,j}(a)$
be the unitriangular matrix of the form $I+aE_{i,j}$ for $i\ne j$
and $a\in\mathbb{R}$. 
\begin{lem}
\label{lem:properties of unitriangular matrices} For any $T\in\mathcal{U}_{n}(\mathbb{R})$,
$T$ can be written as $I+T_{u}$, where $T_{u}$ is strictly upper
triangular and thus nilpotent with index $\le n$, i.e., $T_{u}^{i}=0$
for all $i\ge n$. 
\begin{enumerate}
\item \label{enu:inverse of T} The inverse of $T$ is given by $T^{-1}=I+\sum_{i=1}^{n-1}(-T_{u})^{i}$. 
\item For distinct $i,j\in\{1,\ldots,n\}$ and $a\in\mathbb{R}$, one has
$T_{i,j}(a)^{-1}=T_{i,j}(-a)$. 
\item \label{enu:distinct i,j,l} For distinct $i,j,l\in\{1,\ldots,n\}$
and $a,b\in\mathbb{R}$, one has $[T_{i,j}(a),T_{j,l}(b)]=T_{i,l}(ab)$. 
\item For distinct $i,j,k,l\in\{1,\ldots,n\}$ and $a,b\in\mathbb{R}$,
one has $[T_{i,j}(a),T_{k,l}(b)]=I$. 
\end{enumerate}
\end{lem}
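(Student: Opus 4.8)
The plan is to reduce every assertion to the single multiplication rule $E_{i,j}E_{k,l}=\delta_{jk}E_{i,l}$ for matrix units, together with the observation that for any $T\in\mathcal{U}_n(\mathbb{R})$ the matrix $T_u:=T-I$ is strictly upper triangular. The nilpotency $T_u^n=0$ then follows because a product of $i$ strictly upper triangular matrices has $(p,q)$-entry zero unless $q-p\ge i$, so $T_u^i=0$ as soon as $i\ge n$. For the inverse formula I would invoke the finite geometric-series identity $(I-X)(I+X+\cdots+X^{n-1})=I-X^n$ with $X=-T_u$: since $(-T_u)^n=0$ this gives $T\bigl(I+\sum_{i=1}^{n-1}(-T_u)^i\bigr)=I$, and as $T$ is invertible (it has determinant $1$) this expression is its two-sided inverse.

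The identity $T_{i,j}(a)^{-1}=T_{i,j}(-a)$ is the special case $T_u=aE_{i,j}$: since $i\ne j$ we have $(aE_{i,j})^2=a^2\delta_{ji}E_{i,j}=0$, so the series terminates after a single term. For the two commutator identities I would expand $[X,Y]=XYX^{-1}Y^{-1}$, substitute the inverses just found, and repeatedly apply the matrix-unit rule, discarding each term whose product of matrix units vanishes because the indices that would have to coincide are in fact distinct. When $i,j,k,l$ are pairwise distinct, every cross term $E_{i,j}E_{k,l}$ and $E_{k,l}E_{i,j}$ dies (as $j\ne k$ and $l\ne i$), so $T_{i,j}(a)$ and $T_{k,l}(b)$ commute outright. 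When $i,j,l$ are pairwise distinct, the only surviving cross term in $T_{i,j}(a)T_{j,l}(b)$ is $ab\,E_{i,j}E_{j,l}=ab\,E_{i,l}$; carrying this through the two further multiplications by $T_{i,j}(-a)$ and $T_{j,l}(-b)$, and checking that terms involving products such as $E_{j,l}E_{i,j}$ or $E_{i,l}E_{j,l}$ vanish (they do, since $i\ne l$ and $j\ne l$), leaves exactly $I+ab\,E_{i,l}=T_{i,l}(ab)$.

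There is no genuine obstacle here; the statement is pure bookkeeping. The one place that rewards care is the commutator $[T_{i,j}(a),T_{j,l}(b)]$, where at each of the three successive multiplications one must confirm that the unwanted matrix-unit products vanish under the standing hypothesis that $i,j,l$ are pairwise distinct. I would lay out that computation so the relevant index comparisons are visible at every step --- for instance by tracking, for each product of two matrix units, whether the second index of the left factor equals the first index of the right factor --- so that all but the intended term visibly drop out.
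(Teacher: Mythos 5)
Your proposal is correct and follows essentially the same route as the paper: the inverse formula via the (truncated) geometric series for $(I+T_u)^{-1}$ using $T_u^n=0$, and the remaining identities by direct expansion with the matrix-unit rule $E_{i,j}E_{k,l}=\delta_{j,k}E_{i,l}$. The only cosmetic difference is that you verify the finite identity $(I-X)(I+\cdots+X^{n-1})=I-X^n$ directly rather than quoting the formal power series and noting it terminates, which if anything makes the argument slightly more self-contained.
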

\begin{proof}
We have the following equation of power series, 
\[
(I+T_{u})^{-1}=\sum_{i=0}^{\infty}(-T_{u})^{i},
\]
from which (\ref{enu:inverse of T}) follows easily, since the right
hand side ends in finitely many steps, while the other statements
follow easily from (\ref{enu:inverse of T}) and the formula of matrix
products 
\[
E_{i,j}E_{k,l}=\delta_{j,k}E_{i,l}=\begin{cases}
E_{i,l}, & \text{if }j=k,\\
0, & \text{otherwise}.
\end{cases}
\]
\end{proof}
For all $1\le k\le n$, let $\mathcal{U}_{n,k}(\mathbb{R})$ be the
subset of $\mathcal{U}_{n}(\mathbb{R})$ consisting of matrices $(t_{i,j})$
such that $t_{i,j}=\delta_{i,j}$ for $j<i+k$, and for all $1\le k\le n-1$,
define maps $\phi_{k}$ in the following way 
\[
\phi_{k}:\mathcal{U}_{n,k}(\mathbb{R})\to(\mathbb{R}^{n-k},+);\quad T=(t_{i,j})\mapsto(t_{1,1+k},t_{2,2+k},\ldots,t_{n-k,n}).
\]
For convenience sake, we call $(t_{1,1+k},t_{2,2+k},\ldots,t_{n-k,n})$
the $k$th diagonal entries of $T$.  Note that $\mathcal{U}_{n,k}(\mathbb{R})$
is a subgroup of $\mathcal{U}_{n}(\mathbb{R})$ with $\mathcal{U}_{n,1}(\mathbb{R})=\mathcal{U}_{n}(\mathbb{R})$
and $\mathcal{U}_{n,n}(\mathbb{R})=\{I\}$. 
\begin{lem}
\label{lem:properties of U_n(R)} The following properties hold: 
\begin{enumerate}
\item \label{enu:phi_k homomorphism} The map $\phi_{k}$ is a homomorphism
of groups with kernel $\mathcal{U}_{n,k+1}(\mathbb{R})$. 
\item The derived group $C^{2}\mathcal{U}_{n,k}(\mathbb{R})$ is a subgroup
of $\mathcal{U}_{n,k+1}(\mathbb{R})$. 
\item The subgroup $\mathcal{U}_{n,k+1}(\mathbb{R})$ is a normal subgroup
of $\mathcal{U}_{n,k}(\mathbb{R})$ for all $k\ge1$. 
\item \label{enu:U_=00007Bn,k=00007D(R) generated by T_=00007Bi,j=00007D(a)}
The subgroup $\mathcal{U}_{n,k}(\mathbb{R})$ is generated by $\{T_{i,j}(a)\mid a\in\mathbb{R},j\ge i+k\}$. 
\item For all $k\ge1$, $C^{k}\mathcal{U}_{n}(\mathbb{R})$ is a subgroup
of $\mathcal{U}_{n,k}(\mathbb{R})$ and $\mathcal{U}_{n}(\mathbb{R})$
is nilpotent of class $\le n-1$. 
\end{enumerate}
\end{lem}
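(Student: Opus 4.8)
The plan is to extract all five assertions from one structural observation. Writing $T = I + T_u$ as in Lemma \ref{lem:properties of unitriangular matrices}, I note that $T \in \mathcal{U}_{n,k}(\mathbb{R})$ precisely when $T_u$ lies in the additive group $\mathcal{N}_k$ of matrices whose only possibly nonzero entries occupy positions $(i,j)$ with $j \geq i + k$ (so $\mathcal{N}_n = 0$). The product formula $E_{i,j}E_{p,q} = \delta_{j,p}E_{i,q}$ from the preceding lemma immediately yields the filtration estimate $\mathcal{N}_k \mathcal{N}_l \subseteq \mathcal{N}_{k+l}$; combined with $(I + A)^{-1} = I + \sum_{i \geq 1}(-A)^i$, this re-confirms that each $\mathcal{U}_{n,k}(\mathbb{R})$ is a subgroup, and it drives everything below.

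For the homomorphism claim, if $T = I + A$ and $T' = I + B$ with $A, B \in \mathcal{N}_k$, then $TT' = I + (A + B + AB)$ with $AB \in \mathcal{N}_{2k} \subseteq \mathcal{N}_{k+1}$ since $k \geq 1$, so $AB$ contributes nothing to the $k$-th diagonal entries and $\phi_k(TT') = \phi_k(T) + \phi_k(T')$. Its kernel is the set of $T$ with $T_u \in \mathcal{N}_{k+1}$, which is $\mathcal{U}_{n,k+1}(\mathbb{R})$ by definition. The derived-subgroup and normality claims are then formal consequences: $\phi_k$ maps into an abelian group, so it kills commutators, giving $C^2\mathcal{U}_{n,k}(\mathbb{R}) \subseteq \ker \phi_k = \mathcal{U}_{n,k+1}(\mathbb{R})$, and a kernel is always normal (with the degenerate range $k \geq n-1$ trivial since $\mathcal{U}_{n,n}(\mathbb{R}) = \{I\}$).

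For the generation claim I would induct downward on $k$, the case $k = n$ being vacuous because $\{T_{i,j}(a) : j \geq i+n\}$ is empty and $\mathcal{U}_{n,n}(\mathbb{R}) = \{I\}$. Each $T_{i,j}(a) = I + aE_{i,j}$ with $j \geq i+k$ lies in $\mathcal{U}_{n,k}(\mathbb{R})$. Conversely, given $T \in \mathcal{U}_{n,k}(\mathbb{R})$ with $k$-th diagonal $(t_{1,1+k}, \ldots, t_{n-k,n})$, I set $T_0 = \prod_{i=1}^{n-k} T_{i,i+k}(t_{i,i+k})$; since $\phi_k$ is additive and $\phi_k(T_{i,i+k}(a))$ is $a$ in slot $i$ and $0$ elsewhere, one gets $\phi_k(T_0) = \phi_k(T)$, hence $TT_0^{-1} \in \mathcal{U}_{n,k+1}(\mathbb{R})$, which by the inductive hypothesis is a product of matrices $T_{i,j}(a)$ with $j \geq i + k + 1$; then $T = (TT_0^{-1})T_0$ is of the required form.

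For the lower-central-series claim the one real computation is the sharper estimate $[\mathcal{U}_{n,k}(\mathbb{R}), \mathcal{U}_{n,l}(\mathbb{R})] \subseteq \mathcal{U}_{n,k+l}(\mathbb{R})$: for $T = I + A \in \mathcal{U}_{n,k}(\mathbb{R})$ and $S = I + B \in \mathcal{U}_{n,l}(\mathbb{R})$ one has $TS = ST + (AB - BA)$ with $AB - BA \in \mathcal{N}_{k+l}$, so $[T,S] = TS(ST)^{-1} = I + (AB - BA)(ST)^{-1} \in I + \mathcal{N}_{k+l}$ using $(ST)^{-1} \in I + \mathcal{N}_1$ and the filtration estimate; since $\mathcal{U}_{n,k+l}(\mathbb{R})$ is a subgroup, the commutator subgroup it generates lies inside. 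An induction on $k$ with base $C^1\mathcal{U}_n(\mathbb{R}) = \mathcal{U}_{n,1}(\mathbb{R})$ then gives $C^{k+1}\mathcal{U}_n(\mathbb{R}) = [C^k\mathcal{U}_n(\mathbb{R}), \mathcal{U}_n(\mathbb{R})] \subseteq [\mathcal{U}_{n,k}(\mathbb{R}), \mathcal{U}_{n,1}(\mathbb{R})] \subseteq \mathcal{U}_{n,k+1}(\mathbb{R})$, and $C^n\mathcal{U}_n(\mathbb{R}) \subseteq \mathcal{U}_{n,n}(\mathbb{R}) = \{I\}$ gives nilpotency class $\leq n - 1$. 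I expect no genuine obstacle here; the work is entirely bookkeeping of filtration indices and of the small ranges of $k$, all resting on $\mathcal{N}_k\mathcal{N}_l \subseteq \mathcal{N}_{k+l}$.
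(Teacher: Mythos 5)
Your proposal is correct, and it is more than the paper offers: the paper disposes of this lemma with a citation to \cite[Exercise 13.38]{Drutu2018Geometric} rather than a proof, so you are supplying the argument the paper outsources. Your route is the standard and natural one: encode $\mathcal{U}_{n,k}(\mathbb{R})=I+\mathcal{N}_{k}$ with $\mathcal{N}_{k}$ the additive group of strictly upper triangular matrices supported on positions $(i,j)$ with $j\ge i+k$, derive $\mathcal{N}_{k}\mathcal{N}_{l}\subseteq\mathcal{N}_{k+l}$ from $E_{i,j}E_{p,q}=\delta_{j,p}E_{i,q}$, and read off everything else: additivity of $\phi_{k}$ because the cross term $AB$ lands in $\mathcal{N}_{2k}\subseteq\mathcal{N}_{k+1}$ (here $k\ge1$ is used), the kernel description, hence normality of $\mathcal{U}_{n,k+1}(\mathbb{R})$ and $C^{2}\mathcal{U}_{n,k}(\mathbb{R})\subseteq\ker\phi_{k}$ since the target is abelian; the downward induction for generation via $T=(TT_{0}^{-1})T_{0}$ with $\phi_{k}(T_{0})=\phi_{k}(T)$ is sound, with the empty product handling $k=n$; and your commutator computation $[T,S]=(TS)(ST)^{-1}=I+(AB-BA)(ST)^{-1}\in I+\mathcal{N}_{k+l}$ gives the graded estimate $[\mathcal{U}_{n,k}(\mathbb{R}),\mathcal{U}_{n,l}(\mathbb{R})]\subseteq\mathcal{U}_{n,k+l}(\mathbb{R})$, which is slightly stronger than assertion (5) needs and immediately yields $C^{k}\mathcal{U}_{n}(\mathbb{R})\subseteq\mathcal{U}_{n,k}(\mathbb{R})$ and nilpotency class $\le n-1$. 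What the paper's citation buys is brevity; what your argument buys is self-containedness, explicit control of the degenerate indices, and the graded commutator inclusion that also underlies the subsequent Corollary \ref{cor:C^k(U_n(R))=00003DU_=00007Bn,k=00007D(R)} (whose reverse inclusion would additionally use $[T_{i,j}(a),T_{j,l}(b)]=T_{i,l}(ab)$ from Lemma \ref{lem:properties of unitriangular matrices}, but that is outside the present statement). I see no gap.
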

\begin{proof}
See \cite[Exercise 13.38]{Drutu2018Geometric}. 
\end{proof}
The last statement in the previous lemma can be strengthened as follows: 
\begin{cor}
\label{cor:C^k(U_n(R))=00003DU_=00007Bn,k=00007D(R)} For all $k\ge1$,
$C^{k}\mathcal{U}_{n}(\mathbb{R})$ is the group $\mathcal{U}_{n,k}(\mathbb{R})$
and $\mathcal{U}_{n}(\mathbb{R})$ is nilpotent of class $n-1$. 
\end{cor}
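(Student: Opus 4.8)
The plan is to establish, for every $k$ with $1\le k\le n$, the equality $C^{k}\mathcal{U}_{n}(\mathbb{R})=\mathcal{U}_{n,k}(\mathbb{R})$ by induction on $k$, and then read off the nilpotency class. One inclusion, $C^{k}\mathcal{U}_{n}(\mathbb{R})\le\mathcal{U}_{n,k}(\mathbb{R})$, is already recorded in Lemma \ref{lem:properties of U_n(R)}, so the substantive task is the reverse inclusion $\mathcal{U}_{n,k}(\mathbb{R})\le C^{k}\mathcal{U}_{n}(\mathbb{R})$.

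For $k=1$ both groups equal $\mathcal{U}_{n}(\mathbb{R})$ by definition. Assume the equality holds for some $k\ge1$; then $C^{k+1}\mathcal{U}_{n}(\mathbb{R})=[C^{k}\mathcal{U}_{n}(\mathbb{R}),\mathcal{U}_{n}(\mathbb{R})]=[\mathcal{U}_{n,k}(\mathbb{R}),\mathcal{U}_{n}(\mathbb{R})]$. By Lemma \ref{lem:properties of U_n(R)}, the group $\mathcal{U}_{n,k+1}(\mathbb{R})$ is generated by the elementary matrices $T_{i,l}(c)$ with $c\in\mathbb{R}$ and $l\ge i+k+1$, so it suffices to place each such matrix in $[\mathcal{U}_{n,k}(\mathbb{R}),\mathcal{U}_{n}(\mathbb{R})]$. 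Given such a pair $(i,l)$, put $j:=i+k$; then $i<j<l$, the second inequality because $l\ge i+k+1=j+1$, so $i,j,l$ are pairwise distinct elements of $\{1,\dots,n\}$ with $j=i+k\le l-1\le n-1$. Since $j\ge i+k$ we have $T_{i,j}(1)\in\mathcal{U}_{n,k}(\mathbb{R})$, and $T_{j,l}(c)\in\mathcal{U}_{n}(\mathbb{R})$; by the commutator identity $[T_{i,j}(a),T_{j,l}(b)]=T_{i,l}(ab)$ from Lemma \ref{lem:properties of unitriangular matrices} we obtain $T_{i,l}(c)=[T_{i,j}(1),T_{j,l}(c)]\in[\mathcal{U}_{n,k}(\mathbb{R}),\mathcal{U}_{n}(\mathbb{R})]=C^{k+1}\mathcal{U}_{n}(\mathbb{R})$. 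Hence $\mathcal{U}_{n,k+1}(\mathbb{R})\le C^{k+1}\mathcal{U}_{n}(\mathbb{R})$, which completes the induction.

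It then remains to pin down the class. The case $k=n$ gives $C^{n}\mathcal{U}_{n}(\mathbb{R})=\mathcal{U}_{n,n}(\mathbb{R})=\{I\}$. If $n=1$ the group is trivial, of class $0=n-1$; if $n\ge2$, then $\mathcal{U}_{n,n-1}(\mathbb{R})$ contains $T_{1,n}(1)\ne I$, so $C^{n-1}\mathcal{U}_{n}(\mathbb{R})=\mathcal{U}_{n,n-1}(\mathbb{R})\ne\{I\}$. In either case $\mathcal{U}_{n}(\mathbb{R})$ is nilpotent of class exactly $n-1$.

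I do not anticipate a genuine obstacle here: the argument is entirely driven by the commutator formula $[T_{i,j}(a),T_{j,l}(b)]=T_{i,l}(ab)$ and the generation statement for $\mathcal{U}_{n,k}(\mathbb{R})$, both supplied by the preceding lemmas. The only point that wants a little care is the index bookkeeping in the inductive step — verifying that the auxiliary choice $j=i+k$ yields three genuinely distinct indices in $\{1,\dots,n\}$ and that $T_{i,j}(1)$ really lies in $\mathcal{U}_{n,k}(\mathbb{R})$ — and this follows at once from $i+k+1\le l\le n$.
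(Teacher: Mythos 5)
Your proof is correct and matches the paper's intended argument: the paper's one-line proof is precisely an induction showing $T_{i,l}(a)\in C^{k}\mathcal{U}_{n}(\mathbb{R})$ for $l\ge i+k$, which you carry out via the identity $[T_{i,j}(a),T_{j,l}(b)]=T_{i,l}(ab)$ together with the generation and inclusion statements of Lemma \ref{lem:properties of U_n(R)}. Your determination of the class via $C^{n-1}\mathcal{U}_{n}(\mathbb{R})=\mathcal{U}_{n,n-1}(\mathbb{R})\ni T_{1,n}(1)\ne I$ is also exactly what the corollary requires.
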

\begin{proof}
 One proves by induction on $k$ that $T_{i,j}(a)\in C^{k}\mathcal{U}_{n}(\mathbb{R})$
for $j\ge i+k$ and $a\in\mathbb{R}$. 
\end{proof}
\begin{thm}
\label{thm:unitriangular matrix of polynomials in 1 var} Let $f_{i,j}:\mathbb{R}_{\ge0}\to\mathbb{R}$
with $1\le i<j\le n$ be continuous polynomial maps of degree $d_{i,j}$
and $f:\mathbb{R}_{\ge0}\rightarrow\mathcal{U}_{n}(\mathbb{R})$ be
a function whose matrix form is 
\begin{equation}
\begin{pmatrix}1 & f_{1,2} & f_{1,3} & \cdots & f_{1,n}\\
 & 1 & f_{2,3} & \cdots & f_{2,n}\\
 &  & 1 & \ddots & \vdots\\
 &  &  & \ddots & f_{n-1,n}\\
 &  &  &  & 1
\end{pmatrix}.\label{eq:matrix form}
\end{equation}
Then, the function $f$ is a continuous polynomial map. 

Conversely, every continuous \footnote{The real Lie group $\mathcal{U}_{n}(\mathbb{R})$ is given the usual
metric topology. } polynomial map $f:\mathbb{R}_{\ge0}\rightarrow\mathcal{U}_{n}(\mathbb{R})$
is of this form. 
\end{thm}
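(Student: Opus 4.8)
The plan is to prove both implications by induction on $n$. The base cases $n=1$ (trivial, since $\mathcal{U}_1(\mathbb{R})=\{I\}$) and $n=2$ are immediate from Theorem~\ref{thm:polynomial in 1 var} once one identifies $\mathcal{U}_2(\mathbb{R})$ with the topological group $(\mathbb{R},+)$ via $T_{1,2}(a)\leftrightarrow a$. The inductive step rests on the short exact sequence
\[
1\longrightarrow N\longrightarrow\mathcal{U}_n(\mathbb{R})\xrightarrow{\ \pi\ }\mathcal{U}_{n-1}(\mathbb{R})\longrightarrow1,
\]
where $\pi$ records the upper-left $(n-1)\times(n-1)$ block — a group homomorphism, because for unitriangular matrices the entries of a product in the first $n-1$ rows and columns do not involve the last row — and $N=\{I+\sum_{i=1}^{n-1}a_iE_{i,n}:a_i\in\mathbb{R}\}$ is the normal abelian subgroup of matrices differing from $I$ only in the last column. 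A direct check gives $N\cong(\mathbb{R}^{n-1},+)$ as topological groups, and $\pi$ has a section $\iota$, namely the embedding of $\mathcal{U}_{n-1}(\mathbb{R})$ into $\mathcal{U}_n(\mathbb{R})$ as the upper-left block. Throughout I use that $\mathcal{U}_n(\mathbb{R})$ is nilpotent (Corollary~\ref{cor:C^k(U_n(R))=00003DU_=00007Bn,k=00007D(R)}), so Theorem~\ref{thm:product of two polynomial maps} applies to products of polynomial maps into it and Corollary~\ref{cor:inverse} to their inverses.

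Given $f:\mathbb{R}_{\ge0}\to\mathcal{U}_n(\mathbb{R})$ with matrix entries $f_{i,j}$, set $g:=\iota\circ\pi\circ f$ (concretely, $g(t)$ is $f(t)$ with its above-diagonal last-column entries replaced by $0$) and $h:=g^{-1}f$, so $f=gh$; since $\pi\circ\iota=\mathrm{id}$, the map $h$ takes values in $\ker\pi=N$. By the inverse formula of Lemma~\ref{lem:properties of unitriangular matrices}, the entries of $g^{-1}$, hence those of $h=g^{-1}f$, are fixed polynomial expressions in the $f_{i,j}$, so if the $f_{i,j}$ are continuous then $g$ and $h$ have continuous entries. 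For the first assertion, assume each $f_{i,j}$ is a usual polynomial. Then $\pi\circ f$ has polynomial entries, hence is a polynomial map $\mathbb{R}_{\ge0}\to\mathcal{U}_{n-1}(\mathbb{R})$ by the inductive hypothesis, and $g=\iota\circ(\pi\circ f)$ is a polynomial map by Proposition~\ref{prop:homomorphism of groups}. The entries of $h$ are polynomials, so through $N\cong\mathbb{R}^{n-1}$ the map $h$ is a tuple of usual polynomials and therefore a polynomial map (for instance via Propositions~\ref{prop:homomorphism of commutative semigroups} and \ref{prop:direct sum}). Hence $f=gh$ is a product of polynomial maps into the nilpotent group $\mathcal{U}_n(\mathbb{R})$, so it is a polynomial map by Theorem~\ref{thm:product of two polynomial maps}, and it is continuous since all its entries are polynomials.

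For the converse, assume $f$ is a continuous polynomial map. Then $\pi\circ f$ is a continuous polynomial map into $\mathcal{U}_{n-1}(\mathbb{R})$ (Proposition~\ref{prop:homomorphism of groups}), so by induction its entries $f_{i,j}$ with $i<j\le n-1$ are usual polynomials; consequently $g$ has polynomial entries and is itself a continuous polynomial map, and $h=g^{-1}f$ is a continuous polynomial map by Corollary~\ref{cor:inverse} and Theorem~\ref{thm:product of two polynomial maps}. Since $h$ takes values in $N\cong\mathbb{R}^{n-1}$, Corollary~\ref{cor:vector of polynomials in 1 var} shows its entries are usual polynomials, and then $f=gh$ exhibits the remaining entries $f_{i,n}=\sum_{i\le k<n}g_{i,k}\,h_{k,n}$ of $f$ as polynomials, closing the induction. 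I expect no deep obstacle: the two load-bearing inputs are Theorem~\ref{thm:product of two polynomial maps} (to know $h=g^{-1}f$ is again a polynomial map) and Corollary~\ref{cor:vector of polynomials in 1 var} (to pass from a continuous polynomial map into $\mathbb{R}^{n-1}$ to genuine polynomials); the main work is the bookkeeping that $g$, $g^{-1}$ and $h$ depend polynomially — hence continuously — on the entries of $f$, together with verifying the claimed structure of the exact sequence.
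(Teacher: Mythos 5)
Your argument is correct, but it is organized differently from the paper's. For the forward direction the paper simply factors $f$ as the ordered product $\prod_{i=1}^{n-1}\prod_{j=i+1}^{n}(I+f_{i,j}E_{i,j})$ of elementary polynomial maps and invokes Theorem \ref{thm:product of two polynomial maps} once, whereas you re-run an induction on $n$; both are fine, the paper's being a little more direct. For the converse the paper keeps $n$ fixed and does a downward induction on the lc-height: it applies the homomorphism $\phi_1$ of Lemma \ref{lem:properties of U_n(R)} to see that the first superdiagonal $(f_{1,2},\ldots,f_{n-1,n})$ is a vector of polynomials (Corollary \ref{cor:vector of polynomials in 1 var}), then left-multiplies by $I-\sum_i f_{i,i+1}E_{i,i+1}$ to push the image into $\mathcal{U}_{n,2}(\mathbb{R})$ and peels off one superdiagonal per step. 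You instead induct on the matrix size $n$ via the split extension $1\to\mathbb{R}^{n-1}\to\mathcal{U}_n(\mathbb{R})\xrightarrow{\pi}\mathcal{U}_{n-1}(\mathbb{R})\to1$, writing $f=gh$ with $g=\iota\circ\pi\circ f$ and $h=g^{-1}f$ valued in the abelian kernel, and peel off the last column instead of a diagonal. The load-bearing inputs are identical in both treatments (Theorem \ref{thm:product of two polynomial maps}, Corollary \ref{cor:inverse}, Theorem \ref{thm:polynomial in 1 var} and Corollary \ref{cor:vector of polynomials in 1 var}, plus the homomorphism propositions), and your verifications that $\pi$ is a continuous homomorphism with abelian kernel $\cong(\mathbb{R}^{n-1},+)$ and section $\iota$, and that $f_{i,n}=\sum_{i\le k<n}g_{i,k}h_{k,n}$, are all sound. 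What your route buys is a very clean one-application-of-Corollary-\ref{cor:vector of polynomials in 1 var}-per-level structure with an abelian kernel; what the paper's route buys is the diagonal-by-diagonal bookkeeping (via $\phi_k$ and lc-height) that is reused almost verbatim in Section \ref{sec:Degree} for the degree and lc-degree estimates, which your column-peeling decomposition would not set up as directly.
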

\begin{proof}
 Notice that $f$ is given by the following elementwise product with
a particular order 
\[
f=\prod_{i=1}^{n-1}\prod_{j=i+1}^{n}(I+f_{i,j}E_{i,j}).
\]
By Theorem \ref{thm:product of two polynomial maps}, it suffices
to show that each $I+f_{i,j}E_{i,j}$ is a polynomial map. This follows
easily from the assumption that each $f_{i,j}$ is a continuous polynomial
map. By Theorem \ref{thm:polynomial in 1 var}, $f_{i,j}:\mathbb{R}_{\ge0}\to\mathbb{R}$
are polynomials of degree $d_{i,j}$ for all $1\le i<j\le n$. Hence,
it follows that $f$ is continuous. 

Conversely, suppose that $f$ has lc-height $k$, i.e., the image
of $f$ lies in the subgroup $C^{k}\mathcal{U}_{n}(\mathbb{R})=\mathcal{U}_{n,k}(\mathbb{R})$
for $1\le k\le n$. The proof for the converse statement is by downward
induction on $k$. If $k=n$, then $\mathcal{U}_{n,k}(\mathbb{R})=\{I\}$
and $f_{i,j}$ are zero polynomials. If $k=n-1$, then all $f_{i,j}$
are zero polynomials except $f_{1,n}$. By (\ref{enu:phi_k homomorphism})
in Lemma \ref{lem:properties of U_n(R)}, $\phi_{n-1}$ is a homomorphism
(in fact an isomorphism). Since $f$ is a continuous polynomial map,
so is $f_{1,n}$. By Theorem \ref{thm:polynomial in 1 var}, $f_{1,n}$
is a polynomial. 

Suppose we have shown this for continuous polynomial maps of lc-height
$>2$ and $f$ is a continuous polynomial map of lc-height $1$. Then,
we apply the group homomorphism $\phi_{1}$ to $f(t)$ to pick out
the first diagonal entries $(f_{1,2},f_{2,3},\ldots,f_{n-1,n})$.
Then, $(f_{1,2},f_{2,3},\ldots,f_{n-1,n}):\mathbb{R}_{\ge0}\to\mathbb{R}^{n-1}$
is a continuous polynomial map. By Corollary \ref{cor:vector of polynomials in 1 var},
$f_{i,i+1}:\mathbb{R}_{\ge0}\to\mathbb{R}$ are polynomials for all
$1\le i\le n-1$. Next, we multiply $h=I-\sum_{i=1}^{n-1}f_{i,i+1}(t)E_{i,i+1}$
on the left hand side of $f$ and obtain: 
\begin{align*}
hf & =\begin{pmatrix}1 & -f_{1,2}\\
 & 1 & -f_{2,3}\\
 &  & 1 & \ddots\\
 &  &  & \ddots & -f_{n-1,n}\\
 &  &  &  & 1
\end{pmatrix}\begin{pmatrix}1 & f_{1,2} & f_{1,3} & \cdots & f_{1,n}\\
 & 1 & f_{2,3} & \cdots & f_{2,n}\\
 &  & 1 & \ddots & \vdots\\
 &  &  & \ddots & f_{n-1,n}\\
 &  &  &  & 1
\end{pmatrix}\\
 & =\begin{pmatrix}1 & 0 & f_{1,3}-f_{1,2}f_{2,3} & f_{1,4}-f_{1,2}f_{2,4} & \cdots & f_{1,n}-f_{1,2}f_{2,n}\\
 & 1 & 0 & f_{2,4}-f_{2,3}f_{3,4} & \cdots & f_{2,n}-f_{2,3}f_{3,n}\\
 &  & 1 & 0 & \ddots & \vdots\\
 &  &  & 1 & \ddots & f_{n-2,n}-f_{n-2,n-1}f_{n-1,n}\\
 &  &  &  & \ddots & 0\\
 &  &  &  &  & 1
\end{pmatrix}.
\end{align*}
Then, $hf$ is a continuous polynomial map of lc-height $\ge2$. By
the induction hypotheses, each entry of $hf$ is a polynomial. Then,
the second diagonal entries 
\[
f_{i,i+2}=(f_{i,i+2}-f_{i,i+1}f_{i+1,i+2})+f_{i,i+1}f_{i+1,i+2}
\]
are polynomials, since $f_{i,i+1}f_{i+1,i+2}$ are already known to
be polynomials. Inductively, we can show that all $j$th diagonal
entries $f_{i,j}=(f_{i,j}-f_{i,i+1}f_{i+1,j})+f_{i,i+1}f_{i+1,j}$
are polynomials. 
\end{proof}
Next, we will talk about polynomial maps in several variables. Any
usual polynomial $f:\mathbb{R}_{\ge0}^{N}\rightarrow\mathbb{R}$ in
$N$ variables of degree $\le d$ is a polynomial map of degree $\le d$
in our sense. The converse also holds, provided that the polynomial
map $f:\mathbb{R}_{\ge0}^{N}\rightarrow\mathbb{R}$ is assumed to
be continuous. 
\begin{thm}
\label{thm:polynomial in N var} Every continuous polynomial map $f:\mathbb{R}_{\ge0}^{N}\rightarrow\mathbb{R}$
of degree $d$ is a polynomial in $N$ variables of degree $d$. 
\end{thm}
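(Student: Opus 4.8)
The plan is to induct on the number of variables $N$, the base case $N=1$ being Theorem \ref{thm:polynomial in 1 var}. So suppose the statement holds in $N-1$ variables for every degree, and let $f\colon\mathbb{R}_{\ge0}^N\to\mathbb{R}$ be a continuous polynomial map of degree $d$. Write a point as $(x',x_N)$ with $x'\in\mathbb{R}_{\ge0}^{N-1}$.

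First I would record two families of auxiliary polynomial maps obtained by freezing coordinates. For fixed $x'$, the slice $x_N\mapsto f(x',x_N)$ is the translation $T_{(x',0)}f$ pulled back along the semigroup homomorphism $\mathbb{R}_{\ge0}\to\mathbb{R}_{\ge0}^N$, $x_N\mapsto(0,x_N)$; since translations of polynomial maps are again polynomial maps of the same degree bound and pullbacks along homomorphisms of commutative semigroups do not raise the degree (Proposition \ref{prop:homomorphism of commutative semigroups}), this slice is a continuous polynomial map $\mathbb{R}_{\ge0}\to\mathbb{R}$ of degree $\le d$, hence a polynomial of degree $\le d$ in $x_N$ by Theorem \ref{thm:polynomial in 1 var}. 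Likewise, for each fixed $c\in\mathbb{R}_{\ge0}$ the map $f_c\colon x'\mapsto f(x',c)$ is $T_{(0,c)}f$ pulled back along $\mathbb{R}_{\ge0}^{N-1}\to\mathbb{R}_{\ge0}^N$, $x'\mapsto(x',0)$, so $f_c$ is a continuous polynomial map $\mathbb{R}_{\ge0}^{N-1}\to\mathbb{R}$ of degree $\le d$, and by the induction hypothesis $f_c$ is a polynomial in $N-1$ variables of degree $\le d$.

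Next I would reconstruct $f$ by Lagrange interpolation in the last variable. Fix $d+1$ distinct nodes $a_0,\dots,a_d\in\mathbb{R}_{\ge0}$ with Lagrange basis polynomials $\ell_0,\dots,\ell_d\in\mathbb{R}[x_N]$. For each fixed $x'$, both sides of $f(x',x_N)=\sum_{k=0}^{d}f_{a_k}(x')\,\ell_k(x_N)$ are polynomials of degree $\le d$ in $x_N$ agreeing at the $d+1$ nodes, hence agree identically; so this identity holds on all of $\mathbb{R}_{\ge0}^N$. As each $f_{a_k}$ is a polynomial in $x'$ and each $\ell_k$ a polynomial in $x_N$, we conclude that $f$ is the restriction to $\mathbb{R}_{\ge0}^N$ of a polynomial in $N$ variables.

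It remains to identify the degree, and this is the one point requiring real care, since the interpolation formula a priori only bounds the total degree of $f$ by $2d$. I would prove the routine fact that for any polynomial $p\in\mathbb{R}[x_1,\dots,x_N]$ the total degree $e$ equals the degree of $p$ as a polynomial map: applying a difference operator $D_s$ cancels the top homogeneous part and hence lowers the total degree by at least one, which gives polynomial-map degree $\le e$; conversely, picking $v\in(0,\infty)^N$ outside the zero locus of the leading form of $p$ (possible when $e\ge1$, since that locus has empty interior), the function $t\mapsto p(tv)$ is a polynomial of degree exactly $e$ in $t$ and is the pullback of $p$ along the semigroup homomorphism $t\mapsto tv$, so its one-variable polynomial-map degree $e$ is $\le$ that of $p$. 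Applying this to $p=f$ yields $e=d$, which closes the induction. The main obstacle, as flagged, is precisely this last degree bookkeeping; the rest is a clean induction on $N$ resting on Theorem \ref{thm:polynomial in 1 var} and the stability of polynomial maps under translations and pullbacks along semigroup homomorphisms.
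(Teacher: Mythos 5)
Your argument is correct, and it follows the same overall inductive skeleton as the paper (induction on $N$, base case Theorem \ref{thm:polynomial in 1 var}), but the mechanism of the induction step is genuinely different. The paper expands each slice in the last variable as $\sum_{i=0}^{d}c_{i}(\mathbf{a})t^{i}$, applies $d+1$ difference operators in directions of the form $(x_{1},\ldots,x_{N-1},0)$, and invokes the linear independence of $1,t_{N},\ldots,t_{N}^{d}$ to conclude that the coefficient functions $c_{i}$ are continuous polynomial maps in $N-1$ variables, to which the induction hypothesis is applied (the continuity of the $c_{i}$ is asserted there without comment). You instead apply the induction hypothesis directly to the slices $f_{a_k}(x')=f(x',a_k)$ at $d+1$ fixed nodes, which are manifestly continuous and are polynomial maps of degree $\le d$ by translation plus pullback along a semigroup homomorphism (Proposition \ref{prop:homomorphism of commutative semigroups}), and then reconstruct $f$ by Lagrange interpolation in $x_{N}$; this sidesteps any discussion of the coefficient functions (which are in any case linear combinations of your $f_{a_k}$ via the inverse Vandermonde matrix, so the two decompositions are linearly equivalent). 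The other difference is that you explicitly prove the degree assertion in the statement, via the lemma that for an actual polynomial the total degree coincides with the polynomial-map degree (difference operators kill the leading form in one direction; restriction to a ray $t\mapsto tv$ avoiding the zero locus of the leading form in the other), whereas the paper's proof establishes only that $f$ is a polynomial and leaves the identification of $d$ with the total degree implicit; your interpolation formula a priori only gives total degree $\le 2d$, so this extra step is genuinely needed in your route and you supply it correctly.
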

\begin{proof}
We proceed by induction on $N$ with the base case $N=1$ being Theorem
\ref{thm:polynomial in 1 var}. For any $\mathbf{a}\in\mathbb{R}_{\ge0}^{N-1}$
define $f_{\mathbf{a}}(t)=f(\mathbf{a},t)$. Then, $f_{\mathbf{a}}(t)$
is a continuous polynomial map of degree $\le d$ in the variable
$t$. By Theorem \ref{thm:polynomial in 1 var}, we can write $f_{\mathbf{a}}(t)=\sum_{i=0}^{d}c_{i}(\mathbf{a})t^{i}$,
where $c_{i}:\mathbb{R}_{\ge0}^{N-1}\to\mathbb{R}$ are continuous
functions.  Applying the finite forward difference operator of the
form $D_{(x_{1},\ldots,x_{N-1},0)}$ $d+1$ times to
\[
f(t_{1},\ldots,t_{N-1},t_{N})=\sum_{i=0}^{d}c_{i}(t_{1},\ldots,t_{N-1})t^{i},
\]
we have $0=\sum_{i=0}^{d}D^{d+1}c_{i}(t_{1},\ldots,t_{N-1})t_{N}^{i}$.
By the linear independence of geometric progressions $t_{N}^{0},t_{N}^{1},\ldots,t_{N}^{d}$,
$c_{i}(t_{1},\ldots,t_{N-1})$ are continuous polynomial maps of degree
$\le d$. By the induction hypothesis, they are given by polynomials
in the usual sense, so the same is true for $f$. 
\end{proof}
\begin{cor}
\label{cor:polynomial set in N var} The image of any nonconstant
continuous polynomial map $f:\mathbb{R}_{\ge0}^{N}\rightarrow\mathbb{R}$
is an unbounded interval. 
\end{cor}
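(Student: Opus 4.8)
The plan is to reduce everything to the one-variable statement Corollary \ref{cor:polynomial set in 1 var} (equivalently, to elementary facts about real polynomials) together with connectedness of the domain.

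First I would apply Theorem \ref{thm:polynomial in N var} to replace $f$ by a genuine polynomial $f\in\mathbb{R}[x_1,\dots,x_N]$, which is nonconstant by hypothesis. Since $\mathbb{R}_{\ge0}^N$ is connected and $f$ is continuous, the image $f(\mathbb{R}_{\ge0}^N)$ is a connected subset of $\mathbb{R}$, hence an interval; so the only thing left to prove is that this interval is unbounded.

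To get unboundedness I would restrict $f$ to a well-chosen coordinate line. As $f$ is nonconstant, some variable occurs nontrivially; after relabelling I may assume $m:=\deg_{x_N}f\ge 1$ and write $f=\sum_{k=0}^{m}c_k(x_1,\dots,x_{N-1})\,x_N^{k}$ with $c_m\not\equiv 0$. Since a nonzero polynomial cannot vanish identically on the nonempty open set $\mathbb{R}_{>0}^{N-1}$, I can pick $\mathbf{a}\in\mathbb{R}_{>0}^{N-1}$ with $c_m(\mathbf{a})\ne 0$. Then $g(t):=f(\mathbf{a},t)$ is a one-variable real polynomial of degree exactly $m\ge 1$, so $g(\mathbb{R}_{\ge0})$ is unbounded; since $g(\mathbb{R}_{\ge0})\subseteq f(\mathbb{R}_{\ge0}^{N})$, we are done. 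If one prefers to stay inside the formalism of the paper, note that $g$ is itself a continuous polynomial map $\mathbb{R}_{\ge0}\to\mathbb{R}$ — it is a translate of $f$ precomposed with the semigroup homomorphism $t\mapsto(0,\dots,0,t)$, cf.\ Proposition \ref{prop:homomorphism of commutative semigroups} — so Corollary \ref{cor:polynomial set in 1 var} gives that $g(\mathbb{R}_{\ge0})$ is an unbounded interval directly.

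I do not expect any real obstacle here: the corollary is essentially a repackaging of the one-variable result. The only point that deserves a sentence is the choice of $\mathbf{a}$, namely that the leading coefficient $c_m$, being a nonzero polynomial, does not vanish on all of $\mathbb{R}_{>0}^{N-1}$ — which is standard, since the zero locus of a nonzero real polynomial is nowhere dense (indeed has Lebesgue measure zero).
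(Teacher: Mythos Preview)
Your argument is correct and follows the paper's own line: the paper invokes Theorem \ref{thm:polynomial in N var} and then either declares the image ``certainly an unbounded interval'' or suggests reducing to the one-variable case via an embedding $\mathbb{R}_{\ge0}\hookrightarrow\mathbb{R}_{\ge0}^{N}$ and Corollary \ref{cor:polynomial set in 1 var}. You have simply filled in the details the paper leaves implicit --- in particular your care in choosing $\mathbf{a}$ so that the restriction is genuinely nonconstant is a point the paper's terse alternative argument glosses over.
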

\begin{proof}
By Theorem \ref{thm:polynomial in N var}, $f$ must be a polynomial
of degree $\ge1$, whose image $f(\mathbb{R}_{\ge0}^{N})$ is certainly
an unbounded interval in $\mathbb{R}$. Alternatively, one may argue
with the help of the induced continuous polynomial map $\mathbb{R}_{\ge0}\hookrightarrow\mathbb{R}_{\ge0}^{N}\to\mathbb{R}$
and Theorem \ref{thm:polynomial in 1 var}. 
\end{proof}
\begin{cor}
\label{cor:vector of polynomials in N var} Every continuous polynomial
map $f:\mathbb{R}_{\ge0}^{N}\to\mathbb{R}^{M}$ of degree $\le d$
is vector of polynomials in $N$ variables of degree $\le d$. 
\end{cor}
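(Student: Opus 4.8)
The plan is to reduce the vector-valued statement to the scalar-valued Theorem \ref{thm:polynomial in N var} coordinate by coordinate, exactly as in the proof of Corollary \ref{cor:vector of polynomials in 1 var}. Write $\pi_i:\mathbb{R}^M\to\mathbb{R}$ for the projection onto the $i$th coordinate, $1\le i\le M$. Since $\mathbb{R}^M$ and $\mathbb{R}$ are abelian groups written additively, each $\pi_i$ is a group homomorphism, so Proposition \ref{prop:homomorphism of groups} applies to the composite $f_i:=\pi_i\circ f:\mathbb{R}_{\ge0}^{N}\to\mathbb{R}$.

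First I would invoke Proposition \ref{prop:homomorphism of groups} to conclude that each $f_i$ is a polynomial map of degree $\le d$ (it cannot have larger degree, since $f$ itself has degree $\le d$). Continuity of $f_i$ is immediate: it is the composition of the continuous map $f$ with the continuous linear projection $\pi_i$. Hence each $f_i:\mathbb{R}_{\ge0}^{N}\to\mathbb{R}$ is a continuous polynomial map of degree $\le d$.

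Next I would apply Theorem \ref{thm:polynomial in N var} to each $f_i$ separately, which gives that $f_i$ is a polynomial in $N$ variables of degree $\le d$ in the usual sense. Since $f=(f_1,\dots,f_M)$, this exhibits $f$ as a vector of such polynomials, which is exactly the claim. There is no real obstacle here; the only point to keep straight is the (harmless) asymmetry that passing to a quotient or to a homomorphic image can only decrease the degree, so "of degree $\le d$" is preserved without any loss. One could alternatively bound the degrees of the individual $f_i$ from the degrees occurring in $f$, but nothing beyond $\le d$ is asserted, so the one-line argument above suffices.
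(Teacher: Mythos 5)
Your proposal is correct and follows the same route as the paper: the paper's proof likewise projects onto each coordinate via $\pi_i$, notes that the composite $\pi_i\circ f$ is a continuous polynomial map of degree $\le d$, and then applies Theorem \ref{thm:polynomial in N var} coordinatewise.
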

\begin{proof}
The proof is the same as the one of Corollary \ref{cor:vector of polynomials in 1 var}. 
\end{proof}
\begin{thm}
\label{thm:unitriangular matrix of polynomials in N var} Let $f_{i,j}:\mathbb{R}_{\ge0}^{N}\to\mathbb{R}$
with $1\le i<j\le n$ be continuous polynomial maps of degree $\le d_{i,j}$
and $f:\mathbb{R}_{\ge0}^{N}\rightarrow\mathcal{U}_{n}(\mathbb{R})$
be a function with matrix form given by (\ref{eq:matrix form}). Then,
the function $f$ is a continuous polynomial map. 

Conversely, every continuous polynomial map $f:\mathbb{R}_{\ge0}^{N}\rightarrow\mathcal{U}_{n}(\mathbb{R})$
is of this form. 
\end{thm}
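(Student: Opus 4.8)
The plan is to mirror, almost verbatim, the proof of Theorem \ref{thm:unitriangular matrix of polynomials in 1 var}, replacing every appeal to the one-variable results of Section \ref{sec:Continuous Polynomial Maps} by its $N$-variable counterpart; the only genuinely analytic input — that a continuous polynomial map into $\mathbb{R}^{M}$ is an honest polynomial — has already been isolated in Theorem \ref{thm:polynomial in N var} and Corollary \ref{cor:vector of polynomials in N var}.

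For the forward implication, I would observe that $f$ is the ordered elementwise product
\[
f=\prod_{i=1}^{n-1}\prod_{j=i+1}^{n}\bigl(I+f_{i,j}E_{i,j}\bigr),
\]
taken in a fixed order on the index pairs $(i,j)$. Since $E_{i,j}^{2}=0$ for $i\neq j$, the assignment $a\mapsto I+aE_{i,j}$ is a group homomorphism $(\mathbb{R},+)\to\mathcal{U}_{n}(\mathbb{R})$, so by Proposition \ref{prop:homomorphism of groups} each factor $t\mapsto I+f_{i,j}(t)E_{i,j}$ is a polynomial map $\mathbb{R}_{\ge0}^{N}\to\mathcal{U}_{n}(\mathbb{R})$. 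As $\mathcal{U}_{n}(\mathbb{R})$ is nilpotent (of class $\le n-1$) by Lemma \ref{lem:properties of U_n(R)}, Theorem \ref{thm:product of two polynomial maps} shows the product $f$ is a polynomial map; and $f$ is continuous because each $f_{i,j}$ is continuous (indeed a polynomial, by Theorem \ref{thm:polynomial in N var}) and matrix multiplication is continuous.

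For the converse, I would run the same downward induction on the lc-height $k$ of $f$, i.e.\ on the largest $k$ with $f(\mathbb{R}_{\ge0}^{N})\subseteq C^{k}\mathcal{U}_{n}(\mathbb{R})=\mathcal{U}_{n,k}(\mathbb{R})$ (Corollary \ref{cor:C^k(U_n(R))=00003DU_=00007Bn,k=00007D(R)}). The base cases $k=n$ and $k=n-1$ are handled exactly as before. For the step, I apply the group homomorphism $\phi_{k}$ of Lemma \ref{lem:properties of U_n(R)} to $f$ to extract its $k$th diagonal entries, obtaining a continuous polynomial map $\mathbb{R}_{\ge0}^{N}\to(\mathbb{R}^{n-k},+)$; Corollary \ref{cor:vector of polynomials in N var} then forces these entries to be genuine polynomials in $N$ variables. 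Left-multiplying $f$ by the inverse of the unitriangular matrix carrying precisely those $k$th diagonal entries produces a continuous polynomial map with image in $\mathcal{U}_{n,k+1}(\mathbb{R})$, to which the induction hypothesis applies; the remaining entries $f_{i,j}$ with $j>i+k$ are then polynomials, since each differs from a now-known polynomial entry of that product by a sum of products of polynomials already shown to be polynomials, exactly as in the one-variable computation.

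I do not anticipate a genuine obstacle: the substantive work was done in Theorems \ref{thm:polynomial in 1 var} and \ref{thm:polynomial in N var} and their corollaries. The only point requiring care is the combinatorial bookkeeping in the downward induction — identifying which off-diagonal band is cleared at each stage and checking that the left-multiplication strictly raises the lc-height — but this is purely formal and identical to the $N=1$ case of Theorem \ref{thm:unitriangular matrix of polynomials in 1 var}.
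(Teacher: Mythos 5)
Your proposal is correct and is essentially the paper's own argument: the paper proves the $N$-variable case by repeating the proof of Theorem \ref{thm:unitriangular matrix of polynomials in 1 var} verbatim, substituting Theorem \ref{thm:polynomial in N var} and Corollary \ref{cor:vector of polynomials in N var} for their one-variable counterparts, exactly as you do (your use of the homomorphism $a\mapsto I+aE_{i,j}$ and of the genuine inverse of the band matrix instead of $I-\sum f_{i,i+k}E_{i,i+k}$ are only cosmetic variations). No gap to report.
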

\begin{proof}
The proof is the same as the one of Theorem \ref{thm:unitriangular matrix of polynomials in 1 var},
except that one needs to replace Theorem \ref{thm:polynomial in 1 var}
by Theorem \ref{thm:polynomial in N var}, and Corollary \ref{cor:vector of polynomials in 1 var}
by Corollary \ref{cor:vector of polynomials in N var}. 
\end{proof}

\section{Estimation of the Degree \label{sec:Degree}}

The most important quantity of a polynomial map is its (lc-)degree.
The first attempt has been given in Corollary \ref{cor:lc-degree <=00003D a superadditive vector}
via superadditive vectors, but it is not good enough. So we will try
to estimate them by working out a formula for the lower and upper
bounds of the (lc-)degree, in particular, of polynomial maps of the
form $\mathbb{R}_{\ge0}^{N}\to\mathcal{U}_{n}(\mathbb{R})$. 

Notice that in the first part of Theorem \ref{thm:unitriangular matrix of polynomials in 1 var},
we do not give any information about the degree $d$ of the polynomial
map $f$, which should be closely related to the degrees $d_{i,j}$
for all $1\le i<j\le n$. Notice that $I+f_{i,i+k}E_{i,i+k}$ is a
polynomial map of lc-degree 
\[
(-\infty,\cdots,-\infty,d_{i,i+k},\ldots,d_{i,i+k})\in\mathbb{Z}_{*}^{n-1},
\]
where the first $d_{i,i+k}$ appears in the $k$th entry, as its image
lies in $\mathcal{U}_{n,k}(\mathbb{R})=C^{k}\mathcal{U}_{n}(\mathbb{R})$.
Then, there exists a least superadditive vector above all of the lc-degree,
which should be an upper bound of the (lc)-degree of $f$ in view
of Corollary \ref{cor:lc-degree <=00003D a superadditive vector}.
But this estimation is quite coarse. 

For a better understanding, two motivating examples are provided.
They are the continuous Heisenberg group $H_{3}(\mathbb{R})=\mathcal{U}_{3}(\mathbb{R})$
and the nilpotent Lie group $\mathcal{U}_{4}(\mathbb{R})$. 
\begin{example}
Let $f_{i,j}:\mathbb{R}_{\ge0}\to\mathbb{R}$ be continuous polynomial
maps of degree $d_{i,j}$ and set 
\[
f_{3}=:\begin{pmatrix}1 & f_{1,2} & f_{1,3}\\
0 & 1 & f_{2,3}\\
0 & 0 & 1
\end{pmatrix}:\mathbb{R}_{\ge0}\rightarrow H_{3}(\mathbb{R}),\qquad f_{4}:=\begin{pmatrix}1 & f_{1,2} & f_{1,3} & f_{1,4}\\
0 & 1 & f_{2,3} & f_{2,4}\\
0 & 0 & 1 & f_{3,4}\\
0 & 0 & 0 & 1
\end{pmatrix}:\mathbb{R}_{\ge0}\rightarrow\mathcal{U}_{4}(\mathbb{R}).
\]
Then, $f_{3}$ is a continuous polynomial map of degree $\le\max\{d_{1,3},d_{1,2}+d_{2,3}\}$
and $\ge\max\{f_{1,2},f_{2,3}\}$, and $f_{4}$ is a continuous polynomial
map of degree $\le\max\{d_{1,4},d_{1,2}+d_{2,4},d_{1,3}+d_{3,4},d_{1,2}+d_{2,3}+d_{3,4}\}$
and $\ge\max\{d_{1,2},d_{2,3},d_{3,4}\}$. 
\end{example}
\begin{thm}
\label{thm: bound degree 1} Let $f$ be as in Theorem \ref{thm:unitriangular matrix of polynomials in 1 var}.
Then, $f$ is a polynomial map of degree bounded below by 
\begin{equation}
\max\{d_{k,k+1}\mid1\le k\le n-1\},\label{eq:lower bound degree}
\end{equation}
and bounded above by 
\begin{equation}
\max\left\{ d_{k_{1},k_{2}}+\cdots+d_{k_{n-1},k_{n}}\mid1=k_{1}\le k_{2}\le\cdots\le k_{n-1}\le k_{n}=n\right\} ,\label{eq:upper bound degree}
\end{equation}
where $d_{i,j}$ is defined to be $0$ if $i=j$. 
\end{thm}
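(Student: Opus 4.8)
The plan is to extract each bound from the matrix structure of $f$ using the homomorphisms $\phi_k$ and the filtration $\mathcal{U}_{n,k}(\mathbb{R}) = C^k\mathcal{U}_n(\mathbb{R})$ identified in Corollary \ref{cor:C^k(U_n(R))=00003DU_=00007Bn,k=00007D(R)}. For the lower bound \eqref{eq:lower bound degree}, I would compose $f$ with the group homomorphism $\phi_1:\mathcal{U}_n(\mathbb{R})\to(\mathbb{R}^{n-1},+)$ that reads off the first superdiagonal. By Proposition \ref{prop:homomorphism of groups} this composition is a polynomial map of degree $\le \deg f$, and by Corollary \ref{cor:vector of polynomials in N var} (together with Theorem \ref{thm:polynomial in 1 var}) it is the vector of polynomials $(f_{1,2},f_{2,3},\ldots,f_{n-1,n})$, whose degree is exactly $\max_k d_{k,k+1}$. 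Hence $\deg f \ge \max_k d_{k,k+1}$.

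For the upper bound \eqref{eq:upper bound degree}, I would use the factorization already recorded in the proof of Theorem \ref{thm:unitriangular matrix of polynomials in 1 var}, namely $f = \prod_{i<j}(I + f_{i,j}E_{i,j})$ in a fixed order, and combine it with the lc-degree bookkeeping from Corollary \ref{cor:lc-degree <=00003D a superadditive vector}. Each factor $I + f_{i,j}E_{i,j}$ has image in $\mathcal{U}_{n,j-i}(\mathbb{R}) = C^{j-i}\mathcal{U}_n(\mathbb{R})$, so it is a polynomial map of lc-height $\ge j-i$ and degree $\le d_{i,j}$; thus its lc-degree is $\le (\underbrace{-\infty,\ldots,-\infty}_{j-i-1}, d_{i,j},\ldots,d_{i,j})$. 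The key combinatorial point is: if a map has lc-degree $\hat d$ and lc-height $\ge p$ (so $\hat d_m \le \hat d$-degree only for $m\ge p$), then under elementwise multiplication the degrees in level $\ell$ of the lower central series add only among factors whose lc-height $p_1,\ldots,p_r$ satisfy $p_1 + \cdots + p_r \le \ell$ — this is exactly the superadditivity constraint "$d_i + d_j \le d_{i+j}$" being exploited in reverse. Concretely, the degree of $f \bmod C^{n}\mathcal{U}_n(\mathbb{R}) = f$ (the group has class $n-1$, so $C^n = \{I\}$) is bounded by the maximum, over all ways of writing a "staircase" $1 = k_1 \le k_2 \le \cdots \le k_n = n$, of the sum $\sum_{t} d_{k_t,k_{t+1}}$: each step of size $k_{t+1}-k_t$ contributes a factor living at lc-height $k_{t+1}-k_t$, and the total lc-height along the staircase is $n-1 < n$, so these degrees genuinely add. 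I would formalize this by induction on $n$, peeling off the last superdiagonal via $\phi_{n-1}$: the quotient $f \bmod C^{n-1}\mathcal{U}_n(\mathbb{R})$ is an $\mathcal{U}_{n-1}$-type map to which induction applies, and the top-level correction is controlled by expanding $[T_{i,j}(a),T_{j,l}(b)] = T_{i,l}(ab)$ from Lemma \ref{lem:properties of unitriangular matrices}\eqref{enu:distinct i,j,l}, whose right side has degree $\le d_{i,j}+d_{j,l}$ by Theorem \ref{thm:product of two polynomial maps} and the remark on degrees of products after it.

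The main obstacle I anticipate is the upper bound, specifically making the degree-addition argument precise: Theorem \ref{thm:product of two polynomial maps} guarantees that the product of two polynomial maps into a nilpotent group is polynomial but gives \emph{no} control on its degree, and the naive bound "$\deg(ff') \le \deg f + \deg f'$" is false in general (cf. the Fibonacci Example \ref{exa:Fibonacci}). What saves the situation here is that all but the top-level contributions come from factors of high lc-height, so the commutator corrections generated when reordering the product $\prod_{i<j}(I+f_{i,j}E_{i,j})$ land in deeper terms of the lower central series and are absorbed by the staircase maximum rather than increasing it. So the real work is a careful induction that simultaneously tracks, for every $\ell = 1,\ldots,n-1$, the degree of $f \bmod C^{\ell+1}\mathcal{U}_n(\mathbb{R})$ and shows it is $\le \max\{\sum_{t=1}^{\ell} d_{k_t,k_{t+1}} : 1 = k_1 \le \cdots \le k_{\ell+1} = \ell+1\}$ — equivalently, bounding the whole lc-degree vector of $f$ by the componentwise staircase maxima and then reading off the last component. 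The lower bound, by contrast, is essentially immediate once $\phi_1$ is invoked.
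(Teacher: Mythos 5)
Your lower-bound argument is correct and is exactly the paper's: compose with $\phi_{1}$ and use Proposition \ref{prop:homomorphism of groups}, so nothing to add there.

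The upper bound, however, has a genuine gap: the step you yourself flag as ``the real work'' --- showing that the entry-degrees add only along staircases when you multiply the elementary factors and reorder --- is precisely the content of the theorem, and the tools you cite cannot deliver it. Theorem \ref{thm:product of two polynomial maps} gives no degree information at all, and Corollary \ref{cor:lc-degree <=00003D a superadditive vector} only controls products whose lc-degrees are bounded by a \emph{superadditive} vector; the componentwise staircase-maximum vector is in general not superadditive, so that corollary can only give the (strictly weaker) bound by the least superadditive majorant, which the paper itself dismisses as too coarse at the start of Section \ref{sec:Degree}. Concretely, for $n=3$ with $d_{1,2}=5$, $d_{2,3}=d_{1,3}=0$ the staircase maxima are $(5,5)$, which is not superadditive ($5+5\not\le 5$), and the smallest superadditive vector above it is $(5,10)$, whereas the claimed upper bound is $5$. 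So your plan would need a new, sharper product/commutator degree lemma (essentially Corollary \ref{cor:upper bound product}, which in the paper is \emph{deduced from} this theorem), making the argument circular as sketched. A secondary issue: $f\bmod C^{n-1}\mathcal{U}_{n}(\mathbb{R})$ is the quotient by the one-dimensional center, not a map into $\mathcal{U}_{n-1}(\mathbb{R})$, so the ``peel off $\phi_{n-1}$ and induct on $n$'' step also needs repair (e.g.\ via the two corner-deletion homomorphisms, but then bounding $\deg f$ from $\deg(f\bmod Z)$ requires exhibiting a central-valued remainder of controlled degree, which is again the hard part). The paper avoids all of this by inducting on the degree $d$ of $f$ itself: it writes $f(s+t)f(t)^{-1}$ and $f(t)^{-1}f(s+t)$ explicitly using $(I+T_{u})^{-1}=I+\sum_{i\ge1}(-T_{u})^{i}$ (Lemma \ref{lem:properties of unitriangular matrices}), observes that every $(i,j)$-entry of these differences is a polynomial of degree at most the corresponding staircase maximum minus one (because exactly one factor in each monomial is a difference $f_{i,k}(s+t)-f_{i,k}(t)$, which drops the degree in $t$ by one), and then applies the induction hypothesis to these difference maps. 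If you want to salvage your route, you would have to prove that quantitative entrywise statement anyway, at which point you are running the paper's computation.
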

\begin{proof}
Denote $f_{i,j}(s+t)$ by $s_{i,j}$ and $f_{i,j}(t)$ by $t_{i,j}$,
set $s_{i,i}=t_{i,i}=1$, and write $S=I+S_{u}=f(s+t)$ and $T=I+T_{u}=f(t)$,
where $S_{u}$ (resp. $T_{u}$) has entries given by $s_{i,j}$ (resp.
$t_{i,j}$). 

Suppose that $f$ is a polynomial map of degree $d$. By (\ref{enu:phi_k homomorphism})
in Lemma \ref{lem:properties of U_n(R)}, $\phi_{1}$ is a homomorphism.
By Proposition \ref{prop:direct sum}, the induced polynomial map
$\phi_{1}\circ f=(f_{1,2},\ldots,f_{n-1,n}):\mathbb{R}_{\ge0}\rightarrow(\mathbb{R}^{n-1},+)$
is a polynomial of degree $\max\{d_{k,k+1}\mid1\le k\le n-1\}$, which
by Proposition \ref{prop:homomorphism of groups} is at most $d$. 

The proof for the upper bound is given by induction on $d$. The case
when $d\le0$ is trivial.  For the induction step, in view of (\ref{enu:inverse of T})
in Lemma \ref{lem:properties of U_n(R)}, we see that $L:=f(s+t)f^{-1}(t)$
is given by  
\begin{align*}
(I+S_{u})(I+T_{u})^{-1} & =I+\sum_{i=1}^{n}(-1)^{i}(T_{u}-S_{u})T_{u}^{i-1},
\end{align*}
and that $R:=f(t)^{-1}f(s+t)$ is given by  
\begin{align*}
(I+T_{u})^{-1}(I+S_{u}) & =I+\sum_{i=1}^{n}(-1)^{i}T_{u}^{i-1}(T_{u}-S_{u}).
\end{align*}
Since each entry of the matrix $L$ (resp. $R$) is a polynomial,
the crux of the proof is to find its expression and estimate the upper
bound of its degree. 

Notice that these two equations are similar to the inverse of $T$
as in Lemma \ref{lem:properties of U_n(R)}, except that one replaces
the first (resp. last) $T_{u}$ by $T_{u}-S_{u}$. Since $T_{u}$
is strictly upper triangular and nilpotent with index $\le n$, one
could easily write down the expression of each entry of $T^{-1}$.
Indeed, for example, the first diagonal entries of $T^{-1}$ are given
by $(-t_{1,2},-t_{2,3},\ldots,-t_{n-1,n})$, and the second diagonal
entries of $T^{-1}$ are given by $(-t_{1,3}+t_{1,2}t_{2,3},-t_{2,4}+t_{2,3}t_{3,4},\ldots,-t_{n-2,n}+t_{n-2,n-1}t_{n-1,n})$.
In general, the $(i,j)$-entry in the $(j-i)$th diagonal entries
of $T^{-1}$ is given by 
\[
-t_{i,j}+\sum_{i<k<j}t_{i,k}t_{k,j}-\sum_{i<k_{1}<k_{2}<j}t_{i,k_{1}}t_{k_{1},k_{2}}t_{k_{2},j}+\cdots.
\]
Similarly, the first diagonal entries of $L$ and $R$ are given by
$s_{i,i+1}-t_{i,i+1}$, and the second diagonal entries of $L$ (resp.
$R$) are given by 
\[
(-(t_{1,3}-s_{1,3})+(t_{1,2}-s_{1,2})t_{2,3},\ldots,s_{n-2,n}-t_{n-2,n}+(t_{n-2,n-1}-s_{n-2,n-1})t_{n-1,n}),
\]
\[
\text{(resp. }(-(t_{1,3}-s_{1,3})+t_{1,2}(t_{2,3}-s_{2,3}),\ldots,s_{n-2,n}-t_{n-2,n}+t_{n-2,n-1}(t_{n-1,n}-s_{n-1,n}))\text{).}
\]
In general, the $(i,j)$-entry in the $(j-i)$th diagonal entries
$L$ (resp. $R$) is given by 
\[
-(t_{i,j}-s_{i,j})+\sum_{i<k<j}(t_{i,k}-s_{i,k})t_{k,j}-\sum_{i<k_{1}<k_{2}<j}(t_{i,k_{1}}-s_{i,k_{1}})t_{k_{1},k_{2}}t_{k_{2},j}+\cdots,
\]
\[
\left(\text{resp. }-(t_{i,j}-s_{i,j})+\sum_{i<k<j}t_{i,k}(t_{k,j}-s_{k,j})-\sum_{i<k_{1}<k_{2}<j}t_{i,k_{1}}t_{k_{1},k_{2}}(t_{k_{2},j}-s_{k_{2},j})+\cdots\right).
\]
Then, the degree of the $(i,j)$-entry in the $(j-i)$th diagonal
entries of $L$ and $R$ is 
\[
\le e_{i,j}:=\max_{i=k_{1}\le k_{2}\le\cdots\le k_{j-i}\le k_{j-i+1}=j}\left\{ d_{k_{1},k_{2}}+\cdots+d_{k_{j-i},k_{j-i+1}}\right\} -1,
\]
which is $1$ less than the degree of the $(i,j)$-entry in the $(j-i)$th
diagonal entries of $T^{-1}$. 

Since $L$ and $R$ are polynomial maps of degree $d-1$, by the induction
hypothesis, we can apply the upper bounds (\ref{eq:upper bound degree})
to $L$ and $R$, and obtain that 
\begin{align*}
d-1\le & \max\left\{ e_{j_{1},j_{2}}+\cdots+e_{j_{n-1},j_{n}}\mid1=j_{1}\le j_{2}\le\cdots\le j_{n-1}\le j_{n}=n\right\} \\
\le & \max_{1=j_{1}\le j_{2}\le\cdots\le j_{n-1}\le j_{n}=n}\left\{ \max_{j_{1}=k_{1}\le k_{2}\le\cdots\le k_{j_{2}-1}\le k_{j_{2}}=j_{2}}\left\{ d_{k_{1},k_{2}}+\cdots+d_{k_{j_{2}-1},k_{j_{2}}}\right\} -1+\cdots\right.\\
 & \left.+\max_{j_{n-1}=k_{1}\le k_{2}\le\cdots\le k_{j_{n}-j_{n-1}}\le k_{j_{n}-j_{n-1}+1}=j_{n}}\left\{ d_{k_{1},k_{2}}+\cdots+d_{k_{j_{n}-j_{n-1}},k_{j_{n}-j_{n-1}+1}}\right\} -1\right\} \\
\le & \max\left\{ d_{k_{1},k_{2}}+\cdots+d_{k_{n-1},k_{n}}\mid1=k_{1}\le k_{2}\le\cdots\le k_{n-1}\le k_{n}=n\right\} -1.
\end{align*}
Therefore, we have $d\le\max\{d_{k_{1},k_{2}}+\cdots+d_{k_{n-1},k_{n}}\mid1=k_{1}\le k_{2}\le\cdots\le k_{n-1}\le k_{n}=n\}$. 
\end{proof}
\begin{rem*}
Generically, the degree $d$ should achieve this upper bound. But
as one can see from the proof, there is a rare possibility that some
polynomials might cancel with each other when the coefficients of
these polynomials satisfy a certain system of nontrivial polynomial
equations, so that is why inequality (\ref{eq:upper bound degree})
only gives an upper bound. 
\end{rem*}
 Here is an example in which the actual degree of the polynomial
map can be much smaller than this upper bound. A one-parameter subgroup
in a topological group $G$ is a continuous group homomorphism $\varphi:(\mathbb{R},+)\rightarrow G$.
 If $\varphi$ is injective, then the image $\varphi(\mathbb{R})$
will be a subgroup isomorphic to $\mathbb{R}$ as an additive group.
Typically, trivial homomorphisms are not considered to be one-parameter
subgroups. So one-parameter subgroups are polynomial maps of degree
$1$. 
\begin{example*}
 Then, $f(t)=e^{tA}:\mathbb{R}\to\mathcal{U}_{n}(\mathbb{R})$, where
 $A$ is a strictly upper triangular $n\times n$ matrix in $M_{n}(\mathbb{R})$,
is a one-parameter subgroup in $\mathcal{U}_{n}(\mathbb{R})$. For
each $1\le k\le n-1$, the $k$th diagonal entries $f_{i,i+k}$ are
polynomials of degree $\le d_{i,i+k}=k$. Hence, the inequality (\ref{eq:upper bound degree})
yields an upper bound  
\[
\max\left\{ d_{k_{1},k_{2}}+\cdots+d_{k_{n-1},k_{n}}\mid1=k_{1}\le k_{2}\le\cdots\le k_{n-1}\le k_{n}=n\right\} =n-1.
\]
\end{example*}
As mentioned before, the degree of the product of two polynomial maps
is quite mysterious in Theorem \ref{thm:product of two polynomial maps}.
But with the help of Theorem \ref{thm: bound degree 1}, we can say
more about this. 
\begin{cor}
\label{cor:upper bound product} Let $f,f':\mathbb{R}_{\ge0}\to\mathcal{U}_{n}(\mathbb{R})$
be two polynomial maps of degree $\le d$ and $\le d'$ respectively,
where $d$ and $d'$ are the upper bounds given by (\ref{eq:upper bound degree}).
Then, the product $ff':\mathbb{R}_{\ge0}\to\mathcal{U}_{n}(\mathbb{R})$
is a polynomial map of degree $\le d+d'$.
\end{cor}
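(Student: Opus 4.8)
The plan is to push everything through Theorem~\ref{thm: bound degree 1}, applied to the product $ff'$, and then settle a purely combinatorial inequality about weighted chains. Write $f=(f_{i,j})$ and $f'=(f'_{i,j})$ in the matrix form (\ref{eq:matrix form}); by Theorem~\ref{thm:unitriangular matrix of polynomials in 1 var} the entries are honest polynomials, say of degrees $d_{i,j}$ and $d'_{i,j}$, and, with the conventions of Theorem~\ref{thm: bound degree 1} ($d_{i,i}=d'_{i,i}=0$, and a vanishing entry counted with degree $0$ rather than $-\infty$), $d$ and $d'$ are the maxima (\ref{eq:upper bound degree}) formed from these. Since $\mathcal{U}_n(\mathbb{R})$ is nilpotent (Corollary~\ref{cor:C^k(U_n(R))=00003DU_=00007Bn,k=00007D(R)}), Theorem~\ref{thm:product of two polynomial maps} shows $ff'$ is a (continuous) polynomial map, and its matrix entries are $(ff')_{i,j}=\sum_{i\le k\le j}f_{i,k}f'_{k,j}$, a polynomial of degree at most $D_{i,j}:=\max_{i\le k\le j}(d_{i,k}+d'_{k,j})$, with $D_{i,i}=0$. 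Feeding this into Theorem~\ref{thm: bound degree 1} applied to $ff'$ gives
\[
\deg(ff')\le\max\{D_{k_1,k_2}+\cdots+D_{k_{n-1},k_n}\mid 1=k_1\le k_2\le\cdots\le k_n=n\},
\]
so it suffices to bound the right-hand side by $d+d'$.

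For this I would set up some combinatorial machinery. For $i\le j$ let $\delta_{i,j}$ be the maximum of $d_{j_1,j_2}+\cdots+d_{j_{r-1},j_r}$ over all chains $i=j_1\le\cdots\le j_r=j$ (so $\delta_{i,i}=0$), and define $\delta'_{i,j}$ from the $d'$'s in the same way. Because every $d_{i,j},d'_{i,j}$ with $i\le j$ is $\ge 0$, each $\delta$ is nonnegative, superadditive ($\delta_{i,k}+\delta_{k,j}\le\delta_{i,j}$ for $i\le k\le j$, by concatenating chains), and monotone ($\delta_{i',j'}\le\delta_{i,j}$ whenever $i\le i'\le j'\le j$, by padding with $\delta_{i,i'}$ and $\delta_{j',j}$); and, padding or pruning repeated indices, $\delta_{1,n}=d$ and $\delta'_{1,n}=d'$. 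Finally $D_{i,j}\le\max_{i\le k\le j}(\delta_{i,k}+\delta'_{k,j})$, since $d_{i,j}\le\delta_{i,j}$ and $d'_{i,j}\le\delta'_{i,j}$.

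It then remains to show $D_{k_1,k_2}+\cdots+D_{k_{n-1},k_n}\le d+d'$ for every chain $1=k_1\le\cdots\le k_n=n$. For each $m$ choose $q_m\in[k_m,k_{m+1}]$ with $D_{k_m,k_{m+1}}\le\delta_{k_m,q_m}+\delta'_{q_m,k_{m+1}}$, so the sum is at most $\sum_m\delta_{k_m,q_m}+\sum_m\delta'_{q_m,k_{m+1}}$. Into the first sum insert the nonnegative terms $\delta_{q_m,k_{m+1}}$ and telescope by superadditivity along $k_1\le q_1\le k_2\le q_2\le\cdots\le q_{n-1}\le k_n$ to get $\sum_m\delta_{k_m,q_m}\le\delta_{k_1,q_{n-1}}\le\delta_{1,n}=d$; symmetrically, inserting the terms $\delta'_{k_{m+1},q_{m+1}}$ and using $q_1\ge k_1=1$, $\sum_m\delta'_{q_m,k_{m+1}}\le\delta'_{q_1,k_n}\le\delta'_{1,n}=d'$. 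Hence $D_{k_1,k_2}+\cdots+D_{k_{n-1},k_n}\le d+d'$, and taking the maximum over chains yields $\deg(ff')\le d+d'$.

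The heart of the argument, and the only place requiring real care, is this combinatorial collapse of the nested maximum $\max_{\text{chains}}\sum_m\max_k(d_{k_m,k}+d'_{k,k_{m+1}})$ down to $d+d'$, which rides entirely on the superadditivity and monotonicity of the longest-chain functions $\delta,\delta'$; everything else is matrix bookkeeping and direct appeals to Theorems~\ref{thm:product of two polynomial maps} and \ref{thm: bound degree 1}. The one subtlety worth flagging is that these upper bounds behave well only if vanishing matrix entries contribute degree $0$ rather than $-\infty$, consistent with the conventions already in force for Theorem~\ref{thm: bound degree 1}.
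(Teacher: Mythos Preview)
Your proof is correct and follows essentially the same route as the paper's: compute the entry degrees $D_{i,j}=\max_{i\le k\le j}(d_{i,k}+d'_{k,j})$ of $ff'$, apply Theorem~\ref{thm: bound degree 1} to $ff'$, and then bound the resulting chain-maximum by $d+d'$ via the same ``insert nonnegative terms and collapse the chain'' manoeuvre. The only presentational difference is that you package the combinatorics through the auxiliary chain-max functions $\delta,\delta'$ and appeal to their superadditivity and monotonicity, whereas the paper writes out the inserted terms $d_{l_m,k_{m+1}}$ and $d'_{k_m,l_m}$ explicitly and bounds each completed chain directly by $d$ and $d'$; the underlying inequality is identical. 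Your explicit flag that the argument requires treating vanishing entries as degree $0$ (so that the inserted terms are nonnegative) is a point the paper leaves implicit.
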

\begin{proof}
The $(i,j)$-entry of $f(t)f'(t)$ is given by $\sum_{i\le k\le j}f_{i,k}(t)f_{k,j}'(t)$,
which has degree 
\[
\le e_{i,j}:=\max\{d_{i,k}+d_{k,j}'\mid i\le k\le j\}.
\]
Hence, by Theorem \ref{thm: bound degree 1}, $ff'$ is a polynomial
map of degree $\le$ 
\begin{align*}
 & \max\left\{ e_{k_{1},k_{2}}+\cdots+e_{k_{n-1},k_{n}}\mid1=k_{1}\le k_{2}\le\cdots\le k_{n-1}\le k_{n}=n\right\} \\
\le & \max_{1=k_{1}\le k_{2}\le\cdots\le k_{n-1}\le k_{n}=n}\left\{ \max_{k_{1}\le l_{1}\le k_{2}}\{d_{k_{1},l_{1}}+d_{l_{1},k_{2}}'\}+\cdots+\max_{k_{n-1}\le l_{n-1}\le k_{n}}\{d_{k_{n-1},l_{n-1}}+d_{l_{n-1},k_{n}}'\}\right\} \\
\le & \max_{1=k_{1}\le k_{2}\le\cdots\le k_{n-1}\le k_{n}=n}\left\{ d_{k_{1},k_{2}}+\cdots+d_{k_{n-1},k_{n}}\right\} +\max_{1=k_{1}\le k_{2}\le\cdots\le k_{n-1}\le k_{n}=n}\left\{ d_{k_{1},k_{2}}'+\cdots+d_{k_{n-1},k_{n}}'\right\} ,
\end{align*}
where the last inequality holds for the following reason. For each
possible choice of $l_{1},\ldots,l_{n-1}$, such that $1=k_{1}\le l_{1}\le k_{2}\le l_{2}\le k_{3}<\ldots<k_{n-1}\le l_{n-1}\le k_{n}=n$,
we have 
\begin{align*}
 & d_{k_{1},l_{1}}+d_{l_{1},k_{2}}'+\cdots+d_{k_{n-1},l_{n-1}}+d_{l_{n-1},k_{n}}'=d_{k_{1},l_{1}}+\cdots+d_{k_{n-1},l_{n-1}}+d_{l_{1},k_{2}}'+\cdots+d_{l_{n-1},k_{n}}'\\
\\
\le & d_{k_{1},l_{1}}+d_{l_{1},k_{2}}+\cdots+d_{k_{n-1},l_{n-1}}+d_{l_{n-1},k_{n}}+d_{k_{1},l_{1}}'+d_{l_{1},k_{2}}'+\cdots+d_{k_{n-1},l_{n-1}}'+d_{l_{n-1},k_{n}}'\\
\le & \max_{1=k_{1}\le k_{2}\le\cdots\le k_{n-1}\le k_{n}=n}\left\{ d_{k_{1},k_{2}}+\cdots+d_{k_{n-1},k_{n}}\right\} +\max_{1=k_{1}\le k_{2}\le\cdots\le k_{n-1}\le k_{n}=n}\left\{ d_{k_{1},k_{2}}'+\cdots+d_{k_{n-1},k_{n}}'\right\} .
\end{align*}
Hence, the proof is complete, since the last row is nothing but $d+d'$. 
\end{proof}
 In the same manner, we can talk more about the lc-degree of $f$.

\begin{thm}
\label{thm: bound lc-degree 1} Let $f$ be as in Theorem \ref{thm:unitriangular matrix of polynomials in 1 var}
and $\hat{d}=(d_{1},d_{2},\ldots,d_{n-1})$ be the lc-degree of $f$.
Then, 
\begin{equation}
\begin{cases}
-\infty\le d_{1}=\max\{d_{k,k+1}\mid1\le k\le n-1\},\\
d_{i-1}\le d_{i}\le\max\left\{ d_{k_{1},k_{2}}+\cdots+d_{k_{i},k_{i+1}}\right|\\
\left.\qquad\qquad\qquad\quad1\le k_{1}\le k_{2}\le\cdots\le k_{i}\le k_{i+1}=k_{1}+i\le n\right\} , & 2\le i\le n-1,
\end{cases}\label{eq:bound lc-degree}
\end{equation}
where $d_{i,j}$ is defined to be $0$ if $i=j$. In particular,
when $i=n-1$, we obtain the same upper bound for the degree of $f$
as in Theorem \ref{thm: bound degree 1}. 
\end{thm}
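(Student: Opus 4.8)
The plan is to treat the three assertions separately; only the upper bound on $d_i$ requires real work. For the value of $d_1$: by Corollary \ref{cor:C^k(U_n(R))=00003DU_=00007Bn,k=00007D(R)} we have $C^2\mathcal{U}_n(\mathbb{R})=\mathcal{U}_{n,2}(\mathbb{R})=\ker\phi_1$, so under the isomorphism $\mathcal{U}_n(\mathbb{R})/C^2\mathcal{U}_n(\mathbb{R})\cong(\mathbb{R}^{n-1},+)$ induced by $\phi_1$ the reduction $f\bmod C^2\mathcal{U}_n(\mathbb{R})$ becomes $\phi_1\circ f=(f_{1,2},f_{2,3},\dots,f_{n-1,n})$. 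This is a continuous polynomial map into an abelian group, hence by Corollary \ref{cor:vector of polynomials in 1 var} a vector of polynomials, and its degree is the maximum of the component degrees, namely $\max\{d_{k,k+1}\mid 1\le k\le n-1\}$ (equal to $-\infty$ when all $f_{k,k+1}$ vanish); thus $d_1=\max\{d_{k,k+1}\mid 1\le k\le n-1\}$. The inequalities $d_{i-1}\le d_i$ are the monotonicity of the lc-degree recorded after its definition: since $C^iG\supseteq C^{i+1}G$, the reduction $f\bmod C^iG$ factors through $f\bmod C^{i+1}G$, so Proposition \ref{prop:homomorphism of groups} gives $\deg(f\bmod C^iG)\le\deg(f\bmod C^{i+1}G)$.

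For the upper bound on $d_i$, I would imitate the induction in the proof of Theorem \ref{thm: bound degree 1} but carry everything out modulo $C^{i+1}\mathcal{U}_n(\mathbb{R})=\mathcal{U}_{n,i+1}(\mathbb{R})$. The point that makes this possible is that the first $i$ diagonals of a product of two matrices in $\mathcal{U}_n(\mathbb{R})$ depend only on the first $i$ diagonals of the factors, which is exactly why $\mathcal{U}_{n,i+1}(\mathbb{R})$ is normal (Lemma \ref{lem:properties of U_n(R)}); consequently $f\bmod C^{i+1}\mathcal{U}_n(\mathbb{R})$ is faithfully encoded by the truncated array $(f_{a,b})_{b-a\le i}$, and its difference operators $L_s$ and $R_s$ are obtained by truncating to the first $i$ diagonals the matrices $L=f(s+t)f(t)^{-1}$ and $R=f(t)^{-1}f(s+t)$ already analyzed in the proof of Theorem \ref{thm: bound degree 1}. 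The induction is on $d_i$; for $d_i\le 0$ the map $f\bmod C^{i+1}\mathcal{U}_n(\mathbb{R})$ is constant and there is nothing to prove. For the step, the entry estimate from the proof of Theorem \ref{thm: bound degree 1} gives that for $b-a=m\le i$ the $(a,b)$-entries of $L$ and $R$ are polynomials of degree at most $e_{a,b}:=\max\{d_{k_1,k_2}+\cdots+d_{k_m,k_{m+1}}\mid a=k_1\le\cdots\le k_{m+1}=b\}-1$, with the convention $e_{a,a}=d_{a,a}=0$; since $L_s(f)$ and $R_s(f)$ have $i$th lc-degree component strictly less than $d_i$, the induction hypothesis applies to them and shows that their reductions mod $C^{i+1}\mathcal{U}_n(\mathbb{R})$ have degree at most the right-hand side of $(\ref{eq:bound lc-degree})$ for level $i$ evaluated with the array $e$ in place of $d$. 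Hence $d_i$ is at most one more than that quantity.

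It then remains to check the combinatorial inequality that $(\ref{eq:bound lc-degree})$ at level $i$ evaluated at $e$ is at most its value at $d$ minus one. Given an admissible chain $1\le k_1\le\cdots\le k_{i+1}=k_1+i\le n$, each term $e_{k_\ell,k_{\ell+1}}$ with $k_\ell<k_{\ell+1}$ equals a maximizing weakly increasing $d$-chain from $k_\ell$ to $k_{\ell+1}$ of length $k_{\ell+1}-k_\ell$, minus one, while the terms with $k_\ell=k_{\ell+1}$ contribute $0$; concatenating the optimal $d$-chains over $\ell=1,\dots,i$ produces a single weakly increasing $d$-chain from $k_1$ to $k_1+i$ with $\sum_\ell(k_{\ell+1}-k_\ell)=i$ summands, whose total is at most the value of $(\ref{eq:bound lc-degree})$ at $d$, and since $k_{i+1}>k_1$ at least one $\ell$ contributes a $-1$. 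This yields $d_i\le$ the right-hand side of $(\ref{eq:bound lc-degree})$. Finally, when $i=n-1$ the constraint $k_{i+1}=k_1+i\le n$ forces $k_1=1$, $k_n=n$, so the bound is exactly the upper bound of Theorem \ref{thm: bound degree 1}, and since $C^n\mathcal{U}_n(\mathbb{R})=\{I\}$ we have $d_{n-1}=\deg f$, which is the last assertion. I expect the main obstacle to be the bookkeeping of the truncation — verifying that reduction modulo $C^{i+1}\mathcal{U}_n(\mathbb{R})$ commutes with the difference operators and with reading off matrix entries up to the $i$th diagonal, so that the estimates from the proof of Theorem \ref{thm: bound degree 1} transfer verbatim — together with the consistent handling of the conventions on $\mathbb{Z}_*$ in the degenerate cases where some $d_{a,b}=-\infty$; the telescoping itself is routine.
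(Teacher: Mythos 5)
Your proposal is correct and follows essentially the same route as the paper: the paper likewise gets $d_{1}$ from the homomorphism $\phi_{1}$ together with Corollary \ref{cor:C^k(U_n(R))=00003DU_=00007Bn,k=00007D(R)}, and for $2\le i\le n-1$ it simply reruns the induction of Theorem \ref{thm: bound degree 1} for $f\bmod C^{i+1}\mathcal{U}_{n}(\mathbb{R})=f\bmod\mathcal{U}_{n,i+1}(\mathbb{R})$, noting that only the first $i$ diagonals are involved. You merely spell out the truncation bookkeeping and the chain-concatenation estimate that the paper leaves implicit by saying the argument is ``almost the same'' as that of Theorem \ref{thm: bound degree 1}.
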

\begin{proof}
By definition, $d_{1}$ is the degree of $f\mod C^{2}\mathcal{U}_{n}(\mathbb{R})$,
which by Corollary \ref{cor:C^k(U_n(R))=00003DU_=00007Bn,k=00007D(R)}
is the same as the degree of $f\mod\mathcal{U}_{n,2}(\mathbb{R})$,
and the same as the degree of 
\[
\phi_{1}\circ f=(f_{1,2},\ldots,f_{n-1,n}):\mathbb{R}_{\ge0}\rightarrow(\mathbb{R}^{n-1},+).
\]
We know that $\phi_{1}\circ f$ is a polynomial of degree $\max\{d_{k,k+1}\mid1\le k\le n-1\}$. 

For $2\le i\le n-1$, the proof is almost the same as the one of Theorem
\ref{thm: bound degree 1}, except that instead of calculating the
degree of $f$, one calculates the degree of $f\mod C^{i+1}\mathcal{U}_{n}(\mathbb{R})=f\mod\mathcal{U}_{n,i+1}(\mathbb{R})$.
It is easy to see why only the $\le i$th diagonal terms are involved
in the inequalities (\ref{eq:bound lc-degree}). 
\end{proof}
\begin{rem*}
The lc-degree $\hat{d}$ should generically achieve this upper bound,
but there is a rare possibility that it is strictly less than that.
The one-parameter subgroups in $\mathcal{U}_{n}(\mathbb{R})$ provide
such examples. 
\end{rem*}
 Of course, the above results generalize to polynomial maps in multivariate
cases. 
\begin{thm}
\label{thm:bound degree N} Let $f$ be as in Theorem \ref{thm:unitriangular matrix of polynomials in N var}.
Then, $f$ is a polynomial map of degree bounded below by 
\begin{equation}
\max\{d_{k,k+1}\mid1\le k\le n-1\},\label{eq:lower bound degree N}
\end{equation}
and bounded above by 
\begin{equation}
\le\max\left\{ d_{k_{1},k_{2}}+\cdots+d_{k_{n-1},k_{n}}\mid1=k_{1}\le k_{2}\le\cdots\le k_{n-1}\le k_{n}=n\right\} ,\label{eq:upper bound N}
\end{equation}
where $d_{i,j}$ is defined to be $0$ if $i=j$. 
\end{thm}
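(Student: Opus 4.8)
The plan is to transcribe the proof of Theorem~\ref{thm: bound degree 1} almost verbatim, performing the same substitutions used to pass from Theorem~\ref{thm:unitriangular matrix of polynomials in 1 var} to Theorem~\ref{thm:unitriangular matrix of polynomials in N var}: every appeal to Theorem~\ref{thm:polynomial in 1 var} becomes an appeal to Theorem~\ref{thm:polynomial in N var}, and every appeal to Corollary~\ref{cor:vector of polynomials in 1 var} becomes an appeal to Corollary~\ref{cor:vector of polynomials in N var}. Recall that $f$ is already known to be a continuous polynomial map by Theorem~\ref{thm:unitriangular matrix of polynomials in N var}; write $d$ for its degree and $d_{i,j}$ for the degree of $f_{i,j}$.

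For the lower bound~(\ref{eq:lower bound degree N}) I would compose $f$ with the group homomorphism $\phi_{1}\colon\mathcal{U}_{n}(\mathbb{R})\to(\mathbb{R}^{n-1},+)$ of Lemma~\ref{lem:properties of U_n(R)}. By Proposition~\ref{prop:homomorphism of groups} the induced map $\phi_{1}\circ f=(f_{1,2},f_{2,3},\ldots,f_{n-1,n})\colon\mathbb{R}_{\ge0}^{N}\to(\mathbb{R}^{n-1},+)$ is a polynomial map of degree $\le d$; by Corollary~\ref{cor:vector of polynomials in N var} it is a vector of polynomials, and by Theorem~\ref{thm:polynomial in N var} applied to each coordinate its degree equals $\max\{d_{k,k+1}\mid1\le k\le n-1\}$. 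Hence $d\ge\max\{d_{k,k+1}\mid 1\le k\le n-1\}$.

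For the upper bound~(\ref{eq:upper bound N}) I would induct on $d$; the case $d\le0$ is immediate, since then every $f_{i,j}$ is constant. For $d\ge1$, pick $s\in\mathbb{R}_{\ge0}^{N}$ for which one of $L_{s}(f)$, $R_{s}(f)$ has degree exactly $d-1$ (otherwise every $L_{s}(f)$ and $R_{s}(f)$ would have degree $\le d-2$, forcing $\deg f\le d-1$); say it is $L:=L_{s}(f)$, the case of $R$ being symmetric. Writing $S=I+S_{u}=f(s+t)$, $T=I+T_{u}=f(t)$ and using the formula $T^{-1}=I+\sum_{i=1}^{n-1}(-T_{u})^{i}$ from Lemma~\ref{lem:properties of unitriangular matrices} exactly as in the one-variable proof, the $(i,j)$-entry in the $(j-i)$th diagonal of $L$ is the alternating sum $-(t_{i,j}-s_{i,j})+\sum_{i<k<j}(t_{i,k}-s_{i,k})t_{k,j}-\cdots$ over chains $i<k_{1}<\cdots<k_{m-1}<j$. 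Because Theorem~\ref{thm:polynomial in N var} tells us each $f_{i,j}$ is a genuine polynomial in $N$ variables of total degree $d_{i,j}$, the difference $f_{i,k_{1}}(s+t)-f_{i,k_{1}}(t)$ has total degree $\le d_{i,k_{1}}-1$ in the variable $t$, so this entry is a polynomial of degree
\[
\le e_{i,j}:=\max_{i=k_{1}\le k_{2}\le\cdots\le k_{j-i}\le k_{j-i+1}=j}\left\{ d_{k_{1},k_{2}}+\cdots+d_{k_{j-i},k_{j-i+1}}\right\} -1,
\]
with the convention $e_{i,i}=0$. Thus $L$ is of the form in Theorem~\ref{thm:unitriangular matrix of polynomials in N var} with entry degrees $\le e_{i,j}$, and applying the inductive hypothesis to $L$ (which has degree $d-1<d$) yields
\[
d-1\le\max\left\{ e_{k_{1},k_{2}}+\cdots+e_{k_{n-1},k_{n}}\mid1=k_{1}\le k_{2}\le\cdots\le k_{n-1}\le k_{n}=n\right\} .
\]
Expanding each $e_{k_{a},k_{a+1}}$ and collapsing the nested maxima exactly as in the proof of Theorem~\ref{thm: bound degree 1} (the one-edge chain $1,n$ already contributes $e_{1,n}$, which equals the right-hand side of~(\ref{eq:upper bound N}) minus $1$, while longer chains only lose more through the accumulated $-1$'s) shows that this maximum is $\le\max\{d_{k_{1},k_{2}}+\cdots+d_{k_{n-1},k_{n}}\mid 1=k_{1}\le\cdots\le k_{n}=n\}-1$, whence $d$ is bounded above by~(\ref{eq:upper bound N}).

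I do not anticipate a genuine obstacle: the whole argument is a mechanical lift of the single-variable case, and all of the group-theoretic machinery invoked (Lemmas~\ref{lem:properties of unitriangular matrices} and \ref{lem:properties of U_n(R)}, Propositions~\ref{prop:homomorphism of groups} and \ref{prop:direct sum}, Theorem~\ref{thm:product of two polynomial maps}) is indifferent to the number of variables. The one place where the multivariate hypothesis is used rather than pushed through formally is the degree-drop estimate $\deg_{t}\bigl(f_{i,k}(s+t)-f_{i,k}(t)\bigr)\le d_{i,k}-1$, which relies on $f_{i,k}$ being an honest polynomial in $N$ variables (Theorem~\ref{thm:polynomial in N var}) together with the elementary fact that a first forward difference of a several-variable polynomial drops its total degree by at least one; once that is in hand the combinatorial bookkeeping with the $e_{i,j}$ is identical to the $N=1$ case.
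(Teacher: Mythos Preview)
Your proposal is correct and follows exactly the approach the paper takes: the paper's proof of this theorem consists of the single sentence ``The proof is the same as the one of Theorem~\ref{thm: bound degree 1}'', and what you have written is precisely that proof with the substitutions $\mathbb{R}_{\ge0}\rightsquigarrow\mathbb{R}_{\ge0}^{N}$, Theorem~\ref{thm:polynomial in 1 var}$\rightsquigarrow$Theorem~\ref{thm:polynomial in N var}, Corollary~\ref{cor:vector of polynomials in 1 var}$\rightsquigarrow$Corollary~\ref{cor:vector of polynomials in N var}. Your added remark that the maximum over chains of the $e_{k_a,k_{a+1}}$'s is already attained by the single-hop chain $1,1,\ldots,1,n$ (longer chains incurring additional $-1$'s) is a clean way to see the final inequality that the paper obtains by a longer explicit unrolling.
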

\begin{proof}
The proof is the same as the one of Theorem \ref{thm: bound degree 1}. 
\end{proof}
\begin{cor}
Let $f,f':\mathbb{R}_{\ge0}^{N}\to\mathcal{U}_{n}(\mathbb{R})$ be
two polynomial maps of degree $\le d$ and $\le d'$ respectively,
where $d$ and $d'$ are the upper bounds given by (\ref{eq:upper bound N}).
Then, the product $ff':\mathbb{R}_{\ge0}^{N}\to\mathcal{U}_{n}(\mathbb{R})$
is a polynomial map of degree $\le d+d'$.
\end{cor}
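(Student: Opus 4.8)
The plan is to transcribe the proof of Corollary~\ref{cor:upper bound product} essentially verbatim, replacing every appeal to the one-variable Theorem~\ref{thm: bound degree 1} by an appeal to its multivariate analogue Theorem~\ref{thm:bound degree N}. Since $\langle f,f'\rangle$ is a subgroup of the nilpotent group $\mathcal{U}_{n}(\mathbb{R})$, Theorem~\ref{thm:product of two polynomial maps} already guarantees that $ff'$ is a polynomial map, so only the degree estimate is at stake. By Theorem~\ref{thm:unitriangular matrix of polynomials in N var} I may write $f=(f_{i,j})$ and $f'=(f'_{i,j})$ in the matrix form \eqref{eq:matrix form}, where the entries $f_{i,j},f'_{i,j}:\mathbb{R}_{\ge0}^{N}\to\mathbb{R}$ are genuine polynomials of degrees $d_{i,j}$ and $d'_{i,j}$ respectively (with the convention $d_{i,i}=d'_{i,i}=0$), and by hypothesis $d$ and $d'$ are the right-hand sides of \eqref{eq:upper bound N} built from the $d_{i,j}$ and from the $d'_{i,j}$.

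The first step is to write out the entries of the product. The matrix $ff'$ is again upper unitriangular, and for $i<j$ its $(i,j)$-entry is $\sum_{i\le k\le j}f_{i,k}(t)f'_{k,j}(t)$, a polynomial in $N$ variables of degree at most $e_{i,j}:=\max\{d_{i,k}+d'_{k,j}\mid i\le k\le j\}$ (note $e_{i,i}=0$). Since all entries of $ff'$ are polynomials, Theorem~\ref{thm:bound degree N} applies with the $e_{i,j}$ in the role of the $d_{i,j}$, and produces the bound $\deg(ff')\le\max\{e_{k_{1},k_{2}}+\cdots+e_{k_{n-1},k_{n}}\mid 1=k_{1}\le k_{2}\le\cdots\le k_{n-1}\le k_{n}=n\}$.

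The last step is the purely combinatorial estimate already carried out in Corollary~\ref{cor:upper bound product}: for a chain $1=k_{1}\le\cdots\le k_{n}=n$, choose $k_{m}\le l_{m}\le k_{m+1}$ realising $e_{k_{m},k_{m+1}}=d_{k_{m},l_{m}}+d'_{l_{m},k_{m+1}}$; then, interleaving the $l_{m}$ and using $d_{l_{m},k_{m+1}}\ge0$ and $d'_{k_{m},l_{m}}\ge0$,
\[
\sum_{m=1}^{n-1}\bigl(d_{k_{m},l_{m}}+d'_{l_{m},k_{m+1}}\bigr)\le\sum_{m=1}^{n-1}\bigl(d_{k_{m},l_{m}}+d_{l_{m},k_{m+1}}\bigr)+\sum_{m=1}^{n-1}\bigl(d'_{k_{m},l_{m}}+d'_{l_{m},k_{m+1}}\bigr).
\]
Each of the two sums on the right is a sum along the refined chain $1=k_{1}\le l_{1}\le k_{2}\le\cdots\le l_{n-1}\le k_{n}=n$; deleting repeated consecutive indices (harmless since $d_{i,i}=d'_{i,i}=0$) and padding back to exactly $n$ indices turns it into a competitor in \eqref{eq:upper bound N}, so the right-hand side is $\le d+d'$. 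Taking the maximum over all chains then gives $\deg(ff')\le d+d'$.

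I do not anticipate any genuine obstacle here: the argument is a mechanical upgrade of the one-variable proof, since Theorem~\ref{thm:bound degree N} supplies exactly the input that Theorem~\ref{thm: bound degree 1} supplied there. The only point that needs a little care is the combinatorial bookkeeping with the chains of indices --- verifying that inserting the intermediate indices $l_{m}$ and then collapsing equal consecutive ones really yields a non-decreasing chain from $1$ to $n$ with at most $n$ distinct entries, so that the final inequality is legitimate.
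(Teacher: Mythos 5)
Your proposal is correct and is essentially the paper's own proof: the paper simply says the argument is the same as Corollary \ref{cor:upper bound product} with Theorem \ref{thm:bound degree N} in place of Theorem \ref{thm: bound degree 1}, which is exactly what you carry out. Your extra care with the chain bookkeeping (inserting the $l_{m}$, collapsing repeats using $d_{i,i}=0$, and padding back to $n$ indices) only makes explicit a step the paper leaves implicit.
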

\begin{proof}
The proof is the same as the one of Corollary \ref{cor:upper bound product}. 
\end{proof}
\begin{thm}
\label{thm:bound lc-degree N} Let $f$ be as in Theorem \ref{thm:unitriangular matrix of polynomials in N var}
and $\hat{d}=(d_{1},d_{2},\ldots,d_{n-1})$ be the lc-degree of $f$.
Then, 

\begin{equation}
\begin{cases}
-\infty\le d_{1}=\max\{d_{k,k+1}\mid1\le k\le n-1\},\\
d_{i-1}\le d_{i}\le\max\left\{ d_{k_{1},k_{2}}+\cdots+d_{k_{i},k_{i+1}}\right|\\
\left.\qquad\qquad\qquad\quad1\le k_{1}\le k_{2}\le\cdots\le k_{i}\le k_{i+1}=k_{1}+i\le n\right\} , & 2\le i\le n-1,
\end{cases}\label{eq:bound lc-degree N}
\end{equation}
where $d_{i,j}$ is defined to be $0$ if $i=j$. In particular, when
$i=n-1$, we obtain the same upper bound for the degree of $f$ as
in Theorem \ref{thm:bound degree N}. 
\end{thm}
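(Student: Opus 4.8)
The plan is to follow the proof of Theorem~\ref{thm: bound lc-degree 1} line by line, substituting its one-variable inputs by their $N$-variable analogues; exactly as in passing from Theorem~\ref{thm: bound degree 1} to Theorem~\ref{thm:bound degree N}, nothing in the argument is sensitive to whether the source semigroup is $\mathbb{R}_{\ge0}$ or $\mathbb{R}_{\ge0}^{N}$. First, the inequalities $d_{i-1}\le d_i$ are immediate, since the components of any lc-degree are nondecreasing. So the work is to pin down $d_1$ and to bound $d_i$ from above for $2\le i\le n-1$.

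For $d_1$: by definition it is the degree of $f\mod C^{2}\mathcal{U}_n(\mathbb{R})$, which by Corollary~\ref{cor:C^k(U_n(R))=00003DU_=00007Bn,k=00007D(R)} equals $f\mod\mathcal{U}_{n,2}(\mathbb{R})$. Composing with the homomorphism $\phi_1$ of Lemma~\ref{lem:properties of U_n(R)}, whose kernel is $\mathcal{U}_{n,2}(\mathbb{R})$, identifies this with the map $(f_{1,2},f_{2,3},\ldots,f_{n-1,n}):\mathbb{R}_{\ge0}^{N}\to(\mathbb{R}^{n-1},+)$. By Corollary~\ref{cor:vector of polynomials in N var} this is a genuine vector of polynomials, of degree $\max\{d_{k,k+1}:1\le k\le n-1\}$, which is the first line of (\ref{eq:bound lc-degree N}).

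For the upper bound on $d_i$ with $2\le i\le n-1$: here $d_i$ is the degree of $f\mod C^{i+1}\mathcal{U}_n(\mathbb{R})=f\mod\mathcal{U}_{n,i+1}(\mathbb{R})$, so only the $j$-th diagonal entries with $j\le i$ play a role. I would argue by induction on $d_i$, the case $d_i\le 0$ being clear. For the inductive step, write $S=I+S_u=f(s+t)$ and $T=I+T_u=f(t)$ and expand $L_s(f)=ST^{-1}$ and $R_s(f)=T^{-1}S$ using Lemma~\ref{lem:properties of unitriangular matrices}; exactly as in the proof of Theorem~\ref{thm: bound degree 1}, the $(k,k+j)$-entry of $L_s(f)$ (resp.\ $R_s(f)$) is then an alternating sum of products of the $f_{\bullet,\bullet}$ in which one factor is replaced by a difference $f_{\bullet,\bullet}(s+t)-f_{\bullet,\bullet}(t)$. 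By Theorem~\ref{thm:polynomial in N var} the $f_{\bullet,\bullet}$ are genuine polynomials, hence so is that entry, and its degree is at most $e_{k,k+j}$, which is one less than the corresponding upper bound attached to the $(k,k+j)$-entry of $f$. Since $L_s(f)$ and $R_s(f)$ have the same matrix shape as $f$ and, reduced modulo $\mathcal{U}_{n,i+1}(\mathbb{R})$, have degree $\le d_i-1$, applying the induction hypothesis to them bounds $d_i-1$ by the $e$-version of the right-hand side of (\ref{eq:bound lc-degree N}); collapsing the nested maxima exactly as at the end of the proof of Theorem~\ref{thm: bound degree 1} turns this into $\max\{d_{k_1,k_2}+\cdots+d_{k_i,k_{i+1}}\mid 1\le k_1\le\cdots\le k_i\le k_{i+1}=k_1+i\le n\}$, as desired. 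Taking $i=n-1$ recovers the degree bound of Theorem~\ref{thm:bound degree N}.

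I do not expect any genuine obstacle here: the only substantive verification is that Theorem~\ref{thm:polynomial in 1 var} and Corollary~\ref{cor:vector of polynomials in 1 var} may legitimately be replaced by Theorem~\ref{thm:polynomial in N var} and Corollary~\ref{cor:vector of polynomials in N var}, and this is precisely the pattern already used for Theorem~\ref{thm:bound degree N}. The one mildly delicate bookkeeping point is that, modulo $\mathcal{U}_{n,i+1}(\mathbb{R})$, only diagonals of index $\le i$ survive, but that is forced by the identity $C^{i+1}\mathcal{U}_n(\mathbb{R})=\mathcal{U}_{n,i+1}(\mathbb{R})$ of Corollary~\ref{cor:C^k(U_n(R))=00003DU_=00007Bn,k=00007D(R)}; the rest is routine matrix manipulation identical to the one-variable case.
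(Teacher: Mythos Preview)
Your proposal is correct and is essentially the paper's own approach: the paper merely says the proof is the same as in the one-variable case (Theorem~\ref{thm: bound lc-degree 1}), which in turn reduces to the degree computation of Theorem~\ref{thm: bound degree 1} applied modulo $\mathcal{U}_{n,i+1}(\mathbb{R})$, and you have spelled out exactly that reduction with the appropriate $N$-variable substitutions (Theorem~\ref{thm:polynomial in N var} and Corollary~\ref{cor:vector of polynomials in N var} in place of their one-variable counterparts).
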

\begin{proof}
The proof is the same as the one of Theorem \ref{thm:bound degree N}.
\end{proof}

\section{Polynomial Sequences \label{sec:Polynomial Sequences}}

In this section, we will concentrate on special polynomial maps of
the form $\mathbb{N}_{0}\to G$. 
\begin{defn}
A polynomial map $g$ from $\mathbb{N}_{0}$ to a group $G$ will
be called a polynomial sequence. 

By abuse of terminology, we often call $g_{0},g_{1},g_{2},\ldots$
a polynomial sequence in $G$, where $g_{i}:=g(i)$, $\forall i\in\mathbb{N}_{0}$,
and denote by $\langle g\rangle$ the subgroup of $G$ generated by
the polynomial sequence $g_{0},g_{1},g_{2},\ldots$. 
\end{defn}
\begin{rem*}
We can talk about polynomial subsequence of $g_{0},g_{1},g_{2},\ldots$
in the following sense: 
\begin{enumerate}
\item Since the translation $T_{s}(g)(t):=g(t+s)$ of a polynomial sequence
$g$ by $s\in\mathbb{N}_{0}$ is a polynomial sequence, $g_{s},g_{1+s},g_{2+s},\ldots$
can be viewed as a polynomial subsequence. 
\item We have the polynomial sequence $\mathbb{N}_{0}\xrightarrow{\hat{k}}\mathbb{N}_{0}\xrightarrow{g}G$
induced by a homomorphism $\hat{k}:\mathbb{N}_{0}\to\mathbb{N}_{0}$;
$t\mapsto kt$, where $k\in\mathbb{N}_{0}$. Then, $g_{k},g_{2k},\ldots$
can be viewed as a polynomial subsequence. 
\end{enumerate}
Notice that a polynomial subsequence has degree no larger than the
original degree. 
\end{rem*}
By Proposition \ref{prop:ploynomial map in locally nilpotent subgroup},
the subgroup generated by a polynomial sequence is always finitely
generated. Then, Theorem \ref{thm:product of two polynomial maps}
can be slightly generalized in the case of polynomial sequences. 
\begin{cor}
The product of two polynomial sequences $f,f':\mathbb{N}_{0}\to G$
in a locally nilpotent group $G$ is a polynomial sequence. 
\end{cor}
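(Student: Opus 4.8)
The plan is to reduce to Theorem \ref{thm:product of two polynomial maps}: its only hypothesis, beyond polynomiality of $f$ and $f'$, is that the subgroup $\langle f,f'\rangle$ generated by $f(\mathbb{N}_{0})$ and $f'(\mathbb{N}_{0})$ be nilpotent, and everything here is arranged so that this holds automatically. The single structural fact I would use is that $\mathbb{N}_{0}$ is a finitely generated commutative monoid, namely $\mathbb{N}_{0}=\langle 1\rangle$ with the empty word understood as $0$.

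First I would apply Proposition \ref{prop:ploynomial map in locally nilpotent subgroup} to each of $f$ and $f'$ separately: since $f,f'\colon\mathbb{N}_{0}\to G$ are polynomial maps from a finitely generated commutative semigroup into the locally nilpotent group $G$, the subgroups $\langle f\rangle$ and $\langle f'\rangle$ are finitely generated (and in fact nilpotent). Choose finite generating sets $X$ for $\langle f\rangle$ and $X'$ for $\langle f'\rangle$. Then $\langle f,f'\rangle=\langle X\cup X'\rangle$ is generated by the finite set $X\cup X'$, hence is a finitely generated subgroup of $G$; because $G$ is locally nilpotent, $\langle f,f'\rangle$ is nilpotent. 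Now Theorem \ref{thm:product of two polynomial maps} applies directly and gives that the elementwise product $ff'\colon\mathbb{N}_{0}\to G$, $t\mapsto f(t)f'(t)$, is a polynomial map, i.e., a polynomial sequence in $G$.

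I do not expect any genuine obstacle: the corollary is essentially a repackaging of Theorem \ref{thm:product of two polynomial maps} together with the finiteness assertion of Proposition \ref{prop:ploynomial map in locally nilpotent subgroup}, and the only point worth spelling out is the elementary observation that finite generation of $\langle f\rangle$ and $\langle f'\rangle$ forces finite generation of their join $\langle f,f'\rangle$. (If one preferred not to route through Proposition \ref{prop:ploynomial map in locally nilpotent subgroup}, one could invoke the last clause of Proposition \ref{prop:uniquely determined} instead, since $\mathbb{N}_{0}$ is finitely generated, to see directly that $f(\mathbb{N}_{0})$ and $f'(\mathbb{N}_{0})$ lie in finitely generated subgroups of $G$.)
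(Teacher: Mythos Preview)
Your proof is correct and follows essentially the same approach as the paper: the paper observes (in the sentence immediately preceding the corollary) that by Proposition~\ref{prop:ploynomial map in locally nilpotent subgroup} the subgroup generated by a polynomial sequence is finitely generated, so that Theorem~\ref{thm:product of two polynomial maps} applies. You have simply spelled out the one missing step---that the join $\langle f,f'\rangle$ of two finitely generated subgroups is itself finitely generated, hence nilpotent in a locally nilpotent $G$---which the paper leaves implicit.
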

\begin{defn}
A sequence $g$ is called periodic, if there exists $P\in\mathbb{N}$
such that $g_{i+P}=g_{i}$, $\forall i\in\mathbb{N}_{0}$.
\end{defn}
\begin{prop}
\label{prop:periodic polynomial sequence} A polynomial sequence in
a finite group is always periodic.
\end{prop}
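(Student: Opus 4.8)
The plan is to argue by induction on the degree $d$ of the polynomial sequence $g\colon\mathbb{N}_{0}\to G$, where $G$ is finite. When $d\le 0$ the sequence is constant, hence periodic with period $1$, which settles the base case. For the inductive step, assume the statement for all polynomial sequences of degree $\le d$ into finite groups and let $g$ have degree $\le d+1$. By the definition of ``degree $\le d+1$'', the sequence $\delta:=L_{1}(g)$, sending $t$ to $g(t+1)g(t)^{-1}$, is a polynomial sequence of degree $\le d$; so by the inductive hypothesis there is a $P\in\mathbb{N}$ with $\delta(t+P)=\delta(t)$ for all $t\in\mathbb{N}_{0}$.

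Next I would iterate the recursion $g(t+1)=\delta(t)g(t)$: unwinding it $P$ steps gives $g(t+P)=h(t)\,g(t)$, where $h(t):=\delta(t+P-1)\delta(t+P-2)\cdots\delta(t)$. The periodicity of $\delta$ immediately yields $h(t+P)=h(t)$, so $h$ depends only on $t\bmod P$, and then a short induction on $k$ gives $g(t+kP)=h(t)^{k}g(t)$ for all $k\ge 0$. Applying this with $k=|G|$ and using $h(t)^{|G|}=1_{G}$ (Lagrange's theorem) shows $g(t+|G|P)=g(t)$ for every $t\in\mathbb{N}_{0}$, so $g$ is periodic with period $|G|\cdot P$.

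All the verifications here are routine: that $L_{1}(g)$ has degree $\le d$ is immediate; the formula $g(t+P)=h(t)g(t)$, the period-$P$ property of $h$, and the relation $g(t+kP)=h(t)^{k}g(t)$ are one-line computations. The point worth flagging is that we must produce a \emph{genuine} period, valid for every $i\in\mathbb{N}_{0}$ including small $i$, rather than mere eventual periodicity. This is exactly why it is convenient to run the difference-operator induction, which reconstructs $g$ from $\delta$ over all of $\mathbb{N}_{0}$, instead of arguing by pigeonhole on the finitely many tuples $(g(s),g(s+1),\dots,g(s+d))$ (using Proposition~\ref{prop:uniquely determined}): that argument gives a pair $s_{1}<s_{2}$ with $g(s_{1}+t)=g(s_{2}+t)$ for all $t\ge 0$, i.e.\ eventual periodicity, but upgrading it to honest periodicity would require extra input, whereas the induction above delivers honest periodicity directly.
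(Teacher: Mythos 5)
Your proof is correct and follows essentially the same route as the paper's: induction on the degree, periodicity of the (left) difference sequence, and Lagrange's theorem to kill the accumulated product, giving period $|G|\cdot P$. The differences are cosmetic — the paper also tracks $R_{1}(g)$ and takes an lcm of the two periods, and its final step is in fact stated less carefully (it asserts the length-$P$ window products are constant in $i$, though shifting $i$ only conjugates them, and replaces $P$ by $P^{|G|}$), whereas your observation that $h(t+P)=h(t)$ together with the choice of period $|G|\cdot P$ makes that step airtight.
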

\begin{proof}
Let $G$ be a finite group and $g:\mathbb{N}_{0}\to G$ be any polynomial
sequence.  Let $|G|$ be the order of $G$. The proof is by induction
on the degree $d$ of the polynomial sequence. If $d\le0$, then $g$
is constant, and thus periodic. If $d=1$, then we have
\[
g_{i+P}=l_{1}g_{i+P-1}=\cdots=l_{1}^{P}g_{i},\qquad g_{i+P}=g_{i+P-1}r_{1}=\cdots=g_{i}r_{1}^{P}.
\]
A suitable $P$ (for example $|G|$) can be chosen so that $l_{1}^{P}=r_{1}^{P}$
is the identity of $G$. 

Suppose that we have proved this for all polynomial sequences of degree
$<d$ and $g$ is a polynomial sequence of degree $\le d$. Then,
we have 
\begin{align*}
g_{i+P} & =L_{1}(g)(i+P-1)g_{i+P-1}=\cdots=L_{1}(g)(i+P-1)\cdots L_{1}(g)(i)g_{i},\\
g_{i+P} & =g_{i+P-1}R_{1}(g)(i+P-1)=\cdots=g_{i}R_{1}(g)(i)\cdots R_{1}(g)(i+P-1).
\end{align*}
Since $L_{1}(g)$ and $R_{1}(g)$ are polynomials of degree $\le d-1$,
they are periodic polynomial sequences, say, of periods $L$ and $R$
respectively. Thus, for a certain natural number $P$ (for example,
$\lcm(L,R)$), $L_{1}(g)(i+P-1)\cdots L_{1}(g)(i)$ and $R_{1}(g)(i)\cdots R_{1}(g)(i+P-1)$
are constant for all $i\in\mathbb{N}_{0}$. If necessary, one replaces
$P$ by $P^{|G|}$ in order to assure that 
\[
1_{G}=L_{1}(g)(i+P-1)\cdots L_{1}(g)(i)=R_{1}(g)(i)\cdots R_{1}(g)(i+P-1).
\]
Hence, $g_{i+P}=g_{i}$ for all $i\in\mathbb{N}_{0}$, i.e., $g$
has period $P$. 
\end{proof}
\begin{thm}
\label{thm: polynomial sequence in Z, Q, R} Every polynomial sequence
$f:\mathbb{N}_{0}\rightarrow R$, where $R$ can be the ring $\mathbb{Z}$,
$\mathbb{Q}$, or $\mathbb{R}$, of degree $\le d$ is a polynomial
of degree $\le d$. 
\end{thm}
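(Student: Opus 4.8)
The plan is to follow the template of the proof of Theorem~\ref{thm:polynomial in 1 var}, but to exploit the fact that $\mathbb{N}_{0}$ is singly generated (by $1$) so that the analytic input there (continuity) can be replaced by the classical Newton forward-difference interpolation formula. First I would unwind the definition: since $R$ is abelian, $L_{s}=R_{s}=D_{s}$, so a polynomial sequence $f:\mathbb{N}_{0}\to R$ of degree $\le d$ satisfies $D_{s_{1}}\cdots D_{s_{d+1}}f\equiv 0$ for all $s_{1},\dots,s_{d+1}\in\mathbb{N}_{0}$. Taking every $s_{i}=1$ and writing $\Delta f(n):=f(n+1)-f(n)=D_{1}f(n)$ gives $\Delta^{d+1}f\equiv 0$, hence $(\Delta^{k}f)(0)=0$ for all $k\ge d+1$.

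The key step is then the identity
\[
f(n)=\sum_{k=0}^{d}(\Delta^{k}f)(0)\binom{n}{k}\qquad(n\in\mathbb{N}_{0}),
\]
whose right-hand side is manifestly a polynomial in $n$ of degree $\le d$. I would prove this by induction on $d$: for $d\le 0$ it is immediate, and for the inductive step one applies the induction hypothesis to the sequence $\Delta f$, which has degree $\le d-1$ and satisfies $(\Delta^{k}(\Delta f))(0)=(\Delta^{k+1}f)(0)$, and then recovers $f$ from $f(n+1)=f(n)+\Delta f(n)$ by an inner induction on $n$, using Pascal's rule $\binom{n}{k}+\binom{n}{k-1}=\binom{n+1}{k}$. (Over $R=\mathbb{Q}$ or $\mathbb{R}$ one may instead argue exactly as in Theorem~\ref{thm:polynomial in 1 var}: by induction $\Delta f$ equals a polynomial $p$ of degree $\le d-1$; since $q\mapsto q(x+1)-q(x)$ maps the space of polynomials of degree $\le d$ onto the polynomials of degree $\le d-1$ with kernel the constants, there is a polynomial $q$ of degree $\le d$ with $q(n+1)-q(n)=p(n)$, and after adjusting its constant term so that $q(0)=f(0)$ the sequence $f-q$ has vanishing first difference, hence $f=q$ on $\mathbb{N}_{0}$. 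For $R=\mathbb{Z}$ this variant must be run in the $\mathbb{Z}$-basis $\binom{x}{0},\dots,\binom{x}{d}$ of integer-valued polynomials, where $\Delta\binom{x}{k}=\binom{x}{k-1}$, which is precisely the displayed formula.)

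Finally I would record the unified treatment of the three rings: the coefficients $(\Delta^{k}f)(0)$ automatically lie in $R$, so the formula exhibits $f$ as the restriction to $\mathbb{N}_{0}$ of an element of $R[x]$ of degree $\le d$ when $R=\mathbb{Q}$ or $\mathbb{R}$, and as an integer-valued polynomial of degree $\le d$ (a $\mathbb{Z}$-linear combination of the $\binom{n}{k}$) when $R=\mathbb{Z}$. I do not expect a genuine obstacle here; the only point requiring care is the two-layer induction (on $d$ and on $n$) in the verification of the Newton identity, and the observation that---unlike the continuous case in Theorem~\ref{thm:polynomial in 1 var}---no continuity-type hypothesis is needed, precisely because a function on $\mathbb{N}_{0}$ is determined by its values and $\mathbb{N}_{0}$ is generated by a single element.
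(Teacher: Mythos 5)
Your proof is correct, but it takes a genuinely different route from the paper's. The paper disposes of this theorem by rerunning the argument of Theorem \ref{thm:polynomial in 1 var} verbatim, with $\mathbb{Q}_{\ge0}$ replaced by $\mathbb{N}_{0}$ and the continuity considerations dropped: induction on the degree, with the degree-one case handled by the additivity of $s\mapsto f(s+t)-f(t)$ (so that $C_{s}=sC_{1}$) and the inductive step by subtracting $A_{n}t^{n}$, where $A_{n}=\frac{1}{n}\alpha_{n-1}(1)$ is read off from the leading coefficient of the first difference $P_{s}$. You instead specialize the hypothesis to the single generator $1$ of $\mathbb{N}_{0}$, obtaining $\Delta^{d+1}f\equiv0$, and conclude by Newton's forward-difference formula $f(n)=\sum_{k=0}^{d}(\Delta^{k}f)(0)\binom{n}{k}$, verified by your double induction; discarding the rest of the hypothesis is legitimate since only one implication is asserted. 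Both arguments are sound, but yours is more self-contained (no appeal to the lemma on additive functions), produces explicit coefficients, and handles $R=\mathbb{Z}$ more cleanly: in the binomial basis the coefficients $(\Delta^{k}f)(0)$ lie in $\mathbb{Z}$ and $f$ is exhibited as an integer-valued polynomial, whereas the paper's recipe introduces the denominator $n$ in $A_{n}$, so for $R=\mathbb{Z}$ its output is a priori only a polynomial with rational coefficients and ``polynomial'' must be read as ``integer-valued polynomial'' anyway. Your parenthetical alternative for $\mathbb{Q}$ and $\mathbb{R}$ essentially reproduces the paper's argument, so the two approaches meet there.
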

\begin{proof}
The proof is the same as the proof of Theorem \ref{thm:polynomial in 1 var}.
One simply ignores the continuity part and replaces $\mathbb{Q}_{\ge0}$
by $\mathbb{N}_{0}$ everywhere. 
\end{proof}
\begin{cor}
\label{cor:vector of polynomial sequences in Z^M, Q^M, R^M} Every
polynomial sequence $f:\mathbb{N}_{0}\rightarrow R^{M}$, where $R$
could be the ring $\mathbb{Z}$, $\mathbb{Q}$, or $\mathbb{R}$,
of degree $\le d$ is a vector of polynomials of degree $\le d$. 
\end{cor}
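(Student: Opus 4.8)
The plan is to reduce to the one-dimensional case exactly as in the proofs of Corollaries \ref{cor:vector of polynomials in 1 var} and \ref{cor:vector of polynomials in N var}. For each index $1\le i\le M$, let $\pi_i:R^M\to R$ be the projection onto the $i$th coordinate. Since $R^M$ and $R$ are abelian groups (indeed $\pi_i$ is a ring homomorphism, but we only need the group structure), $\pi_i$ is a homomorphism of groups, so Proposition \ref{prop:homomorphism of groups} applies: the induced sequence $f_i:=\pi_i\circ f:\mathbb{N}_0\to R$ is a polynomial map of degree $\le d$, i.e. a polynomial sequence of degree $\le d$.

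Next I would invoke Theorem \ref{thm: polynomial sequence in Z, Q, R} for each $i$: a polynomial sequence $\mathbb{N}_0\to R$ of degree $\le d$ is a genuine polynomial of degree $\le d$ in the usual sense. Reassembling the components, $f=(f_1,\ldots,f_M)$ is a vector of polynomials, each of degree $\le d$, which is what the statement asks. To see that the degree of $f$ as a polynomial map matches $\max_i\deg f_i$, note that the forward difference operator $D$ on $R^M$ acts coordinatewise, so $D^{d+1}f\equiv 0$ if and only if $D^{d+1}f_i\equiv 0$ for every $i$; hence $f$ has degree $\le d$ precisely when each $f_i$ does.

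I do not expect any real obstacle here: the result is a formal consequence of Proposition \ref{prop:homomorphism of groups} together with Theorem \ref{thm: polynomial sequence in Z, Q, R}, just as Corollary \ref{cor:vector of polynomials in 1 var} follows from Proposition \ref{prop:homomorphism of groups} and Theorem \ref{thm:polynomial in 1 var}. The only point worth stating carefully is that no continuity hypothesis is needed now, because over $\mathbb{N}_0$ the pathological (Hamel-basis type) examples of Remark \ref{rem:Hamel basis} do not arise and Theorem \ref{thm: polynomial sequence in Z, Q, R} already delivers honest polynomials.
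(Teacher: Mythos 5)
Your argument is correct and follows the paper's own proof essentially verbatim: project to each coordinate via $\pi_i$ (a group homomorphism, so Proposition \ref{prop:homomorphism of groups} gives that each $f_i$ is a polynomial sequence of degree $\le d$), then apply Theorem \ref{thm: polynomial sequence in Z, Q, R} componentwise. The extra remarks about the coordinatewise action of the difference operators and the absence of a continuity hypothesis are harmless additions but not needed.
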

\begin{proof}
Let $\pi_{i}:\mathbb{R}^{M}\to\mathbb{R}$ be the projection map of
the $i$th coordinates. Then, $f_{i}:=\pi_{i}\circ f$ is a polynomial
sequence of degree $\le d$. Then, by Theorem \ref{thm: polynomial sequence in Z, Q, R},
$f=(f_{1},\ldots,f_{M})$ is a vector of polynomials of degree $\le d$.
\end{proof}
\begin{thm}
\label{thm: polynomial sequence in U_n(R)} Let $f_{i,j}:\mathbb{N}_{0}\to R$,
where $R$ could be the ring $\mathbb{Z}$, $\mathbb{Q}$, or $\mathbb{R}$,
be polynomial sequences of degree $\le d_{i,j}$ for all $1\le i<j\le n$
and $f:\mathbb{N}_{0}\rightarrow\mathcal{U}_{n}(R)$ be a function
with matrix form given by (\ref{eq:matrix form}). Then, $f$ is a
polynomial sequence. 

Conversely, every polynomial sequence $f:\mathbb{N}_{0}\rightarrow\mathcal{U}_{n}(R)$
is of this form. 
\end{thm}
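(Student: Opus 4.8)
The plan is to transcribe the proof of Theorem~\ref{thm:unitriangular matrix of polynomials in 1 var} almost verbatim, replacing its continuous one-variable inputs (Theorem~\ref{thm:polynomial in 1 var} and Corollary~\ref{cor:vector of polynomials in 1 var}) by the discrete ones (Theorem~\ref{thm: polynomial sequence in Z, Q, R} and Corollary~\ref{cor:vector of polynomial sequences in Z^M, Q^M, R^M}). The one preliminary observation is that Lemma~\ref{lem:properties of unitriangular matrices}, Lemma~\ref{lem:properties of U_n(R)} and Corollary~\ref{cor:C^k(U_n(R))=00003DU_=00007Bn,k=00007D(R)} remain valid with $\mathbb{R}$ replaced by any $R\in\{\mathbb{Z},\mathbb{Q},\mathbb{R}\}$, since the computations involved---above all $T_{i,j}(a)^{-1}=T_{i,j}(-a)$ and $[T_{i,j}(a),T_{j,l}(b)]=T_{i,l}(ab)$---use only that $R$ is a commutative ring with identity; in particular $C^{k}\mathcal{U}_{n}(R)=\mathcal{U}_{n,k}(R)$ and $\mathcal{U}_{n}(R)$ is nilpotent of class $n-1$ in all three cases (and $\mathbb{Z}$ is torsion-free, so Theorem~\ref{thm: polynomial sequence in Z, Q, R} genuinely applies).

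For the first assertion, write $f$ as the ordered elementwise product $f=\prod_{i=1}^{n-1}\prod_{j=i+1}^{n}\bigl(I+f_{i,j}E_{i,j}\bigr)$. For each pair $i<j$, the map $a\mapsto I+aE_{i,j}$ is a group homomorphism $(R,+)\to\mathcal{U}_{n}(R)$, so Proposition~\ref{prop:homomorphism of groups} shows that $t\mapsto I+f_{i,j}(t)E_{i,j}$ is a polynomial sequence. Since $\mathcal{U}_{n}(R)$ is nilpotent, Theorem~\ref{thm:product of two polynomial maps} applies to these finitely many factors and yields that $f$ is a polynomial sequence.

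For the converse I would run a downward induction on the lc-height $k$ of $f$, i.e.\ on the largest $k$ with $f(\mathbb{N}_{0})\subseteq C^{k}\mathcal{U}_{n}(R)=\mathcal{U}_{n,k}(R)$; the case $k=n$ is trivial since then $f\equiv I$. For the inductive step, apply the homomorphism $\phi_{k}$ of Lemma~\ref{lem:properties of U_n(R)} to $f$: this is a polynomial sequence $\mathbb{N}_{0}\to R^{\,n-k}$ recording the $k$th-diagonal entries of $f$, so by Corollary~\ref{cor:vector of polynomial sequences in Z^M, Q^M, R^M} and Theorem~\ref{thm: polynomial sequence in Z, Q, R} each $f_{i,i+k}$ is a polynomial in the usual sense. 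Then $h:=\prod_{m}T_{m,m+k}\bigl(-f_{m,m+k}(t)\bigr)$, taken with the index $m$ in decreasing order so that the elementary factors multiply without carry, equals $I-\sum_{i}f_{i,i+k}(t)E_{i,i+k}$ and is itself a polynomial sequence (a product of polynomial sequences in the nilpotent group $\mathcal{U}_{n}(R)$); hence $hf$ is a polynomial sequence whose image lies in $\ker\phi_{k}=\mathcal{U}_{n,k+1}(R)$, so of lc-height $\ge k+1$. By the induction hypothesis every entry of $hf$ is a polynomial, and since $h^{-1}$ also has polynomial entries (being a product of elementary matrices with polynomial entries), so does $f=h^{-1}(hf)$. (Equivalently one unwinds $f_{i,j}=(hf)_{i,j}+f_{i,i+k}\,f_{i+k,j}$ by a secondary induction on $j-i$, exactly as in the proof of Theorem~\ref{thm:unitriangular matrix of polynomials in 1 var}.) The argument is a routine adaptation with no real obstacle; the only points deserving a sentence are the transfer of the $\mathcal{U}_{n}$-structure facts from $\mathbb{R}$ to $\mathbb{Z}$ and $\mathbb{Q}$, and the observation that the correcting factor $h$ is a polynomial sequence.
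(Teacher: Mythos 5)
Your proposal is correct and is essentially the paper's own proof: the paper disposes of this theorem by declaring the argument identical to that of Theorem \ref{thm:unitriangular matrix of polynomials in 1 var} (its citation of Theorem \ref{thm:polynomial in N var} there is evidently a slip), with the continuous inputs replaced by Theorem \ref{thm: polynomial sequence in Z, Q, R} and Corollary \ref{cor:vector of polynomial sequences in Z^M, Q^M, R^M}, which is exactly the transcription you carry out. Your additional remarks---that the structural facts about $\mathcal{U}_{n}$ transfer verbatim from $\mathbb{R}$ to $\mathbb{Z}$ and $\mathbb{Q}$, and that the correcting factor $h$ is itself a polynomial sequence---are sound and merely make explicit what the paper leaves implicit.
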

\begin{proof}
The proof is the same as the proof of Theorem \ref{thm:polynomial in N var}.
One simply ignores the continuity part and replaces $\mathbb{Q}_{\ge0}$
by $\mathbb{N}_{0}$ everywhere. 
\end{proof}
The combination of the following theorems by Mal'tsev and Ado shows
that each finitely generated torsion-free nilpotent group embeds in
$\mathcal{U}_{n}(\mathbb{R})$ for some $n$:
\begin{thm*}[A. I. Mal'tsev \cite{Mal49b}]
\label{thm:uniform lattice}  Every finitely generated torsion-free
nilpotent group $\Gamma$ of class $k$ embeds as a uniform lattice
in a simply-connected nilpotent Lie group $N$ of class $k$. Furthermore,
the group $N$ and the embedding $\Gamma\to N$ are unique up to an
isomorphism.
\end{thm*}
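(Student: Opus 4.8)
The plan is to re-prove this classical theorem of Mal'tsev by building an explicit model for $N$ out of the combinatorics of $\Gamma$ and then establishing uniqueness by passing from groups to Lie algebras. For the construction, I would first fix a central series $\Gamma=\Gamma_{1}\supseteq\Gamma_{2}\supseteq\cdots\supseteq\Gamma_{n+1}=\{1\}$, with each $\Gamma_{i}$ normal in $\Gamma$ and each factor $\Gamma_{i}/\Gamma_{i+1}$ infinite cyclic. Such a series exists: replacing the terms of the lower central series by their isolators makes the successive quotients finitely generated, torsion-free and abelian (here one uses all three hypotheses on $\Gamma$), and refining each free abelian factor cyclically yields the series above; since $\Gamma$ is torsion-free, the isolator of $\gamma_{k+1}(\Gamma)=\{1\}$ is trivial while that of $\gamma_{k}(\Gamma)$ is not, so the nilpotency class $k$ is preserved. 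Choosing $x_{i}\in\Gamma_{i}$ whose image generates $\Gamma_{i}/\Gamma_{i+1}$ gives a \emph{Mal'tsev basis}: every $g\in\Gamma$ has a unique normal form $g=x_{1}^{a_{1}}\cdots x_{n}^{a_{n}}$ with $(a_{1},\dots,a_{n})\in\mathbb{Z}^{n}$.

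The technical heart is the Hall--Mal'tsev \emph{polynomiality lemma}: in these coordinates the multiplication map $(\mathbf{a},\mathbf{b})\mapsto\mathbf{c}$, where $x_{1}^{a_{1}}\cdots x_{n}^{a_{n}}\cdot x_{1}^{b_{1}}\cdots x_{n}^{b_{n}}=x_{1}^{c_{1}}\cdots x_{n}^{c_{n}}$, the inversion map, and the map sending $(\mathbf{a},t)$ to the coordinate vector of $(x_{1}^{a_{1}}\cdots x_{n}^{a_{n}})^{t}$ for $t\in\mathbb{Z}$, are all given by polynomials with \emph{rational} coefficients in the exponents. I would prove this by induction on $n$ (equivalently on the nilpotency class) via Philip Hall's commutator collection process, noting that the only denominators produced are binomial coefficients. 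Granting the lemma, set $N:=(\mathbb{R}^{n},\mu_{\mathbb{R}})$, where $\mu_{\mathbb{R}}$ is the same polynomial multiplication evaluated at real exponents. Associativity, the group axioms, and the iterated-commutator identities expressing nilpotency of class $k$ are polynomial identities in the exponents that hold on the Zariski-dense subset $\mathbb{Z}^{n}$ (indeed the same polynomials already make $\mathbb{Q}^{n}$ into a group, the rational Mal'tsev completion $\Gamma_{\mathbb{Q}}$), hence hold identically on $\mathbb{R}^{n}$; so $N$ is a real Lie group, diffeomorphic to $\mathbb{R}^{n}$ and thus connected and simply connected, nilpotent of class exactly $k$, with $\Gamma=\mathbb{Z}^{n}$ as a discrete subgroup. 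Finally, the compact unit cube $[0,1]^{n}$ surjects onto $\Gamma\backslash N$ in these coordinates, so $\Gamma$ is a uniform lattice in $N$.

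For uniqueness, suppose $j_{\nu}\colon\Gamma\hookrightarrow N_{\nu}$, $\nu=1,2$, are two realizations of $\Gamma$ as a uniform lattice in a simply connected nilpotent Lie group. Since $N_{\nu}$ is simply connected and nilpotent, $\exp\colon\mathfrak{n}_{\nu}\to N_{\nu}$ is a diffeomorphism, and because the Baker--Campbell--Hausdorff series terminates, the $\mathbb{Q}$-span $\mathfrak{q}_{\nu}$ of $\log j_{\nu}(\Gamma)$ is a $\mathbb{Q}$-Lie subalgebra of $\mathfrak{n}_{\nu}$ with $\mathfrak{q}_{\nu}\otimes_{\mathbb{Q}}\mathbb{R}=\mathfrak{n}_{\nu}$ (the latter because a lattice in a simply connected nilpotent Lie group spans the Lie algebra). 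The point is that both the $\mathbb{Q}$-vector space structure and the bracket on $\mathfrak{q}_{\nu}$ are determined by the abstract group $\Gamma$ alone: multiplication by $a/b\in\mathbb{Q}$ corresponds to taking the unique $b$-th root of the $a$-th power in $\Gamma_{\mathbb{Q}}$, and the bracket is recovered from ordinary group commutators by inverting the finite rational BCH expansion. Hence the identification $j_{2}\circ j_{1}^{-1}$ induces an isomorphism $\mathfrak{q}_{1}\xrightarrow{\sim}\mathfrak{q}_{2}$ of $\mathbb{Q}$-Lie algebras; extending scalars gives $\mathfrak{n}_{1}\cong\mathfrak{n}_{2}$, and exponentiating gives a Lie group isomorphism $N_{1}\xrightarrow{\sim}N_{2}$ carrying $j_{1}(\Gamma)$ onto $j_{2}(\Gamma)$ compatibly with the embeddings, which is the asserted uniqueness.

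The step I expect to be the main obstacle is the polynomiality lemma together with its companion fact that $\Gamma_{\mathbb{Q}}$ is \emph{uniquely radicable} (every element has a unique $m$-th root): these underlie both the construction of $N$ and the intrinsic description of $\mathfrak{q}_{\nu}$, and they require genuine bookkeeping through Hall's collection process rather than any soft argument. A secondary point demanding care is checking, in the uniqueness step, that the isomorphism produced can be pinned down on $\Gamma$ itself rather than merely up to an automorphism of $\Gamma$ --- this is exactly where one uses that $\exp$ intertwines the abstract $\mathbb{Q}$-Lie-algebra isomorphism with the two given embeddings.
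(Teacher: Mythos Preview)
Your proposal outlines the classical proof of Mal'tsev's theorem via Mal'tsev coordinates, the Hall--Mal'tsev polynomiality lemma, and the Lie-algebra/BCH argument for uniqueness; this is a correct and standard route. However, there is nothing to compare it against: the paper does \emph{not} prove this statement. It is quoted as a named classical result (attributed to Mal'tsev with a citation) and used as a black box, together with the Ado--Engel theorem, to deduce that finitely generated torsion-free nilpotent groups embed in $\mathcal{U}_{n}(\mathbb{R})$. So your write-up is not a reconstruction of the paper's proof but an independent proof of a cited theorem; if the goal is to match the paper, the appropriate response is simply to cite Mal'tsev's result rather than to reprove it.
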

\begin{thm*}[Ado-Engel theorem]
\label{thm:simply-connected nilpotent Lie}  Every simply-connected
nilpotent Lie group $N$ embeds into $\mathcal{U}_{n}(\mathbb{R})$
for some $n$. 
\end{thm*}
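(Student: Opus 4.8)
The plan is to pass to Lie algebras, produce a faithful representation of the Lie algebra by nilpotent matrices, triangularise it by Engel's theorem, and then integrate back up. Write $\mathfrak{n}=\operatorname{Lie}(N)$; since $N$ is simply connected and nilpotent, the exponential map $\exp_N\colon\mathfrak{n}\to N$ is a diffeomorphism, so it suffices to embed $\mathfrak{n}$ as a Lie subalgebra of the strictly upper-triangular matrices $\mathfrak{u}_n(\mathbb{R})=\operatorname{Lie}(\mathcal{U}_n(\mathbb{R}))$ for some $n$, and then integrate that inclusion to a homomorphism $N\to\mathcal{U}_n(\mathbb{R})$ and check it is an embedding.

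First I would construct the representation. Let $U(\mathfrak{n})$ be the universal enveloping algebra, $\varepsilon\colon U(\mathfrak{n})\to\mathbb{R}$ the augmentation, and $J=\ker\varepsilon$ the augmentation ideal, i.e.\ the two-sided ideal generated by $\mathfrak{n}$. By the Poincar\'e--Birkhoff--Witt theorem, $U(\mathfrak{n})/J^{N}$ is finite-dimensional for every $N$; fix $N\ge 2$ and set $V=U(\mathfrak{n})/J^{N}$. Left multiplication descends to a Lie algebra homomorphism $\rho\colon\mathfrak{n}\to\mathfrak{gl}(V)$, $\rho(x)(\bar u)=\overline{xu}$, because $[L_x,L_y]=L_{[x,y]}$ on $U(\mathfrak{n})$. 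Since $\mathfrak{n}\subseteq J$, the operator $\rho(x)$ sends the image of $J^i$ into the image of $J^{i+1}$, so $\rho(x)^{N}=0$; and $\rho$ is injective, since $\rho(x)=0$ forces $x=x\cdot 1\in J^{N}\subseteq J^{2}$, while PBW gives $\mathfrak{n}\cap J^{2}=0$. Thus $\rho(\mathfrak{n})$ is a Lie subalgebra of $\mathfrak{gl}(V)$ consisting of nilpotent endomorphisms. (Alternatively one could quote Ado's theorem for a faithful finite-dimensional representation together with its nilpotent refinement, but this enveloping-algebra argument is self-contained.)

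Next I would apply Engel's theorem to $\rho(\mathfrak{n})$: there is a complete flag $0=V_0\subset V_1\subset\cdots\subset V_n=V$ with $\rho(x)V_i\subseteq V_{i-1}$ for all $x$ and all $i$, where $n=\dim V$. A basis adapted to this flag turns $\rho$ into an injective Lie algebra homomorphism $\rho\colon\mathfrak{n}\hookrightarrow\mathfrak{u}_n(\mathbb{R})$. Because $N$ is simply connected, $\rho$ integrates to a Lie group homomorphism $\Phi\colon N\to\mathcal{U}_n(\mathbb{R})$ with $d\Phi_{1_N}=\rho$ and $\Phi\circ\exp_N=\exp\circ\rho$. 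Both $\exp_N$ and $\exp\colon\mathfrak{u}_n(\mathbb{R})\to\mathcal{U}_n(\mathbb{R})$ are diffeomorphisms (in the latter case $\exp$ and $\log$ are mutually inverse polynomial maps, since strictly upper-triangular matrices are nilpotent of index $\le n$), so $\Phi=\exp\circ\rho\circ\exp_N^{-1}$ is an injective immersion whose image $\exp(\rho(\mathfrak{n}))$ is the closed connected Lie subgroup of $\mathcal{U}_n(\mathbb{R})$ attached to the subalgebra $\rho(\mathfrak{n})$; hence $\Phi$ is a closed embedding.

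The main obstacle is the construction of a faithful representation of $\mathfrak{n}$ by nilpotent matrices — this is essentially the nilpotent case of Ado's theorem — and the delicate point within it is ensuring the image consists of nilpotent operators so that Engel's theorem applies, which the $J$-adic filtration on $U(\mathfrak{n})/J^{N}$ handles cleanly. Everything else, namely Engel's theorem itself and the integration-and-embedding step, is routine, and it is considerably simplified here by the fact that $\exp$ is a global diffeomorphism for both $N$ and $\mathcal{U}_n(\mathbb{R})$.
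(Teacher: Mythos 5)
The paper itself quotes the Ado--Engel theorem as a classical fact without proof, so your proposal can only be judged on its own terms, and there is a genuine gap in it: the faithfulness of $\rho$. Your argument is that $\rho(x)=0$ forces $x\in J^{N}\subseteq J^{2}$ and that ``PBW gives $\mathfrak{n}\cap J^{2}=0$.'' That last claim is false whenever $\mathfrak{n}$ is non-abelian: for the Heisenberg algebra with $[x,y]=z$ one has $z=xy-yx\in J^{2}$, so $\mathfrak{n}\cap J^{2}\supseteq[\mathfrak{n},\mathfrak{n}]\neq 0$ (PBW controls the associated graded of the \emph{degree} filtration, not the $J$-adic one; straightening a product of two degree-one elements produces degree-one commutator terms). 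Concretely, with your choice ``fix $N\ge 2$'' the representation on $U(\mathfrak{n})/J^{2}$ kills all of $[\mathfrak{n},\mathfrak{n}]$, so it is not faithful. A sharper symptom that something must be wrong: nowhere in your construction of $\rho$ did you use that $\mathfrak{n}$ is nilpotent, so as written the argument would produce, for \emph{any} finite-dimensional Lie algebra (say $\mathfrak{sl}_{2}(\mathbb{R})$), a faithful representation by nilpotent operators, hence by Engel an embedding into $\mathfrak{u}_{n}(\mathbb{R})$ --- impossible, since $\mathfrak{u}_{n}(\mathbb{R})$ is nilpotent. Indeed, for perfect $\mathfrak{g}$ one has $\mathfrak{g}\subseteq\bigcap_{k}J^{k}$, so the $J$-adic quotients are never faithful there.

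The construction itself is the right one (it is essentially Birkhoff's proof of the nilpotent case of Ado), but the faithfulness step is exactly where nilpotency must enter, and it needs a real argument: take $N=c+1$ where $c$ is the nilpotency class, choose a basis of $\mathfrak{n}$ adapted to the lower central series and assign weight $w$ to basis vectors lying in $C^{w}\mathfrak{n}\setminus C^{w+1}\mathfrak{n}$; since PBW straightening replaces a product by an ordered monomial plus commutator terms of no smaller total weight, $J^{m}$ is spanned by PBW monomials of weight $\ge m$, while every element of $\mathfrak{n}$ is a combination of degree-one monomials of weight $\le c$. Hence $\mathfrak{n}\cap J^{c+1}=0$, which is the statement you actually need in place of $\mathfrak{n}\cap J^{2}=0$. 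With that repaired, the remainder of your proof is correct and routine: $\rho(x)$ is nilpotent via the $J$-adic filtration on $V$, Engel gives a flag putting $\rho(\mathfrak{n})$ inside $\mathfrak{u}_{n}(\mathbb{R})$, simple connectedness integrates $\rho$ to $\Phi\colon N\to\mathcal{U}_{n}(\mathbb{R})$ with $\Phi=\exp\circ\rho\circ\exp_{N}^{-1}$, and since both exponentials are global diffeomorphisms and $\exp(\rho(\mathfrak{n}))$ is a closed subgroup, $\Phi$ is a closed embedding.
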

\begin{thm}
\label{thm:polynomial sequence repeats constant} In a finitely generated
torsion-free nilpotent group $G$, a polynomial sequence $g_{0},g_{1},g_{2},\ldots$
can repeat a value infinitely many times if and only if the sequence
is constant. 
\end{thm}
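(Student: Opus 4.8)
The reverse implication is trivial: a constant sequence repeats its single value at every index. So the plan is to prove that infinite repetition forces constancy, and the whole argument reduces to the elementary fact that a real polynomial with infinitely many roots is identically zero.

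\emph{Step 1: normalize the repeated value.} Suppose $g_{n_1}=g_{n_2}=\cdots=h$ for some $h\in G$ and some strictly increasing sequence $n_1<n_2<\cdots$ in $\mathbb{N}_0$. If $h=1_G$ there is nothing to do; otherwise $h^{-1}\neq 1_G$ has some lc-height $\ge 1$, so by Corollary \ref{cor:f-translation} the left translate $g':=h^{-1}g:\mathbb{N}_0\to G$ is again a polynomial sequence. Since $g'_{n_k}=1_G$ for every $k$, it suffices to show that a polynomial sequence taking the value $1_G$ at infinitely many indices is constant; thus I may assume from the outset $h=1_G$.

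\emph{Step 2: linearize via Mal'tsev--Ado.} By Proposition \ref{prop:ploynomial map in locally nilpotent subgroup} the subgroup $\langle g\rangle$ is finitely generated and nilpotent, and being a subgroup of the torsion-free group $G$ it is torsion-free; of course $G$ itself is finitely generated torsion-free nilpotent. By the Mal'tsev embedding theorem together with the Ado--Engel theorem, $G$ (equivalently $\langle g\rangle$) embeds into $\mathcal{U}_N(\mathbb{R})$ for some $N$; fix such an embedding $\iota$. By Proposition \ref{prop:homomorphism of groups} the composite $\iota\circ g:\mathbb{N}_0\to\mathcal{U}_N(\mathbb{R})$ is a polynomial sequence, so by Theorem \ref{thm: polynomial sequence in U_n(R)} its matrix entries $f_{i,j}:\mathbb{N}_0\to\mathbb{R}$ ($1\le i<j\le N$) are polynomial sequences in $\mathbb{R}$, and by Theorem \ref{thm: polynomial sequence in Z, Q, R} each $f_{i,j}$ is the restriction to $\mathbb{N}_0$ of an honest real polynomial.

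\emph{Step 3: kill the entries.} Since $g_{n_k}=1_G$, the matrix $\iota(g_{n_k})$ is the identity for every $k$, hence $f_{i,j}(n_k)=0$ for all $k$ and all $i<j$. A polynomial with infinitely many roots vanishes, so $f_{i,j}\equiv 0$ for every $i<j$; therefore $\iota\circ g$ is the constant identity map, and injectivity of $\iota$ forces $g\equiv 1_G$. Undoing Step 1, the original sequence is constantly $h$. The only points requiring care are the applicability of Corollary \ref{cor:f-translation} in the normalization (which is fine, as noted) and the correct invocation of the Mal'tsev--Ado embedding to pass into $\mathcal{U}_N(\mathbb{R})$; there is no genuine obstacle once the structure theorems of the previous sections are available — the essential idea is simply that in the linearized picture the problem becomes a statement about vanishing polynomials.
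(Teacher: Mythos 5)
Your proof is correct and follows essentially the same route as the paper's: embed $G$ into $\mathcal{U}_N(\mathbb{R})$ via Mal'tsev and Ado--Engel, use Theorem \ref{thm: polynomial sequence in U_n(R)} to see each matrix entry is a real polynomial, and conclude from the fact that a polynomial taking one value infinitely often is constant. Your Step 1 normalization by $h^{-1}$ is harmless but unnecessary, since the paper applies the same vanishing argument directly to the entries minus their repeated values.
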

\begin{proof}
By theorems of Mal'tsev and Ado, there exists $n\in\mathbb{N}$ such
that $G$ embeds into $\mathcal{U}_{n}(\mathbb{R})$. Then, consider
the induced polynomial sequence $f:\mathbb{N}_{0}\xrightarrow{g}G\hookrightarrow\mathcal{U}_{n}(\mathbb{R})$.
By Theorem \ref{thm: polynomial sequence in U_n(R)}, $f$ can be
written as an upper unitriangular matrix form with  polynomials $f_{i,j}:\mathbb{N}_{0}\to\mathbb{R}$
in each entry. Then, each $f_{i,j}$ can repeat a value infinitely
many times if and only if $f_{i,j}$ is a constant. Hence, the same
assertion holds for $f$ and thus for $g$. 
\end{proof}
\begin{cor}
If a coset of an infinite index subgroup $H$ of a finitely generated
nilpotent group $G$ contains infinitely many elements of a fixed
nonzero power of a polynomial sequence $g:\mathbb{N}_{0}\to G$, then
there exists a normal subgroup $N$ of infinite index in $G$ such
that a coset of $N$ contains the whole sequence.
\end{cor}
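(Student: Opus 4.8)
The plan is to show that the hypothesis forces the \emph{whole} sequence $g$ into a coset of an infinite-index normal subgroup, by first locating such a subgroup geometrically in the Mal'tsev completion and then removing a possible $k$-th-root ambiguity. Write $h(n)=g(n)^{k}$ for the fixed nonzero power; by Corollaries~\ref{cor:inverse} and~\ref{cor:group of polynomial maps}, $h$ is again a polynomial sequence, and $J:=\{n:h(n)\in aH\}$ is infinite. First I would reduce to the torsion-free case: $T=\Tor(G)$ is finite and normal (as $G$ is polycyclic), $\bar G=G/T$ is finitely generated torsion-free nilpotent, and the image $HT/T$ still has infinite index since $[HT:H]\le|T|$; the induced sequences $\bar g,\bar h$ are polynomial (Corollary~\ref{cor:quotient}) and satisfy the same hypotheses over $\bar G$, while an infinite-index normal $\bar N\trianglelefteq\bar G$ with $\bar g$ constant modulo $\bar N$ pulls back to the desired $N\trianglelefteq G$. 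So assume $G$ is torsion-free; by the theorems of Mal'tsev and Ado stated above, fix an embedding $G\hookrightarrow\mathcal{U}_{m}(\mathbb{R})$, and let $G^{\mathbb{R}}$ be its Mal'tsev completion, realized as the Zariski closure of $G$ in $\mathcal{U}_{m}(\mathbb{R})$, with $H^{\mathbb{R}}$ that of $H$. Since $[G:H]=\infty$, one has $H^{\mathbb{R}}\subsetneq G^{\mathbb{R}}$ (finite index is equivalent to equality of completions).

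The crux is to promote ``$h$ lies in $aH$ infinitely often'' to ``$h$ lies in a coset of an infinite-index normal subgroup always.'' By Theorems~\ref{thm: polynomial sequence in U_n(R)} and~\ref{thm: polynomial sequence in Z, Q, R}, the matrix entries of $h$ are honest polynomials, so $h$ extends to a polynomial curve $\tilde h:\mathbb{R}\to\mathcal{U}_{m}(\mathbb{R})$; since $\mathbb{N}_{0}$ and the infinite set $J$ are Zariski-dense in the affine line while $\tilde h(\mathbb{N}_{0})\subseteq G^{\mathbb{R}}$ and $\tilde h(J)\subseteq aH^{\mathbb{R}}$, we get $h(\mathbb{N}_{0})\subseteq aH^{\mathbb{R}}\cap G$. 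Now put $M^{\mathbb{R}}:=H^{\mathbb{R}}\cdot[G^{\mathbb{R}},G^{\mathbb{R}}]$ and $M:=M^{\mathbb{R}}\cap G$. Then $M^{\mathbb{R}}$ is a closed connected subgroup containing $[G^{\mathbb{R}},G^{\mathbb{R}}]$, hence normal in $G^{\mathbb{R}}$, and it is \emph{proper}: if it were all of $G^{\mathbb{R}}$ then $H^{\mathbb{R}}$ would surject onto $G^{\mathbb{R}}/[G^{\mathbb{R}},G^{\mathbb{R}}]$, forcing $H^{\mathbb{R}}=G^{\mathbb{R}}$ by nilpotency. Therefore $M\trianglelefteq G$; $M$ has infinite index (otherwise $M$, being finite-index in $G$, would be Zariski-dense in $G^{\mathbb{R}}$, whence $M^{\mathbb{R}}=G^{\mathbb{R}}$); and since $h(0)\in aH^{\mathbb{R}}\subseteq aM^{\mathbb{R}}$, we have $aM^{\mathbb{R}}=h(0)M^{\mathbb{R}}$, so $h(\mathbb{N}_{0})\subseteq h(0)M^{\mathbb{R}}\cap G=h(0)M$.

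To finish, pass to $Q:=G/M$: it is finitely generated nilpotent, infinite since $[G:M]=\infty$, and \emph{torsion-free}, because it embeds into the connected simply connected nilpotent Lie group $G^{\mathbb{R}}/M^{\mathbb{R}}$. The image $\bar g$ of $g$ in $Q$ is a polynomial sequence (Corollary~\ref{cor:quotient}) with $\bar g(n)^{k}=\overline{h(n)}=\overline{h(0)}=\bar g(0)^{k}$ for all $n$; since $k$-th roots are unique in a torsion-free nilpotent group (e.g. via $\exp$ and $\log$ after embedding $Q$ into some $\mathcal{U}_{m'}(\mathbb{R})$), $\bar g$ is constant, i.e. $g(\mathbb{N}_{0})\subseteq g(0)M$. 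Thus $N:=M$ and $b:=g(0)$ work.

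The step I expect to be the main obstacle is exactly the choice of $M^{\mathbb{R}}$: one cannot in general separate $H$ by a single homomorphism $G\to\mathbb{Z}$ (that only succeeds when the image of $H$ in $G^{\mathrm{ab}}$ has infinite index), so one is forced into the possibly higher-codimension normal subgroup $H^{\mathbb{R}}[G^{\mathbb{R}},G^{\mathbb{R}}]$ and then into disposing of the $k$-th-power ambiguity, which is precisely where the torsion-freeness of $G/M$ and uniqueness of roots become essential. A secondary technical point is the Zariski-density argument upgrading ``infinitely often in the coset'' to ``always in its closure,'' which rests on the genuine polynomiality of the matrix entries of a polynomial sequence.
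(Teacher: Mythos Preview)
Your proof is correct but takes a genuinely different route from the paper's. The paper first invokes Lemma~\ref{lem:normal subgroup of infinite index} (every infinite-index subgroup of a finitely generated nilpotent group lies in a normal one of infinite index, proved via nearly maximal subgroups and the Lennox--Robinson theorem) to produce $N\supseteq H$ abstractly, then applies Theorem~\ref{thm:polynomial sequence repeats constant} to the induced sequence $\bar g^{n}$ in the torsion-free quotient of $G/N$ to force it constant, and finally extracts roots. You instead build the normal subgroup geometrically as $M=G\cap\bigl(H^{\mathbb{R}}\cdot[G^{\mathbb{R}},G^{\mathbb{R}}]\bigr)$ inside the Mal'tsev completion, and your Zariski-density step (polynomial matrix entries of $h$, infinite $J$ dense in $\mathbb{A}^{1}$) promotes ``infinitely often in $aH$'' directly to ``always in $aH^{\mathbb{R}}$'', which simultaneously replaces the nearly-maximal-subgroup lemma and subsumes the repetition argument of Theorem~\ref{thm:polynomial sequence repeats constant}. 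Your construction also yields $G/M$ torsion-free automatically (via the embedding into $G^{\mathbb{R}}/M^{\mathbb{R}}$), whereas the paper must pass to $(G/N)/\Tor(G/N)$ by hand. The trade-off: the paper's argument stays internal to its own results but imports the Lennox--Robinson black box; yours avoids that at the cost of standard but not entirely trivial facts about Mal'tsev/Zariski closures, notably that $[G:H]=\infty$ forces $H^{\mathbb{R}}\subsetneq G^{\mathbb{R}}$ (which follows from equality of Hirsch length with $\dim G^{\mathbb{R}}$) and that the Zariski closure of a subgroup of $\mathcal{U}_{m}(\mathbb{R})$ is connected.
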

\begin{proof}
Suppose that a coset of $H$ contains infinitely many of elements
in the sequence $g_{0}^{n},g_{1}^{n},g_{2}^{n},\ldots$ for some $0\ne n\in\mathbb{Z}$.
We may assume that $n>0$.

Since $H$ is of infinite index in $G$, by a technical Lemma \ref{lem:normal subgroup of infinite index}
proved later, there exists a normal subgroup $N$ of infinite index
in $G$ containing $H$. Then, a coset of $N$ contains infinitely
many elements of the sequence $g_{0}^{n},g_{1}^{n},g_{2}^{n},\ldots$,
i.e., the induced sequence $\bar{g}^{n}:\mathbb{N}_{0}\to G\to G/N$
repeats a value infinitely many times.

If $G/N$ is torsion-free, then by Theorem \ref{thm:polynomial sequence repeats constant}
$\bar{g}^{n}$ is a constant. If $G/N$ is not torsion-free, then
the torsion subgroup $\Tor(G/N)$ is a finite normal subgroup of infinite
index in $G/N$ and thus there exists a normal subgroup $N'$ of infinite
index in $G$ such that $G/N'\cong(G/N)/\Tor(G/N)$ is a finitely
generated torsion-free nilpotent group. Since $g^{n}\mod N'=\bar{g}^{n}\mod\Tor(G/N)$
repeats a value infinitely many times, it is constant. So we may assume
that $G/N$ is finitely generated torsion-free nilpotent and $\bar{g}^{n}$
is a constant.

By a classical result on extraction of roots in torsion-free nilpotent
groups (cf. \cite[Thm 2.7]{CMZ2017}), $\bar{g}$ can only take values
from a finite set of at most $n$ elements, which are all $n$th roots
of the constant $\bar{g}^{n}$, and thus must repeat one of the value
infinitely many times. Hence, $\bar{g}$ itself is constant, i.e.,
a coset of $N$ contains the whole sequence $g_{0},g_{1},g_{2},\ldots$.
\end{proof}
Alternatively, we may use the following theorem of P. Hall to embed
a finitely generated torsion-free nilpotent group $G$ into $\mathcal{U}_{n}(\mathbb{Z})$
for some $n$ and to prove Theorem \ref{thm:polynomial sequence repeats constant}. 
\begin{thm*}[{\cite[Thm 7.5]{HallEdmonton1957}, \cite[Thm 6.5]{CMZ2017}}]
\label{thm:faithful repr}  Every finitely generated torsion-free
nilpotent group $G$ is isomorphic to a subgroup of $\mathcal{U}_{n}(\mathbb{Z})$
for some $n=n(G)$. 
\end{thm*}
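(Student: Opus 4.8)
The plan is to make $G$ act faithfully by integral unipotent matrices on a finite truncation of its group ring. Write $c$ for the nilpotency class of $G$, let $\Delta_{\mathbb{Z}}\subseteq\mathbb{Z}[G]$ and $\Delta_{\mathbb{Q}}\subseteq\mathbb{Q}[G]$ be the augmentation ideals, and set $V=\mathbb{Q}[G]/\Delta_{\mathbb{Q}}^{c+1}$. First I would check that $\dim_{\mathbb{Q}}V<\infty$: this holds because $\Delta_{\mathbb{Q}}/\Delta_{\mathbb{Q}}^{2}\cong G^{\mathrm{ab}}\otimes_{\mathbb{Z}}\mathbb{Q}$ is finite dimensional (as $G$ is finitely generated) and each $\Delta_{\mathbb{Q}}^{i}/\Delta_{\mathbb{Q}}^{i+1}$ is a quotient of the $i$-fold tensor power of $\Delta_{\mathbb{Q}}/\Delta_{\mathbb{Q}}^{2}$. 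Left multiplication gives a homomorphism $\rho\colon G\to\mathrm{GL}(V)$; since $(g-1)\Delta_{\mathbb{Q}}^{i}\subseteq\Delta_{\mathbb{Q}}^{i+1}$ we have $(\rho(g)-1)^{c+1}=0$ for all $g$, so the flag $V\supseteq\Delta_{\mathbb{Q}}^{1}/\Delta_{\mathbb{Q}}^{c+1}\supseteq\cdots\supseteq\Delta_{\mathbb{Q}}^{c}/\Delta_{\mathbb{Q}}^{c+1}\supseteq0$ is $\rho(G)$-stable with trivial action on each graded piece; choosing a basis of $V$ refining this flag realizes $\rho(G)$ inside $\mathcal{U}_{n}(\mathbb{Q})$, where $n=\dim_{\mathbb{Q}}V$.

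Next I would establish faithfulness. Here $\ker\rho=G\cap(1+\Delta_{\mathbb{Q}}^{c+1})$ is by definition the $(c+1)$-st rational dimension subgroup of $G$, and a classical theorem on dimension subgroups identifies it with the isolator $\sqrt{\gamma_{c+1}(G)}$ in $G$; since $G$ has class $c$ we have $\gamma_{c+1}(G)=1$, and $G$ torsion-free then forces this isolator to be $1$, so $\rho$ is injective. (One could reach $G\hookrightarrow\mathcal{U}_{m}(\mathbb{Q})$ differently, e.g.\ by passing through the rational Mal'tsev completion of $G$ and the Baker--Campbell--Hausdorff correspondence and then applying the Ado--Engel theorem over $\mathbb{Q}$ --- both already invoked in this paper --- but one then still faces the same descent from $\mathbb{Q}$ to $\mathbb{Z}$, which the present construction handles more cleanly.)

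Finally I would descend to $\mathbb{Z}$. The natural map $\mathbb{Z}[G]\to V$ kills $\Delta_{\mathbb{Z}}^{c+1}\subseteq\Delta_{\mathbb{Q}}^{c+1}$, hence factors through $\mathbb{Z}[G]/\Delta_{\mathbb{Z}}^{c+1}$, which is a finitely generated abelian group by the same tensor-power argument used over $\mathbb{Q}$; therefore its image $L\subseteq V$ is a finitely generated $\mathbb{Z}$-submodule. Now $L$ spans $V$ over $\mathbb{Q}$ (because $\mathbb{Z}[G]$ spans $\mathbb{Q}[G]$), it is $\rho(G)$-invariant (being the image of the left ideal $\mathbb{Z}[G]$), and it is torsion-free (it lies in the $\mathbb{Q}$-vector space $V$), so $L\cong\mathbb{Z}^{n}$. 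Moreover each $L_{i}:=L\cap(\Delta_{\mathbb{Q}}^{i}/\Delta_{\mathbb{Q}}^{c+1})$ is a pure submodule of $L$, since $L/L_{i}$ embeds into the $\mathbb{Q}$-vector space $V/(\Delta_{\mathbb{Q}}^{i}/\Delta_{\mathbb{Q}}^{c+1})$ and so is torsion-free; hence there is a $\mathbb{Z}$-basis of $L$ adapted to the flag $0\subseteq L_{c}\subseteq\cdots\subseteq L_{1}\subseteq L_{0}=L$, and with respect to a suitable ordering of that basis $\rho(G)\subseteq\mathcal{U}_{n}(\mathbb{Z})$. The step I expect to be the main obstacle is faithfulness: it depends on the nontrivial dimension-subgroup computation above (equivalently, on residual nilpotence of the group ring together with a count on the nilpotency class); the remaining work --- producing the finitely generated, $\rho(G)$-stable, filtration-adapted integral lattice $L$ --- is bookkeeping once that is in hand.
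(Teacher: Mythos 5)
The paper does not prove this statement at all --- it is quoted verbatim from Hall's Edmonton notes and from Clement--Majewicz--Zyman --- so there is no internal argument to compare yours against; I can only assess your outline on its own terms. As such it is a correct rendition of the classical group-ring proof, and the bookkeeping is sound: $\mathbb{Q}[G]/\Delta_{\mathbb{Q}}^{c+1}$ is finite dimensional by the tensor-power argument, left multiplication is unipotent relative to the augmentation filtration, the image $L$ of $\mathbb{Z}[G]$ is a finitely generated, torsion-free, $\rho(G)$-invariant lattice spanning $V$, and since each successive quotient $L_{i}/L_{i+1}$ embeds in the $\mathbb{Q}$-vector space $\Delta_{\mathbb{Q}}^{i}/\Delta_{\mathbb{Q}}^{i+1}$ it is free, so a flag-adapted $\mathbb{Z}$-basis exists and $(\rho(g)-1)L_{i}\subseteq L_{i+1}$ puts $\rho(G)$ inside $\mathcal{U}_{n}(\mathbb{Z})$ with $n=\rank L$.

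The one substantive caveat is the step you yourself flag: faithfulness is delegated entirely to the rational dimension subgroup theorem of Jennings--Hall, namely $G\cap(1+\Delta_{\mathbb{Q}}^{c+1})=\sqrt{C^{c+1}G}=\Tor(G)=1$ for $G$ torsion-free of class $c$. That identification is a genuine theorem of depth comparable to the statement being proved, so your argument is a complete proof only modulo that citation; since the paper itself settles for citing Hall and CMZ for the whole result, this seems a fair trade, but you should be aware that you have not made the theorem elementary --- you have relocated its content into the dimension-subgroup input. The sources the paper cites reach the same conclusion by different means (Hall argues through the polycyclic structure of $G$, inducting along a central series with infinite cyclic factors), which avoids dimension subgroups at the cost of a less uniform construction; your route buys a clean, basis-free description of the integral lattice and makes the descent from $\mathbb{Q}$ to $\mathbb{Z}$ transparent, which, as you note, is awkward if one instead goes through the Mal'tsev completion and Ado--Engel.
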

\begin{rem*}
If one wishes to avoid using Theorem \ref{thm: polynomial sequence in U_n(R)}
and the canonical Mal'tsev embedding or the theorem of Hall to prove
Theorem \ref{thm:polynomial sequence repeats constant}, one could
also argue by induction on the nilpotency class of $G$ via upper
central series $G=Z_{n}\triangleright Z_{n-1}\triangleright\cdots\triangleright Z_{1}\triangleright Z_{0}=\{1\}$.
Without loss of generality, suppose that the polynomial sequence $g$
repeats the value $g_{0}\in G$ infinitely many times. We may assume
that $g_{0}$ is the identity element $1_{G}$ of $G$; otherwise,
replace $g$ by the polynomial sequence $g_{0}^{-1}g$. Let $\pi_{i}:Z_{i}\to Z_{i}/Z_{i-1}$
be the quotient map. Since $G$ is torsion-free, each quotient $Z_{i+1}/Z_{i}$
is torsion-free abelian, cf. \cite[Lem 13.69]{Drutu2018Geometric}
or \cite[Thm 1.2.20]{LennoxRobinson2004}. Consider the induced polynomial
maps $\mathbb{N}_{0}\xrightarrow{g}G\xrightarrow{\pi_{n}}G/Z_{n-1}$.
Then, $G/Z_{n-1}$ is a direct sum of finitely many copies of $\mathbb{Z}$,
since $G$ is finitely generated. By Corollary \ref{cor:vector of polynomial sequences in Z^M, Q^M, R^M},
$\pi_{n}\circ g$ is a vector of polynomials. Since $\pi_{n}\circ g$
vanishes for infinitely many values $n$, it is identically zero,
i.e., $g$ is a polynomial sequence in $Z_{n-1}$, i.e., $g$ has
uc-height $\le n-1$. By induction, one proves that $g$ is a polynomial
sequence in $Z_{i}$, or has uc-height $\le i$, for all $i=n,\ldots,1,0$.
Hence, $g$ is constant. 
\end{rem*}
For the proof given in the above remark, one has to argue with the
upper central series, because for a torsion-free nilpotent group $G$,
the quotients $C^{i}G/C^{i+1}G$ may not be torsion-free. 

Next, we state the technical lemma, whose proof is postponed later. 
\begin{lem}
\label{lem:normal subgroup of infinite index} If $H$ is a subgroup
of infinite index in a finitely generated nilpotent group $G$, there
exists a normal subgroup $N$ of $G$ containing $H$ and having infinite
index in $G$.
\end{lem}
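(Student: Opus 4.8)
\emph{Plan.} The plan is to reduce the statement to a fact about the abelianization of $G$ and then to prove that fact by induction on the nilpotency class using commutator calculus. I would first observe that one may simply take $N=HG'$, where $G'=[G,G]=C^{2}G$: this subgroup contains $H$, and it is normal in $G$ because it contains $G'$ — indeed $ghg^{-1}=[g,h]h\in G'H=HG'$ for all $g\in G,\ h\in H$, or equivalently $HG'/G'$ is a subgroup of the abelian group $G/G'$ and hence normal. So the lemma follows once one knows that $HG'$ has infinite index in $G$ whenever $H$ does; equivalently, it suffices to prove the contrapositive: if $[G:HG']<\infty$, then $[G:H]<\infty$.

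I would prove this implication by induction on the nilpotency class $c$ of $G$. For $c\le 1$ the group $G$ is abelian, so $G'=\{1\}$ and there is nothing to prove. For $c\ge 2$, let $Z=C^{c}G$ be the last nontrivial term of the lower central series; it lies in the centre of $G$, so $\bar G:=G/Z$ has class $\le c-1$. Writing $\bar H$ for the image of $H$ in $\bar G$, one has $\bar H\,[\bar G,\bar G]=HG'Z/Z$, hence $[\bar G:\bar H\,[\bar G,\bar G]]=[G:HG'Z]\le[G:HG']<\infty$; the induction hypothesis then gives $[\bar G:\bar H]<\infty$, i.e.\ $[G:HZ]<\infty$. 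Since $Z$ is central, $H$ is normal in $HZ$ and $HZ/H\cong Z/(Z\cap H)$, so $[G:H]=[G:HZ]\cdot[Z:Z\cap H]$, and it remains only to show $[Z:Z\cap H]<\infty$.

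For this I would set $n=[G:HG']<\infty$; since $HG'$ is normal of index $n$, every $g\in G$ satisfies $g^{\,n}\in HG'$, so fixing a finite generating set $x_{1},\dots,x_{r}$ of $G$ I can write $x_{i}^{\,n}=h_{i}k_{i}$ with $h_{i}\in H$ and $k_{i}\in G'$. The point is that in a group of class $\le c$ every commutator term of weight $>c$ is trivial, so for every $c$-fold left-commutator of generators
\[
[x_{i_{1}},\dots,x_{i_{c}}]^{\,n^{c}}\;=\;[\,x_{i_{1}}^{\,n},\dots,x_{i_{c}}^{\,n}\,]\;=\;[\,h_{i_{1}}k_{i_{1}},\dots,h_{i_{c}}k_{i_{c}}\,]\;=\;[h_{i_{1}},\dots,h_{i_{c}}]\ \in\ C^{c}H\subseteq H,
\]
where the first equality pulls the exponent $n$ out of each of the $c$ slots and the third discards each $k_{i_{j}}\in G'=C^{2}G$, both being valid because the omitted correction terms have commutator weight $>c$ and hence vanish, while the middle equality is the substitution $x_{i_{j}}^{\,n}=h_{i_{j}}k_{i_{j}}$. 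Since $Z=C^{c}G$ is abelian and is generated by the weight-$c$ left-commutators of the generators $x_{1},\dots,x_{r}$, this shows $Z^{\,n^{c}}\subseteq Z\cap H$; and $Z$ is finitely generated abelian ($G$ being polycyclic, hence Noetherian), so $[Z:Z^{\,n^{c}}]<\infty$ and therefore $[Z:Z\cap H]<\infty$, completing the induction.

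The part I expect to require the most care is the displayed commutator identity — precisely, justifying that all correction terms of weight exceeding $c$ vanish. I would do this by repeatedly applying the expansion identities of Lemma \ref{lem:commutator identities} (or by invoking the Hall collection process) and checking that every stray factor produced lies in $C^{c+1}G=\{1\}$. The remaining ingredients — normality of $HG'$, the passage to $G/C^{c}G$, the index bookkeeping, and the standard fact that $C^{c}G$ is generated by the weight-$c$ left-commutators of a generating set of $G$ — are routine.
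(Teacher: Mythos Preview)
Your argument is correct and is genuinely different from the paper's. The paper does not construct $N$ explicitly: it first uses Zorn's lemma to place $H$ inside a \emph{nearly maximal} subgroup $M$ (a subgroup of infinite index such that every overgroup has finite index), and then invokes a theorem of Lennox--Robinson stating that in a finitely generated finite-by-nilpotent group every nearly maximal subgroup is normal; this $M$ is the desired $N$. By contrast, you take the concrete candidate $N=HG'$ and prove directly, by induction on the nilpotency class and commutator calculus (multilinearity of the weight-$c$ commutator map into the centre, plus Noetherianity of $G$), that $[G:HG']<\infty$ forces $[G:H]<\infty$. Your route is more elementary and self-contained --- no Zorn, no external citation --- and it yields an explicit $N$ together with the quantitative byproduct $Z^{n^{c}}\subseteq Z\cap H$; the paper's route is shorter to state once the cited machinery is in place and situates the lemma within the broader Lennox--Robinson framework for virtually solvable groups. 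The one place to be careful, as you note, is the displayed identity $[x_{i_1}^n,\dots,x_{i_c}^n]=[x_{i_1},\dots,x_{i_c}]^{n^c}$ and the removal of the $k_{i_j}\in G'$ factors: both follow from the standard fact that the map $(g_1,\dots,g_c)\mapsto[g_1,\dots,g_c]$ into the central subgroup $C^{c}G$ is $\mathbb{Z}$-multilinear, which in turn is obtained from the identities in Lemma~\ref{lem:commutator identities} by discarding terms of weight~$>c$.
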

With the help of the above lemma, we can prove the following theorem:
\begin{thm}
\label{thm:infinite subsequence generates finite index subgroup}
Let $G$ be any nilpotent group and $g:\mathbb{N}_{0}\to G$ be any
polynomial sequence such that $G=\langle g\rangle$. Then, every infinite
subsequence (not necessarily corresponding to any arithmetic progression)
generates a finite index subgroup of $G$.
\end{thm}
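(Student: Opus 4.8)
The plan is to argue by contradiction, using the technical Lemma~\ref{lem:normal subgroup of infinite index} to replace an arbitrary subgroup by a normal one, and then Theorem~\ref{thm:polynomial sequence repeats constant} to force a quotient sequence to be constant. First I would observe that $G$ is automatically finitely generated nilpotent: since $\mathbb{N}_{0}$ is a finitely generated commutative monoid and nilpotent groups are locally nilpotent, Proposition~\ref{prop:ploynomial map in locally nilpotent subgroup} applies to $g$ and shows that $G=\langle g\rangle$ is finitely generated and nilpotent. Write the given infinite subsequence as $g_{n_{1}},g_{n_{2}},\ldots$ with $n_{1}<n_{2}<\cdots$, set $H=\langle g_{n_{k}}:k\in\mathbb{N}\rangle$, and suppose toward a contradiction that $[G:H]=\infty$.

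By Lemma~\ref{lem:normal subgroup of infinite index} there is a normal subgroup $N\trianglelefteq G$ with $H\subseteq N$ and $[G:N]=\infty$. Let $\pi:G\to G/N$ be the quotient map. By Corollary~\ref{cor:quotient} the induced map $\bar{g}=\pi\circ g:\mathbb{N}_{0}\to G/N$ is again a polynomial sequence, and since $g(\mathbb{N}_{0})$ generates $G$, the image $\bar{g}(\mathbb{N}_{0})$ generates $G/N$. On the other hand $g_{n_{k}}\in H\subseteq N$ for every $k$, so $\bar{g}(n_{k})=1_{G/N}$; hence $\bar{g}$ attains the identity infinitely often. Now $G/N$ is finitely generated nilpotent, so its torsion elements form a finite normal subgroup $\Tor(G/N)$, and $(G/N)/\Tor(G/N)$ is finitely generated torsion-free nilpotent. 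Composing $\bar{g}$ with the projection $G/N\to (G/N)/\Tor(G/N)$ gives a polynomial sequence into a finitely generated torsion-free nilpotent group that still attains the identity at every $n_{k}$, hence infinitely often; by Theorem~\ref{thm:polynomial sequence repeats constant} this composite is constant, equal to the identity. Therefore $\bar{g}(\mathbb{N}_{0})\subseteq\Tor(G/N)$, so $G/N=\langle\bar{g}(\mathbb{N}_{0})\rangle$ is finite, contradicting $[G:N]=\infty$. Hence $[G:H]<\infty$.

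The hard part here is not any single calculation but making sure the two reductions are legitimate. The reduction from a general subgroup $H$ to a normal subgroup $N$ of the same (infinite) index is precisely Lemma~\ref{lem:normal subgroup of infinite index}, proved separately, and it is what makes the ``repeats a value'' machinery usable at all. The second delicate point is that Theorem~\ref{thm:polynomial sequence repeats constant} requires a torsion-free target, so one cannot apply it to $G/N$ directly; the finite torsion subgroup of $G/N$ must be killed first, after which the torsion-free quotient is still finitely generated nilpotent and the theorem applies. Once both reductions are in place, the contradiction is immediate.
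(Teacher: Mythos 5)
Your proposal is correct and follows essentially the same route as the paper: contradiction via Lemma~\ref{lem:normal subgroup of infinite index} to get a normal subgroup $N$ of infinite index containing the subgroup generated by the subsequence, then passing to $(G/N)/\Tor(G/N)$ and invoking Theorem~\ref{thm:polynomial sequence repeats constant} to force the induced sequence to be constant, contradicting $G=\langle g\rangle$. Your explicit use of Proposition~\ref{prop:ploynomial map in locally nilpotent subgroup} to justify that $G$ is finitely generated (needed for the lemma and for finiteness of $\Tor(G/N)$) is a point the paper leaves implicit, but it is the same argument.
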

\begin{proof}
 The assertion is trivial if $G$ is a torsion group.  So we may
assume that $G$ has elements of infinite order. Suppose there exists
an infinite subsequence of $g$, which generates an infinite index
subgroup $H$ of $G$. Then, by Lemma \ref{lem:normal subgroup of infinite index},
there exists a normal subgroup $N$ in $G$ containing $H$ and having
infinite index in $G$. Consider the induced polynomial sequence 
\[
\bar{g}:\mathbb{N}_{0}\xrightarrow{g}G\twoheadrightarrow G/N\twoheadrightarrow(G/N)/\Tor(G/N).
\]
Since $G/N$ is a finitely generated nilpotent group, the torsion
elements of $G/N$ form a finite normal subgroup $\Tor(G/N)$. Since
$G/N$ is infinite, $(G/N)/\Tor(G/N)$ is infinite and thus finitely
generated torsion-free nilpotent. Then, $\bar{g}$ repeat the identity
element in $(G/N)/\Tor(G/N)$ infinitely many times. By Theorem \ref{thm:polynomial sequence repeats constant},
$\bar{g}$ must be the constant identity map. This is a contradiction
to the assumption that $G=\langle g\rangle$, which implies $(G/N)/\Tor(G/N)=\langle\bar{g}\rangle$. 
\end{proof}
 The technique we use to prove Lemma \ref{lem:normal subgroup of infinite index}
is the theory of nearly maximal subgroups. Recall that a proper subgroup
$M$ of a group $G$ is called maximal if it is not properly contained
in any other proper subgroups of $G$. Similarly, a subgroup $M$
of a group $G$ is said to be nearly maximal  if it has infinite
index, but any subgroup properly containing $M$ has finite index
in $G$. 

It is known that every proper subgroup in a finitely generated group
is contained in a maximal one.  Similarly, with the assumption of
Zorn's lemma, we have the following result: 
\begin{lem}
Every proper subgroup of infinite index in a finitely generated infinite
group is contained in a nearly maximal subgroup. 
\end{lem}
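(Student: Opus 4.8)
The plan is a direct application of Zorn's Lemma to a well-chosen poset, with the finite generation of $G$ entering at exactly one point. Let $H$ be a proper subgroup of infinite index in the finitely generated infinite group $G$, and consider
\[
\mathcal{S}=\{K\le G\mid H\le K\text{ and }[G:K]=\infty\},
\]
partially ordered by inclusion. It is nonempty since $H\in\mathcal{S}$. I claim that any maximal element $M$ of $\mathcal{S}$ is exactly a nearly maximal subgroup containing $H$: it has infinite index by virtue of lying in $\mathcal{S}$, it is proper because $[G:M]=\infty$, and if $M'$ is any subgroup with $M\subsetneq M'$, then $H\le M\le M'$, so $M'\notin\mathcal{S}$ by maximality of $M$, which forces $[G:M']<\infty$. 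Thus the whole task reduces to producing a maximal element of $\mathcal{S}$.

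To run Zorn's Lemma I must check that every chain $\mathcal{C}\subseteq\mathcal{S}$ has an upper bound in $\mathcal{S}$. The natural candidate is $K:=\bigcup_{L\in\mathcal{C}}L$, which is a subgroup of $G$ because $\mathcal{C}$ is totally ordered, and which plainly contains $H$. The one nontrivial point, and the sole place where the hypothesis that $G$ is finitely generated is used, is to show $[G:K]=\infty$. Suppose instead $[G:K]<\infty$. A finite-index subgroup of a finitely generated group is itself finitely generated (a standard fact, via the Reidemeister--Schreier procedure), so $K=\langle k_{1},\dots,k_{m}\rangle$ for some $k_{1},\dots,k_{m}\in K$. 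Each $k_{i}$ lies in some $L_{i}\in\mathcal{C}$, and since $\mathcal{C}$ is a chain there is a largest among the finitely many $L_{1},\dots,L_{m}$, say $L$; then $k_{1},\dots,k_{m}\in L$, whence $K=\langle k_{1},\dots,k_{m}\rangle\le L\le K$, so $K=L\in\mathcal{C}$, contradicting $[G:L]=\infty$. Therefore $K\in\mathcal{S}$ and is an upper bound for $\mathcal{C}$, and Zorn's Lemma yields the desired maximal $M$.

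The main obstacle is precisely the chain step above: infinite index is not in general preserved under unions of chains of subgroups, so the argument genuinely needs the input that a finite-index subgroup of a finitely generated group is finitely generated, in order to push a hypothetical finite-index union back down into a single member of the chain. Everything else — the translation between "maximal element of $\mathcal{S}$" and "nearly maximal subgroup" — is routine bookkeeping about the partial order.
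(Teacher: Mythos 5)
Your proposal is correct and follows essentially the same route as the paper: Zorn's Lemma applied to the poset of infinite-index subgroups containing $H$, with the chain step handled by the fact (Schreier) that a finite-index subgroup of a finitely generated group is finitely generated, forcing a hypothetical finite-index union back into a single member of the chain. The only difference is cosmetic — the paper cites Schreier's lemma where you invoke Reidemeister--Schreier — so no further comment is needed.
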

\begin{proof}
Let $G$ be any finitely generated infinite group and $H$ be a subgroup
of infinite index. Let 
\[
\Omega_{H}=\{K\le G\mid H\le K,(G:K)=\infty\}
\]
be the partially ordered set of all infinite index subgroups of $G$
containing $H$ with the partial order given by the inclusion of subgroups.
Let $\mathcal{C}$ be a chain (i.e., a totally ordered subset) in
$\Omega_{H}$. Let $J=\bigcup_{K\in\mathcal{C}}K$ be the union of
all elements in $\mathcal{C}$. Then, $J$ is easily seen to be a
subgroup of $G$. By Schreier's lemma,  any finite index subgroup
of a finitely generated group is finitely generated. So if $J$ were
of finite index in $G$, then $J$ would be a finitely generated infinite
group and its finitely many generators would lie in $\bigcup_{K\in\mathcal{C}}K$
and thus in some $K\in\mathcal{C}$, since $\mathcal{C}$ is totally
ordered. Hence, $K=J$ has finite index in $G$, which is a contradiction.
Hence, $J$ must have infinite index in $G$. So far, the hypothesis
of Zorn's lemma has been checked. Then, by Zorn's Lemma, $\Omega_{H}$
contains a maximal element $M$. Then, every subgroup properly containing
$M$ has finite index in $G$. Hence, $M$ is a nearly maximal subgroup
of $G$. 
\end{proof}
Then, Lemma \ref{lem:normal subgroup of infinite index} is an easy
consequence of the following theorem of Lennox and Robison \cite[Thm C]{LennoxRobinson1982}
or \cite[Thm 10.4.5]{LennoxRobinson2004}, which establishes a criterion
for all nearly maximal subgroups being normal in finitely generated
virtually solvable groups. 
\begin{thm*}
Let $G$ be a finitely generated virtually solvable group. Then, the
following conditions are equivalent: 
\begin{enumerate}
\item each nearly maximal subgroup has finitely many conjugates in $G$; 
\item each nearly maximal subgroup is normal in $G$; 
\item $G$ is finite-by-nilpotent.
\end{enumerate}
\end{thm*}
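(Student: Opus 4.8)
The plan is to establish the cycle of implications $(3)\Rightarrow(2)\Rightarrow(1)\Rightarrow(3)$. The implication $(2)\Rightarrow(1)$ is immediate, since a normal subgroup is its own unique conjugate, so the substance lies in the other two implications; $(1)\Rightarrow(3)$ will be the main obstacle, while $(3)\Rightarrow(2)$ is elementary.

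For $(3)\Rightarrow(2)$ I would first reduce to the nilpotent case. Suppose $F\trianglelefteq G$ is finite with $G/F$ nilpotent and let $M$ be nearly maximal in $G$. Since $[MF:M]=[F:F\cap M]$ is finite while $[G:M]$ is infinite, the subgroup $MF$ has infinite index; as $M\le MF$, near-maximality forces $MF=M$, i.e. $F\le M$. Then $M/F$ is nearly maximal in the finitely generated nilpotent group $Q:=G/F$ (subgroups between $M/F$ and $Q$ correspond to subgroups between $M$ and $G$, with finite index preserved both ways). So it suffices to show that a nearly maximal subgroup $N$ of a finitely generated nilpotent group $Q$ is normal. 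For this I would invoke the standard fact that in such a $Q$ a subgroup $H$ has finite index as soon as $H\gamma_2(Q)$ does — proved by checking inductively that $H\gamma_i(Q)$ has finite index for every $i$, using that the commutator induces a surjective bi-additive map $\gamma_{i-1}(Q)/\gamma_i(Q)\times Q/\gamma_2(Q)\to\gamma_i(Q)/\gamma_{i+1}(Q)$, so that finite-index images of $H$ in the first two layers force a finite-index image in the third, and that $\gamma_{c+1}(Q)=\{1\}$. Granting this, since $[Q:N]$ is infinite so is $[Q:N\gamma_2(Q)]$; because $N\le N\gamma_2(Q)$, near-maximality gives $N\gamma_2(Q)=N$, i.e. $\gamma_2(Q)\le N$, and a subgroup containing the derived subgroup is normal.

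For $(1)\Rightarrow(3)$ I would argue by contraposition: assuming $G$ is not finite-by-nilpotent, I would construct a nearly maximal subgroup $M$ with $N_G(M)$ of infinite index. The starting observation is the reformulation that a group is finite-by-nilpotent precisely when some term $\gamma_c$ of its lower central series is finite (if $\gamma_{c+1}$ is finite then $G/\gamma_{c+1}$ is nilpotent), so here every $\gamma_i(G)$ is infinite. After replacing $G$ by a finite-index finitely generated solvable normal subgroup — still not finite-by-nilpotent, since the property is inherited by and detectable on finite-index subgroups — I would locate an infinite abelian chief-like section $V$ inside some $\gamma_i/\gamma_{i+1}$, viewed as a finitely generated module over the group ring of a finitely generated nilpotent quotient (Noetherian by P. Hall). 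Failure of finite-by-nilpotency is exactly what makes this action "unbounded": after a suitable reduction, no finite-index subgroup of $G$ acts trivially on $V$ and $V$ has no nonzero finite $G$-submodule. Inside the preimage of the extension $V\rtimes Q$ in $G$ one then takes $M$ to be a subgroup lying between a full submodule of $V$ and the extension that is maximal among subgroups of infinite index; the module-theoretic unboundedness is what forces the $G$-conjugates of $M$ to be infinitely many, so $[G:N_G(M)]=\infty$.

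The delicate point — and the step I expect to cost the most — is this last construction: passing from the qualitative statement "$G$ is not finite-by-nilpotent" to an \emph{explicit} nearly maximal subgroup with an infinite conjugacy class, uniformly across all finitely generated virtually solvable groups. It relies on the structure theory of finitely generated solvable groups (Hall's module-finiteness results over group rings of polycyclic-by-finite groups, and the behaviour of chief factors) and on reconciling two competing demands on $M$, namely being \emph{maximal} among infinite-index subgroups while having an \emph{infinite} conjugacy class; this tension is where the bulk of the Lennox--Robinson argument lies. The cycle then closes with the trivial $(2)\Rightarrow(1)$ and the $(3)\Rightarrow(2)$ above.
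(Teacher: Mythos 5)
First, a point of comparison: the paper does not prove this statement at all --- it is quoted from Lennox and Robinson (\cite[Thm C]{LennoxRobinson1982}, \cite[Thm 10.4.5]{LennoxRobinson2004}), and the only implication the paper actually uses is $(3)\Rightarrow(2)$ applied to finitely generated nilpotent groups, en route to Lemma \ref{lem:normal subgroup of infinite index}. That part of your proposal is in good shape: $(2)\Rightarrow(1)$ is trivial, and your $(3)\Rightarrow(2)$ argument is correct and essentially complete --- absorbing the finite normal subgroup $F$ into a nearly maximal $M$ because $MF$ has infinite index, passing to the finitely generated nilpotent quotient $Q=G/F$, and invoking the standard fact that a subgroup $H$ of such a $Q$ has finite index as soon as $H\,[Q,Q]$ does (your bilinear-commutator induction is the usual proof of that fact), so near-maximality forces $[Q,Q]\le N$ and hence normality.

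The genuine gap is $(1)\Rightarrow(3)$, and it is twofold. First, your reduction step is false as stated: finite-by-nilpotency is \emph{not} detectable on finite-index subgroups, so after ``replacing $G$ by a finite-index finitely generated solvable normal subgroup'' the hypothesis that $G$ is not finite-by-nilpotent can simply disappear. The infinite dihedral group $D_{\infty}=\langle r,s\mid s^{2}=1,\ srs^{-1}=r^{-1}\rangle$ is a counterexample to your claim: its index-two subgroup $\langle r\rangle\cong\mathbb{Z}$ is abelian, yet every term of the lower central series of $D_{\infty}$ is the infinite group $\langle r^{2^{k}}\rangle$, so $D_{\infty}$ is not finite-by-nilpotent; here the required witness lives genuinely in $G$ and not in the solvable subgroup (the nearly maximal subgroup $\langle s\rangle$ has the infinitely many conjugates $\langle r^{2m}s\rangle$, $m\in\mathbb{Z}$), so any argument that first discards the complement of the solvable part cannot find it, and relating nearly maximal subgroups of a finite-index subgroup to nearly maximal subgroups of $G$ with infinite conjugacy classes is itself unresolved in your sketch. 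Second, and more fundamentally, the core construction --- producing, in an arbitrary finitely generated virtually solvable group that is not finite-by-nilpotent, a nearly maximal subgroup whose normalizer has infinite index --- is only gestured at: you posit a module $V$ over the group ring of a nilpotent quotient, take $M$ ``maximal among infinite-index subgroups'' over a submodule, and assert that ``module-theoretic unboundedness'' forces infinitely many conjugates, but no argument is given that such an $M$ fails to be normalized by a finite-index subgroup, and as you yourself acknowledge this is exactly where the substance of the Lennox--Robinson proof lies (their argument goes through Hall's results on finitely generated modules over polycyclic and nilpotent group rings and a careful analysis of the relevant just-infinite sections). As it stands, $(1)\Rightarrow(3)$, and with it the equivalence, is not established.
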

Here, a group $G$ is said to be finite-by-nilpotent, if it has a
normal subgroup $N$ which is finite such that the quotient $G/N$
is nilpotent. 

\section{Symmetric Polynomial Maps \label{sec:Symmetric Polynomial Maps}}

Let $S$ be a commutative semigroup and $S^{N}$ be the direct sum
of $N$ copies of $S$. Let $G$ be any group and denote by $G_{p}^{S^{N}}$
the set of all polynomial maps $S^{N}\to G$. We want to define an
action of the symmetric group $S_{N}$ on the set $G_{p}^{S^{N}}$
by the following manner:

For each $\sigma\in S_{N}$ and each polynomial map 
\[
f:S^{N}\to G;\ (s_{1},s_{2}\ldots,s_{N})\mapsto f(s_{1},s_{2},\ldots,s_{N}),
\]
of degree $d$, we define the function 
\begin{align*}
\sigma(f):S^{N} & \xrightarrow{\sigma}S^{N}\xrightarrow{f}G\\
(s_{1},s_{2}\ldots,s_{N}) & \mapsto(s_{\sigma(1)},s_{\sigma(2)},\ldots,s_{\sigma(N)})\mapsto f(s_{\sigma(1)},s_{\sigma(2)},\ldots,s_{\sigma(N)}).
\end{align*}
Since $\sigma:S^{N}\to S^{N}$ is an isomorphism of commutative semigroups,
by Proposition \ref{prop:homomorphism of commutative semigroups},
$\sigma(f)$ is a polynomial map of degree $d$. Also, we have $e(f)=f$,
where $e\in S_{N}$ is the identity element, and $\sigma\tau(f)=\sigma(\tau(f))$
for all $f\in G_{p}^{S^{N}}$ and $\sigma,\tau\in S_{N}$. Therefore,
this is indeed an action. 
\begin{defn}
A polynomial map $f:S^{N}\rightarrow G$ is called symmetric with
respect to this $S_{N}$-action, if $\sigma(f)=f$ holds for all $\sigma\in S_{N}$. 
\end{defn}
Moreover, if $G_{p}^{S^{N}}$ is a group (for example, when $G$ is
nilpotent of class $n$), then $\sigma$ fixes the identity of $G_{p}^{S^{N}}$,
and $\sigma(fg)=\sigma(f)\sigma(g)$ for all $f,g\in G_{p}^{S^{N}}$
and $\sigma\in S_{N}$. This implies that each $\sigma$ induces a
homomorphism from $S_{N}$ to the automorphism group of $G_{p}^{S^{N}}$.
In this sense, we may say that $G_{p}^{S^{N}}$ is a (left) \textbf{non-abelian}
$S_{N}$-module.  Thus, all symmetric polynomial maps form an $S_{N}$-invariant
subgroup $\left(G_{p}^{S^{N}}\right)^{S_{N}}$ of the group $G_{p}^{S^{N}}$. 

The goal of this section is to prove that every polynomial map from
$S^{N}$ to any nilpotent group $G$ admits an iterated symmetrization.
But let us first start the discussion in a concrete case when $S=\mathbb{R}_{\ge0}$
and $G=\mathcal{U}_{n}(\mathbb{R})$ to illustrate how this is done. 

Any polynomial $f_{i,j}:\mathbb{R}_{\ge0}^{N}\to\mathbb{R}$ admits
a symmetric polynomial $\tilde{f}_{i,j}=\sum_{\sigma\in S_{N}}\sigma(f)$.
This simple fact can be generalized to the following result, which
says that one can symmetrize any continuous polynomial map $f:\mathbb{R}_{\ge0}^{N}\rightarrow\mathcal{U}_{n}(\mathbb{R})$
within a finite number of steps. 
\begin{thm}
\label{thm:symmetrization in U_n(R)} Let $f:\mathbb{R}_{\ge0}^{N}\rightarrow\mathcal{U}_{n}(\mathbb{R})$
be a continuous polynomial map as in Theorem \ref{thm:unitriangular matrix of polynomials in N var}.
Then there is a natural number $M$, only dependent on $N$ and $n$,
and a sequence $\sigma_{1},\sigma_{2},\ldots,\sigma_{M}\in S_{N}$,
such that the product 
\[
\tilde{f}=\prod_{i=1}^{M}\sigma_{i}(f)=\sigma_{1}(f)\sigma_{2}(f)\cdots\sigma_{M}(f)
\]
is a symmetric continuous polynomial map. 
\end{thm}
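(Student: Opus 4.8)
**The plan is to reduce the symmetrization problem to the already-established symmetrization of scalar polynomials, working downward along the lower central series of $\mathcal{U}_{n}(\mathbb{R})$.** By Theorem \ref{thm:unitriangular matrix of polynomials in N var}, $f$ is encoded by its entries $f_{i,j}\colon\mathbb{R}_{\ge0}^{N}\to\mathbb{R}$, which are ordinary polynomials. The obstruction to naive symmetrization is that $G=\mathcal{U}_{n}(\mathbb{R})$ is nonabelian, so $\prod_{\sigma\in S_N}\sigma(f)$ need not be symmetric: permuting variables and multiplying do not commute in the target. The remedy is iterated symmetrization: after one pass, the product $\prod_{\sigma}\sigma(f)$ fails to be symmetric only by a correction term living one step higher in the lower central series $C^{2}\mathcal{U}_{n}(\mathbb{R})=\mathcal{U}_{n,2}(\mathbb{R})$, and that correction can be symmetrized by a further pass whose discrepancy lies in $C^{3}\mathcal{U}_{n}(\mathbb{R})$, and so on; since $\mathcal{U}_{n}(\mathbb{R})$ is nilpotent of class $n-1$ (Corollary \ref{cor:C^k(U_n(R))=00003DU_=00007Bn,k=00007D(R)}), the process terminates after finitely many stages.

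\textbf{The key steps, in order.} First, I would set up the base case: the induced map $\phi_{1}\circ f=(f_{1,2},\dots,f_{n-1,n})\colon\mathbb{R}_{\ge0}^{N}\to(\mathbb{R}^{n-1},+)$ has each coordinate an ordinary polynomial, so $g_{0}:=\prod_{\sigma\in S_N}\sigma(f)$ has first diagonal entries equal to $\sum_{\sigma}\sigma(f_{i,i+1})$, which are symmetric. Hence $g_{0}$ is ``symmetric modulo $C^{2}\mathcal{U}_{n}(\mathbb{R})$'': for each $\tau\in S_N$, $\tau(g_{0})g_{0}^{-1}$ is a continuous polynomial map into $\mathcal{U}_{n,2}(\mathbb{R})$. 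Second, I would prove by downward induction on the lc-height $k$ the following claim: if $h\colon\mathbb{R}_{\ge0}^{N}\to\mathcal{U}_{n,k}(\mathbb{R})$ is a continuous polynomial map that is symmetric modulo $\mathcal{U}_{n,k+1}(\mathbb{R})$, then some product of a bounded number (depending only on $N$ and $n$) of permutes of $h$ is a continuous polynomial map into $\mathcal{U}_{n,k}(\mathbb{R})$ that is symmetric modulo $\mathcal{U}_{n,k+2}(\mathbb{R})$; the inductive application then pushes the discrepancy up to $\mathcal{U}_{n,n}(\mathbb{R})=\{I\}$. The mechanism for the claim is the homomorphism $\phi_{k}\colon\mathcal{U}_{n,k}(\mathbb{R})\to(\mathbb{R}^{n-k},+)$ from Lemma \ref{lem:properties of U_n(R)}: applying $\phi_{k}$ to $\prod_{\sigma}\sigma(h)$ gives $\sum_{\sigma}\sigma(\phi_{k}\circ h)$, already symmetric; and since the kernel of $\phi_{k}$ is $\mathcal{U}_{n,k+1}(\mathbb{R})$ and one extra commutator level of correction is absorbed into $\mathcal{U}_{n,k+2}(\mathbb{R})$ by the gradedness $[\mathcal{U}_{n,k}(\mathbb{R}),\mathcal{U}_{n,1}(\mathbb{R})]\le\mathcal{U}_{n,k+1}(\mathbb{R})$, the discrepancy of the new product lies one level higher. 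Third, I would assemble: starting from $g_{0}$, repeatedly apply the claim at levels $k=1,2,\dots,n-1$; after $n-1$ stages the discrepancy is trivial, so the final product is a genuinely symmetric continuous polynomial map. Polynomiality of the result throughout follows from Theorem \ref{thm:product of two polynomial maps} together with Proposition \ref{prop:homomorphism of commutative semigroups} (each $\sigma(f)$ is polynomial of the same degree), and continuity from Theorem \ref{thm:unitriangular matrix of polynomials in N var}. The count $M$ is just the product over the $n-1$ stages of the number of permutes used per stage, each of which is at most $|S_N|=N!$, so $M\le (N!)^{n-1}$ and in particular depends only on $N$ and $n$.

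\textbf{The main obstacle.} The delicate point is verifying that after symmetrizing at level $k$ the discrepancy really does drop into $\mathcal{U}_{n,k+2}(\mathbb{R})$ and not merely stay in $\mathcal{U}_{n,k+1}(\mathbb{R})$. This requires a careful commutator bookkeeping: when one compares $\tau\!\left(\prod_{\sigma}\sigma(h)\right)$ with $\prod_{\sigma}\sigma(h)$, the two products differ by a product of commutators of the $\sigma(h)$'s with each other; since each $\sigma(h)$ takes values in $\mathcal{U}_{n,k}(\mathbb{R})$, each such commutator lies in $[\mathcal{U}_{n,k}(\mathbb{R}),\mathcal{U}_{n,k}(\mathbb{R})]\le\mathcal{U}_{n,2k}(\mathbb{R})\le\mathcal{U}_{n,k+1}(\mathbb{R})$; and combined with the fact that $\phi_{k}$ already kills the level-$k$ part symmetrically, one obtains that the genuine obstruction sits in $\mathcal{U}_{n,k+1}(\mathbb{R})$ and, after grouping and a second symmetrization via $\phi_{k+1}$, in $\mathcal{U}_{n,k+2}(\mathbb{R})$. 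Making this precise — keeping track of which correction terms cancel under the $S_N$-sum and which survive into the next level, and confirming the bound on the number of factors is uniform — is where the real work lies; the rest is an application of the machinery already developed in Sections \ref{sec:Polynomial Maps} and \ref{sec:Continuous Polynomial Maps}.
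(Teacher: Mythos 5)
Your overall strategy --- iterated symmetrization, with the symmetry defect pushed one step further down the lower central series at each pass, terminating because $\mathcal{U}_{n}(\mathbb{R})$ is nilpotent, with $M\le (N!)^{n-1}$ --- is exactly the paper's (the paper proves this theorem by induction on the diagonal entries, and proves the general Theorem \ref{thm:symmetrization in G} by precisely the discrepancy bookkeeping you outline). However, the decisive step, namely that one pass drops the defect by a full level, is the part you defer, and the mechanism you sketch for it does not work as written. Your inductive claim hypothesizes that $h$ takes values in $\mathcal{U}_{n,k}(\mathbb{R})$; but the maps you actually iterate are products of permutes of $f$ and always take values in all of $\mathcal{U}_{n}(\mathbb{R})=\mathcal{U}_{n,1}(\mathbb{R})$, so for $k\ge 2$ the claim never applies to them and $\phi_{k}\circ h$ is not even defined --- as stated, the claims do not chain. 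Moreover, the commutator estimate you offer (reordering changes the product by commutators of the factors, which lie in $[\mathcal{U}_{n,k},\mathcal{U}_{n,k}]\le\mathcal{U}_{n,2k}\le\mathcal{U}_{n,k+1}$) only places the new defect in $\mathcal{U}_{n,k+1}$, i.e.\ it reproves the previous stage; the appeal to ``a second symmetrization via $\phi_{k+1}$'' inside the same stage is never carried out, and in fact is not needed.

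The missing idea is to use the induction hypothesis on the factors themselves rather than on their target: the claim that chains is that if $h:\mathbb{R}_{\ge0}^{N}\to\mathcal{U}_{n}(\mathbb{R})$ is a continuous polynomial map which is symmetric modulo $\mathcal{U}_{n,k+1}(\mathbb{R})$, then $\prod_{\sigma\in S_{N}}\sigma(h)$ is symmetric modulo $\mathcal{U}_{n,k+2}(\mathbb{R})$. Indeed, symmetry of $h$ modulo $\mathcal{U}_{n,k+1}$ means all factors satisfy $\sigma(h)=h\,c_{\sigma}$ with $c_{\sigma}$ a polynomial map into $\mathcal{U}_{n,k+1}$; since $\tau\bigl(\prod_{\sigma}\sigma(h)\bigr)=\prod_{\sigma}(\tau\sigma)(h)$ is merely a reordering of the same factors, and a product of elements that pairwise agree modulo $\mathcal{U}_{n,k+1}$ changes under reordering only by elements of $[\mathcal{U}_{n,k+1},\mathcal{U}_{n}(\mathbb{R})]\cdot[\mathcal{U}_{n,k+1},\mathcal{U}_{n,k+1}]\le\mathcal{U}_{n,k+2}$, the defect drops a level. (In the paper's concrete form: the $k$th diagonal entries of the new product are the $S_{N}$-sum of the old $k$th diagonal entries plus polynomial expressions in the lower-diagonal entries, which are already symmetric by induction.) With this claim, one pass per level starting from $f$ yields a symmetric map after $n-1$ passes, giving $M=(N!)^{n-1}$, and polynomiality and continuity follow, as you say, from Theorem \ref{thm:product of two polynomial maps}, Proposition \ref{prop:homomorphism of commutative semigroups} and Theorem \ref{thm:unitriangular matrix of polynomials in N var}.
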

\begin{proof}
For $n=2$, we have $\mathcal{U}_{2}(\mathbb{R})\cong\mathbb{R}$
and $M$ can be taken to be $N!$. So we may assume that $n>2$. The
proof is given by induction on the $k$th diagonal entries. Clearly,
the first diagonal entries of $f^{\{1\}}:=\prod_{\sigma\in S_{N}}\sigma(f)$
are given by the symmetric polynomials $\prod_{\sigma\in S_{N}}\sigma(f_{i,i+1})$,
$\forall1\le i\le n$. If one sets $M_{1}=N!$, then this gives the
basis step. 

Suppose that there is a finite sequence $\sigma_{1},\sigma_{2},\ldots,\sigma_{M_{k-1}}$
for some $M_{k-1}$, such that the first, $\ldots$, $(k-1)$th diagonal
entries of $f^{\{k-1\}}:=\prod_{i=1}^{M_{k-1}}\sigma_{i}(f)$ are
all symmetric polynomials. The goal is to show that the first, $\ldots$,
$k$th diagonal entries of $f^{\{k\}}:=\prod_{\sigma\in S_{N}}\sigma(f^{\{k-1\}})$
are all symmetric polynomials. Clearly, the first, $\ldots$, $(k-1)$th
diagonal entries of $f^{\{k\}}:=\prod_{\sigma\in S_{N}}\sigma(f^{\{k-1\}})$
remain symmetric, because they are linear combinations of finite products
of first, $\ldots$, $(k-1)$th diagonal entries of $f^{\{k-1\}}$,
which are symmetric by induction. Similarly, the $k$th diagonal entries
of $f^{\{k\}}$ are given by the summation of 
\[
\sum_{\sigma\in S_{N}}\sigma(f_{i,i+k}^{\{k-1\}})
\]
and linear combinations of finite products of first, $\ldots$, $(k-1)$th
diagonal entries of $f^{\{k-1\}}$, both of which are symmetric. This
proves the inductive step. The induction method implies that $M$
can be taken to be $(N!)^{n-1}$ and hence the proof is complete. 
\end{proof}
 The idea given in previous proof suggests the following more general
result:
\begin{thm}
\label{thm:symmetrization in G} Let $S$ be a commutative semigroup,
$G$ be a nilpotent group of class $n$ and $f:S^{N}\rightarrow G$
be a polynomial map. Then there is a natural number $M$, only dependent
on $N$ and $n$, and a sequence $\sigma_{1},\sigma_{2},\ldots,\sigma_{M}\in S_{N}$,
such that the product 
\[
\tilde{f}=\prod_{i=1}^{M}\sigma_{i}(f)=\sigma_{1}(f)\sigma_{2}(f)\cdots\sigma_{M}(f):S^{N}\rightarrow G
\]
is a symmetric polynomial map.

Moreover, if the group $\langle f\rangle$ generated by $f(S^{N})$
is finitely generated and the subgroup $\langle f\restriction_{S}\rangle$
generated by the image of the restriction of $f$ on the diagonal
$S$ of $S^{N}$ has finite index in $\langle f\rangle$, then the
subgroup $\langle\tilde{f}\rangle$ generated by $\tilde{f}(S^{N})$
is of finite index in $\langle f\rangle$.
\end{thm}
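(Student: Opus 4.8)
The plan is to perform the symmetrization one layer of the lower central series at a time, working inside the group $G_{p}^{S^{N}}$, which is nilpotent of class $n$ by Corollary \ref{cor:group of polynomial maps}. Recall that $S_{N}$ acts on $G_{p}^{S^{N}}$ by automorphisms and that, for each $k$, the reduction map $\pi_{k}\colon G_{p}^{S^{N}}\to\bigl(G/C^{k+1}G\bigr)_{p}^{S^{N}}$ supplied by Corollary \ref{cor:quotient} is a homomorphism commuting with this action. I will construct, by induction on $k=0,1,\dots,n$, a polynomial map $f^{\{k\}}\colon S^{N}\to G$ which is a product of $(N!)^{k}$ many $S_{N}$-translates of $f$ and which is \emph{symmetric modulo $C^{k+1}G$}, meaning $\pi_{k}(f^{\{k\}})$ is a symmetric polynomial map; equivalently, with $g=f^{\{k\}}$, the maps $g^{-1}\sigma(g)$ all have image in $C^{k+1}G$. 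The base case $k=0$ is $f^{\{0\}}=f$, where the condition is vacuous since $C^{1}G=G$, and for $k=n$ we have $C^{n+1}G=\{1_{G}\}$, so $\tilde f:=f^{\{n\}}$ is genuinely symmetric, with $M:=(N!)^{n}$ depending only on $N$ and $n$. Unwinding the recursion $f^{\{k\}}=\prod_{\sigma\in S_{N}}\sigma(f^{\{k-1\}})$ writes $\tilde f$ as $\sigma_{1}(f)\cdots\sigma_{M}(f)$ with each $\sigma_{i}$ a product of $n$ elements of $S_{N}$, hence $\sigma_{i}\in S_{N}$.

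For the inductive step, put $g:=f^{\{k-1\}}$ and $f^{\{k\}}:=\prod_{\sigma\in S_{N}}\sigma(g)$ with respect to a fixed enumeration of $S_{N}$; this is a product of $N!\cdot(N!)^{k-1}=(N!)^{k}$ translates of $f$. The inductive hypothesis that $g$ is symmetric modulo $C^{k}G$ says precisely that the ``defect'' maps $c_{\sigma}:=g^{-1}\sigma(g)\in G_{p}^{S^{N}}$ take values in $C^{k}G$; these form a non-abelian $1$-cocycle $c_{\sigma\tau}=c_{\sigma}\cdot\sigma(c_{\tau})$, and the argument is, in effect, an averaging argument over the finite group $S_{N}$. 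Passing to $\bar G:=G/C^{k+1}G$ and writing $\bar{\ }$ for $\pi_{k}$, the subgroup $C^{k}\bar G=C^{k}G/C^{k+1}G$ lies in the centre of $\bar G$, so each $\bar c_{\sigma}$ is central in $\bar G_{p}^{S^{N}}$. Since $\sigma(\bar g)=\bar g\,\bar c_{\sigma}$, every factor of $\bar f^{\{k\}}=\prod_{\sigma}\sigma(\bar g)$ is $\bar g$ times a central element, whence $\bar f^{\{k\}}\equiv \bar g^{\,N!}\prod_{\sigma\in S_{N}}\bar c_{\sigma}$, and this expression is independent of the order in which the product over $S_{N}$ is formed. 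Applying any $\tau\in S_{N}$ merely re-enumerates the factors $\{\sigma(\bar g)\}_{\sigma\in S_{N}}$, so $\tau(\bar f^{\{k\}})$ equals the same order-independent product; thus $\tau(f^{\{k\}})\equiv f^{\{k\}}\pmod{C^{k+1}G}$, which closes the induction.

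For the ``moreover'' clause, observe that on the diagonal copy $S\hookrightarrow S^{N}$, $s\mapsto(s,\dots,s)$, every $\sigma\in S_{N}$ fixes the tuple $(s,\dots,s)$, so each $\sigma_{i}(f)$ restricts to $f\restriction_{S}$ there; hence $\tilde f\restriction_{S}=(f\restriction_{S})^{M}$ pointwise, and $\langle\tilde f\rangle$ contains $H_{0}:=\langle\,f(s,\dots,s)^{M}\mid s\in S\,\rangle$. By hypothesis $\langle f\rangle$ is finitely generated and nilpotent, hence Noetherian, so its subgroup $H:=\langle f\restriction_{S}\rangle$ is finitely generated; choosing a finite subset $X_{0}\subseteq\{f(s,\dots,s)\mid s\in S\}$ generating $H$, the quotient $H/\langle x^{M}\mid x\in X_{0}\rangle$ is a finitely generated nilpotent group generated by finitely many elements of finite order, hence a torsion group, hence finite. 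Thus $\langle x^{M}\mid x\in X_{0}\rangle\subseteq H_{0}$ has finite index in $H$; combining this with $[\langle f\rangle:H]<\infty$ (the hypothesis) and $H_{0}\subseteq\langle\tilde f\rangle\subseteq\langle f\rangle$ gives $[\langle f\rangle:\langle\tilde f\rangle]<\infty$.

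The step I expect to be the main obstacle is the inductive step, precisely the verification that re-averaging over $S_{N}$ at level $k$ does not destroy the symmetry already secured modulo $C^{k}G$. The reason it goes through is that the obstruction at level $k$, recorded by the cocycle $(c_{\sigma})$, takes values in $C^{k}G$, which is \emph{central} modulo $C^{k+1}G$; this collapses the a priori non-commutative rearrangement of the $N!$ factors into an ordinary abelian averaging over the finite group $S_{N}$, i.e.\ the situation in which $1$-cohomology is annihilated by the group order.
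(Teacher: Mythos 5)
Your symmetrization half is correct and is essentially the paper's own argument: both proceed by iterated symmetrization through the lower central series, replacing $g$ by $\prod_{\sigma\in S_{N}}\sigma(g)$ once per layer, and both land on $M=(N!)^{n}$. The only difference is bookkeeping: the paper records the defect of symmetry as an explicit non-abelian $1$-cocycle $\alpha_{\tau}=f_{1}^{-1}\tau(f_{1})$ and pushes the conjugated terms $\sigma(\alpha_{\sigma^{-1}\tau\sigma})$ to the right, while you use directly that the defects take values in $C^{k}G$, hence are central modulo $C^{k+1}G$, so that the product over $S_{N}$ is order-independent and reindexing by $\tau$ changes nothing. That step of yours is sound.

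There is, however, a genuine gap in your ``moreover'' paragraph: you form ``the quotient $H/\langle x^{M}\mid x\in X_{0}\rangle$'', but the subgroup generated by the $M$th powers of a generating set is in general \emph{not normal} in $H$, so this quotient does not exist and the ``generated by torsion elements, hence finite'' argument cannot be applied to it. For instance, in $H=\mathcal{U}_{3}(\mathbb{Z})=\langle x,y\rangle$ with $z=[x,y]$ central, the subgroup $\langle x^{2},y^{2}\rangle$ consists exactly of the elements $x^{2a}y^{2b}z^{4c}$, and $yx^{2}y^{-1}=x^{2}z^{-2}$ does not lie in it. Replacing the subgroup by its normal closure only shows that the normal closure has finite index, and the normal closure need not be contained in $H_{0}=\langle\tilde{f}\restriction_{S}\rangle$, so this does not give what you need. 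The fact you are after --- in a finitely generated nilpotent group the subgroup generated by the $M$th powers of a generating set has finite index --- is true, but it is a genuine theorem of Mal'tsev (the paper invokes it via \cite[Thm 2.23]{CMZ2017}), proved for example by induction on the nilpotency class using $[x^{M},y^{M}]\equiv[x,y]^{M^{2}}$ modulo higher commutators; it does not follow from the one-line torsion-quotient argument. Once you either cite that result or supply such an induction, the rest of your ``moreover'' reasoning (the identity $\tilde{f}\restriction_{S}=(f\restriction_{S})^{M}$, finite generation of $H$ by Noetherianity, and the chain of inclusions) coincides with the paper's and closes the proof.
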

\begin{proof}
If $n=0$, then the theorem is trivial. If $n=1$, then setting $M=N!$
and $\{\sigma_{1},\sigma_{2},\ldots,\sigma_{M}\}=S_{N}$, one finds
that the product 
\[
\tilde{f}=\prod_{i=1}^{M}\sigma_{i}(f)=\prod_{\sigma\in S_{N}}\sigma(f)
\]
satisfies that $\tau(\tilde{f})=\tilde{f}$ for all $\tau\in S_{N}$,
since $G$ is abelian. Thus, $\tilde{f}$ is a symmetric polynomial
map. 

So we may assume that $n\ge2$. If one defines $f_{1}=\prod_{\sigma\in S_{N}}\sigma(f)$,
then for any $\tau\in S_{N}$, one has 
\[
\tau(f_{1})=\prod_{\sigma\in S_{N}}\tau\sigma(f)\equiv f_{1}\mod C^{2}G,
\]
i.e., $f_{1}$ is symmetric modulo $C^{2}G$. Write $\tau(f_{1})=f_{1}\alpha_{\tau}$,
where $\alpha_{\tau}=f_{1}^{-1}\tau(f_{1})$ is a polynomial map from
$S^{N}$ to $C^{2}G$. Then, $\alpha_{e}=f_{1}^{-1}f_{1}$ is the
identity of $G_{p}^{S^{N}}$, where $e\in S_{N}$ is the identity,
and 
\[
f_{1}\alpha_{\sigma\tau}=\sigma\tau(f_{1})=\sigma(f_{1}\alpha_{\tau})=\sigma(f_{1})\sigma(\alpha_{\tau})=f_{1}\alpha_{\sigma}\sigma(\alpha_{\tau})
\]
and thus $\alpha_{\sigma\tau}=\alpha_{\sigma}\sigma(\alpha_{\tau})$.
Then, we obtain a $1$-cocycle $\alpha:S_{N}\to C^{2}(G_{p}^{S^{N}})$. 

If one defines $f_{2}=\prod_{\sigma\in S^{N}}\sigma(f_{1})$, then
for any $e\ne\tau\in S_{N}$, one has 
\begin{align*}
\tau(f_{2}) & =\prod_{\sigma\in S^{N}}\tau\sigma(f_{1})=\prod_{\sigma\in S^{N}}\sigma\sigma^{-1}\tau\sigma(f_{1})\\
 & =\prod_{\sigma\in S^{N}}\sigma(f_{1}\alpha_{\sigma^{-1}\tau\sigma})=\prod_{\sigma\in S^{N}}\sigma(f_{1})\sigma(\alpha_{\sigma^{-1}\tau\sigma}).
\end{align*}
Notice that $\sigma(\alpha_{\sigma^{-1}\tau\sigma}):S^{N}\to C^{2}G$
and $\sigma(f_{1}):S^{N}\to G$ are polynomial maps for all $\sigma\in S^{N}$
and commutators of such terms are certainly polynomial maps from $S^{N}$
to $C^{3}G$. Pushing $\sigma(\alpha_{\sigma^{-1}\tau\sigma})$ to
the rightmost, we see that 
\[
\tau(f_{2})\equiv f_{2}\mod C^{3}G,
\]
i.e., $f_{2}$ is symmetric modulo $C^{3}G$. Similarly, we write
$\tau(f_{2})=f_{2}\beta_{\tau}$, where $\beta_{\tau}=f_{2}^{-1}\tau(f_{2})$
is a polynomial map from $S^{N}$ to $C^{3}G$. Then, $\beta_{e}=f_{2}^{-1}f_{2}$
is the constant map to the identity of $G$, and 
\[
f_{2}\beta_{\sigma\tau}=\sigma\tau(f_{2})=\sigma(f_{2}\beta_{\tau})=\sigma(f_{2})\sigma(\beta_{\tau})=f_{2}\beta_{\sigma}\sigma(\beta_{\tau})
\]
and thus $\beta_{\sigma\tau}=\beta_{\sigma}\sigma(\beta_{\tau})$.
Then, we obtain another $1$-cocycle $\beta:S_{N}\to C^{3}(G_{p}^{S^{N}})$. 

It is clear that the procedure described above continues and ends
in finitely many steps, since $G_{p}^{S^{N}}$ is nilpotent of class
$n$. Hence, there is a finite sequence $\sigma_{1},\sigma_{2},\ldots,\sigma_{M}\in S_{N}$,
where $M=(N!)^{n}$ such that the  product 
\[
\tilde{f}=\prod_{i=1}^{M}\sigma_{i}(f)=\sigma_{1}(f)\sigma_{2}(f)\cdots\sigma_{M}(f):S^{N}\to G
\]
is a symmetric polynomial map.

We have the following relations of inclusion of subgroups in $\langle f\rangle$:
\[
\langle(f\restriction_{S})^{M}\rangle\subset\langle f\restriction_{S}\rangle\subset\langle f\rangle\text{ and }\langle(f\restriction_{S})^{M}\rangle=\text{\ensuremath{\langle\tilde{f}\restriction_{S}\rangle}}\subset\langle\tilde{f}\rangle\subset\langle f\rangle.
\]
Since $\langle f\rangle$ is finitely generated and nilpotent, $\langle f\restriction_{S}\rangle$
is finitely generated and nilpotent. By a result due to Mal'tsev (cf.
\cite[Thm 2.23]{CMZ2017}), $\langle(f\restriction_{S})^{M}\rangle$
has finite index in $\langle f\restriction_{S}\rangle$ and since
$\langle f\restriction_{S}\rangle$ has finite index in $\langle f\rangle$,
it also has finite index in $\langle f\rangle$. Hence, $\langle\tilde{f}\rangle$
has finite index in $\langle f\rangle$.
\end{proof}
\begin{rem*}
The strategy given in proofs above will be called iterated symmetrization,
which turns out to be very useful in the sequel. 
\end{rem*}

\section{Polynomial Sets \label{sec:Polynomial Sets}}
\begin{defn}
A subset $U$ of a \emph{ path-connected} nilpotent Lie group $N$
is said to be parameterized by some \emph{continuous} polynomial map
$f:\mathbb{R}_{\ge0}^{n}\to N$, if it is the image of $f$. In this
case, we denote $(U\mid f:\mathbb{R}_{\ge0}^{n}\to N)$ and call it
a polynomial set in $N$; but sometimes we will abbreviate $f$ and
simply call $U$ a polynomial set in $N$ for short. If $U$ is open
(resp. is closed, resp. has nonempty interior) in  $N$, then we
call $U$ an open (resp. a closed, resp. a proper) polynomial set
in $N$.

A nonempty subset $V$ of a nilpotent group $G$ is called a polynomial
set, if it is the inverse image $\phi^{-1}(U)$ of a polynomial set
$(U\mid f:\mathbb{R}_{\ge0}^{n}\to N)$ of a nilpotent Lie group $N$
along some group homomorphism $\phi\colon G\to N$. In this case,
for completeness, we denote the polynomial set by 
\[
(V\mid\phi:G\to N,(U\mid f:\mathbb{R}_{\ge0}^{n}\to N)).
\]
We call $V=\phi^{-1}(U)$ an open (resp. a closed, resp. a proper)
polynomial set in $G$, if $U$ has the same property in $N$. In
particular, the generalized cones in $G$ is given by proper polynomial
set. 
\end{defn}
\begin{rem}
One may wonder why we require the nilpotent Lie group $N$ to be path-connected.
For one reason, continuous image of a path-connected set is path-connected;
for the other, if we do not assume this, then in the trivial sense
any group having at most countably many elements can be viewed as
a $0$-dimensional Lie group with the discrete topology. For example,
the group $\mathcal{U}_{n}(\mathbb{Z})$ of upper unitriangular $n\times n$
matrices in integers, has at most countably many elements. Then, any
singleton of $\mathcal{U}_{n}(\mathbb{Z})$ viewed as a $0$-dimensional
Lie group with the discrete topology is open. 
\end{rem}
\begin{rem*}
Corollary \ref{cor:polynomial set in N var} implies that the only
polynomial sets in $\mathbb{R}$ are singletons, unbounded closed
intervals and the whole $\mathbb{R}$.  Hence, the only open polynomial
set in $\mathbb{R}$ is $\mathbb{R}$ itself, and the only proper
polynomial sets in $\mathbb{R}$ are either unbounded closed intervals
or the whole $\mathbb{R}$. 
\end{rem*}
The following lemma proves the existence of a proper polynomial set
inside any Kamke domains. 
\begin{lem}
\label{lem:polynomial set in Kamke's domain} For any $B\ge2$, a
Kamke domain 
\[
U(B,N)=\{(l_{1},\cdots,l_{B})\in\mathbb{R}_{\ge0}^{B}\mid k_{1}<l_{1},k_{\nu}l_{1}^{v}<l_{v}<K_{v}l_{1}^{v},\nu=2,3,\ldots,B\}
\]
always contains a proper polynomial set. 
\end{lem}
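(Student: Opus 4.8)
The plan is to exhibit the required polynomial set as the image of an explicit tuple of ordinary polynomials. The relevant path‑connected nilpotent Lie group here is $\mathbb{R}^{B}=(\mathbb{R}^{B},+)$ (abelian, hence nilpotent, and simply connected), and $U(B,N)$ is an open subset of it. By Corollary \ref{cor:vector of polynomials in N var} together with Proposition \ref{prop:direct sum}, any tuple $f=(f_{1},\dots,f_{B})$ of ordinary polynomials $f_{\nu}\colon\mathbb{R}_{\ge0}^{n}\to\mathbb{R}$ is automatically a continuous polynomial map $f\colon\mathbb{R}_{\ge0}^{n}\to\mathbb{R}^{B}$; so it suffices to produce such an $f$ with $f(\mathbb{R}_{\ge0}^{n})\subseteq U(B,N)$ whose image has nonempty interior, for then $\bigl(f(\mathbb{R}_{\ge0}^{n})\mid f\bigr)$ is a proper polynomial set contained in $U(B,N)$. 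For the interior condition it is enough to arrange that $f$ be a submersion at one point of the open domain $\mathbb{R}_{>0}^{n}$, by the implicit function theorem.

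The point to notice is that the inequalities $k_{\nu}l_{1}^{\nu}<l_{\nu}<K_{\nu}l_{1}^{\nu}$ confine $l_{\nu}$ to a thin ``horn'' about $m_{\nu}l_{1}^{\nu}$, where $m_{\nu}:=(k_{\nu}+K_{\nu})/2$ lies strictly between $k_{\nu}$ and $K_{\nu}$; a short boundedness argument shows that any polynomial perturbation $l_{\nu}-m_{\nu}l_{1}^{\nu}$ that is bounded by a fixed multiple of $l_{1}^{\nu}$ on all of $\mathbb{R}_{\ge0}^{n}$ can depend only on the variables occurring in $l_{1}$, so one cannot simply take the $l_{\nu}$ to be independent polynomial coordinates. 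The ansatz that works is: take $n=B$, set $t:=c+x_{1}+\dots+x_{B}$ for a constant $c$ with $c>k_{1}$ and $c>0$, and put
\[
f_{1}:=t,\qquad f_{\nu}:=m_{\nu}t^{\nu}+\eta_{\nu}\,x_{\nu}(t-x_{\nu})\,t^{\nu-2}\qquad(2\le\nu\le B),
\]
where $0<\eta_{\nu}<2(K_{\nu}-k_{\nu})$.

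I then verify two things. First, $f(\mathbb{R}_{\ge0}^{B})\subseteq U(B,N)$: clearly $f_{1}=t\ge c>k_{1}$; and since $x_{\nu}\ge0$ and $t-x_{\nu}=c+\sum_{j\ne\nu}x_{j}>0$ have sum $t$, AM--GM gives $0\le x_{\nu}(t-x_{\nu})\le t^{2}/4$, so $0\le f_{\nu}-m_{\nu}t^{\nu}\le(\eta_{\nu}/4)t^{\nu}<(K_{\nu}-m_{\nu})t^{\nu}$, which with $m_{\nu}>k_{\nu}$ yields $k_{\nu}t^{\nu}<f_{\nu}<K_{\nu}t^{\nu}$. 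Second, $f$ is a submersion at a suitable point: $\partial f_{1}/\partial x_{j}=1$ for every $j$, and for $\nu\ge2$ a direct computation gives $\partial f_{\nu}/\partial x_{j}=A_{\nu}+\eta_{\nu}\delta_{\nu j}(t-2x_{\nu})t^{\nu-2}$ with $A_{\nu}$ not depending on $j$; subtracting $A_{\nu}$ times the first row of the Jacobian from its $\nu$th row leaves a matrix whose first row is $(1,\dots,1)$ and whose $\nu$th row has its only nonzero entry $\eta_{\nu}(t-2x_{\nu})t^{\nu-2}$ in position $\nu$, so the Jacobian determinant equals $\prod_{\nu=2}^{B}\eta_{\nu}(t-2x_{\nu})t^{\nu-2}$. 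At the interior point $x_{1}=\dots=x_{B}=\delta$ with $\delta>0$ small one has $t-2x_{\nu}=c+(B-2)\delta>0$ and $t>0$, so this determinant is nonzero there; hence $f$ is a submersion at that point and $f(\mathbb{R}_{\ge0}^{B})$ has nonempty interior.

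I expect the only genuine difficulty to be locating this perturbation ansatz, since it must reconcile two competing demands: being bounded by $(K_{\nu}-k_{\nu})l_{1}^{\nu}/2$ on the whole cone $\mathbb{R}_{\ge0}^{B}$ (handled by the identity $x_{\nu}(t-x_{\nu})\le t^{2}/4$, which forces the perturbation to ride on powers of $t$) and being independent enough across $\nu$ to span all $B$ coordinate directions (ensured by the diagonal‑plus‑rank‑one shape of the Jacobian). The remaining verifications — that tuples of polynomials are polynomial maps in the sense of Section \ref{sec:Polynomial Maps}, that $\mathbb{R}^{B}$ is a path‑connected nilpotent Lie group, and the two elementary estimates above — are routine.
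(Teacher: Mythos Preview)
Your proof is correct, and the route is genuinely different from the paper's. The paper parametrises with $n\gg B$ variables, sets $y_\kappa=x_\kappa+k_1/n+\varepsilon/n$, and takes
\[
l_1=\sum_\kappa y_\kappa,\qquad l_\nu=C_\nu\sum_\kappa y_\kappa^\nu+D_\nu l_1^\nu+\varepsilon\quad(\nu\ge2),
\]
using the multinomial expansion and the power-mean inequality to sandwich $\sum_\kappa y_\kappa^\nu$ between $n^{1-\nu}l_1^\nu$ and $l_1^\nu-(1-n^{1-\nu})(k_1+\varepsilon)^\nu$, and then solving for $C_\nu,D_\nu$; the Jacobian-rank check is left to the linear independence of the geometric sequences $(y_\kappa^\nu)_\nu$. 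You instead work with exactly $n=B$ variables and attach to each coordinate $\nu\ge2$ its own perturbation $\eta_\nu\,x_\nu(t-x_\nu)t^{\nu-2}$, so that the containment reduces to the single AM--GM bound $x_\nu(t-x_\nu)\le t^2/4$ and the Jacobian becomes, after one row operation, a rank-one update of a diagonal matrix with visibly nonzero determinant. Your construction is tighter (minimal number of source variables, no asymptotic choice of $n$ or $\varepsilon$) and the verification is shorter; the paper's power-sum construction is more classical and its symmetry in the $y_\kappa$ may be convenient in the sequel, but for the bare statement your argument is the cleaner one.
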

\begin{proof}
Let $y_{\kappa}=x_{\kappa}+k_{1}/n+\varepsilon/n$ and consider the
following polynomial map 
\[
q:\mathbb{R}_{\ge0}^{n}\to\mathbb{R}^{B};\ (x_{1},\ldots,x_{n})\mapsto(l_{1},\cdots,l_{B}),
\]
where  
\[
\left\{ \begin{aligned}l_{1} & =\sum_{\kappa=1}^{n}y_{\kappa}>k_{1},\\
l_{2} & =C_{2}\sum_{\kappa=1}^{n}y_{\kappa}^{2}+D_{2}l_{1}^{2}+\varepsilon,\\
\cdots & \cdots\cdots\cdots\\
l_{B} & =C_{B}\sum_{\kappa=1}^{n}y_{\kappa}^{B}+D_{B}l_{1}^{B}+\varepsilon,
\end{aligned}
\right.
\]
and $\varepsilon,C_{2},\ldots,C_{B},D_{2},\ldots,D_{B}$ are some
positive numbers. By the multinomial formula, 
\begin{align*}
l_{1}^{\nu} & =\left(\sum_{\kappa=1}^{n}y_{\kappa}\right)^{\nu}=\sum_{j_{1}+j_{2}+\cdots+j_{n}=\nu}\binom{\nu}{j_{1},j_{2},\ldots,j_{n}}\prod_{\kappa=1}^{n}y_{\kappa}^{j_{\kappa}}\\
 & =\sum_{\kappa=1}^{n}y_{\kappa}^{\nu}+\sum_{\substack{j_{1}+j_{2}+\cdots+j_{n}=\nu\\
j_{1},j_{2},\ldots,j_{n}\ne\nu
}
}\binom{\nu}{j_{1},j_{2},\ldots,j_{n}}\prod_{\kappa=1}^{n}y_{\kappa}^{j_{\kappa}}\\
 & \ge\sum_{\kappa=1}^{n}y_{\kappa}^{\nu}+\sum_{\substack{j_{1}+j_{2}+\cdots+j_{n}=\nu\\
j_{1},j_{2},\ldots,j_{n}\ne\nu
}
}\binom{\nu}{j_{1},j_{2},\ldots,j_{n}}\prod_{\kappa=1}^{n}\left(k_{1}/n+\varepsilon/n\right)^{j_{\kappa}}\\
 & =\sum_{\kappa=1}^{n}y_{\kappa}^{\nu}+(n^{\nu}-n)\left(k_{1}/n+\varepsilon/n\right)^{\nu}\\
 & =\sum_{\kappa=1}^{n}y_{\kappa}^{\nu}+(1-n^{1-\nu})(k_{1}+\varepsilon)^{\nu},
\end{align*}
where equality holds if all $x_{\kappa}$ are $0$, and by the generalized
mean inequality, 
\[
\left(\dfrac{l_{1}}{n}\right)^{\nu}=\left(\frac{1}{n}\sum_{\kappa=1}^{n}y_{\kappa}\right)^{\nu}\le\frac{1}{n}\sum_{\kappa=1}^{n}y_{\kappa}^{\nu},
\]
where equality holds if all $x_{\kappa}$ are the same. Then, for
all $\nu=2,3,\ldots,B$, we must have 
\begin{align*}
(C_{\nu}n^{1-\nu}+D_{\nu})l_{1}^{\nu}+\varepsilon & \le C_{\nu}\sum_{\kappa=1}^{n}y_{\kappa}^{\nu}+D_{\nu}l_{1}^{\nu}+\varepsilon=l_{\nu}\\
 & \le C_{\nu}\left(l_{1}^{\nu}-(1-n^{1-\nu})(k_{1}+\varepsilon)^{\nu}\right)+D_{\nu}l_{1}^{\nu}+\varepsilon\\
 & =(C_{\nu}+D_{\nu})l_{1}^{\nu}-(C_{\nu}(1-n^{1-\nu})(k_{1}+\varepsilon)^{\nu}-\varepsilon).
\end{align*}
For $k_{\nu}l_{1}^{v}<l_{v}<K_{v}l_{1}^{v}$ to be true, it suffices
to take some sufficiently large $n>1$, $C_{\nu},D_{\nu}$ satisfying
\[
0<k_{\nu}-C_{\nu}n^{1-\nu}=D_{\nu}=K_{\nu}-C_{\nu},\qquad0<C_{\nu}=\dfrac{K_{\nu}-k_{\nu}}{1-n^{1-\nu}},
\]
and some sufficiently small $\varepsilon>0$ such that 
\begin{align*}
C_{\nu}(1-n^{1-\nu})(k_{1}+\varepsilon)^{\nu}-\varepsilon & =\dfrac{K_{\nu}-k_{\nu}}{1-n^{1-\nu}}(1-n^{1-\nu})(k_{1}+\varepsilon)^{\nu}-\varepsilon\\
 & =(K_{\nu}-k_{\nu})(k_{1}+\varepsilon)^{\nu}-\varepsilon\\
 & >(K_{\nu}-k_{\nu})k_{1}^{\nu}-\varepsilon>0.
\end{align*}
To show that $q(\mathbb{R}_{\ge0}^{n})$ has a nonempty interior inside
$U(B,N)$, one check that the rank of the Jacobian matrix of $q$
is $B$, which basically follows from the linearly independence of
geometric series. 
\end{proof}
\printindex
\bibliographystyle{alpha}

\end{document}